\documentclass[toc=bibliography]{scrartcl}

\usepackage[utf8]{inputenc}
\usepackage[T1]{fontenc}

\usepackage{graphicx}
\usepackage{amsmath,amssymb,amsthm}
\usepackage{esint} 
\usepackage{bbm}
\usepackage{enumitem,color}
\usepackage{comment}
\usepackage{hyperref}

\newtheorem{thm}{Theorem}[section]
\newtheorem{prop}[thm]{Proposition}
\newtheorem{lem}[thm]{Lemma}
\newtheorem{cor}[thm]{Corollary}
\theoremstyle{definition}

\theoremstyle{remark}
\newtheorem{rem}[thm]{Remark}

\numberwithin{equation}{section}
\numberwithin{figure}{section}

\newcommand{\fig}[2]{\includegraphics[width=#1\textwidth]{#2}}

\usepackage{xstring}

\newcommand{\al}{\mathtt{a}}
\newcommand{\bl}{\mathtt{b}}
\newcommand{\cl}{\mathtt{c}}

\newcommand{\vl}{\mathtt{v}}
\newcommand{\wl}{\mathtt{w}}

\newcommand{\td}{\tilde{d}}
\newcommand{\tdel}{\tilde{\partial}}

\newcommand{\ov}{\overline}
\newcommand{\un}{\underline}
\newcommand{\Z}{\mathbb{Z}}
\newcommand{\N}{\mathbb{N}}
\newcommand{\R}{\mathbb{R}}

\newcommand{\E}{\mathbb{E}}
\newcommand{\abs}[1]{\lvert #1 \rvert}

\newcommand{\CC}{\mathcal{C}}
\newcommand{\LL}{\mathcal{L}}
\renewcommand{\AA}{\mathcal{A}}
\newcommand{\RR}{\mathcal{R}}
\newcommand{\FF}{\mathcal{F}}
\newcommand{\BB}{\mathcal{B}}
\newcommand{\DD}{\mathcal{D}}
\newcommand{\GG}{\mathcal{G}}
\newcommand{\MM}{\mathcal{M}}
\newcommand{\QQ}{\mathcal{Q}}
\newcommand{\TT}{\mathcal{T}}

\renewcommand{\P}{\mathbb{P}}

\title{Bijective enumeration of planar bipartite maps with three
  tight boundaries, or how to slice pairs of pants}%
\author{J\'er\'emie Bouttier%
  \thanks{Université Paris-Saclay, CNRS, CEA, Institut de physique
    théorique, 91191, Gif-sur-Yvette, France} %
  \thanks{Univ Lyon, Ens de Lyon, Univ Claude Bernard, CNRS,
    Laboratoire de Physique, F-69342 Lyon} \and %
  Emmanuel Guitter\footnotemark[1] \and %
  Grégory Miermont\thanks{ENS de Lyon, UMPA, CNRS UMR 5669, 46 allée d’Italie, 69364 Lyon Cedex 07, France}}%
\date{\today}

\begin{document}
\maketitle

\begin{abstract}
  We consider planar maps with three boundaries, colloquially called
  pairs of pants. In the case of bipartite maps with controlled face
  degrees, a simple expression for their generating function was found
  by Eynard and proved bijectively by Collet and Fusy. In this paper,
  we obtain an even simpler formula for \emph{tight} pairs of pants,
  namely for maps whose boundaries have minimal length in their
  homotopy class. We follow a bijective approach based on the slice
  decomposition, which we extend by introducing new fundamental
  building blocks called bigeodesic triangles and diangles, and by
  working on the universal cover of the triply punctured sphere. We
  also discuss the statistics of the lengths of minimal separating
  loops in (non necessarily tight) pairs of pants and annuli, and
  their asymptotics in the large volume limit. 
\end{abstract}

\newpage
\tableofcontents

\section{Introduction}
\label{sec:intro}

\paragraph{Context and motivations.}

The study of maps (graphs embedded into surfaces) is an active field
of research, at the crossroads between combinatorics, theoretical
physics and probability theory. The combinatorial theory of maps
started with the pioneering work of Tutte in the
1960's~\cite{Tutte68}, and we refer to the recent review by
Schaeffer~\cite{Schaeffer15} for an account of its many developments
ever since. In theoretical physics, maps are intimately connected with
matrix models and two-dimensional quantum gravity: see for instance
the review by Di Francesco, Ginsparg and Zinn-Justin~\cite{DGZ95}, the
book by Ambjørn, Durhuus and Jonsson~\cite{ADJ97}, and the book by
Eynard~\cite{Eynard16} for more recent mathematical advances including
the theory of topological recursion. Probability theory aims at
understanding the geometric properties of large random maps and their limits:
this topic is covered in several sets of lecture
notes~\cite{LGMi12,MiermontSFnotes,BuddRMGFFnotes,CurienSFnotes}, and
we also mention the review by Miller~\cite{Miller18} which discusses
the connection with Liouville quantum gravity, a rigorous approach to
two-dimensional quantum gravity.

A key tool in the study of maps is the \emph{bijective approach},
which consists in finding correspondences between different families
of maps, or with other combinatorial objects such as trees or lattice
walks. Bijections often yield elementary derivations of enumerative
results, but are also useful to understand properties of maps such as
distances (see the aforementioned references). There exists by now
several general bijective frameworks, and in this paper we focus on a
specific one, called the slice decomposition.

Colloquially speaking, the~\emph{slice decomposition} consists in
performing a canonical decomposition of maps, by cutting them along
leftmost geodesics. It was first mentioned in the
papers~\cite{pseudoquad,hankel,constfpsac}, mostly as a reformulation
of the decomposition of mobiles~\cite{mobiles}. Its real significance
was highlighted in the paper~\cite{irredmaps}---see
also~\cite[Chapter~2]{BouttierHDR} for a recent exposition---which
considers so-called irreducible maps for which bijections were not
known before.  The slice decomposition also passes naturally to the
scaling limit~\cite{LeGall13,BeMi17}.  However, it has so far been
understood only in the case of \emph{disks} and \emph{annuli}, namely
planar maps with one or two boundaries. Our purpose is to understand
the case of maps with other topologies, with the long-term goal of
developing a bijective approach to topological recursion.

In this paper, we make a first step in this direction, by considering
\emph{pairs of pants}, namely planar maps with three boundaries. For
simplicity, we restrict to the case of bipartite maps with controlled
face degrees (also known as Boltzmann maps), though we believe that
our treatment can be extended to the non bipartite or to the
irreducible settings as considered in \cite{hankel,irredmaps}.  A
simple explicit expression of the generating function of bipartite
pairs of pants was given by Eynard~\cite[Proposition~3.3.1]{Eynard16}
and derived bijectively by Collet and Fusy~\cite{CoFu12}. We note that
equivalent formulas appeared previously in the physics literature, see
for instance  \cite[Equation (45)]{AJM} or \cite[Equation
(4.94)]{ADJ97}. 
Here, we
obtain an even simpler formula for \emph{tight} pairs of pants, namely
for maps whose boundaries have minimal length in their homotopy
class. As we shall see, our formula is equivalent to the
Eynard-Collet-Fusy formula, but our derivation is fundamentally
different.

\begin{figure}[t]
  \centering
  \includegraphics[width=.9\textwidth]{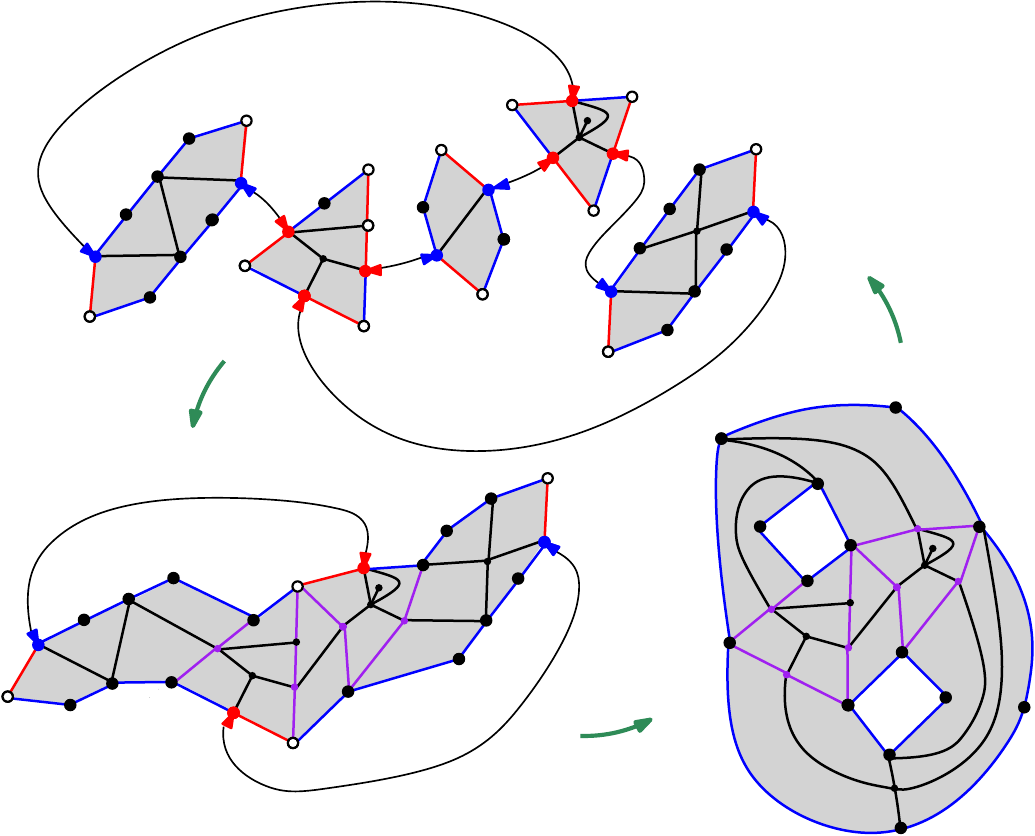}
  \caption{Illustration of the main bijective construction discussed in this
    paper. Starting from elementary pieces, namely two bigeodesic
    triangles and three bigeodesic diangles (top), one builds a pair of
    pants with three tight boundaries (bottom right). To better
    visualize the construction, we pass through an intermediate
    partial assembling (bottom left). Conversely, the building blocks
  can be recovered by cutting along appropriately defined bigeodesics,
here displayed in purple.}
  \label{fig:assemblingsample}
\end{figure}

Our approach, whose general idea is displayed in Figure \ref{fig:assemblingsample}, consists in decomposing tight pairs of pants into
geometric pieces which we call (bigeodesic) diangles and
triangles. While the former are, in a sense, generalizations of the
existing notion of slices, the second are new objects (although they
appear implicitly in earlier work \cite{threepoint}, see the
discussion in Appendix \ref{sec:wlm}). As was pointed to us by Bram
Petri, the way in which the elementary pieces are assembled is very
much reminiscent of classical constructions of pairs of pants in
hyperbolic geometry from ideal hyperbolic triangles, see for instance
\cite[Section 3.4]{Thurston97}. In particular, some notions of
importance in this paper, which we refer to as ``equilibrium
vertices'' in triangles and ``exceedances'' in diangles, have natural
analogs in hyperbolic geometry: the equilibrium vertices correspond
to tangency points of the inner circles of the ideal triangles, and
the exceedances correspond to the invariants $d(v)$ in
\cite{Thurston97}.

We believe that many other connections exist between these classical
concepts and our work. In particular, in the context of the
classification of Riemann and hyperbolic surfaces~\cite{ImTa92}, pants
decompositions play a fundamental role. It is therefore natural to
expect that similar decompositions should exist in the context of
maps. In particular, the tightness constraint which we introduce
should be an important ingredient: indeed, it should translate the
natural idea of cutting surfaces along closed geodesics, in order to
obtain canonical decompositions. Such pants decompositions will be
explored in future research, but provide one of our main motivations
for the present paper.

\paragraph{Overview.}

A \emph{planar map} is a connected multigraph embedded into the sphere
without edge crossings, and considered up to homeomorphism.
It consists of \emph{vertices}, \emph{edges},
\emph{faces} and \emph{corners}, see~\cite{Schaeffer15} for precise
definitions. Until further notice, we only consider finite maps, i.e.\
maps with a finite number of edges (hence of vertices, faces and
corners). A \emph{path} on a map is a sequence of consecutive edges,
and the \emph{length} of a path is its number of edges. Given a face,
its \emph{contour} is the closed path formed by its incident edges,
and its \emph{degree} is the length of the contour. A planar map is
\emph{bipartite} if all its faces have even degree.

A \emph{boundary} is either a marked face or a marked vertex on the
map. We will use the denominations \emph{boundary-face} and
\emph{boundary-vertex} when we wish to specify the nature of a
boundary. We define the \emph{length} of a boundary as being equal to
its degree in the case of a boundary-face, and to zero in the case of
a boundary-vertex. Faces which are not boundaries are called
\emph{inner faces}. A map is said \emph{essentially bipartite} if all
its inner faces have even degree. The sum of the lengths of the
boundaries of an essentially bipartite map is necessarily even.

\begin{figure}
  \centering
  \includegraphics[width=.5\textwidth]{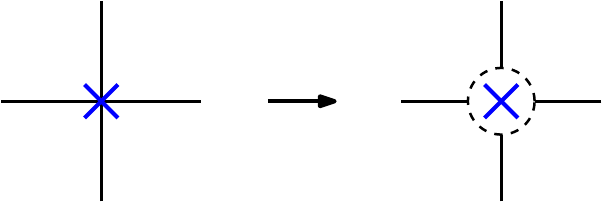}
  \caption{An intuitive way to think of a boundary-vertex: we remove a
    small disk around it, and keep all incident edges connected along
    a circle made of special (dashed) edges, which are considered as
    having length zero. Note that a path passing through a
    boundary-vertex may ``circumvent'' it in two ways, which will
    correspond to different homotopy classes when there are other
    boundaries.}
  \label{fig:boundaryvertex}
\end{figure}

We intuitively think of boundaries as representing punctures on the
sphere. This is rather natural in the case of a boundary-face (we just
remove its interior from the surface), but slightly less in the case
of boundary-vertex: see Figure~\ref{fig:boundaryvertex}. A path on the
map, together with a choice of circumventing direction when passing
through a boundary-vertex, corresponds to a path on the punctured
sphere. Two closed paths are said to be in the same \emph{homotopy
  class}, or \emph{freely homotopic}, if they can be continuously
deformed into one another on the punctured sphere.  A boundary-face is
said \emph{tight} if its contour has minimal length in its homotopy
class (if the boundary-face is incident to a boundary-vertex, the
contour should be considered as the contour of the corresponding face
in the map modified as in Figure~\ref{fig:boundaryvertex}). A
boundary-vertex is by convention always tight.

We are interested in essentially bipartite planar maps with three
distinct boundaries which are labeled (distinguishable). Such maps
cannot have symmetries, and therefore we do not root (i.e., mark a
corner on) the boundaries.  Two situations may occur: either all the
boundary lengths are even, and the planar map is truly bipartite, or
two lengths are odd and the third is even, and following~\cite{CoFu12}
we say that the map is \emph{quasi-bipartite}.  We may now state our
main enumerative result:

\begin{thm}
  \label{thm:Tabc}
  Let $a$, $b$ and $c$ be integers or half-integers such that $a+b+c$
  is an integer. Then, the generating function $T_{a,b,c}$ of
  essentially bipartite planar maps with three labeled distinct tight
  boundaries of lengths $2a$, $2b$, $2c$, counted with a weight $t$
  per vertex different from a boundary-vertex and, for all $k\geq 1$,
  a weight $g_{2k}$ per inner face of degree $2k$, is equal to
  \begin{equation}
    \label{eq:Tabc}
    T_{a,b,c} = R^{a+b+c} \frac{d \ln R}{dt} - t^{-1} \mathbf{1}_{a=b=c=0}
  \end{equation}
  where $R$ is the formal power series in $t,g_2,g_4,\ldots$
  determined by
  \begin{equation}
    \label{eq:Req}
    R = t + \sum_{k \geq 1} \binom{2k-1}{k} g_{2k} R^k
  \end{equation}
  and where $\mathbf{1}_P$ is equal to $1$ if $P$ is true, and to $0$
  otherwise.

\end{thm}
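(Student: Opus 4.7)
The plan is to prove Theorem~\ref{thm:Tabc} bijectively by extending the slice decomposition to this topologically nontrivial setting. The key idea is to decompose a tight pair of pants into elementary pieces -- bigeodesic diangles and triangles -- whose generating functions can be computed separately and then assembled into the claimed product.

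\textbf{Step 1 (Universal cover setup).} Because a pair of pants has nonabelian fundamental group, the notion of a leftmost geodesic joining two boundaries is not globally well-defined on the map itself. I would lift a tight pair of pants to the universal cover of the triply punctured sphere and, for each of the three pairs of boundaries, construct a canonical leftmost/tight geodesic between appropriate lifts using the standard leftmost-geodesic conventions from slice theory. Tightness of the boundaries is essential here, since it is what makes these geodesics minimal in their homotopy classes and allows an unambiguous starting point on each boundary contour.

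\textbf{Step 2 (Canonical geometric decomposition).} The crux of the argument is to show that the three canonical geodesics cut a fundamental domain into a central bigeodesic triangle -- with three ``equilibrium vertices'' where pairs of geodesics meet -- together with three ``sleeves'', one wrapping each boundary, each sleeve being a stack of diangles (the natural two-sided generalization of a slice). One must verify that this is a bijection: every tight pair of pants determines a unique central triangle and three sleeves, and conversely any such combinatorial data can be reassembled.

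\textbf{Step 3 (Enumerating the pieces).} The diangles behave like slices, so by the standard slice generating function their total contribution along the three boundaries should equal $R^{a+b+c}$, i.e.\ each unit $1/2$ of boundary length contributes a factor $R^{1/2}$, so cumulatively the three boundaries of lengths $2a, 2b, 2c$ contribute $R^{a+b+c}$. The bigeodesic triangle should then carry the remaining factor $\frac{d\ln R}{dt}$. Here I expect $\frac{dR}{dt}$ to reflect the marking of one non-boundary vertex inside the triangle (since each such vertex carries a weight $t$), while the division by $R$ should correspond to a rerooting/normalization tied to the three equilibrium vertices, analogous to the division by an $R$-factor that appears when passing from rooted to pointed disks.

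\textbf{Step 4 (Degenerate correction).} When $a=b=c=0$ all boundaries are boundary-vertices and the decomposition degenerates; the formula $R^{0}\frac{d\ln R}{dt}$ then carries a spurious leading $1/t$ coming from $R = t+O(t^{2})$, which corresponds to the degenerate ``map'' consisting of a single vertex carrying three coincident boundary-vertices. This is not an honest pair of pants with three distinct boundaries, so it must be removed by hand, producing the correction $-t^{-1}\mathbf{1}_{a=b=c=0}$.

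\textbf{Main obstacle.} The hard part is Step~2: proving that the three leftmost geodesics, constructed on the universal cover, fit together in exactly the expected pattern and yield a genuine bijection in both directions. One must classify and handle all the degenerate configurations -- pairs of geodesics sharing a terminal segment, geodesics passing through boundary-vertices with a choice of circumventing direction, and collisions at the equilibrium vertices -- and check that the universal cover framework correctly captures the homotopy freedom. Establishing the precise enumeration of bigeodesic triangles in Step~3, which likely requires its own slice-like subdecomposition of each triangle around an equilibrium vertex, is the secondary technical challenge.
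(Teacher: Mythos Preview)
Your overall strategy---lift to the universal cover, cut along leftmost bigeodesics, and factor the generating function---is the right one, and matches the paper's architecture. But there are two genuine structural gaps.

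\textbf{The decomposition yields two triangles, not one.} A fundamental domain for the triply punctured sphere is a \emph{quadrilateral} (think of the square $S_\varnothing$ with ideal corners $\tilde{x}_A,\tilde{x}_B,\tilde{x}_C,\tilde{x}_{B'}$, where $\tilde{x}_{B'}$ is a second preimage of $x_B$). Cutting along leftmost bigeodesics launched from suitably chosen equilibrium vertices partitions this domain into \emph{two} bigeodesic triangles and three bigeodesic diangles, not one central triangle and three sleeves. Moreover there are two distinct assembling patterns (types I and II) depending on whether the boundary lengths satisfy or violate the triangle inequality; both must be handled. The resulting identity is
\[
T_{a,b,c}+t^{-1}\mathbf{1}_{a=b=c=0}=R^{a+b+c}\,\frac{X^3Y^2}{t^6},
\]
where $X$ and $Y$ are the generating functions of balanced diangles and of triangles, and the $R^{a+b+c}$ factor comes from the \emph{exceedances} $e_1,e_2,e_3$ of the three diangles (which satisfy $e_1+e_2+e_3=a+b+c$), not from boundary length directly.

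\textbf{The factor $d\ln R/dt$ is not obtained by direct enumeration.} Your Step~3 proposes to show that the triangle contributes $d\ln R/dt$ via a marked-vertex interpretation. The paper does \emph{not} do this, and it is not clear how one would: there is no evident bijective reason why $X^3Y^2/t^6$ should equal $d\ln R/dt$. Instead, the paper uses the case $a=b=c=0$ as a calibration: the generating function $T_{0,0,0}$ of triply pointed bipartite maps is already known (from the Bouttier--Di Francesco--Guitter bijection, or a slice rederivation) to equal $d\ln(R/t)/dt$. Comparing this with the bijective identity above at $a=b=c=0$ forces $X^3Y^2/t^6=d\ln R/dt$, and the general formula follows. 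Without this indirect step, your Step~3 as written is a genuine gap.
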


\begin{figure}
  \centering
  \includegraphics{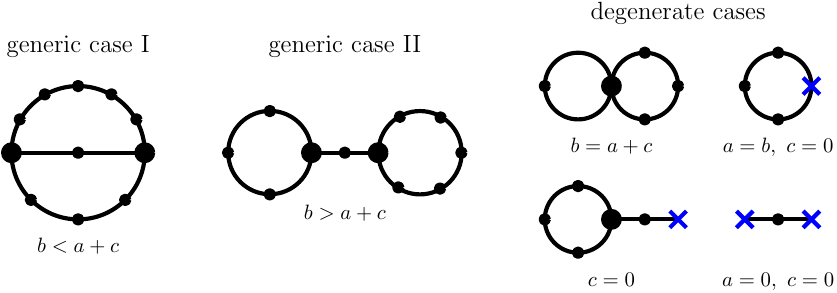}
  \caption{The possible types of maps with three tight boundaries
    and no inner face. Tightness implies
    that there are no vertices of degree one, except possibly
    boundary-vertices, indicated here by blue crosses.  To identify the different types, assume
    without loss of generality that the boundary lengths are
    $b \geq a \geq c$. There exists two generic situations, denoted I
    and II, corresponding to the cases $b<a+c$ and $b>a+c$
    respectively. The degenerate cases correspond to having $b=a+c$
    or/and some lengths equal to zero. We may check that there are
    $a+b+c-1$ vertices different from boundary-vertices in all cases.}
  \label{fig:gzero}
\end{figure}

It is not difficult to check that the right-hand side
of~\eqref{eq:Tabc} is indeed a well-defined power series in
$t,g_2,g_4,\ldots$. It is useful to look first at the case where
$g_2,g_4,\ldots$ all vanish. This corresponds to maps without inner
faces. In this case, $T_{0,0,0}$ vanishes while, for $a,b,c$ not all
zero, $T_{a,b,c}$ is equal to $t^{a+b+c-1}$: this means that there
should exist exactly one such map, with $a+b+c-1$ vertices different
from boundary-vertices. This is indeed true, as illustrated on
Figure~\ref{fig:gzero}.

Our expression for $T_{a,b,c}$ is very similar to the aforementioned
Eynard-Collet-Fusy (ECF) formula for maps with three boundaries that
are not necessarily tight. In fact, the ECF formula simply differs by
some binomial factors. As we will see in Section~\ref{sec:ecf}, the
two formulas are equivalent, by a canonical decomposition which
consists in cutting a map with three arbitrary boundaries along
outermost minimal separating loops, resulting in three annular maps
and one tight pair of pants. However, our expression for $T_{a,b,c}$ being even
simpler than the ECF formula, it is desirable to have a direct
bijective proof of it, and this is the main objective of the present
paper.

Our results have interesting consequences for the statistics of large
random planar maps, which are explored in Section
\ref{sec:scalinglimits}. There, for simplicity, we restrict our attention to
the case of quadrangulations. The aforementioned canonical decomposition of
a map with three boundaries into three annular maps and a tight pair
of pants allows one to define the exterior areas, corresponding to the number of
faces in the annular maps, the interior area, corresponding to the
number of faces in the tight pair of pants, and the minimal separating cycle
lengths, corresponding to the lengths of the three boundaries of the
tight pair of pants. In Theorem \ref{thpants}, we give a detailed limit theorem
for the joint law of these quantities in large quadrangulations with
three boundaries. We also provide an analogous statement for random
annular quadrangulations
in Theorem \ref{thcylinders}, which relies on the results obtained in
\cite{irredmaps}.

Our strategy to prove Theorem~\ref{thm:Tabc} is the following.  For
$a=b=c=0$, the right-hand side of~\eqref{eq:Tabc} can be rewritten as
$d\ln(R/t)/dt$ which, by results from~\cite{mobiles}, is already known
to be equal to the generating function $T_{0,0,0}$ of triply pointed
bipartite maps (for completeness, we provide a slice-theoretic
rederivation of this fact in
Appendix~\ref{sec:slicetp}).
Then, we will exhibit a bijection implying, as
Corollary~\ref{cor:main} below, that we have for any $a,b,c$
\begin{equation}
  \label{eq:Tabcter}
  T_{a,b,c} + t^{-1} \mathbf{1}_{a=b=c=0} = R^{a+b+c} \frac{X^3 Y^2}{t^6}
\end{equation}
where $R$, $X$ and $Y$ are the generating functions of combinatorial
objects called respectively elementary slices, bigeodesic diangles,
and bigeodesic triangles (all these series are equal to $t$ when
$g_2,g_4,\ldots$ all vanish). These combinatorial objects will be
defined in Section~\ref{sec:basic}. The notations are chosen to be
consistent with those of~\cite{threepoint,loops}: as we discuss in
Appendix~\ref{sec:wlm}, bigeodesic diangles and triangles are the
slice-theoretic equivalents of objects appearing in the decomposition
of well-labeled maps (the slice interpretation of $R$ being already
known). This makes a connection with the bijective approach developed
in~\cite{Miermont09,AmBu13,gen2p}. Comparing~\eqref{eq:Tabcter} with
the known expression for $T_{0,0,0}$, we get $X^3 Y^2/t^6=d\ln R/dt$
and Theorem~\ref{thm:Tabc} follows.

\paragraph{Outline.} Section~\ref{sec:basic} introduces the basic
building blocks of our approach, namely tight slices, bigeodesic
diangles and bigeodesic triangles, and derives some elementary
enumeration results for those pieces.  Section~\ref{sec:gluing}
explains how these pieces can be assembled to produce a map with three
tight boundaries. This allows us to state our main bijective result,
Theorem~\ref{thm:main}. To prove this theorem, the difficult part is
to decompose a map with three tight boundaries back into basic
building blocks.  This decomposition takes place on the universal
cover of the map, which is a periodic infinite map which we describe
in Section~\ref{sec:universal-cover-maps}.  Section~\ref{sec:decface}
then presents the decomposition of a map with three tight boundaries,
by first introducing the important geometric tool of Busemann
functions associated with infinite geodesics, and finishes the proof
of Theorem~\ref{thm:main} by showing that this decomposition is indeed
the inverse of the assembling procedure. Section~\ref{sec:ecf}
discusses how to recover the ECF formula from our results and a
decomposition of pairs of pants into annular maps and tight pairs of
pants, and then states and proves our probabilistic applications on
the statistics of minimal separating cycles and areas in large random
quadrangulations with three boundaries. Concluding remarks and
discussion on future directions are gathered in
Section~\ref{sec:conc}. Finally, we recall in
Appendix~\ref{sec:slicetp} how to obtain the classical recursion
relation~\eqref{eq:Req} for slices, as well as the reason why
$d \ln(R/t)/ dt$ is the generating function of triply pointed maps,
and in Appendix~\ref{sec:wlm} we present another approach to
bigeodesic diangles and triangles in the case of quadrangulations,
based on a bijection with labeled trees.

\paragraph{Acknowledgements.} We thank Marie Albenque, Timothy Budd,
Vincent Delecroix, Marco Mazzucchelli and Bram Petri for valuable
discussions. We also thank the two anonymous referees for suggesting
useful improvements to the paper. This project results from an
institutional collaboration between CEA and ENS de Lyon, and was
initiated at the occasion of the \emph{Séminaire de combinatoire de
  Lyon à l'ENS} which is funded by the Labex Milyon
(ANR-10-LABX-0070).  The work of JB is partly supported by the Agence
Nationale de la Recherche via the grants ANR-18-CE40-0033 ``Dimers''
and ANR-19-CE48-0011 ``Combiné''.

\section{Basic building blocks}
\label{sec:basic}

In this section we introduce the fundamental building blocks of our
approach. We start with some preliminary definitions.

\subsection{Preliminaries:  geodesics and related concepts}
\label{sec:geoddefs}

In a map, a \emph{geodesic} between two vertices $v_1$ and $v_2$ is a
path of minimal length connecting them. This minimal length is by
definition the (graph) \emph{distance} $d(v_1,v_2)$ between the
vertices $v_1$ and $v_2$. Maps are assumed to be connected, so geodesics
between any two given vertices always exist and $d(v_1,v_2)$ is a
finite integer.

A \emph{geodesic vertex} between $v_1$ and $v_2$ is a vertex $v$
belonging to a geodesic between them. Clearly, $v$ is a geodesic
vertex if and only if
\begin{equation}
  d(v,v_1) + d(v,v_2) = d(v_1,v_2).
\end{equation}

\paragraph{Bigeodesics.}

In general, there may exist many geodesics between $v_1$ and
$v_2$. But, using the (local) planar structure of a
map, it is often possible to single out a canonical one. The previous
works on slice decomposition were using the notion of leftmost
geodesic determined by the choice of an initial direction at
$v_1$. Here, we will need a related but slightly different notion,
which is that of leftmost bigeodesic determined by the choice of a
geodesic vertex between $v_1$ and $v_2$.

A \emph{bigeodesic} between two vertices $v_1$ and $v_2$ is a triple
made of a geodesic vertex $v$ between them and of two geodesics, one
between $v$ and $v_1$ and one between $v$ and $v_2$. Clearly the
concatenation of these two geodesics is a geodesic between $v_1$ and
$v_2$, so that a bigeodesic between $v_1$ and $v_2$ is entirely
specified by the data of a geodesic between $v_1$ and $v_2$ and of a
vertex $v$ along this geodesic.

Viewing the bigeodesic as ``launched'' from the geodesic vertex $v$
towards $v_1$ and $v_2$ respectively, we may introduce the notion of
\emph{leftmost bigeodesic} as follows. Assume that
$d(v_1,v_2) \geq 2$ and that $v$ is distinct from $v_1$ and $v_2$. We
may partition the set of edges incident to $v$ into three types:
\begin{itemize}\setlength{\itemsep}{0pt}
\item[(i)] those leading to a vertex strictly closer to $v_1$,
\item[(ii)] those leading to a vertex strictly closer to $v_2$,
\item[(iii)] those leading to a vertex that is neither strictly closer to $v_1$ nor to $v_2$.
\end{itemize}
Ignoring the edges of type (iii), it is easily seen that, by
planarity, there exists an edge $e_1$ of type (i) and an edge $e_2$ of
type (ii) such that, when turning clockwise around $v$, all edges of
type (i) appear between $e_1$ and $e_2$ and all edges of type (ii)
appear between $e_2$ and $e_1$. We then consider the leftmost geodesic
from $v$ to $v_1$ starting with $e_1$, i.e.\ the geodesic whose first
step goes along $e_1$ from $v$ to its neighbor at distance
$d(v,v_1)-1$ from $v_1$, and at each step, goes along the leftmost
edge (as viewed from the previous edge) among all those going from the
currently attained vertex to a vertex closer to $v_1$, until $v_1$ is
eventually reached. We similarly pick the leftmost geodesic from $v$
to $v_2$ starting with $e_2$. Concatenating these two geodesics, we
obtain a bigeodesic between $v_1$ and $v_2$ launched from $v$, which
is by definition the leftmost bigeodesic we are looking for. Observe
that the leftmost bigeodesic is not well-defined if $v=v_1$ or
$v=v_2$, and in particular if $d(v_1,v_2)<2$.

\paragraph{Geodesic boundary intervals.}

Consider a planar map with one boundary-face, which we denote $f_0$
and which we choose as the external face in the planar representation
of the map. Let $c$ and $c'$ be two corners incident to $f_0$. These
corners split the contour of $f_0$ in two portions, which we call
\emph{boundary intervals}. When turning counterclockwise around the
map (i.e., when walking along the contour with $f_0$ on the right),
the portion that starts at $c$ and ends at $c'$ is denoted
$[c,c']$. It forms a path on the map, which may not be simple in
general. In the particular case where it forms a geodesic between the
vertices incident to $c$ and $c'$, then we say that the boundary
interval $[c,c']$ is \emph{geodesic} (the path is necessarily simple
in this case). Furthermore, if there exists no other geodesic in the
map with the same endpoints, then $[c,c']$ is said \emph{strictly
  geodesic}. In the figures, we will often use the graphical
convention of representing geodesic boundary intervals in blue, and
strictly geodesic boundary intervals in red.

\subsection{Tight slices}\label{sec:tightslices}

\begin{figure}
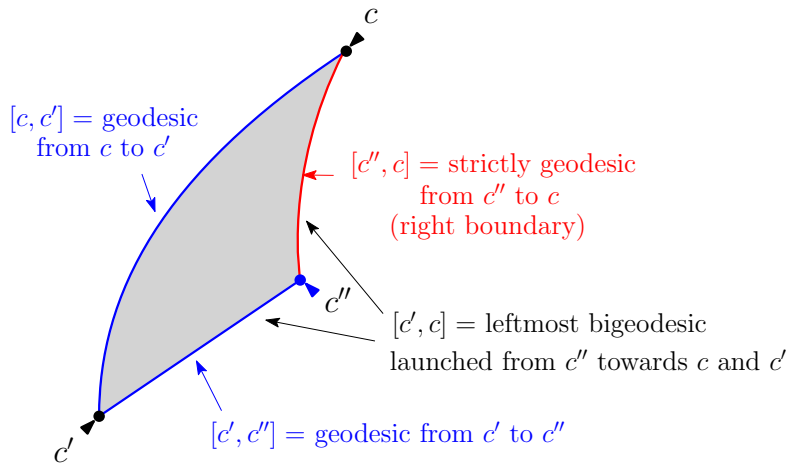

  \centering
  \fig{.7}{tightslice}
  \caption{Generic structure of a tight slice (the boundary-face is
    the outer face). In this figure and the following, we represent
    geodesic boundary intervals in blue and strictly geodesic ones in
    red. The corner $c''$ is shown in blue since it is an intermediate
    corner on the geodesic boundary interval $[c',c]$. The width of
    the slice is the length of $[c',c'']$.}
  \label{fig:tightslice}
\end{figure}

Our first building block is what we call a \emph{tight slice}, defined
as a planar bipartite map with one boundary-face having three
distinguished (not necessarily distinct) incident corners $c$, $c'$
and $c''$ appearing counterclockwise around the map
such that:
\begin{itemize}[label=$\scriptstyle{\circ}$]\setlength{\itemsep}{0pt}
\item  the boundary intervals $[c,c']$ and $[c',c]$ are geodesic,
\item the boundary interval $[c'',c]$, called the \emph{right
    boundary}, is strictly geodesic,
\item  the intervals $[c'',c]$ and $[c,c']$ share only a common vertex at $c$.
\end{itemize}
See Figure~\ref{fig:tightslice} for an illustration.
Note that the constraints imply that $[c',c'']$ is also geodesic, and
the length of this interval is called the \emph{width} of the
slice. The only tight slice of width zero is equal to the
\emph{vertex-map}, reduced to a single vertex and a single face both of
degree zero. Note that, if $c\neq c''$ (hence $c' \neq c''$), then
$[c',c]$ is nothing but the leftmost bigeodesic launched from the
vertex incident to $c''$ towards those incident to $c$ and $c'$
respectively. A tight slice of width $1$ is called an \emph{elementary slice}.

\begin{prop}
  \label{prop:proptightslice}
  The generating function of tight slices of width $\ell$, counted
  with a weight $t$ per vertex not incident to the right boundary and
  a weight $g_{2k}$ per inner face of degree $2k$ for all $k \geq 1$,
  is equal to $R^\ell$, with $R$ defined as in Theorem~\ref{thm:Tabc}.
\end{prop}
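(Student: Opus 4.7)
The plan is to prove Proposition~\ref{prop:proptightslice} in two steps. First, I would show that tight slices of width $\ell$ decompose canonically into ordered $\ell$-tuples of elementary slices, which implies that their generating function factors as $R^\ell$, where $R$ is the generating function of elementary slices. Second, I would derive equation~\eqref{eq:Req} for $R$ by the classical ``apex'' decomposition of an elementary slice, as recalled in Appendix~\ref{sec:slicetp}.

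For the first step, I would argue by induction on $\ell$; the base case $\ell = 0$ is the vertex-map. For $\ell \geq 1$, let $S$ be a tight slice of width $\ell$, denote the vertices incident to $c$, $c'$, $c''$ by $v$, $v'$, $v''$ respectively, and label the vertices of $[c', c'']$ as $v' = v_0, v_1, \ldots, v_{\ell-1}, v_\ell = v''$. Since the concatenation of $[c', c'']$ and $[c'', c]$ is a geodesic, we have $d(v_{\ell-1}, v) = 1 + d(v'', v)$, and the edge $v_{\ell-1} v''$ followed by $[c'', c]$ provides one geodesic realizing this length. Let $\gamma$ be the leftmost geodesic from $v_{\ell-1}$ to $v$ lying strictly to the left of this one. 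I would then verify that $\gamma$, the edge $v_{\ell-1} v''$, and $[c'', c]$ together bound an elementary slice $S_{\mathrm{r}}$ (with distinguished corners at $v$, $v_{\ell-1}$, $v''$ playing the roles of $c$, $c'$, $c''$), while $S \setminus S_{\mathrm{r}}$ is a tight slice of width $\ell - 1$ whose right boundary is $\gamma$. The strict geodesicity of $[c'', c]$ in $S_{\mathrm{r}}$ is inherited from that in $S$, while the strict geodesicity of $\gamma$ in $S \setminus S_{\mathrm{r}}$ follows from its leftmost definition. The inverse gluing operation is straightforward, giving the bijection and, by induction, the factorization $R^\ell$.

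For the second step, one decomposes an elementary slice $S$ at the vertex $v''$. Either $S$ is the minimal elementary slice, consisting of a single edge between $v'$ and $v = v''$ with $c = c''$, contributing the weight $t$; or $S$ contains an inner face $f$ of degree $2k \geq 2$ incident to $c''$ along the edge $v' v''$. In the latter case, removing $f$ and launching leftmost geodesics from appropriately chosen vertices of $f$ towards $v$ decomposes $S$ into $k$ tight subslices, and a cycle-lemma argument on the cyclic arrangements of corners around $f$ produces the multiplicity $\binom{2k-1}{k}$. Summing over $k$ yields~\eqref{eq:Req}; the details are carried out in Appendix~\ref{sec:slicetp}.

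The main obstacle lies in the first step: one must show that the leftmost geodesic $\gamma$ remains inside $S$, meets $[c'', c]$ only at $v$, and that the two resulting pieces both satisfy the tight slice axioms. These facts rely on the strict geodesicity of $[c'', c]$ and on planarity, and amount to adapting the classical non-crossing property of leftmost geodesics to the three-cornered setting of a tight slice. Once this is in place, the inductive decomposition and the factorization $R^\ell$ follow routinely.
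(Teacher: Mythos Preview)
Your approach is the same as the paper's: both decompose a width-$\ell$ tight slice into $\ell$ elementary slices by cutting along leftmost (bi)geodesics launched from the intermediate vertices of $[c',c'']$ towards the apex $v$. You peel off one elementary slice at a time by induction; the paper cuts at all intermediate vertices simultaneously. These are the same decomposition, and your second step (deriving~\eqref{eq:Req}) is exactly what Appendix~\ref{sec:slicetp} does.

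One point needs correction. You assert, and list as the main obstacle, that the leftmost geodesic $\gamma$ from $v_{\ell-1}$ to $v$ meets $[c'',c]$ only at $v$. This is false in general: since $[c'',c]$ is \emph{strictly} geodesic, once $\gamma$ touches $[c'',c]$ at some vertex $w$ it must coincide with $[c'',c]$ from $w$ onwards to $v$. Consequently the elementary slice $S_{\mathrm r}$ has its apex at $w$, not at $v$, and its right boundary is the portion $[v'',w]$ of the original right boundary; the remaining width-$(\ell-1)$ slice still has apex $v$, with right boundary $\gamma$ (which agrees with $[c'',c]$ on $[w,v]$). With this adjustment the weight bookkeeping and the bijection go through unchanged. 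This merging phenomenon is precisely why the paper speaks of leftmost bigeodesics and of pruning their common parts. A minor aside: the factor $\binom{2k-1}{k}$ in the second step arises from a direct count of lattice paths of length $2k-1$ with $k$ up-steps (recording distances to $v$ around the base face), not from a cycle-lemma argument.
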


\begin{proof}
  It is known that $R$ counts elementary slices (for
  completeness, we provide a proof in
  Appendix~\ref{sec:slicetp}). Given a tight slice of arbitrary width,
  let us consider all the vertices incident to the boundary interval
  $]c',c''[$ (with endpoints excluded), and the leftmost bigeodesics
  launched from them towards the vertices incident to $c'$ and
  $c$. These bigeodesics necessarily follow the boundary towards
  $c'$, but may enter inside the map towards $c$. Cutting also these
  bigeodesics splits the map into a $\ell$-tuple of elementary slices, and
  it is straightforward to check that the decomposition is bijective.
\end{proof}

\subsection{Bigeodesic diangles}\label{sec:bigeo-diangles}

\begin{figure}
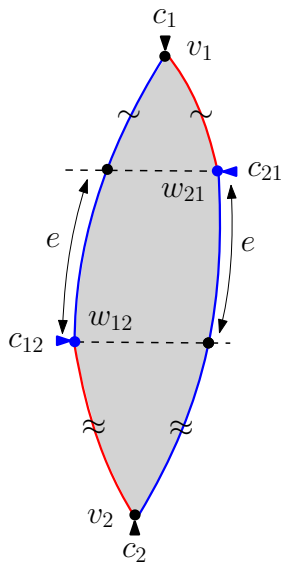

  \centering
  \fig{.25}{diangle}
  \caption{Schematic picture of a bigeodesic diangle of nonnegative
    exceedance $e$.  The boundary intervals with the $\sim$ label have
    the same length and meet only at $v_1$, and similarly for the
    $\approx$ label. The remaining two other boundary intervals, of
    length $e$, may however share common vertices as soon as $e>0$.}
  \label{fig:diangle}
\end{figure}

Our second building block is what we call a \emph{bigeodesic
  diangle}, or a \emph{diangle} for short, which is again a planar
bipartite map with one boundary-face, with now four distinguished (not
necessarily distinct) incident corners $c_1$, $c_{12}$, $c_2$,
$c_{21}$ appearing counterclockwise around the map, and having the
following properties:
\begin{itemize}[label=$\scriptstyle{\circ}$]\setlength{\itemsep}{0pt}
\item the boundary intervals $[c_1,c_2]$ and $[c_2,c_1]$ are geodesic, 
\item the boundary intervals $[c_{12},c_2]$ and $[c_{21},c_1]$ are strictly geodesic,
\item $[c_{21},c_1]$ and $[c_1,c_2]$ share only a common vertex
at the vertex $v_1$ incident to $c_1$ and similarly, $[c_{12},c_2]$ and $[c_2,c_1]$ share only a common vertex
at the vertex $v_2$ incident to $c_2$.
\end{itemize}
See Figure \ref{fig:diangle} for an illustration.
Note that the boundary intervals $[c_1,c_{12}]$ and $[c_2,c_{21}]$ are necessarily geodesic.
Let us denote by $w_{12}$ and $w_{21}$ the vertices incident to $c_{12}$ and $c_{21}$. If $w_{12}$ is different from $v_1$ and $v_2$, then $[c_1,c_2]$ is the leftmost bigeodesic launched from $w_{12}$ towards $v_1$ and $v_2$. Similarly, if $w_{21}$ is different from $v_1$ and $v_2$, then $[c_2,c_1]$ is the leftmost bigeodesic launched from $w_{21}$. The corners $c_{12}$ and $c_{21}$ (or the vertices $w_{12}$ and $w_{21}$ depending on the context)
will be referred to as the \emph{attachment points} of the diangle,
for reasons which will become clear in the next section. 

\medskip
The \emph{exceedance} $e$ of the diangle is defined as
 \begin{equation}
  e=d(w_{12},v_1) - d(w_{21},v_1) = d(w_{21},v_2)-d(w_{12},v_2).
\end{equation}
A diangle is said \emph{balanced} if its exceedance $e$ is $0$, that
is if $w_{12}$ and $w_{21}$ are at the same distance from $v_1$, say.
Note that the vertex-map again satisfies all the required criteria for
a balanced diangle. Apart from this trivial case, any other balanced
diangle has $d(v_1,v_2)\geq 2$, with $w_{12}$ and $w_{21}$ different
from $v_1$ and $v_2$ and from each other, with $[c_1,c_2]$ and
$[c_2,c_1]$ meeting only at $v_1$ and $v_2$. This latter property is
not necessarily true for unbalanced diangles, as indicated in the caption of
Figure~\ref{fig:diangle}.

\medskip
We denote by $X$ the generating function of balanced bigeodesic diangles, where the boundary-face and the vertices incident
to the strictly geodesic intervals $[c_{12},c_2]$ and $[c_{21},c_1]$ other than $w_{12}$ and $w_{21}$ receive no weight.
Note that the vertex-map contributes a weight $t$ to $X$. 

\begin{figure}
  \centering
  \fig{.8}{firstX}
  \caption{The first terms in the expansion of $X$ for quadrangulations ($g_{2k}=g\, \delta_{k,2}$). The unfilled vertices (circles) receive no weight.}
  \label{fig:firstX}
\end{figure}
Even though we shall not need any precise expression for $X$, let us
mention that, by the results of~\cite{threepoint}, a very explicit
formula can be given in the case of quadrangulations
($g_{2k}=g\, \delta_{k,2}$), see equation~\eqref{eq:XYform} in
Appendix~\ref{sec:wlm}. It yields the expansion
$X=t+gt^2 +8 g^2 t^3 +73 g^3 t^4 +711 g^4 t^5 +\ldots$, and the maps
corresponding to the first terms of this expansion are displayed in
Figure~\ref{fig:firstX}.

\medskip
With the same weighting convention as for balanced diangles, we have the following property:
\begin{prop}
  \label{prop:diangleenum}
  The generating function of bigeodesic diangles with nonnegative
  exceedance $e$ is equal to $R^e X$.
\end{prop}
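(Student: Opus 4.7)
The plan is to exhibit a weight-preserving bijection between bigeodesic diangles of nonnegative exceedance $e$ and pairs consisting of a balanced bigeodesic diangle together with a tight slice of width $e$. Combined with the definition of $X$ and with Proposition~\ref{prop:proptightslice}, this will yield the generating function $R^e X$.

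For the forward direction, given a diangle $D$ of exceedance $e \geq 0$, I first single out the unique vertex $u$ lying on the geodesic boundary interval $[c_1, c_{12}]$ at distance $d(v_1, w_{21})$ from $v_1$; this is well-defined since $[c_1, c_{12}]$ has length $d(v_1, w_{12}) = d(v_1, w_{21}) + e$. Next, I launch the leftmost bigeodesic from $u$ toward $v_1$ and $v_2$. A short planarity argument (noting that at $u$ the boundary edges pointing toward $c_1$ and $c_{12}$ are respectively of types (i) and (ii), and are consecutive in the cyclic edge order with the outer face between them, so that they play the roles of $e_1$ and $e_2$) shows that the branch toward $v_1$ starts along the boundary edge pointing to $c_1$; iterating the same observation along $[c_1, u]$, the whole branch follows the boundary. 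The branch toward $v_2$ is a new geodesic of length $d(u, v_2) = d(v_2, w_{21})$ that may well enter the interior of $D$.

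Cutting $D$ along the $v_2$-branch should produce two pieces: on one side, a balanced bigeodesic diangle $D_0$ with attachment points $u$ and $w_{21}$ (balance following from $d(v_1, u) = d(v_1, w_{21})$ and $d(v_2, u) = d(v_2, w_{21})$), whose new strictly geodesic interval $[c_{12}^0, c_2]$ is the cut; on the other side, a tight slice $S$ of width $e$ with distinguished corners $c = c_2$, $c' = u$, $c'' = c_{12}$, whose right boundary $[c'', c]$ coincides with the original strictly geodesic interval $[c_{12}, c_2]$ of $D$, and whose $[c, c']$ interval is the reversed cut. The inverse map glues $D_0$ and $S$ along this shared cut.

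Two main difficulties arise. First, I must verify that the cut is strictly geodesic as viewed within each of the two pieces; this should follow from the leftmost property of the bigeodesic in $D$, since any competitor geodesic inside one of the pieces would also be a geodesic in $D$ and would contradict that property on the appropriate side. Second, the vertex weighting needs careful bookkeeping. Most vertices of $D$ contribute the expected weight in exactly one piece, and the interior vertices of the cut contribute weight $t$ in $S$ (sitting on $[c, c']$, away from the right boundary) and weight $1$ in $D_0$ (sitting on its new strictly geodesic boundary), together recovering the original weight $t$. The delicate case concerns the old and new attachment points: $w_{12}$ becomes unweighted in $S$ as $c''$ on the right boundary, while $u$ acquires weight $t$ in both $D_0$ (as $w_{12}^0$) and $S$ (as $c'$); the excess factor of $t$ at $u$ cancels the deficit factor of $t$ at $w_{12}$, so that the total product of weights matches that of $D$.
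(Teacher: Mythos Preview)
Your proposal is correct and follows essentially the same approach as the paper: pick the vertex $u$ (the paper calls it $\overline{w}_{12}$) on $[c_1,c_{12}]$ at the same distance from $v_1$ as $w_{21}$, launch the leftmost bigeodesic toward $v_1$ and $v_2$, and cut to obtain a balanced diangle plus a tight slice of width $e$, with the weight of $w_{12}$ transferred to $u$. The one point where the paper is slightly more careful is that the $v_2$-branch of this bigeodesic will in general merge with the opposite boundary $[c_2,c_1]$ at some vertex $\overline{v}_2$ which need not equal $v_2$; the balanced diangle then has its $c_2$-corner at $\overline{v}_2$ rather than at the original $v_2$, and correspondingly the slice's left boundary $[c,c']$ is not simply ``the reversed cut'' but the concatenation of the arc $[v_2,\overline{v}_2]\subset[c_2,c_1]$ with the reversed cut (the paper's Figure~\ref{fig:diangledecomp} illustrates the different cases). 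This is a minor bookkeeping detail and does not affect the validity of your argument.
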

\begin{proof}
\begin{figure}
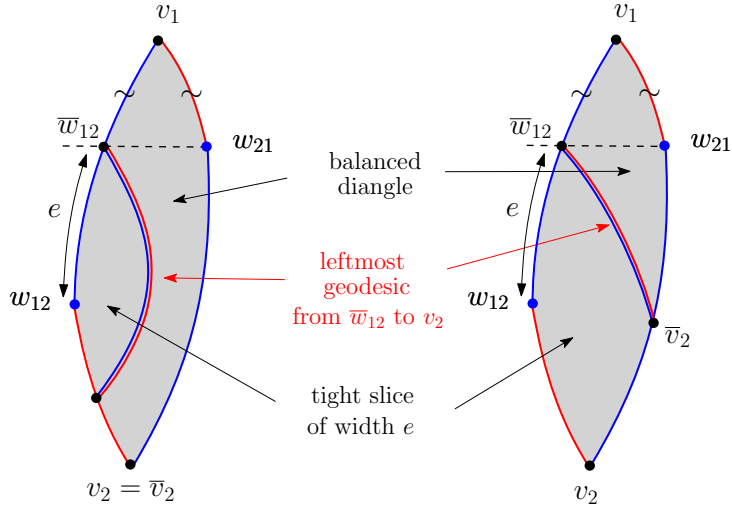

  \centering
  \fig{.65}{diangledecomp}
  \caption{Decomposition 
of a bigeodesic diangle of exceedance $e$ into a pair made of a balanced bigeodesic diangle
and a tight slice of width $e$. Different situations occur according to which boundary is first hit by the lefmost
geodesic from $\overline{w}_{12}$ to $v_2$.  
}
  \label{fig:diangledecomp}
\end{figure}
The property is obvious for $e=0$, so we may assume $e>0$.
Let $\overline{w}_{12}$ denote the vertex along $[c_1,c_2]$ which is at the same distance from $v_1$ as $w_{21}$. We have $d(w_{12},\overline{w}_{12})=e$.
Consider the leftmost bigeodesic launched from $\overline{w}_{12}$
towards $v_1$ and $v_2$: its part towards $v_1$ follows the boundary,
while its part towards $v_2$ may enter inside the diangle. We denote
by $\overline{v}_2$ the first vertex common to this part and
$[c_2,c_1]$. Then, as illustrated in Figure~\ref{fig:diangledecomp},
the bigeodesic splits the map into two pieces. One of them is
a balanced diangle with distinguished corners incident
to $v_1$, $\overline{w}_{12}$, $\overline{v}_2$ and $w_{21}$,
and the other is a tight slice of width $e>0$.
The decomposition is clearly a bijection and implies the wanted
expression, by Proposition~\ref{prop:proptightslice} (note that the
weight $t$ for $w_{12}$ must be transfered to $\overline{w}_{12}$ in the tight slice).
\end{proof}

When all face weights are set to zero, $R^e X$ is equal to $t^{e+1}$,
which accounts for the diangle made of a chain of $e+1$ vertices and $e$ edges, with
$v_1=w_{21}$ at one extremity and $v_2=w_{12}$ at the other.

It is interesting to note that a tight slice of (positive) width $e$ is nothing but a diangle of
exceedance $e$ for which $c_{12}=c_2$ (with the correspondance $c=c_1$, $c'=c_{12}=c_2$, $c''=c_{21}$).
Note however that the weighting conventions differ slightly (there is an extra weight $t$ for diangles).

We will not consider diangles with negative exceedance in this paper,
since their enumeration is more subtle.

\subsection{Bigeodesic triangles}\label{sec:bigeo-triangle}

\begin{figure}
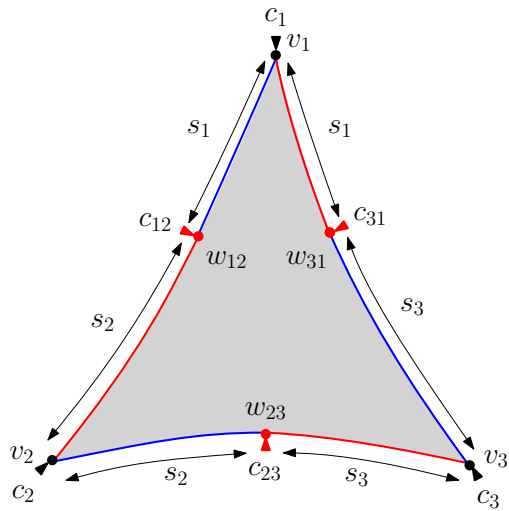

  \centering
  \fig{.45}{triangle}
  \caption{Schematic picture of a bigeodesic triangle. The vertices $w_{12}$, $w_{23}$ and $w_{31}$ are colored in red to indicate that
any geodesic from $v_1$ to $v_2$ (respectively from $v_2$ to $v_3$, from $v_3$ to $v_1$) must pass via $w_{12}$ (respectively $w_{23}$, $w_{31}$).
}
  \label{fig:triangle}
\end{figure}

Our third and final building block is what we call a \emph{bigeodesic
  triangle}, or \emph{triangle} for short, which is again a planar bipartite map with one boundary-face.  It
now has six distinguished incident corners $c_1$, $c_{12}$, $c_2$,
$c_{23}$, $c_3$, $c_{31}$ appearing counterclockwise around the map,
and having the following properties:
\begin{itemize}[label=$\scriptstyle{\circ}$]\setlength{\itemsep}{0pt}
\item the boundary intervals $[c_1,c_2]$, $[c_2,c_3]$ and $[c_3,c_1]$ are geodesic, with no common vertex except
at their endpoints $v_1$, $v_2$, and $v_3$ (incident to $c_1$, $c_2$ and $c_3$, respectively), 
\item the boundary intervals $[c_{12},c_2]$, $[c_{23},c_3]$ and $[c_{31},c_1]$ are strictly geodesic,
\item the boundary intervals $[c_1,c_{12}]$ and $[c_{31},c_1]$ (respectively $[c_2,c_{23}]$ and $[c_{12},c_2]$, $[c_3,c_{31}]$ and $[c_{23},c_3]$)
have the same length $s_1$ (respectively $s_2$, $s_3$).
\item any geodesic from $v_1$ to $v_2$ (respectively from $v_2$ to $v_3$, from $v_3$ to $v_1$) passes via $w_{12}$ (respectively $w_{23}$, $w_{31}$),
the vertex incident to $c_{12}$ (respectively $c_{23}$, $c_{31}$).
\end{itemize}
See Figure~\ref{fig:triangle} for an illustration. Note that, if two
corners among $c_1$, $c_{12}$, $c_2$, $c_{23}$, $c_3$, $c_{31}$ are
equal, then the above properties imply that the triangle is reduced to
the vertex-map.  Note also that the boundary intervals $[c_1,c_{12}]$,
$[c_2,c_{23}]$ and $[c_3,c_{31}]$ are necessarily geodesic and that
$d(v_1,v_2)=s_1+s_2$, $d(v_2,v_3)=s_2+s_3$ and $d(v_3,v_1)=s_3+s_1$.
As in the case of diangles, the interval 
$[c_1,c_2]$ is the leftmost bigeodesic launched from $w_{12}$ towards $v_1$ and $v_2$.
Note also that a geodesic from $v_1$ to $v_2$, which has to pass via $w_{12}$, necessarily sticks to $[c_{12},c_2]$
between $w_{12}$ and $v_2$ since $[c_{12},c_2]$ is strictly geodesic. Similar properties hold under cyclic permutations
of the indices $1,2,3$.
The corners $c_{12},c_{23},c_{31}$ (or the vertices $w_{12},w_{23},w_{31}$ depending on the context)
will be referred to as the \emph{attachment points} of the triangle. 

\medskip
We call $Y$ the generating function of bigeodesic triangles, where the boundary-face  and the vertices incident
to the strictly geodesic intervals $[c_{12},c_2]$, $[c_{23},c_3]$ and $[c_{31},c_1]$ other than $w_{12}$, $w_{23}$ and $w_{31}$ receive no weight.
The vertex-map contributes a term $t$ to $Y$.
\begin{figure}
  \centering
  \fig{.8}{firstY}
  \caption{The first terms in the expansion of $Y$ for quadrangulations ($g_{2k}=g\, \delta_{k,2}$). The unfilled vertices (circles) receive no weight.}
  \label{fig:firstY}
\end{figure}

As was the case for the generating function $X$ of diangles, we shall
not need any precise expression for $Y$, even though the results
of~\cite{threepoint} provide an explicit formula in the case of
quadrangulations ($g_{2k}=g\, \delta_{k,2}$), see again
equation~\eqref{eq:XYform} in Appendix~\ref{sec:wlm}. It yields the
expansion $Y=t+g^3 t^4 +21 g^4 t^5 +324 g^5 t^6+\ldots$, and the maps
corresponding to the first terms of this expansion are displayed in
Figure~\ref{fig:firstY}.

\section{Assembling the building blocks}
\label{sec:gluing}

We now explain how to assemble a map with three tight boundaries
from the basic building blocks introduced in the previous section. We
start from a quintuple consisting of the following pieces:
\begin{itemize}[label=$\scriptstyle{\circ}$]\setlength{\itemsep}{0pt}
\item{three bigeodesic diangles of nonnegative exceedances denoted by $e_1$, $e_2$ and $e_3$,}
\item{two bigeodesic triangles.}
\end{itemize}
Recall that the boundaries of bigeodesic diangles and triangles are
conventionally colored in red and blue: a boundary edge is colored red
if it belongs to a boundary interval that is constrained to be
strictly geodesic, and is colored blue otherwise (i.e.\ it belongs to
a geodesic boundary interval which is not necessarily strictly
geodesic). Generally speaking, the assembling procedure consists in
gluing the boundaries of the different pieces together, a red edge
being always glued to a blue edge. Some blue edges will possibly
remain unmatched and form the boundaries of the assembled map.

We shall discuss in fact two alternative assembling procedures,
hereafter numbered I and II, which are complementary in the sense that
each of them generates only a strict subset of the set of maps with
three tight boundaries but, taken together, they generate the full
set.

\begin{figure}
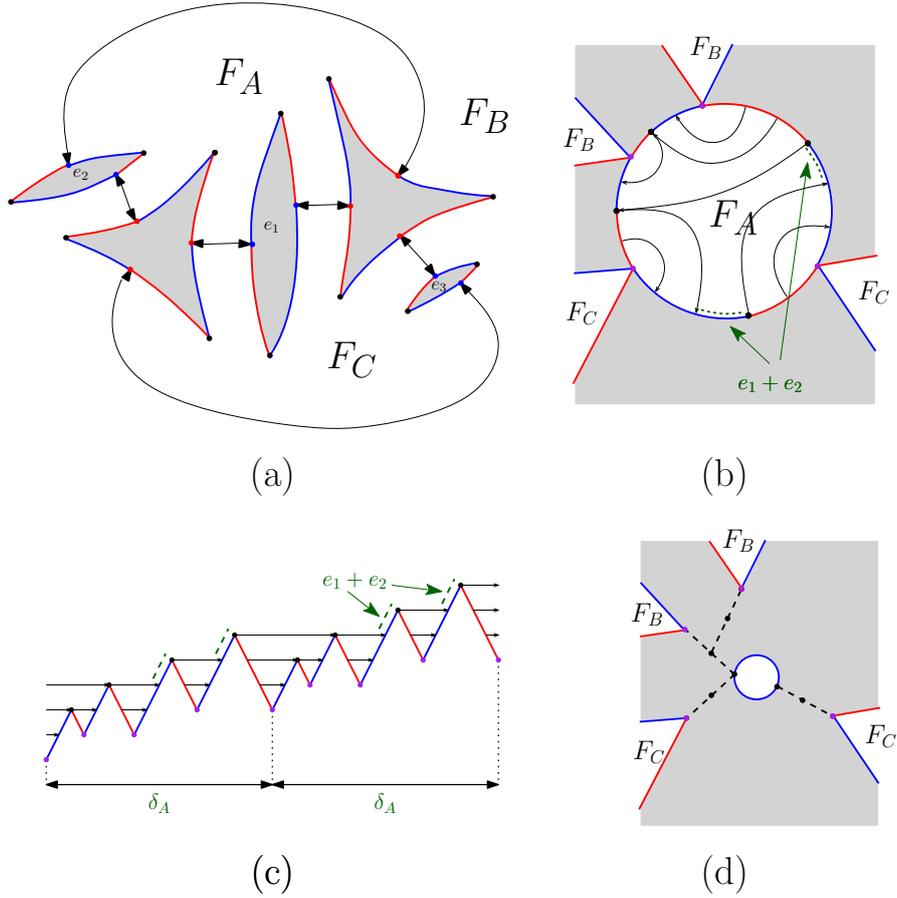

  \centering
  \fig{.8}{assemblingI}
  \caption{The assembling procedure of type I. (a) We first identify
    the attachment points of the two triangles and the three diangles
    as shown so as to create a planar map with three special faces
    $F_A$, $F_B$ and $F_C$. (b) The red-to-blue gluing around the
    special face $F_A$: the arrows indicate the resulting
    identification between vertices (the identified attachment points
    are shown in purple). Here, $e_1+e_2=2$ blue edges remain
    unmatched. (c) Alternate representation of the gluing process: the
    cyclic sequence of edges counterclockwise around $F_A$ may be
    coded by the quasi-periodic lattice path with a unit up
    (respectively down) step for each blue (respectively red) edge
    (the increments of the path are denoted $\epsilon_n$ in the
    text). Each down (red) step is then matched to the next up (blue)
    step on its right which returns to the same height. A number
    $e_1+e_2$ of blue edges remain unmatched for each period
    $\delta_A=\mathrm{deg}(F_A)$. (d) Result of the gluing of $F_A$:
    we obtain a boundary of length $e_1+e_2$ (the glued edges are
    represented by dashed lines). The red-to-blue gluing of $F_B$ and
    $F_C$ is performed similarly.}
  \label{fig:assemblingI}
\end{figure}

\subsection{Description of the assembling procedures I and II}
\label{sec:gluingdesc}

In a nutshell, the assembling procedures consist of two operations,
which we call \emph{attachment} and \emph{red-to-blue gluing}. We
start by describing these operations in detail in the case of
procedure I, referring again to Figure \ref{fig:assemblingsample} for an illustration, then turn to procedure II which only differs at the level
of the attachment operation.

\paragraph{Procedure I.}

Recall from Sections~\ref{sec:bigeo-diangles} and
\ref{sec:bigeo-triangle} that the attachment points of diangles and
triangles are the vertices from which their bigeodesic boundaries are
launched. The \emph{attachment} operation consists in identifying the
attachment points as shown on Figure~\ref{fig:assemblingI}(a). The
resulting object is a planar map which, in addition to the inner faces
of the initial triangles and diangles, has three extra \emph{special}
faces, hereafter denoted $F_A$, $F_B$ and $F_C$. Each special face is
incident to four attachment points (after identification) and its
contour is made of alternating blue and red intervals, four of each
color, with an excess of blue edges ($e_1+e_2$ for $F_A$, $e_2+e_3$
for $F_B$, $e_3+e_1$ for $F_C$).

The second operation is performed independently on each special face,
and consists in gluing all its incident red edges to blue edges so as
to form a face of smaller degree with only blue incident edges (thus
the term~\emph{red-to-blue gluing}). More precisely, consider a
special face, say $F_A$, and follow its contour keeping the face on
the left (i.e.\ we turn counterclockwise around $F_A$, when it is
represented as a bounded face in the plane): each red edge immediately
followed by a blue edge is glued to it, and we repeat the process
until no red edge is left. A convenient global description of this
operation can be given as follows: number the edges along the contour
of $F_A$ by integers, starting at an arbitrary position and in the
same direction as before. We set $\epsilon_n=1$ if the edge numbered
$n$ is blue, and $\epsilon_n=-1$ if it is red. The sequence
$(\epsilon_n)_n$ is naturally defined for all $n \in \Z$ by
periodicity, with a period $\delta_A$ equal to the degree of
$F_A$. Then, a red edge at position $k$ will be matched and glued to
the blue edge at position $\ell$, where $\ell$ is the smallest integer
larger than $k$ such that $\sum_{n=k}^\ell \epsilon_n=0$. Such $\ell$
necessarily exists since $\epsilon_k=-1$ and
$\sum_{n=k}^{k+\delta_A-1} \epsilon_n =e_1+e_2 \geq 0$.  See
Figure~\ref{fig:assemblingI} for an illustration. After the gluing
step, a number $e_1+e_2$ of blue edges remain unmatched. If
$e_1+e_2>0$, these edges form the contour of a boundary-face. If
$e_1+e_2=0$, we instead obtain a boundary-vertex, which corresponds to
the vertex preceding any edge $k$ such that
$\sum_{n=k}^\ell \epsilon_n \leq 0$ for all $\ell \geq k$.  Indeed,
such vertices corresponds to the maxima of the lattice path of
Figure~\ref{fig:assemblingI}(c)---which is periodic when
$e_1+e_2=0$---and all of them are identified by gluing.  All in all,
$F_A$ becomes a boundary of length $e_1+e_2$, and performing the same
operation on $F_B$ and $F_C$, these become boundaries of lengths
$e_2+e_3$ and $e_3+e_1$ respectively.

The assembling procedure is trivially adapted to the case where a
triangle is reduced to the vertex-map, by equipping its unique vertex
with three attachment points (dividing the surrounding corner in three
sectors). Similarly, when one of the exceedances is $0$ and the
corresponding (balanced) diangle is reduced to the vertex-map, we
equip its unique vertex with two attachment points. If $e_1=e_2=e_3=0$
and all the triangles/diangles are reduced to the vertex-map, the
object resulting from the assembling procedure is the vertex-map
itself. Besides this pathological case, we always obtain a map in
which the three boundaries are distinct elements of the map.

\paragraph{Procedure II.}

\begin{figure}
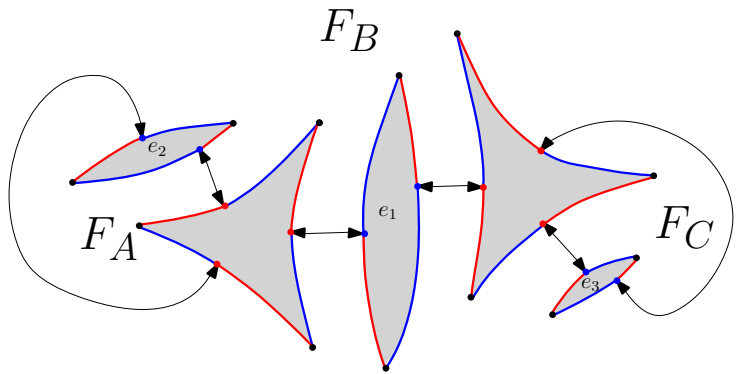

  \centering
  \fig{.65}{assemblingII}
  \caption{The first step of the assembling procedure of type II,
    creating a planar map with three special faces $F_A$, $F_B$ and $F_C$.} 
  \label{fig:assemblingII}
\end{figure}

It differs from the previous one only by the identification between
attachment points, following now the prescription of
Figure~\ref{fig:assemblingII}.  The resulting object is still a planar
map with three special faces: two of them ($F_A$ and $F_C$) are now
incident to two attachment points (after identification) and the third
one ($F_B$) to eight attachment points. They all have a boundary made
of alternating blue and red intervals.  We then repeat the red-to-blue
gluing operation described in the case I, creating three boundaries of
respective lengths $e_2$ (from $F_A$), $e_3$ (from $F_C$) and
$2e_1+e_2+e_3$ (from $F_B$).  Again, the assembling procedure is
trivially extended to the case where some of the triangles/diangles
are reduced to the vertex-map, and the three boundaries are distinct
as soon as at least one of the building blocks is non
trivial.

\subsection{Properties of the assembling procedures}
\label{sec:gluingprops}

Let us first
observe that, for both assembling procedures, the resulting map is
essentially bipartite: indeed its inner faces are those of the initial
triangles and diangles, with no modification of the degrees.  The
boundary lengths are given by:
\begin{equation}
  \label{eq:boundlengths}
  \text{procedure I: }
  \begin{cases}
    2a=e_1+e_2 \\
    2b=e_2+e_3 \\
    2c=e_3+e_1
  \end{cases}
  \qquad
  \text{procedure II: }
  \begin{cases}
    2a=e_2 \\
    2b=2e_1+e_2+e_3  \\
    2c=e_3
  \end{cases}
\end{equation}
Here, $a$, $b$ and $c$ may either be integers or half-integers. They
are all integers (i.e., the map is bipartite) if $e_1$, $e_2$ and
$e_3$ have the same parity in case I, or if $e_2$ and $e_3$ are even
in case II. Otherwise, two of them are half-integers and the third is
an integer, and the map is quasi-bipartite.

We also see that, since $e_1$, $e_2$ and $e_3$ are assumed
nonnegative, we have the ``triangle inequality'' $b\leq c+a$ in case
I, while we have $b \geq c+a$ in case II (in both cases, we have
$a\leq b+c$ and $c\leq a+b$). Upon permuting $a$, $b$ and $c$ in case
II, it is possible to obtain any possible triple of boundary lengths
in a bipartite or quasi-bipartite map with three boundaries.

This suggests to introduce the following definition: a map with three
boundaries is said \emph{of type I (respectively II)} if the largest
of the three boundary lengths is smaller than or equal to
(respectively larger than or equal to) the sum of the two other
boundary lengths.  Clearly a map with three boundaries is either of
type I or type II, and may be both in the equality case. We may now
state the main bijective result of this paper, illustrated by Figure
\ref{fig:assemblingsample} in the case of procedure I. 

\begin{thm}
  \label{thm:main}
  For $e_1$, $e_2$ and $e_3$ fixed nonnegative integers, the
  assembling procedure I (respectively II) is a bijection between the
  set of quintuples made of two bigeodesic triangles and three
  bigeodesic diangles of nonnegative exceedances $e_1$, $e_2$ and
  $e_3$, where at least one element differs from the vertex-map, and
  the set of essentially bipartite planar maps with three tight
  boundaries of type I (respectively II), where the lengths of the
  boundaries are given by~\eqref{eq:boundlengths}.
\end{thm}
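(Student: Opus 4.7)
The plan is to split the theorem into two parts: the assembling procedure is well-defined on its domain, producing essentially bipartite maps with three tight boundaries of the specified type and lengths; and it admits a canonical geometric inverse built on the universal cover. For the forward direction, the essentially bipartite property and the boundary lengths~\eqref{eq:boundlengths} are both immediate from the construction: the assembly never alters inner faces of the building blocks, and counting unmatched blue edges around each special face via the lattice-path description of Figure~\ref{fig:assemblingI}(c) yields the stated lengths. The genuinely nontrivial property to verify is tightness of the three boundaries. My argument here would exploit the fact that every red arc comes from a strictly geodesic interval of a building block: a closed curve freely homotopic to a given boundary must, up to homotopy, cross a full set of level lines of the lattice path, and the strict geodesic property of the red intervals forces its length to be at least the number of unmatched blue edges on that boundary. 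Distinctness of the three boundaries outside of the fully degenerate case follows from direct inspection of the assembling procedure.

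For the inverse direction, which constitutes the bulk of the work, I would start from an essentially bipartite planar map $M$ with three tight boundaries of the correct type and boundary lengths, and lift it to its universal cover $\widetilde M$ as a periodic infinite planar map, following Section~\ref{sec:universal-cover-maps}. Each boundary-face of $M$ lifts to a periodic family of holes. For each lifted family I would introduce, as in Section~\ref{sec:decface}, an infinite leftmost geodesic following the boundary and its associated Busemann function on $\widetilde M$. These three Busemann functions together canonically locate attachment-point candidates and determine canonical infinite leftmost bigeodesics on $\widetilde M$, which in turn cut the cover into a periodic tiling by fundamental pieces. Extracting one fundamental domain of this tiling yields a quintuple consisting of two bigeodesic triangles and three bigeodesic diangles with nonnegative exceedances; whether $M$ has type I or type II dictates whether the adjacency pattern of the fundamental tile is that of Figure~\ref{fig:assemblingI}(a) or that of Figure~\ref{fig:assemblingII}, and the exceedances recovered satisfy~\eqref{eq:boundlengths} by a distance count along the Busemann level sets.

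To complete the bijection, it remains to verify that the two round trips coincide with the identity. The ``decompose then assemble'' direction is essentially built into the definition of the decomposition, since the canonical cuts of $\widetilde M$ are designed precisely so that regluing returns $M$. The ``assemble then decompose'' direction uses that if $M$ is obtained from a quintuple $Q$, then $\widetilde M$ inherits the geodesic structure of the building blocks of $Q$, so the Busemann functions and leftmost bigeodesics associated with the three lifted boundaries coincide with those arising directly from the $Q$-structure, and the decomposition returns $Q$. The hardest step will be proving that the Busemann-based decomposition really produces bigeodesic triangles and diangles meeting the strict constraints of Section~\ref{sec:basic}—in particular that the strict geodesic property of the red intervals is preserved under projection from $\widetilde M$, that the three infinite leftmost bigeodesics on $\widetilde M$ meet in the precise combinatorial pattern prescribed by the type of $M$, and that their meeting points determine nonnegative exceedances compatible with~\eqref{eq:boundlengths}. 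This relies crucially on the tightness of the boundaries of $M$, which guarantees that the relevant infinite geodesics exist in the correct homotopy class on the cover, and on the planarity of $\widetilde M$ to obtain uniqueness of the leftmost bigeodesics selected by the Busemann construction.
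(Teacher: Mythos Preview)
Your overall architecture matches the paper's: prove that assembling yields tight boundaries, then build an inverse on the universal cover via Busemann functions and leftmost bigeodesics, and check both round trips. The inverse direction you sketch---lifting to $\tilde{M}$, using Busemann functions of the lifted boundaries to locate equilibrium vertices, cutting along leftmost bigeodesics, and verifying via the type of $M$ whether the resulting pattern is that of Figure~\ref{fig:assemblingI}(a) or Figure~\ref{fig:assemblingII}---is essentially what the paper does in Section~\ref{sec:decface}, including the diangle and triangle lemmas you allude to.

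The genuine gap is in your tightness argument. You propose to work directly on $M$: a cycle homotopic to a boundary ``must cross a full set of level lines of the lattice path,'' and the strict geodesic property of the red intervals should force the length bound. But this does not work as stated. A cycle homotopic to $\partial_A$ is not constrained to stay near the special face $F_A$; it can wander through the interiors of the diangles and triangles in arbitrarily complicated ways, and the lattice-path height function of Figure~\ref{fig:assemblingI}(c) is only defined along the contour of $F_A$, not globally on $M$. More concretely, the paper notes (Remark~\ref{rem:GGnotgeod}) that the projection to $M$ of the relevant geodesic path $\GG$ is \emph{not} a geodesic in $M$ in general---so any argument phrased purely in terms of geodesics and level sets on $M$ will miss the point.

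The paper's fix is to prove tightness \emph{also} on the universal cover (Section~\ref{tightsec}), not just the inverse. One first visualizes the assembling procedure directly on $\tilde{S}$ by placing a copy $\MM_\wl$ of the partially assembled map in each square $S_\wl$ (Section~\ref{sec:univcov_assembling}), then constructs a specific biinfinite path $\GG$ along blue boundaries and proves it is a geodesic in $\tilde{M}$ by an induction on the number of copies a competing path enters (Lemma~\ref{lem:minpathnew}). Tightness of $\partial_A$ then follows by lifting any homotopic cycle to a path from some $\tilde{z}\in\GG$ to $A\tilde{z}$ (Lemma~\ref{lem:tightn-bound-result-new}) and applying the triangle inequality together with $\td(\tilde{x},A\tilde{x})=\Delta_\al$. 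Your level-line idea is morally related to this---the induction in Lemma~\ref{lem:minpathnew} is exactly a ``push the path to the boundary of the outermost copy'' argument---but it only becomes rigorous once lifted to $\tilde{M}$, where the copies $\MM_\wl$ give a global coordinate replacing your local lattice-path height.
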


\begin{figure}
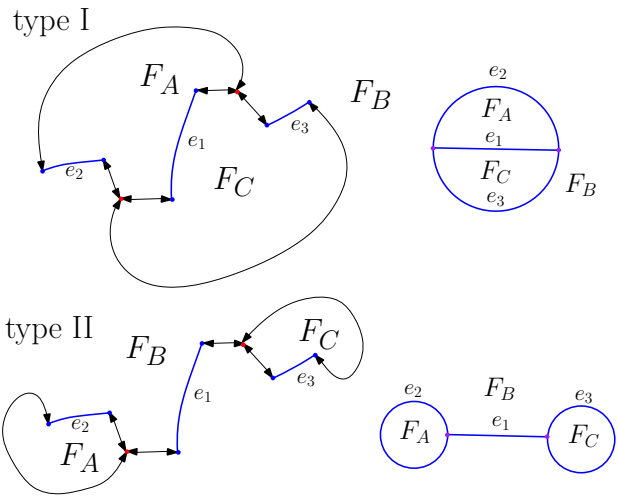

  \centering
  \fig{.55}{assembling_noface}
  \caption{When the triangles and diangles have no inner faces, the
    assembling procedures I and II generate maps corresponding to the
    generic cases I and II of Figure~\ref{fig:gzero}.}
  \label{fig:assembling_noface}
\end{figure}

It is instructive to examine the case where the triangles and diangles
have no inner faces: the triangles are then reduced to the vertex-map,
while the diangles are segments of lengths $e_1$, $e_2$ and $e_3$ (see
Figure~\ref{fig:assembling_noface}).
When these lengths are all positive, we recover precisely from the
assembling procedures I and II the two generic maps I and II displayed
in Figure~\ref{fig:gzero}. The degenerate cases correspond to having
one or two lengths vanish, and the case $e_1=e_2=e_3=0$ (all diangles
reduced to the vertex-map) is pathological when there are no inner faces.

To establish Theorem~\ref{thm:main}, two statements remain to be
proved. First, we need to show that the boundaries of the maps
resulting from the assembling procedures are indeed tight: this will
be done in Section~\ref{sec:universal-cover-maps}, see
Proposition~\ref{prop:tightness}. Second, we must check that both
procedures are bijections: for this, we will exhibit the inverse
bijections in Section~\ref{sec:decface}, see
Propositions~\ref{prop:typeIinv} and \ref{prop:typeIIinv}. Both proofs
are most conveniently performed by considering the universal cover of
a map with three boundaries, which we introduce in the next
section. Before this, let us explain why Theorem~\ref{thm:main}
implies Theorem~\ref{thm:Tabc}, and also sketch the inverse of
procedure I in the simpler case of maps with three boundary-vertices
($e_1=e_2=e_3=0$).

\subsection{Enumerative consequences}
\label{sec:gluingenum}

Recall that $T_{a,b,c}$ denotes the generating function of essentially
bipartite planar maps with three tight boundaries of lengths $2a$,
$2b$, $2c$, with the weighting convention of
Theorem~\ref{thm:Tabc}. As discussed in Section~\ref{sec:intro}, this
theorem is implied by the following corollary of
Theorem~\ref{thm:main}.

\begin{cor}
  \label{cor:main}
  We have
  \begin{equation}
    \label{eq:corenum}
    T_{a,b,c} =R^{a+b+c} \frac{X^3 Y^2}{t^6} - t^{-1} \mathbf{1}_{a=b=c=0}
  \end{equation}
  where $X$ and $Y$ are the generating functions of bigeodesic
  balanced diangles and triangles, respectively, as defined in
  Section~\ref{sec:basic}.
\end{cor}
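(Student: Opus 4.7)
The plan is to derive the corollary directly from Theorem~\ref{thm:main}. Fix $(a,b,c)$ with $a+b+c \in \Z$. An essentially bipartite planar map with three tight boundaries of lengths $2a,2b,2c$ is either of type~I (triangle inequality holds) or of type~II (some boundary exceeds the sum of the other two), possibly both in the equality case. Applying procedure~I when the map is of type~I and procedure~II otherwise, Theorem~\ref{thm:main} gives a bijection between such maps (excepting the trivial vertex-map when $a=b=c=0$) and quintuples of two bigeodesic triangles and three bigeodesic diangles of nonnegative exceedances $(e_1,e_2,e_3)$ determined by~\eqref{eq:boundlengths}. Crucially, in both procedures we have $e_1+e_2+e_3=a+b+c$. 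By Proposition~\ref{prop:diangleenum} and the definition of $Y$, each diangle contributes $R^{e_i}X$ and each triangle contributes $Y$, so the quintuple generating function is $R^{e_1+e_2+e_3}X^3Y^2 = R^{a+b+c}X^3Y^2$. When both procedures apply (the equality case), this value is the correct count and no double-counting issue arises since each map is counted once by selecting a single procedure.

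Next, I would reconcile the building-block weighting with the weighting of the assembled map. In a bigeodesic diangle (resp.\ triangle), only the attachment points $w_{12},w_{21}$ (resp.\ $w_{12},w_{23},w_{31}$) receive weight $t$ among the boundary vertices incident to the strictly geodesic intervals, while the corners $v_1,v_2$ (resp.\ $v_1,v_2,v_3$) and the interior vertices of these intervals receive no weight. In the assembling procedure, the red-to-blue gluing pairs an unweighted red boundary vertex with a weighted blue boundary vertex, yielding exactly the weight $t$ required in the assembled map for the resulting non-boundary-vertex. The attachment step, however, identifies each weighted $w_{ij}$ of a diangle with some weighted $w_{ij}$ of a triangle: the product of block weights thus assigns weight $t^2$ to the identified vertex where the assembled map requires only $t$. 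Inspecting procedures~I and~II (Figures~\ref{fig:assemblingI} and~\ref{fig:assemblingII}), one counts exactly six such weighted-weighted identifications in each procedure, producing a global overcounting by $t^6$. Dividing out accordingly yields $T_{a,b,c}=R^{a+b+c}X^3Y^2/t^6$ for all $(a,b,c)\neq(0,0,0)$.

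Finally, the case $(a,b,c)=(0,0,0)$ requires a subtraction: the single quintuple consisting of five copies of the vertex-map is explicitly excluded from Theorem~\ref{thm:main}, since its assembly yields the trivial vertex-map, which does not qualify as a map with three distinct boundary-vertices. This quintuple contributes $t^{3}\cdot t^{2}/t^{6}=t^{-1}$ to $X^3Y^2/t^6$ and must be removed, producing the term $-t^{-1}\mathbf{1}_{a=b=c=0}$ in~\eqref{eq:corenum}. The main technical point in this plan is the careful verification of the $t^{-6}$ weight correction: beyond the six weighted-weighted identifications, one must verify that neither the unweighted-unweighted identifications of the $v_i$'s nor the red-to-blue gluings (including the degenerate cases where some $e_i=0$ and a special face collapses to a boundary-vertex) introduce further corrections. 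This relies on the precise choice of weighting conventions for $X$ and $Y$, which have been tailored so that the compatibility holds automatically.
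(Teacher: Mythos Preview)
Your proposal is correct and follows essentially the same approach as the paper's own proof: apply Theorem~\ref{thm:main}, use Proposition~\ref{prop:diangleenum} to get the quintuple generating function $R^{e_1+e_2+e_3}X^3Y^2$, observe that $e_1+e_2+e_3=a+b+c$ in both procedures, and account for the $t^6$ overcount arising from the six pairwise identifications of the twelve weighted attachment points. Your discussion of the weight reconciliation (unweighted red vertices glued to weighted blue vertices, boundary-vertices arising from unweighted vertices) and of the excluded all-vertex-map quintuple matches the paper's argument closely, with perhaps slightly more explicit attention to the degenerate cases.
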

 
\begin{proof}
  By Proposition~\ref{prop:diangleenum}, the generating function of
  quintuples made of two bigeodesic triangles and three bigeodesic
  diangles of nonnegative exceedances $e_1$, $e_2$ and $e_3$, is equal
  to $R^{e_1+e_2+e_3} X^3 Y^2$. We exclude the quintuple with all
  elements reduced to the vertex-map by subtracting a term
  $t^5 \mathbf{1}_{e_1=e_2=e_3=0}$.

  We then apply Theorem~\ref{thm:main} and note that, by
  \eqref{eq:boundlengths}, we have $e_1+e_2+e_3=a+b+c$ and
  $\mathbf{1}_{e_1=e_2=e_3=0}=\mathbf{1}_{a=b=c=0}$ both in procedures
  I and II. The claim then follows from the fact that the weight of a
  quintuple, defined according to the conventions of
  Section~\ref{sec:basic}, is equal to $t^6$ times that of the
  corresponding map, defined as in Theorem~\ref{thm:Tabc}. This is
  clear for face weights since the inner faces are not modified by the
  bijection. For vertex weights, the corrective factor $t^6$ comes
  from the identifications between attachment points, see again
  Figures~\ref{fig:assemblingI}(a) and \ref{fig:assemblingII}: there
  are generically twelve attachment points in a quintuple, and they
  are identified in pairs so lead to six vertices in the assembled
  map.  We still obtain a difference of six in the situations where
  some diangles or triangles are reduced to the vertex-map. Note that
  the gluing between blue and red edges does not require corrective
  factors, since by convention the vertices which are incident to red
  edges and which are not attachment points receive no weight. In
  particular, potential boundary-vertices are obtained from such
  unweighted vertices, as wanted.
\end{proof}

\subsection{Disassembling a triply pointed map}
\label{sec:gluinginvtp}

We now sketch the proof
of Theorem~\ref{thm:main} in the case of procedure I with
$e_1=e_2=e_3=0$, which generates a triply pointed map (three
boundary-vertices).  Boundary-vertices are always considered tight, so
we only have to exhibit the inverse bijection. The key point is to
identify the attachment points: once we know them, the decomposition
into diangles and triangles is done by cutting along leftmost
bigeodesics.

\begin{figure}
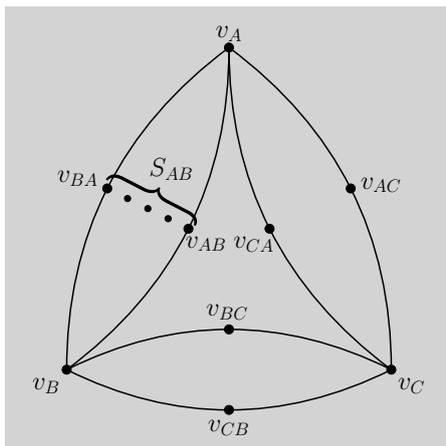

  \centering
  \fig{.4}{threepointdecomp}
  \caption{Sketch of the decomposition of a triply pointed map, with
    marked vertices $v_A$, $v_B$ and $v_C$. The set $S_{AB}$ consists
    of geodesic vertices between $v_A$ and $v_B$ at distance $r_A$
    from $v_A$. We pick its extremal elements $v_{AB}$ and $v_{BA}$,
    from which the two leftmost bigeodesics launched towards $v_A$ and
    $v_B$ delimit a maximal diangle containing all other elements of
    $S_{AB}$, but not $v_C$. Similarly, we construct two other
    disjoint diangles, formed by lefmost bigeodesics launched from
    vertices $v_{BC}$ and $v_{CB}$, $v_{CA}$ and $v_{AC}$
    respectively. The complementary region consists of two geodesics
    triangles (one inside and one outside).}
  \label{fig:threepointdecomp}
\end{figure}

Consider a planar bipartite map with three marked distinct vertices
$v_A$, $v_B$ and $v_C$ (as there are no boundary-faces, being
essentially bipartite is the same as being bipartite). Let us denote
by $d_{AB}$, $d_{BC}$ and $d_{CA}$ their mutual distances. By the
triangle inequalities and bipartiteness, there exists three
nonnegative integers $r_A$, $r_B$ and $r_C$, at most one of them
vanishing, such that
\begin{equation}
  \label{eq:rdtp}
  d_{AB} = r_A + r_B, \quad d_{BC}= r_B+r_C, \quad d_{CA}=r_C+r_A.
\end{equation}
We then consider the set $S_{AB}$ of geodesic vertices between $v_A$
and $v_B$ which are at distance $r_A$ from $v_A$ (hence distance $r_B$
from $v_B$). In the generic situation where $r_A,r_B,r_C$ are all
nonzero, we may single out canonically two ``extremal'' elements
$v_{AB}$ and $v_{BA}$ of $S_{AB}$ as follows. For $v$ and $v'$ in
$S_{AB}$, consider the two leftmost bigeodesics towards $v_A$ and
$v_B$ which are launched from $v$ and $v'$: these bigeodesics delimit
two regions, which turn out to be balanced bigeodesic diangles. Then,
$v_{AB}$ and $v_{BA}$ are chosen in such a way that the diangle not
containing $v_C$ is the largest possible (and contains in particular
all other elements of $S_{AB}$), and such that $v_A$, $v_{AB}$, $v_B$
and $v_{BA}$ appears in clockwise order around it. See
Figure~\ref{fig:threepointdecomp} for an illustration. We similarly
define the vertices $v_{BC}$, $v_{CB}$, $v_{CA}$ and $v_{AC}$. In this
way, we obtain three balanced bigeodesic diangles (the specific choice
of $r_A,r_B,r_C$ ensures that these are disjoint), and the
complementary region forms two bigeodesic triangles, thereby giving
the quintuple we are looking for. Note that some diangles or triangles
may be reduced to the vertex-map, for instance it may happen that
$v_{AB}=v_{BA}$ (resp.\ $v_{AB}=v_{BC}=v_{CA}$), in which case the
corresponding diangle (resp.\ triangle) is equal to the vertex-map.

In the situation where, say, $r_C$ vanishes, $v_C$ is actually an
element of $S_{AB}$: we simply cut the map along the leftmost
bigeodesic launched from $v_C$ towards $v_A$ and $v_B$, which
transforms the map into a single balanced bigeodesic diangle. This may
be seen as a degeneration of the generic situation, upon taking
$v_C=v_{AB}=v_{BA}=v_{BC}=v_{CB}=v_{CA}=v_{AC}$, all other elements of
the quintuple being equal to the vertex-map.

This informal discussion overlooks some important details, such as the
fact that, in general, the leftmost bigeodesics merge before reaching
$v_A$, $v_B$ or $v_C$. We shall be more precise below when discussing
the general inverse bijection, which applies to both types of
boundaries (vertices or faces). Still, the core idea will be the same:
upon defining in a suitable way the distances $d_{AB},d_{BC},d_{CA}$
between the three boundaries, we will define some parameters
$r_A,r_B,r_C$ via the \emph{equilibrium conditions}~\eqref{eq:rdtp},
and we will then construct some \emph{equilibrium vertices}, analogous
to the vertices $v_{AB},v_{BA},v_{BC},v_{CB},v_{CA},v_{AC}$ defined
above. Once these equilibrium vertices are constructed, the
decomposition is done by cutting along leftmost bigeodesics. The main
change induced by the presence of boundary-faces is that all this
construction must be performed on the universal cover of the map,
which we will define now.

\section{The universal cover of a map with three boundaries}\label{sec:universal-cover-maps}

In this section we introduce the universal cover of a map with three
boundaries, which will be one of our main topological tools. Its
construction and some of its properties are discussed in
Section~\ref{univcovsec}. We explain in
Section~\ref{sec:univcov_assembling} how to visualize the assembling
procedures directly on the universal cover, before proving in
Section~\ref{tightsec} that the maps resulting from the assembling
procedures have tight boundaries.

\subsection{Construction and properties of the universal cover}\label{univcovsec}

Let $M$ be a map with three boundaries
$\partial_A,\partial_B,\partial_C$, that are either boundary-faces or
boundary-vertices. We view $M$ as a graph embedded in a
topological sphere $S'$, and we let $x_A,x_B,x_C$ be three
distinguished points of $S'$, such that $x_i$ belongs to the $i$-th
boundary component, that is $x_i=\partial_i$ in the case of a
boundary-vertex, or $x_i\in \partial_i$ in the case of a
boundary-face. We let $S$ be the triply punctured sphere
$S'\setminus \{x_A,x_B,x_C\}$.

\paragraph{The universal cover of the triply punctured sphere.}

The universal cover of the surface $S$ is the data of a simply
connected topological space $\tilde{S}$ and of a mapping
$p:\tilde{S}\to S$ such that for every $x\in S$, there is a
neighborhood $U$ of $x$ in $S$ such that $p^{-1}(U)$ is a disjoint
union of open sets $U_i$, with $p|_{U_i}:U_i\to U$ a
homeomorphism. The pair $(\tilde{S},p)$ is unique up to the natural
notion of isomorphism.

\begin{figure}[t]
\center
\includegraphics[scale=.8]{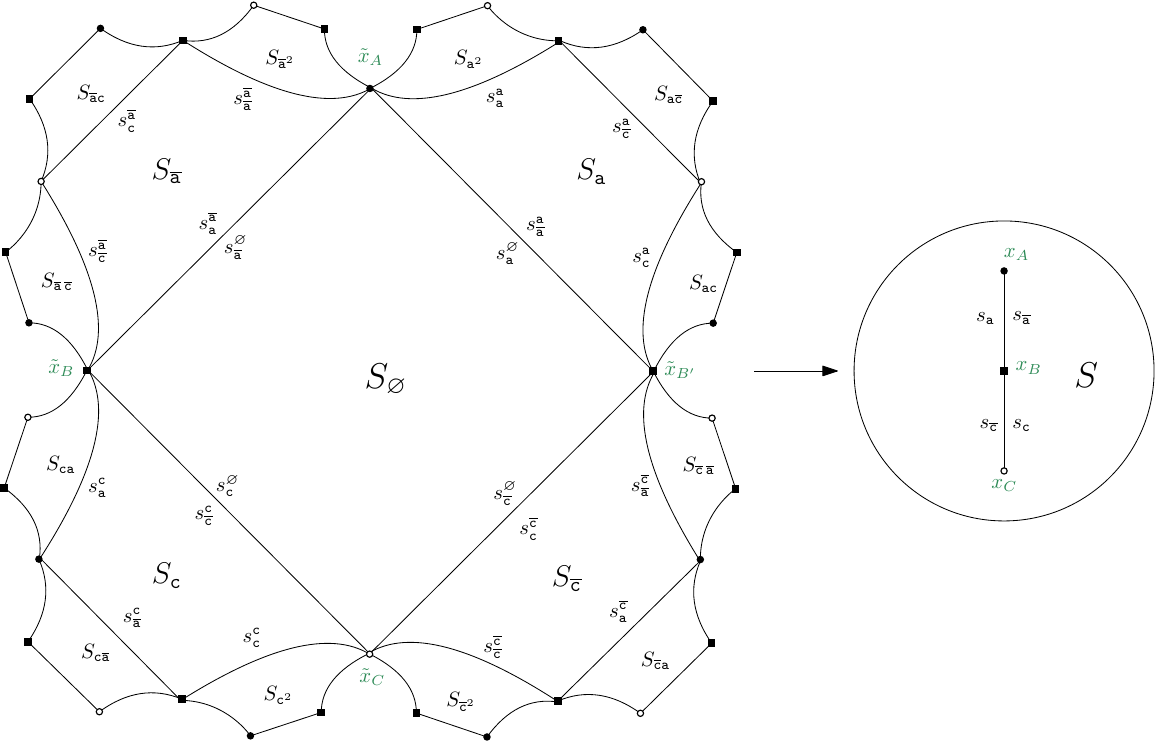}
\caption{A part of the universal cover of the triply punctured
  sphere. The ideal corners
  $\tilde{x}_A,\tilde{x}_B,\tilde{x}_C,\tilde{x}_{B'}$ correspond
  respectively to the punctures $x_A,x_B,x_C$ and $x_B$ again.}
\label{fig:Univcovcarre}
\end{figure}

It will be useful to consider the following concrete construction. Let
$S_0$ denote the unit square with its four corners removed, and with
its four sides denoted by
$s_{\al},s_{\overline{\al}}, s_\cl,s_{\overline{\cl}}$ in
counterclockwise direction. Note that the space obtained by gluing
$S_0$ along its sides by identifying $s_\al$ with $s_{\overline{\al}}$
and $s_\cl$ with $s_{\overline{\cl}}$ (with a head to tail matching of
their orientations so that the resulting surface is orientable) is
homeomorphic to the triply punctured sphere $S$, and we let
$p:S_0\to S$ be the resulting projection.

Let $F=\langle \al,\cl\rangle$ be the free group with two generators
$\al,\cl$, that is the set of reduced finite words $\wl$ made of the
four letters $\al,\overline{\al},\cl,\overline{\cl}$, where
$\overline{\al}=\al^{-1},\overline{\cl}=\cl^{-1}$ are the inverses of
$\al,\cl$ (we thus let
$\overline{\overline{\al}}=\al,\overline{\overline{\cl}}=\cl$). Here,
we say that a word is reduced if it does not contain an occurence of
any letter followed immediately by its inverse. The group operation is
defined by letting $\vl \wl$ be the reduced word obtained from the
concatenation of $\vl$ and $\wl$. The empty word $\varnothing$ is the
neutral element of $F$. We identify $F$ with its Cayley graph with
generators $\{\al,\overline{\al},\cl,\overline{\cl}\}$, which is the
infinite $4$-regular tree with root $\varnothing$, and we let $|\wl|$
be the length of the word $\wl$, which is also its distance in the
tree to the root.

For every $\wl\in F$, we let $S_\wl$ be a copy of $S_0$, that we can
view as $\{(x,\wl):x\in S_0\}$, with its four sides denoted by
$s^\wl_\al,s^\wl_{\overline{\al}},s^\wl_\cl,s^\wl_{\overline{\cl}}$,
again oriented counterclockwise around $S_\wl$. We consider the space
$\tilde{S}$ obtained by gluing the spaces $S_\wl$ along their sides,
in such a way that $s^\wl_{\mathtt{l}}$ is glued with
$s^{\wl\mathtt{l}}_{\ov{\mathtt{l}}}$ for every letter
$\mathtt{l}\in \{\al,\overline{\al},\cl,\overline{\cl}\}$ (with a head
to tail matching of their orientations). See Figure
\ref{fig:Univcovcarre}. We then extend the projection $p$ to a mapping
$p:\tilde{S}\to S$ by letting $p(x,\wl)=p(x)$ for every $x\in S_0$ and
$\wl\in F$. This is easily seen to be the universal cover of $S$.

The universal cover comes with its group of automorphisms
$\mathrm{Aut}(p)$, that are the homeomorphisms
$u:\tilde{S}\to \tilde{S}$ such that $p\circ u=p$. This group is
isomorphic to the free group $F$ via the natural action of $F$ on
$\tilde{S}$ defined by $\wl\cdot (x,\vl)=(x,\wl\vl)$.  Note that the
element of $\mathrm{Aut}(p)$ corresponding to $\wl$ sends $S_\vl$ to
$S_{\wl \vl}$, which may be arbitrarily far apart. We denote by $A$
and $C$ the elements of $\mathrm{Aut}(p)$ corresponding to the action
of $\al$ and $\cl$ respectively. It will be also convenient to
introduce the automorphisms $B:=A^{-1} C^{-1}$ and $B':=C^{-1}A^{-1}$
corresponding the respective actions of the elements
$\bl:=\overline{\al}\, \overline{\cl}$ and
$\bl':=\overline{\cl}\, \overline{\al}$ of $F$.

Classically, $\tilde{S}$ is a topological space homeomorphic to the
open unit disk. In fact, the universal cover of the triply punctured
sphere can also be constructed via hyperbolic geometry, see
e.g.~\cite[Section~5.3]{Stillwell2012}. Figure~\ref{fig:UnivcovPoincare}
(left) displays the connection with our present construction: the
gluing of the squares $(S_\wl)_{\wl \in F}$ can be realized as a
regular tiling in the hyperbolic plane made of ideal quadrangles.

For our purposes, it will be convenient to augment $\tilde{S}$ by
adding back the corners of the squares $S_\wl$, and we denote the
resulting space by $\tilde{S}'$.  Note that, after gluing, a corner is
common to infinitely many squares: for instance, the corner denoted
$\tilde{x}_A$ on Figure~\ref{fig:Univcovcarre} is common to all the
squares $S_{\al^n}$ for $n \in \Z$, similarly $\tilde{x}_C$ is common
to all the $S_{\cl^n}$, while $\tilde{x}_B$ (resp.\ $\tilde{x}_{B'}$)
is common to all the $S_{\bl^n}$ and $S_{\bl^n \cl}$ (resp.\
$S_{(\bl')^n}$ and $S_{(\bl')^n \al}$). The corners form a subset of
the `ideal boundary' of $\tilde{S}$ (which corresponds to the boundary
of the disk in Figure~\ref{fig:UnivcovPoincare}), and we therefore
call them \emph{ideal corners}. The projection $p: \tilde{S} \to S$
extends to a continuous mapping\footnote{Note however that, if we
  identify $\tilde{S}$ with the open unit disk as in
  Figure~\ref{fig:UnivcovPoincare}, then $p$ \emph{does not extend} to
  a continuous function on the closed unit disk. It only admits
  non-tangential limits at the ideal corners, which form a dense
  countable subset of the unit circle. This is consistent with the
  topology of $\tilde{S}'$ resulting from the gluing of squares: in
  the hyperbolic plane picture, given an ideal corner $\tilde{x}$, a
  neighborhood basis for $\tilde{x}$ consists of the interiors of
  horocycles of center $\tilde{x}$.}  from $\tilde{S}'$ to the
sphere $S'$, and the group of automorphisms $\mathrm{Aut}(p)$ acts
naturally on $\tilde{S}'$, each ideal corner being left invariant by
an infinite cyclic subgroup.

Finally, we recall that for any continuous path $\gamma:[0,1]\to S$
and any $\tilde{\gamma}(0)\in p^{-1}(\gamma(0))$, there is a unique
continuous path $\tilde{\gamma}:[0,1]\to \tilde{S}$ starting at
$\tilde{\gamma}(0)$ and such that $p\circ\tilde{\gamma}=\gamma$. This
path is called the \emph{lift} of $\gamma$ starting at
$\tilde{\gamma}(0)$. The lift remains well-defined if the endpoint
$\gamma(1)$ is one of the punctures $x_A,x_B,x_C$, in that case
$\tilde{\gamma}(1)$ is an ideal corner. Lifting a path joining two
punctures may be done by splitting the path at an intermediate point,
and choosing a preimage for that point.

\paragraph{Lifting the map $M$ on the universal cover.}

\begin{figure}[t]
\center
\includegraphics[scale=.8]{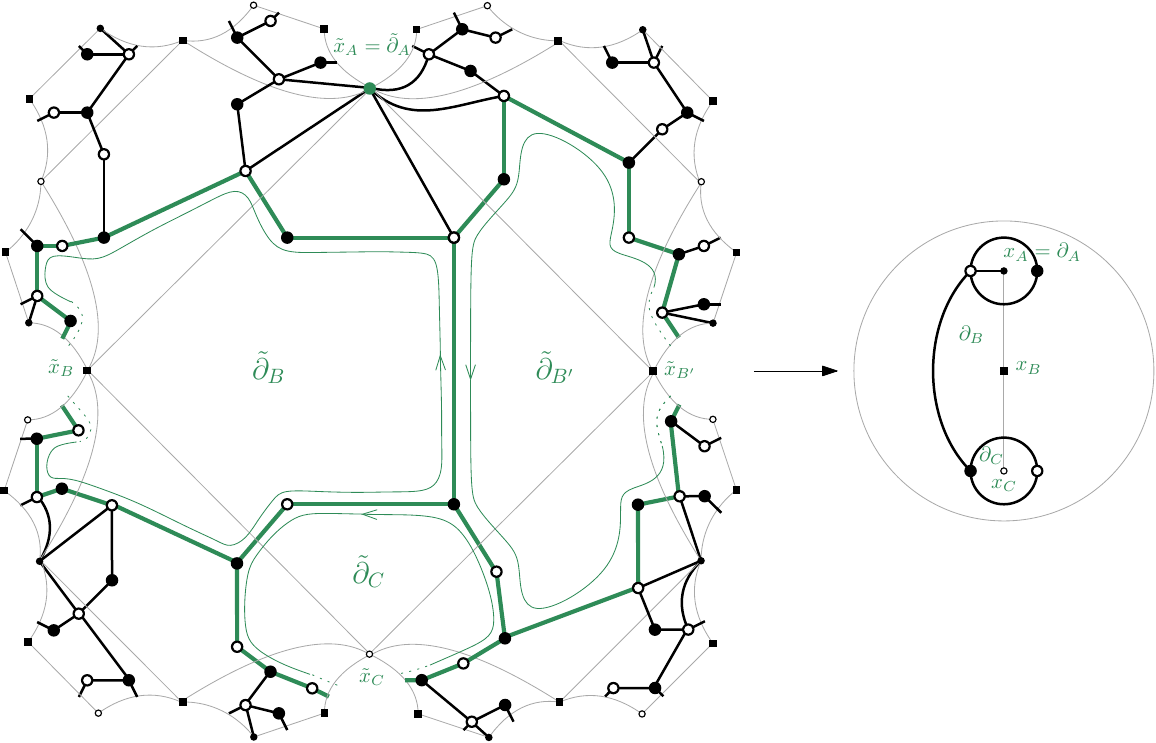}
\caption{A part of the universal cover of a dumbbell-shaped map,
  having two boundary-faces $\partial_B$ and $\partial_C$ and one
  boundary-vertex $\partial_A=x_A$. We display in green the
  distinguished ideal vertex $\tdel_A=\tilde{x}_A$, and the contours
  (oriented counterclockwise by convention) of the distinguished ideal
  faces $\tdel_B$, $\tdel_C$ and $\tdel_{B'}$.  The embedding is good,
  according to the definition given in Section~\ref{sec:goodfund}.}
\label{fig:Univcovcarremap}
\end{figure}

\begin{figure}
  \centering
  \includegraphics[scale=.5]{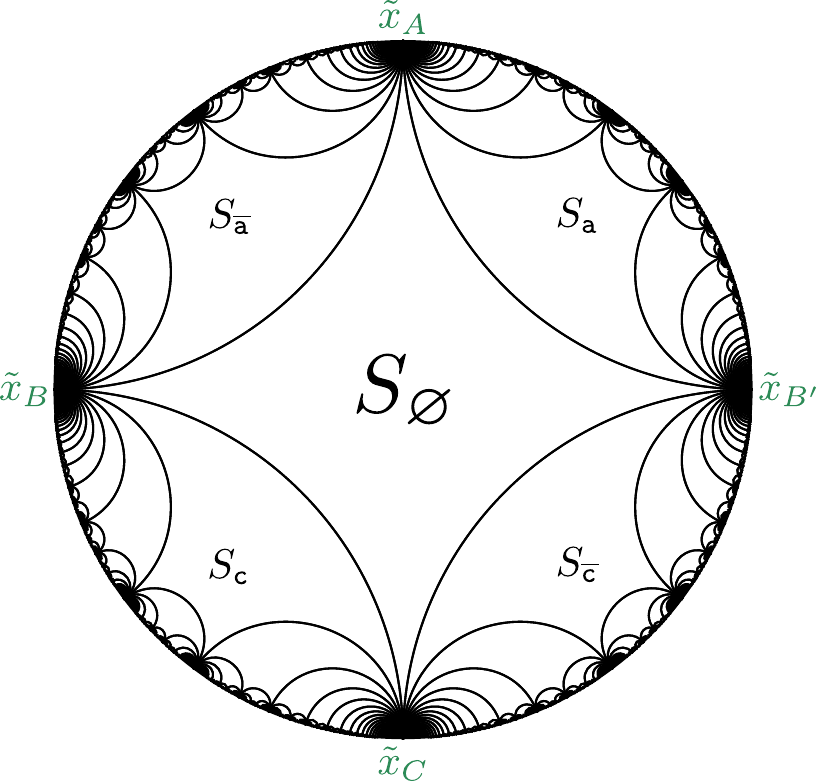}
  \hspace{1em}
  \includegraphics[scale=.5]{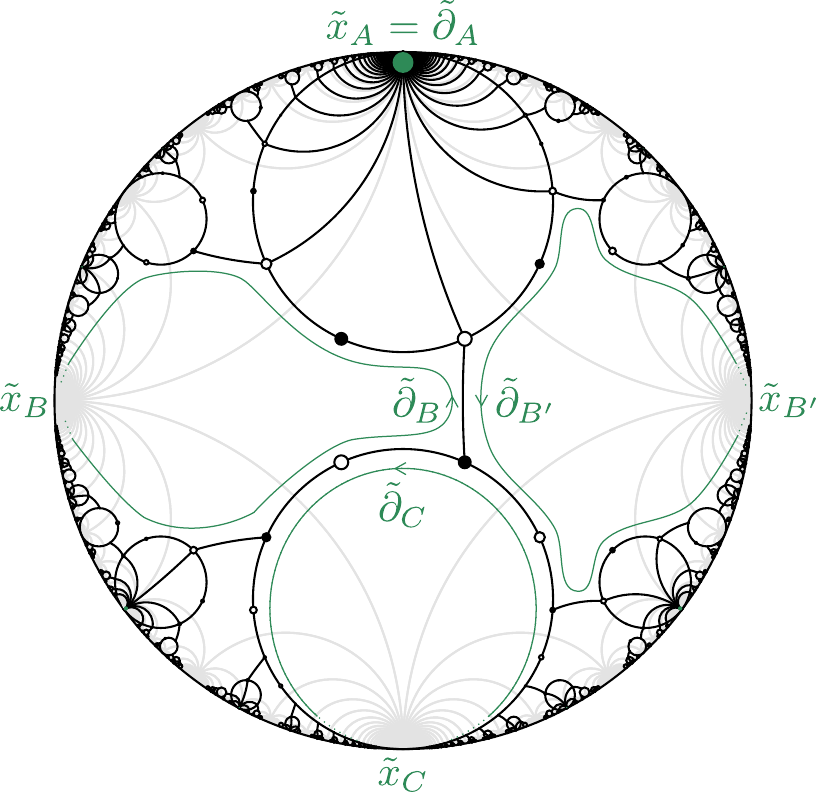}
  \caption{Left: the regular tiling of the hyperbolic plane (here
    represented in the Poincaré disk model) realizing the gluing of
    squares of Figure~\ref{fig:Univcovcarre}. Right: the corresponding
    representation of the universal cover of the dumbbell-shaped map
    of Figure~\ref{fig:Univcovcarremap}. The green vertex at the ideal
    corner $\tilde{x}_A$ has infinite degree. The contour of
    $\tilde{\partial}_C$ follows a horocycle with center the ideal
    corner $\tilde{x}_C$.}
  \label{fig:UnivcovPoincare}
\end{figure}

We now explain how the above considerations interact with the map $M$.
By viewing the (oriented) edges of $M$ as paths in $S'$ parametrized
by $[0,1]$, we can consider the lifts of these edges in $\tilde{S}'$,
which form an embedded graph $\tilde{M}$ in $\tilde{S}'$.

The resulting embedded graph $\tilde{M}$ is an infinite map in the
non-compact surface $\tilde{S}'$, with some faces or vertices of
infinite degrees (see Figure~\ref{fig:Univcovcarremap} and the right
of Figure~\ref{fig:UnivcovPoincare} for an example). More precisely,
$\tilde{M}$ has two possible types of vertices and faces:
\emph{regular} vertices and faces, which are the preimages of the
non-boundary vertices and faces of $M$ (keeping the same finite
degree), and \emph{ideal} vertices and faces, which have infinite
degree, project to the boundaries of $M$, and are in bijection with
the ideal corners of $\tilde{S}'$.

Let us provide some elements of justification to this
dichotomy. First, since $p:\tilde{S}\to S$ is a cover, it is immediate
that a non-boundary vertex of $M$, being placed at a point of $S$,
lifts to vertices of $\tilde{M}$ with the same finite degree.  Next,
if $f$ is a face of $M$ which is not a boundary-face, then (at least
when $f$ is not incident to a boundary-vertex, otherwise we have to
adapt slightly the argument) its contour $\partial f$
is homotopic in $S$ to a point, so that its lifts form closed paths in
$\tilde{S}$ bounding the preimages of $f$ by $p$ (which therefore keep
the same finite degree). Now, if $f$ is a boundary-face of $M$, then
its contour $\partial f$ is not homotopic to a point, so that its
preimages in $\tilde{S}$ are domains bounded by lifts of $\partial f$,
which form infinite paths of edges, resulting in ideal faces with infinite
degree.  Finally, a boundary-vertex $v$ of $M$, being placed at a
puncture, lifts to an ideal corner $\tilde{x}$ of $\tilde{S}'$, and
has infinite degree since $\tilde{x}$ is common to infinitely many
squares $S_\wl$ contributing at least one edge incident
to $\tilde{x}$.

We call the infinite map $\tilde{M}$ the \emph{universal cover} of the
map $M$.  We endow the set of its vertices $V(\tilde{M})$ with the
graph distance $\tilde{d}$, as for finite maps. The automorphism group
of $\tilde{M}$ is the same as that of $\tilde{S}$,
$\mathrm{Aut}(p)$. It acts freely on the regular vertices and faces,
but each ideal vertex or face is left invariant by an infinite cyclic
subgroup, precisely the same as that fixing the corresponding ideal
corner of $\tilde{S}'$. We point out that, in the concrete construction
of $\tilde{S}$ done above, it comes endowed with a distinguished
fundamental domain $S_\varnothing$, which in turn distinguishes four
ideal corners $\tilde{x}_A$, $\tilde{x}_B$, $\tilde{x}_C$ and
$\tilde{x}_{B'}$ (see again Figure~\ref{fig:Univcovcarre}), and
therefore four ideal vertices or faces $\tdel_A$, $\tdel_B$, $\tdel_C$
and $\tdel_{B'}$ of $\tilde{M}$ (see again
Figure~\ref{fig:Univcovcarremap}). We emphasize that this ``rooting''
results from our construction of $\tilde{S}$ and \emph{not} from any
extra data on $M$ other than its distinguished boundaries.

Note that there is an important flexibility in our
construction. Indeed, we can choose in an arbitrary way the embedding
of $M$ in $S$ or the projection $p:S_0\to S$: the resulting
$\tilde{S}$, $\tilde{M}$ and extension of $p$ to $\tilde{S}$ will
always be the same, up to isomorphisms. This observation will be
useful in Section \ref{sec:decface}.

Finally, we record the important observation that, if $M$ is
essentially bipartite, then its universal cover $\tilde{M}$ is
bipartite. Indeed, every simple cycle in $\tilde{M}$ is contractible
and encloses a finite number of faces of finite even degree, thus has
even length.

\subsection{Visualizing the assembling procedures on the universal cover}
\label{sec:univcov_assembling}
\begin{figure}
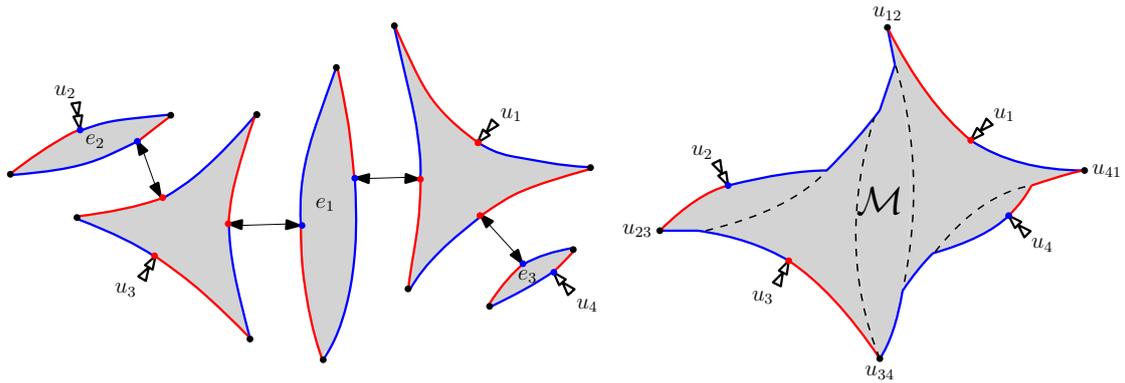

  \centering
  \fig{}{assembling_partial}
  \caption{The partially assembled map $\MM$ obtained from three
    bigeodesic diangles and two bigeodesic triangles.  See again Figure
\ref{fig:assemblingsample} for a concrete example. }
  \label{fig:assembling_partial}
\end{figure}

We now explain how the assembling procedures I and II can be
visualized on the universal cover of the triply punctured sphere.  Let
us start again from three (bigeodesic) diangles and two triangles, as
at the beginning of Section~\ref{sec:gluing}. In the description given
in Section~\ref{sec:gluingdesc}, the assembling was done in two
successive operations, attachment then red-to-blue gluing. Here, it is
convenient to give an alternative (but equivalent) description in
which the assembling is done even more progressively, by doing first a
partial attachment and a partial red-to-blue gluing, thereby giving a
\emph{partially assembled map}, and then completing the assembling by
another round of attachment and red-to-blue gluing.

Precisely, the partially assembled map, which we denote by $\MM$, is
constructed as displayed on
Figure~\ref{fig:assembling_partial}. Namely, we identify some of the
attachment points of the diangles and triangles together, but some of
them, denoted $u_1$, $u_2$, $u_3$ and $u_4$, remain unattached for
now. Note that the attachments that we perform are exactly those which
are common to procedures I and II, see again
Figures~\ref{fig:assemblingI}(a) and~\ref{fig:assemblingII}. This
results in a map with one unique special face. We then perform
red-to-blue gluing counterclockwise around the special face (i.e.\
with this face on the left), with the important prescription that we
do not perform gluings which would require passing over the attachment
points $u_1,\ldots,u_4$. For instance, a red edge preceding the
attachment point $u_1$ on the right triangle remains unglued, since it
would be glued with a blue edge beyond $u_1$. The partially assembled
map $\MM$ is a planar bipartite map with one boundary-face and eight
distinguished incident corners: the attachment points
$u_1,u_2,u_3,u_4$, at which we switch from blue to red when turning
counterclockwise around $\MM$, and the corners
$u_{12},u_{23},u_{34},u_{41}$ at which we switch back from red to
blue. In the situation displayed on
Figure~\ref{fig:assembling_partial}, $u_{12}$ corresponds to a corner
of the right triangle, but it could also be, say, a corner of the
middle diangle, if the latter were longer.  It is straightforward to
check that $[u_1,u_{12}]$ (resp.\ $[u_{41},u_{12}]$) is a strictly
geodesic (resp.\ geodesic) boundary interval of $\MM$, as defined in
Section~\ref{sec:geoddefs}, and similarly for the other intervals
around $\MM$. Furthermore, by the definition of exceedances, we see
that
\begin{equation}
  \label{eq:exccount}
  \begin{split}
    d(u_1,u_{12})+e_1+e_2&=d(u_{12},u_2), \\
    d(u_2,u_{23})+e_2&=d(u_{23},u_3), \\
    d(u_3,u_{34})+e_1+e_3&=d(u_{34},u_4), \\
    d(u_4,u_{41})+e_3&=d(u_{41},u_1),
  \end{split}
\end{equation}
where $d$ denotes the graph distance in $\MM$ and where, by a slight abuse,
we identify corners with their incident vertices. Note that the
boundary-face of $\MM$ is not necessarily simple, as there may be
contacts between the different blue intervals (as said in the caption
of Figure~\ref{fig:diangle}, such contacts may exist in a diangle of
positive exceedance, and may subsist in the partial gluing, for
instance if all the remaining pieces are equal to the vertex-map).

In order to complete the assembling procedures, we have to identify
the remaining attachment points $u_1,\ldots,u_4$ together: let
$\MM_{\text{I}}$ (resp.\ $\MM_{\text{II}}$) be the map obtained from
$\MM$ by identifying $u_1$ with $u_2$ and $u_3$ with $u_4$ (resp.\
$u_1$ with $u_4$ and $u_2$ with $u_3$). Note that these
identifications are exactly those which are specific to
Figure~\ref{fig:assemblingI}(a) (resp.\
Figure~\ref{fig:assemblingII}). In the terminology of
Section~\ref{sec:gluingdesc}, the maps $\MM_{\text{I}}$ and
$\MM_{\text{II}}$ have three ``special'' faces, and we complete the
assembling procedures by performing red-to-blue gluing in each special
face. It is straightforward to check that, for both procedure I and
procedure II, we obtain the same final result as with the previous
construction.

\begin{figure}
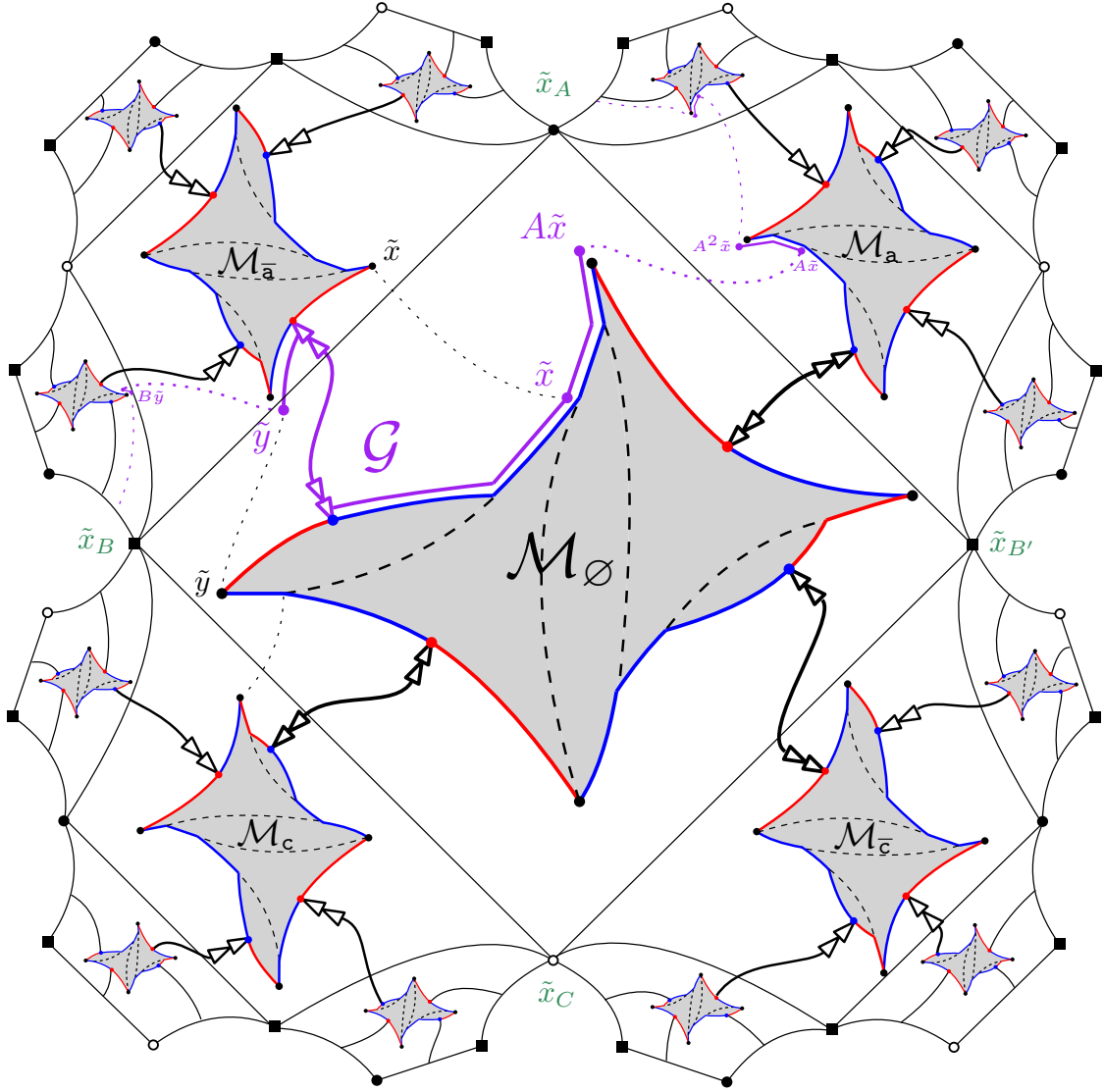

  \centering
  \fig{}{Univcovcarre_assembling}
  \caption{A visualization of the assembling procedure I in the universal
    cover of the triply punctured sphere. Inside each square $S_\wl$
    of the tiling of Figure~\ref{fig:Univcovcarre}, we place a copy
    $\MM_\wl$ of the partial gluing $\MM$ of
    Figure~\ref{fig:assembling_partial}. By attaching these maps
    together via their attachment points as displayed with the arrows,
    and deforming appropriately to perform the identification of
    vertices, we obtain the universal cover of the map
    $\MM_{\text{I}}$. To complete the assembling, we have to match and
    glue the red and blue edges of the ideal faces together (some
    identification between vertices are shown with dotted lines),
    thereby leaving only blue unmatched edges incident to ideal
    faces. Procedure II works just the same, except that we have to
    rotate each map $\MM_\wl$ by a quarter-turn clockwise. We display
    in purple the path $\GG$ and its vertices
    $\tilde{x},A\tilde{x},A^2\tilde{x},\tilde{y},B\tilde{y}$, as
    defined in Section~\ref{tightsec} (in the displayed situation we
    have $\Delta_\al,\Delta_{\overline{\cl}}>0$ but
    $\Delta_{\overline{\al}}=0$). By Lemma~\ref{lem:minpathnew}, $\GG$
    is a geodesic in $\tilde{M}$.}
  \label{fig:Univcovcarre_assembling}
\end{figure}

The interest of our alternative description is that it is now easier
to visualize the universal cover of the resulting assembled
maps. Precisely, we consider the square tiling $(S_\wl)_{\wl \in F}$
of Section~\ref{univcovsec}, and inside each square $S_\wl$ we place a
\emph{copy} $\MM_\wl$ of the partially assembled map $\MM$. We then
identify the attachment points of these copies together as follows. In
the case of procedure I displayed on
Figure~\ref{fig:Univcovcarre_assembling}, the attachment point
$u_1^\wl$ of $\MM_\wl$ is identified with the attachment point
$u_2^{\wl \al}$ of $\MM_{\wl \al}$, and $u_3^\wl$ is identified with
$u_4^{\wl \cl}$, for all $\wl \in F$. Similarly, in the case of
procedure II, looking again at Figure~\ref{fig:assemblingII}, we see
that the attachment point $u_2^\wl$ of $\MM_\wl$ is identified with
the attachment point $u_3^{\wl \al}$ of $\MM_{\wl \al}$, and $u_4^\wl$
is identified with $u_1^{\wl \cl}$. Of course, these identifications
require to deform the copies, and we can place the identified
attachment vertices on the sides of the squares if we want.  At this
stage, we obtain infinite maps denoted by $\tilde{\MM}_{\text{I}}$ and
$\tilde{\MM}_{\text{II}}$, which are the respective universal covers
of $\MM_{\text{I}}$ and $\MM_{\text{II}}$, upon seeing their three
special faces as the three boundaries.

To complete the assembling procedure in the universal cover, we have
to perform a final round of red-to-blue gluing in each special (ideal)
face. To better understand how this works, it is useful to set up some
notations. We first introduce the shorthand notations
\begin{equation}
  \label{eq:Idefs}
  I_1:=[u_{41},u_{12}], \quad I_2:=[u_{12},u_{23}], \quad I_3:=[u_{23},u_{34}], \qquad
  I_4:=[u_{34},u_{41}]
\end{equation}
for the sides of $\MM$ (so that $I_j$ is a bigeodesic launched from
the assembling point $u_j$). We also introduce the unified notations
\begin{equation}
  \label{eq:Idefsbis}
  \text{procedure I: }
  \begin{cases}
    I_\al := I_1 \\ I_{\overline{\al}} := I_2 \\ I_\cl := I_3 \\ I_{\overline{\cl}} := I_4
  \end{cases}
  \qquad
  \text{procedure II: }
  \begin{cases}
    I_\al := I_2 \\ I_{\overline{\al}} := I_3 \\ I_\cl := I_4 \\ I_{\overline{\cl}} := I_1
  \end{cases}
\end{equation}
and, for $\wl \in F$ and
$\mathtt{l} \in \{\al,\overline{\al},\cl,\overline{\cl}\}$, we let
$I^\wl_{\mathtt{l}}$ be the corresponding side of the copy
$\MM_\wl$. The interest of the unified notations is that the
attachment point of $I^\wl_{\mathtt{l}}$ is identified with that of
$I^{\wl {\mathtt{l}}}_{\overline{\mathtt{l}}}$, regardless of whether
we apply procedure I or II. As is apparent on~\eqref{eq:Idefsbis},
switching from procedure I to procedure II amounts on the universal
cover to rotating each $\MM_\wl$ by a quarter-turn
clockwise. Furthermore, the red part of $I^\wl_{\mathtt{l}}$ and the
blue part of $I^{\wl {\mathtt{l}}}_{\overline{\mathtt{l}}}$ are
incident to the same special face, and appear successively
counterclockwise around it. Thus, in the red-to-blue gluing operation,
the red edges in $I^\wl_{\mathtt{l}}$ will first look for blue matches
in $I^{\wl {\mathtt{l}}}_{\overline{\mathtt{l}}}$. Observe that, in
all this discussion, we can return from the universal cover to the
sphere by dropping the copy superscript.

Let us first consider the case $\mathtt{l}=\al$:
by~\eqref{eq:exccount}, we see that the red part of $I_\al$ has
$\Delta_\al$ fewer edges than the blue part of $I_{\overline{\al}}$,
with
\begin{equation}
  \label{eq:Deltaal}
  \text{procedure I: } \quad \Delta_\al := e_1 + e_2, \qquad
  \text{procedure II: } \quad \Delta_\al := e_2.
\end{equation}
Since $\Delta_\al$ is always nonnegative, when we complete the
assembling of $\MM$, the red part of $I_\al$ is thus completely glued
to the beginning of the blue part of $I_{\overline{\al}}$ (starting at
the attachment point). If $\Delta_\al>0$, there remains an unmatched
blue part of length $\Delta_\al$ which forms the contour of the
boundary face $\partial_A$; if $\Delta_\al=0$, $\partial_A$ is instead
a boundary-vertex as explained in
Section~\ref{sec:gluingdesc}. Translated in the universal cover, the
red part of $I_\al^\wl$ is completely glued to the beginning of the
blue part of $I_{\overline{\al}}^{\wl \al}$ (starting at the
attachment point).  If $\Delta_\al>0$, the remaining unmatched blue
part of $I_{\overline{\al}}^{\wl \al}$ forms a lift of the contour of
the boundary face $\partial_A$; if $\Delta_\al=0$, we only get an
``exposed'' blue vertex which we can place at an ideal corner of
$\tilde{S}'$ in order to form an ideal vertex of the map.

The case $\mathtt{l}=\cl$ is entirely similar, the red part of $I_\cl$
having $\Delta_\cl$ fewer edges than the blue part of
$I_{\overline{\cl}}$, with
\begin{equation}
  \text{procedure I: } \quad \Delta_\cl := e_1 + e_3, \qquad
  \text{procedure II: } \quad \Delta_\cl := e_3.
\end{equation}

The cases $\mathtt{l}=\overline{\al}$ and $\mathtt{l}=\overline{\cl}$
are slightly more involved since they must be considered altogether to
construct the boundary $\partial_B$. Let $\Delta_{\overline{\al}}$
(resp.\ $\Delta_{\overline{\cl}}$) be the difference between the
length of the blue part of $I_\al$ (resp.\ $I_\cl$) and that of the
red part of $I_{\overline{\al}}$ (resp.\ $I_{\overline{\cl}}$). The
relations~\eqref{eq:exccount} imply that
$\Delta_\bl:=\Delta_{\overline{\al}}+\Delta_{\overline{\cl}}$ is given
by
\begin{equation}
  \text{procedure I: } \quad \Delta_\bl := e_2 + e_3, \qquad
  \text{procedure II: } \quad \Delta_\bl := 2 e_1 + e_2 + e_3
\end{equation}
and is therefore always nonnegative, but the signs of
$\Delta_{\overline{\al}}$ and $\Delta_{\overline{\cl}}$ themselves are
not fixed since they depend on the sizes of different diangles and
triangles. We therefore have three generic situations:
\begin{itemize}
\item[(i)] $\Delta_{\overline{\al}}$ and $\Delta_{\overline{\cl}}$ are both nonnegative,
\item[(ii)] $\Delta_{\overline{\al}}$ is negative (hence
  $\Delta_{\overline{\cl}}$ is positive),
\item[(iii)] $\Delta_{\overline{\cl}}$ is negative (hence
  $\Delta_{\overline{\al}}$ is positive).
\end{itemize}

\begin{figure}[t!]
\centering
\fig{.85}{Univcovmap_caseitoiii}
\caption{The three generic situations corresponding to the possible
  signs of $\Delta_{\overline{\al}}$ and $\Delta_{\overline{\cl}}$:
  (i) $\Delta_{\overline{\al}},\Delta_{\overline{\cl}} \geq 0$, (ii)
  $\Delta_{\overline{\al}}<0$ and $\Delta_{\overline{\cl}}\geq 0$,
  (iii) $\Delta_{\overline{\al}}\geq 0$ and
  $\Delta_{\overline{\cl}}<0$. In case (ii), some of the red edges of
  $I_{\overline{\al}}^\al$ are matched to blue edges of
  $I_\cl^{\al\bl}=I_\cl^{\overline{\cl}}$, and similarly for case
  (iii) mutatis mutandis.}
  \label{fig:Univcovmap_cases}
\end{figure}

Let us discuss these three situations, illustrated on
Figure~\ref{fig:Univcovmap_cases}.
In the situation (i), all the red edges of $I_{\overline{\al}}$
(resp.\ $I_{\overline{\cl}}$) are matched to blue edges of $I_\al$
(resp.\ $I_\cl$). This implies two things about the universal cover,
one good and one bad. The good thing is that we only have ``nearest
neighbor'' gluings, as in the cases $\mathtt{l}=\al,\cl$ discussed
before: all the red edges of $I_{\overline{\al}}^\wl$ (resp.\
$I_{\overline{\cl}}^\wl$) are matched to blue edges of
$I_\al^{\wl \overline{\al}}$ (resp.\ $I_\cl^{\wl
  \overline{\cl}}$). The bad thing is that, since there are some
unmatched blue edges in both $I_\al$ and $I_\cl$ (unless we are in the
degenerate situation where $\Delta_{\overline{\al}}$ or
$\Delta_{\overline{\cl}}$ vanishes), the lifts of the contour of
$\partial_B$ do not remain in a single copy $\MM_\wl$, but we have to
consider two neighboring copies, say $\MM_\wl$ and
$\MM_{\wl \overline{\al}}$, to obtain such a lift as the concatenation
of the unmatched blue parts of $I_{\overline{\al}}^\wl$ and
$I_{\overline{\cl}}^{\wl \overline{\al}}$.

In the situation (ii), some red edges of $I_{\overline{\al}}$ do not
find blue matches in $I_\al$, and therefore find them in $I_\cl$. This
implies two things about the universal cover, one bad and one
good. The bad thing is that there are now ``next nearest neighbor''
gluings in the universal cover, namely some red edges of
$I_{\overline{\al}}^\wl$ are matched to blue edges of
$I_\cl^{\wl \bl}$, where we recall that
$\bl=\overline{\al}\, \overline{\cl}$ is an element of length $2$ in
$F$. The good thing is, since all unmatched blue edges are found on
$I_\cl$, the corresponding unmatched blue part of $I_\cl^\wl$ forms a
complete lift of the contour of $\partial_B$ that remains in the
single copy $\MM_\wl$.

The situation (iii) is entirely similar to the situation (ii), upon
interchanging the roles of $\al$ and $\cl$, and therefore changing
$\bl$ into $\bl'=\overline{\cl}\, \overline{\al}$.

\subsection{Tightness of the boundaries resulting from the assembling procedures}\label{tightsec}

We are now in position to prove that the maps resulting from the
assembling procedures have tight boundaries. Starting from three
diangles and two triangles, we denote by $\MM$ the partially assembled
map as defined in the previous subsection, and by $M$ the completely
assembled map (done according to procedure I or II). As we have seen,
the universal cover $\tilde{M}$ of $M$ can be constructed directly by
gluing infinitely many copies of $\MM$ along their boundaries.

A key ingredient in our proof is the path $\GG$ displayed on
Figure~\ref{fig:Univcovcarre_assembling} and defined as follows. It
consists of two parts which are both ``launched'' from the attachment
point between the copies $\MM_\varnothing$ and $\MM_{\overline{\al}}$,
and which go ``towards'' the ideal corners $\tilde{x}_A$ and
$\tilde{x}_B$, respectively, by following the blue boundaries. More
precisely, the part towards $\tilde{x}_A$ starts with the blue part of
$I_{\overline{\al}}^\varnothing$, then continues with the unmatched
blue part of $I_{\overline{\al}}^\al$, then that of
$I_{\overline{\al}}^{\al^2}$, $I_{\overline{\al}}^{\al^3}$, etc. Let
us denote by $\tilde{x}$ the last vertex on the blue part of
$I_{\overline{\al}}^\varnothing$ which is glued to the red part of
$I_\al^{\overline{\al}}$. Then, our path passes through the vertices
$A^n \tilde{x}$ for all $n \geq 0$ (the unmatched blue part of
$I_{\overline{\al}}^{\al^n}$ going from $A^n \tilde{x}$ to
$A^{n+1} \tilde{x}$). In the case $\Delta_\al=0$, all the vertices
$A^n \tilde{x}$ are identified and placed at the ideal corner
$\tilde{x}_A$. In this case, $\tilde{x}_A$ is reached in a finite
number of steps by our path. As soon as $\Delta_\al$ is positive, our
path is infinite but, as visible on
Figure~\ref{fig:Univcovcarre_assembling}, it still ``tends'' to
$\tilde{x}_A$ in a sense, as it meets at $\tilde{x}$ the contour of
the ideal face containing $\tilde{x}_A$, and follows it
counterclockwise onwards. The description of the part towards
$\tilde{x}_B$ is similar and just a bit more involved, for the reasons
discussed at the end of Section~\ref{sec:univcov_assembling}: assume
for instance that we are in case (i) with
$\Delta_{\overline{\al}},\Delta_{\overline{\cl}}\geq 0$, then the path
with the blue part of $I_\al^{\overline{\al}}$, continues with the
unmatched blue part of $I_\cl^\bl$, then that of
$I_{\al}^{\bl \overline{\al}}$, $I_{\cl}^{\bl^2}$,
$I_{\al}^{\bl^2 \overline{\al}}$, etc. If $\Delta_\bl>0$, the path
meets at a vertex denoted $\tilde{y}$ the contour of the ideal face
containing $\tilde{x}_B$, and follows it counterclockwise onwards,
passing in particular through $B^n \tilde{y}$ for all $n \geq 0$. If
$\Delta_\bl=0$, the path reaches $\tilde{x}_B$ in a finite number of
steps, and we set $\tilde{y}:=\tilde{x}_B$. Then, the key property of
$\GG$ is the following:

\begin{lem}
  \label{lem:minpathnew}
  The path $\GG$ is a geodesic in $\tilde{M}$.
\end{lem}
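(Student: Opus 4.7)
The plan is to show that for any two vertices $u, u'$ on $\GG$, the length of the subpath $\GG[u,u']$ equals the distance $\td(u,u')$ in $\tilde{M}$. Since $\GG$ itself provides a path of length $|\GG[u,u']|$ between $u$ and $u'$, only the lower bound $\td(u,u') \geq |\GG[u,u']|$ has to be established.

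I would first verify that $\GG$ is locally a geodesic inside each copy $\MM_{\wl}$. By construction, the portion of $\GG$ contained in a given copy is a subpath of a boundary interval $I_{\mathtt{l}}^{\wl}$ of $\MM$, itself built from the blue (geodesic) and red (strictly geodesic) bigeodesic intervals of the diangles and triangles making up $\MM$, glued at attachment points. Since each bigeodesic interval is by definition a geodesic in its building block, and since a bigeodesic is launched from its attachment point---which is therefore a geodesic vertex of the concatenation---each $I_{\mathtt{l}}^{\wl}$ is a geodesic boundary interval of $\MM_{\wl}$.

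The hard step is to upgrade this local geodesicity into a global statement in $\tilde{M}$, ruling out that a competing path $\gamma$ takes a shortcut through neighbouring copies. Let $\gamma$ be any path in $\tilde{M}$ from $u$ to $u'$; since $\tilde{S}'$ is simply connected, $\gamma$ together with $\GG[u,u']^{-1}$ bounds a bounded region $D$. I would analyze $D$ copy-by-copy: on each copy $\MM_{\wl}$ intersecting $D$, the portion of $\gamma$ lying in $\MM_{\wl}$ connects two points of its boundary, and its length can be bounded from below by invoking the geodesicity of the $I_{\mathtt{l}}^{\wl}$ together with the strict geodesicity of their red parts, the latter forcing any would-be shortcut between two vertices lying on a common red interval to agree with that interval. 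Summing these local lower bounds over all copies meeting $D$ should produce $|\gamma| \geq |\GG[u,u']|$.

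The main obstacle will be the bookkeeping at the transitions where $\gamma$ crosses from one copy $\MM_{\wl}$ to a neighbour through an identified attachment vertex, and the three cases~(i)--(iii) of Section~\ref{sec:univcov_assembling}---distinguished by the signs of $\Delta_{\overline{\al}}$ and $\Delta_{\overline{\cl}}$---will require separate treatments, since in cases~(ii) and~(iii) some red-to-blue identifications are not copy-to-nearest-neighbour but span two letters in $F$. As a fallback, if this direct surgery becomes too intricate, one may exploit the $F$-equivariance of $\GG$: its two rays are axes of the hyperbolic automorphisms $A$ and $B$, and it would be enough to show that $\td(\tilde{x},A^n\tilde{x}) = n\Delta_\al$ and $\td(\tilde{y},B^n\tilde{y}) = n\Delta_\bl$ for all $n \geq 1$. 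This in turn can be attacked by exhibiting $1$-Lipschitz functions $V(\tilde{M})\to\Z$ with prescribed $A$- and $B$-increments, in the spirit of the Busemann functions to be introduced in Section~\ref{sec:decface}.
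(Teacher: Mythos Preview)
Your local ingredient is correct and is exactly what the paper uses: each side $I_{\mathtt{l}}^{\wl}$ of a copy $\MM_\wl$ is a geodesic boundary interval of that copy. But your globalization step has a real gap. When you write ``the portion of $\gamma$ lying in $\MM_{\wl}$ connects two points of its boundary, and its length can be bounded from below by invoking the geodesicity of the $I_{\mathtt{l}}^{\wl}$'', this only works cleanly if those two boundary points lie on the \emph{same} side $I_{\mathtt{l}}^{\wl}$. For a generic copy traversed by $\gamma$, the entry and exit points may sit on two different sides of the quadrangle $\MM_\wl$, and then no single geodesic interval gives you the bound you want. Your proposal never explains how to handle this, and flags it only as ``bookkeeping at the transitions''.

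The paper resolves this with a clean inductive device you are missing. Write $\gamma=\gamma_0\gamma_1\cdots\gamma_k\gamma_{k+1}$ with $\gamma_0,\gamma_{k+1}$ on $\GG$ and each $\gamma_i$ inside a copy $\MM_{\wl_i}$, and induct on $k$. At the inductive step, pick an index $i$ with $|\wl_i|$ \emph{maximal} in the Cayley tree of $F$. Because $F$ is a tree, the neighbouring copies $\MM_{\wl_{i-1}}$ and $\MM_{\wl_{i+1}}$ (or $\GG$ itself, at the ends) are all attached to $\MM_{\wl_i}$ through the single side $I_{\overline{\mathtt{l}}}^{\wl_i}$, where $\mathtt{l}$ is the last letter of $\wl_i$. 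Hence both endpoints of $\gamma_i$ lie on that one geodesic side, $\gamma_i$ can be replaced by the corresponding boundary arc without increasing length, and the copy count drops. This ``outermost copy'' trick is the missing idea; without it your copy-by-copy summation does not go through.

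Your fallback via $A$- and $B$-equivariance is also incomplete: even granting $\td(\tilde{x},A^n\tilde{x})=n\Delta_\al$ and $\td(\tilde{y},B^n\tilde{y})=n\Delta_\bl$, you would only know that each of the two infinite rays of $\GG$ is a geodesic; you still need that the finite middle segment of $\GG$ between $\tilde{x}$ and $\tilde{y}$ is geodesic and concatenates geodesically with the rays, which brings you back to the same cross-copy issue.
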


\begin{proof}
  Intuitively, the reason is that, in
  Figure~\ref{fig:Univcovcarre_assembling}, we glue the copies along
  (bi)geodesics.  More formally, consider a path $\gamma$ in
  $\tilde{M}$ whose two endpoints are on $\GG$: we need to show
  that the length of $\gamma$ is at least that of the portion of $\GG$
  having the same endpoints.  Our proof is by induction on the number
  of copies visited by $\gamma$.

  Precisely, we say that $\gamma$ \emph{enters into $k$ copies} if it
  can be written as a concatenation of paths
  $\gamma_0 \gamma_1 \ldots \gamma_k \gamma_{k+1}$, where $\gamma_0$
  and $\gamma_{k+1}$ are portions of $\GG$ (which we do not include
  into the copy count) and, for all $i=1,\ldots,k$, $\gamma_i$ is a
  path in the copy $\MM_{\wl_i}$. Without loss of generality, we may
  assume that the sequence $(\wl_i)_{i=1}^k$ is consistent with the
  structure of gluings discussed at the end of
  Section~\ref{sec:univcov_assembling}, that is to say in the
  situation (i) we have
  $\wl_{i+1} \wl_{i}^{-1} \in
  \{\al,\overline{\al},\cl,\overline{\cl}\}$ for all $i=1,\ldots,k-1$
  and, in the situations (ii) and (iii), we must also allow the values
  $\bl,\overline{\bl}$ and $\bl',\overline{\bl}'$, respectively. Note
  that ideal vertices lead to identifications between vertices
  belonging to distant copies, but we can always assume that the
  $\wl_i$ satisfy the above constraint by considering that a
  long-range identification corresponds to a concatenation of several
  paths $\gamma_i$ of length zero. We may also assume that $\wl_1$ and
  $\wl_k$ are both of the form $\al^n$, $\bl^n$ or
  $\bl^n \overline{\al}$ for some $n \geq 0$, as these correspond to
  the copies ``carrying'' $\GG$.

  If $\gamma$ enters into $0$ copies, it is contained in $\GG$ and we
  are done. If it enters into $k \geq 1$ copies, we will show that we
  can modify it, without increasing its length, so that it enters into
  at most $k-1$ copies. For this, we pick an $i$ such that $|\wl_i|$
  is maximal.  Let
  $\mathtt{l}\in \{\al,\overline{\al},\cl,\overline{\cl}\}$ be the
  last letter of $\wl_i$, with the convention that $\mathtt{l}=\al$ if
  $\wl_i=\varnothing$. Then, from the structure of gluings discussed
  above, from the definition of $\GG$, and from the maximality of
  $|\wl_i|$, we see that $\gamma_i$ has its two endpoints on the
  boundary interval $I_{\overline{\mathtt{l}}}^{\wl_i}$ which is
  geodesic in $\MM_{\wl_i}$. Let $\gamma_i'$ be the portion of
  $I_{\overline{\mathtt{l}}}^{\wl_i}$ with the same endpoints as
  $\gamma_i$: $\gamma_i$ cannot be shorter than $\gamma'_i$, so
  $\gamma$ is not shorter than the path
  $\gamma':=\gamma_0 \ldots \gamma_{i-1} \gamma'_i \gamma_{i+1} \ldots
  \gamma_{k+1}$. We claim that $\gamma'$ enters into (at most) $k-1$
  copies. Indeed, for $1<i<k$, $\gamma'_i$ can be viewed as a path on
  the boundaries of the copies $\MM_{\wl_{i-1}}$ and
  $\MM_{\wl_{i+1}}$---which are necessarily the same in the situation
  (i), but could differ in the situations (ii) and (iii)---so that we
  may rewrite $\gamma_{i-1}\gamma'_i\gamma_{i+1}$ as the concatenation
  of (at most) two paths, each of them staying in one of these
  copies. The argument is the same for $i=1$ or $i=k$, upon
  understanding that $\MM_{\wl_0}$ and $\MM_{\wl_{k+1}}$ refer to the
  path $\GG$.
\end{proof}

\begin{rem}
  \label{rem:GGnotgeod}
  The projection of $\GG$ on the finite map $M$ consists of a path
  connecting the projection $x$ of $\tilde{x}$ to the projection $y$
  of $\tilde{y}$---this path is \emph{not} a geodesic in general---
  prolonged with infinitely many turns around $\partial_A$ and
  $\partial_B$ when these are boundary-faces.
\end{rem}

We will need another technical lemma about $\GG$ and its two
distinguished vertices $\tilde{x}$ and $\tilde{y}$: 

\begin{lem}
  \label{lem:tightn-bound-result-new}
  Suppose that $\partial_A$ (resp.\ $\partial_B$) is a boundary-face
  of $M$, and let $c$ be a cycle of $M$ freely homotopic in $S$
  to, i.e.\ which can be continuously deformed into, the
  contour of $\partial_A$ (resp.\ $\partial_B$), oriented
  counterclockwise. Then, $c$ can be lifted to a path $\tilde{c}$ in
  $\tilde{M}$ going from a vertex $\tilde{z}$ to the vertex
  $A \tilde{z}$ (resp.\ $B \tilde{z}$), with $\tilde{z}$ belonging to
  $\GG$ and situated between $\tilde{x}$ and $\tilde{y}$.
\end{lem}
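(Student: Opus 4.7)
The plan is to produce the lift $\tilde{c}$ as one boundary of an appropriate lift of an embedded annulus $\AA\subset S$ having $c$ and the contour of $\partial_A$ as its two boundary circles. Such an annulus exists by the free homotopy assumption, and since $\AA$ is $\pi_1$-injective with a generating loop mapping to $A\in \mathrm{Aut}(p)$, we can pick the connected component $\tilde{\AA}$ of $p^{-1}(\AA)$ in $\tilde{S}$ whose closure in $\tilde{S}'$ contains the ideal corner $\tilde{x}_A$. This component is a closed topological strip, invariant under $A$, bounded on one side by the contour of the ideal face $\tdel_A$ (the lift of the contour of $\partial_A$ passing through $\tilde{x}_A$), and on the other side by a single lift $\tilde{c}$ of $c$. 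On this lift, $A$ acts by a translation equal to one full traversal of $c$; hence for any vertex $\tilde{z}$ of $\tilde{M}$ lying on $\tilde{c}$, traversing $\tilde{c}$ in the counterclockwise orientation inherited from $c$ produces a lift of $c$ ending at $A\tilde{z}$.

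It then remains to show that the portion of $\GG$ between $\tilde{x}$ and $\tilde{y}$ meets $\tilde{c}$ at a vertex. By construction, $\tilde{x}$ lies on the contour of $\tdel_A$, which is the inner boundary of $\tilde{\AA}$, while $\tilde{y}$ lies on the contour of $\tdel_B$. The latter is contained in $\tilde{S}\setminus\tilde{\AA}$: indeed $c$ separates $x_A$ from $\{x_B,x_C\}$ in $S'$ (being freely homotopic to $\partial_A$ only), so any lift of $c$ separates the $A$-orbit of $\tilde{x}_A$ from the orbits of $\tilde{x}_B$ and $\tilde{x}_C$ in $\tilde{S}'$. Viewing the portion of $\GG$ between $\tilde{x}$ and $\tilde{y}$ as a connected edge path of $\tilde{M}$ starting on the inner boundary of $\tilde{\AA}$ and ending outside $\tilde{\AA}$, it must exit $\tilde{\AA}$ by crossing $\tilde{c}$, hence share a common vertex $\tilde{z}$ with $\tilde{c}$; this is the sought vertex. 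The case of $\partial_B$ is handled identically after replacing $(A,\tdel_A,\tilde{x})$ by $(B,\tdel_B,\tilde{y})$.

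The subtlety I expect to require most care is the choice of the correct connected component $\tilde{\AA}$ of $p^{-1}(\AA)$: one must ensure its inner boundary is precisely the contour of $\tdel_A$ rather than a $\mathrm{Aut}(p)$-translate, so that the associated monodromy of $\tilde{c}$ is exactly $A$ and not a conjugate. Fixing $\tilde{\AA}$ as the component with $\tilde{x}_A$ in its closure resolves this, since the stabilizer of $\tilde{x}_A$ in $\mathrm{Aut}(p)$ is the infinite cyclic group generated by $A$, forcing $\tilde{\AA}$ to be $A$-invariant. A secondary technical point is that strictly speaking $\tilde{x}$ lies on the inner boundary of $\tilde{\AA}$ (not in its interior), so one has to argue that the first edge of $\GG$ leaving $\tilde{x}$ towards $\tilde{y}$ genuinely enters the interior of $\tilde{\AA}$; this follows from the fact that the complementary side of the contour of $\tdel_A$ corresponds to the (removed) open disk around $\tilde{x}_A$ in $\tilde{S}'$, which contains no edges of $\tilde{M}$.
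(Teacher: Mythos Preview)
Your approach shares the paper's overall strategy---produce an $A$-periodic lift of $c$ and show it meets $\GG$ between $\tilde{x}$ and $\tilde{y}$---but it has a genuine gap at the first step. You assume the existence of an \emph{embedded} annulus $\AA\subset S$ with boundary circles $c$ and the contour of $\partial_A$. The lemma places no simplicity hypothesis on $c$: it is merely a cycle of $M$ freely homotopic to the contour of $\partial_A$, and may well self-intersect or share edges with that contour (which itself need not be simple). In such cases no embedded annulus exists, $p^{-1}(\AA)$ is not a well-defined subset of $\tilde S$, and your separation argument (``$c$ separates $x_A$ from $\{x_B,x_C\}$ in $S'$, so any lift of $c$ separates\ldots'') breaks down. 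Free homotopy only furnishes an \emph{immersed} annulus, whose preimage components are not topological strips in general.

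The paper sidesteps this by reversing the roles in the separation argument. It first produces, by the standard based-loop argument, a lift $\tilde{c}_0$ of $c$ from some $\tilde{z}_0$ to $A\tilde{z}_0$, and forms the biinfinite concatenation $\tilde{c}_\infty=\bigcup_n A^n\tilde{c}_0$. It then uses that the portion of $\GG$ between $\tilde{x}$ and $\tilde{y}$ (a geodesic, hence simple, and with an explicit description in terms of the copies $\MM_\wl$) separates the copies indexed by words beginning with $\overline{\al}$ from the rest. Since $A^{-n}\tilde{z}_0$ lies in the former set and $A^n\tilde{z}_0$ in the latter for $n$ large, $\tilde{c}_\infty$ must cross this portion of $\GG$ at some vertex $\tilde{z}$; the arc of $\tilde{c}_\infty$ from $\tilde{z}$ to $A\tilde{z}$ is the required lift. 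The point is that $\GG$ is guaranteed to be simple, whereas $\tilde{c}_\infty$ is not, so one should let $\GG$ do the separating. Your argument can be repaired along exactly these lines: keep your construction of the $A$-invariant lift, but swap which curve plays the Jordan role.
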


\begin{proof}
  If $\partial_A$ is a boundary-face, we denote its contour by
  $\hat{\partial}_A$, which we orient counterclockwise and view as a
  loop rooted at the projection $x$ of $\tilde{x}$ on $M$. If $c$ is
  freely homotopic to $\hat{\partial}_A$, there exists a path $\gamma$
  going from $x$ to a point $z_0$ on $c$ such that
  $\gamma c \gamma^{-1}$ and $\hat{\partial}_A$ are homotopic as loops
  rooted at $x$.  Lifting these loops to $\tilde{M}$, we get paths
  with the same endpoints. Choosing the starting point to be
  $\tilde{x}$, the final point must be $A\tilde{x}$ (since it is the
  final point for the lift of $\hat{\partial}_A$), and we deduce
  that $\gamma c \gamma^{-1}$ lifts to
  $\tilde{\gamma} \tilde{c}_0 (A\tilde{\gamma})^{-1}$, where
  $\tilde{\gamma}$ is the lift of $\gamma$ going from $\tilde{x}$ to a
  vertex $\tilde{z}_0$, and $\tilde{c}_0$ is a lift of $c$ going from
  $\tilde{z}_0$ to $A\tilde{z}_0$.

  If $\tilde{z}_0$ lies on $\GG$ we are done, otherwise we can tweak
  it as follows. Note that the paths $A^n\tilde{c}_0,n\in \Z$ are all
  lifts of $c$, with the property that the endpoint of
  $A^n\tilde{c}_0$ is the starting point of $A^{n+1}\tilde{c}_0$. The
  concatenation of these paths is a doubly infinite path
  $\tilde{c}_\infty$ in $\tilde{M}$ (whose projection to $M$ circles
  around $c$ indefinitely) which, by planarity, necessarily crosses
  $\GG$ at a vertex $\tilde{z}$ situated between $\tilde{x}$ and
  $\tilde{y}$. Indeed, as visible on
  Figure~\ref{fig:Univcovcarre_assembling}, the portion of $\GG$
  between $\tilde{x}$ and $\tilde{y}$ separates the copies $\MM_\wl$
  with a reduced word $\wl$ starting with the letter $\overline{\al}$,
  from the others. And, for $n \geq 0$ large enough,
  $A^{-n} \tilde{z}_0$ will be in the former set of copies, and
  $A^n \tilde{z}_0$ in the latter.  The portion $\tilde{c}$ of
  $\tilde{c}_\infty$ between $\tilde{z}$ and $A\tilde{z}$ is the
  wanted lift of $c$.

  The reasoning in the case where $\partial_B$ is a boundary-face and
  $c$ is a cycle freely homotopic to its contour is entirely similar.
\end{proof}

\begin{prop}
  \label{prop:tightness}
  The map $M$, assembled according to procedure I or II, has tight
  boundaries.
\end{prop}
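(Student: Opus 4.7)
The plan is to show that every boundary-face $\partial_A$, $\partial_B$, $\partial_C$ of $M$ has a contour of minimal length in its free homotopy class on the triply punctured sphere. Boundary-vertices are by convention tight, so they require no argument.

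I would first handle $\partial_A$ as follows. Let $c$ be any cycle of $M$ freely homotopic to $\hat{\partial}_A$ (the contour of $\partial_A$). By Lemma~\ref{lem:tightn-bound-result-new}, $c$ lifts to a path $\tilde{c}$ in $\tilde{M}$ going from a vertex $\tilde{z}$ on $\GG$ to $A\tilde{z}$, with $\tilde{z}$ situated between $\tilde{x}$ and $\tilde{y}$. The length of $c$ equals that of $\tilde{c}$ and is therefore at least $\td(\tilde{z}, A\tilde{z})$, so the target inequality $\mathrm{length}(c) \ge \Delta_\al$ reduces to proving $\td(\tilde{z}, A\tilde{z}) \ge \Delta_\al$.

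To obtain this lower bound, I would use a subadditive argument based on iterating the isometry $A$ of $\tilde{M}$. Fix a geodesic from $\tilde{z}$ to $A\tilde{z}$ of length $L = \td(\tilde{z}, A\tilde{z})$; for each $n \ge 1$, concatenating the images of this geodesic under $\mathrm{id}, A, A^2, \ldots, A^{n-1}$ produces a path from $\tilde{z}$ to $A^n \tilde{z}$ of length $nL$. Combining with the triangle inequality and the $A$-equivariance of $\td$ gives
\begin{equation*}
  nL \;\ge\; \td(\tilde{z}, A^n \tilde{z}) \;\ge\; \td(\tilde{x}, A^n \tilde{x}) - 2\,\td(\tilde{x}, \tilde{z}).
\end{equation*}
Now Lemma~\ref{lem:minpathnew} asserts that $\GG$ is a geodesic of $\tilde{M}$. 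Since $\GG$ visits $\tilde{x}, A\tilde{x}, A^2\tilde{x}, \ldots$ with consecutive pairs separated by the unmatched blue part of $I_{\overline{\al}}^{\al^k}$ of length $\Delta_\al$, the sub-path from $\tilde{x}$ to $A^n\tilde{x}$ has length $n\Delta_\al$ and is itself a geodesic. Thus $\td(\tilde{x}, A^n \tilde{x}) = n\Delta_\al$. Dividing by $n$ and letting $n \to \infty$ yields $L \ge \Delta_\al$, completing the proof for $\partial_A$.

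For $\partial_B$, exactly the same argument applies with $B$ in place of $A$ and $\tilde{y}$ in place of $\tilde{x}$: Lemma~\ref{lem:tightn-bound-result-new} already provides the lift of a cycle freely homotopic to $\hat{\partial}_B$, and $\GG$ visits $\tilde{y}, B\tilde{y}, B^2 \tilde{y}, \ldots$ with increments $\Delta_\bl$, giving $\td(\tilde{y}, B^n\tilde{y}) = n \Delta_\bl$. For $\partial_C$, which is not directly covered by Lemma~\ref{lem:tightn-bound-result-new}, I would exploit the flexibility noted at the end of Section~\ref{univcovsec}: up to choosing a different fundamental domain for $\tilde{S}$ (equivalently, relabeling the three punctures cyclically), $\partial_C$ plays the role of $\partial_A$, and the very same construction yields an analog of $\GG$ and of Lemma~\ref{lem:tightn-bound-result-new}, after which the above argument applies. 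The main obstacle is the subadditive step: it is what upgrades the easy identity $\td(\tilde{x}, A^n\tilde{x}) = n\Delta_\al$ along $\GG$ into a bound for $\td(\tilde{z}, A\tilde{z})$ at a general vertex $\tilde{z}$ of $\GG$, and it is the only nontrivial geometric input beyond the two lemmas already established.
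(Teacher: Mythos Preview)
Your proof is correct and uses the same overall architecture as the paper (the two lemmas on $\GG$ do all the heavy lifting), but you derive the key inequality $\td(\tilde{z},A\tilde{z})\ge \Delta_\al$ by a different, asymptotic route.

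The paper obtains this inequality in one shot, with no passage to the limit: since $\tilde{z}$ lies on $\GG$ on the $\tilde{y}$-side of $\tilde{x}$, and $A\tilde{x}$ lies on $\GG$ on the other side, the geodesic $\GG$ gives $\td(\tilde{z},A\tilde{x})=\td(\tilde{z},\tilde{x})+\Delta_\al$; then the triangle inequality and $A$-invariance yield
\[
\td(\tilde{z},A\tilde{z})\ \ge\ \td(\tilde{z},A\tilde{x})-\td(A\tilde{x},A\tilde{z})
=\td(\tilde{z},\tilde{x})+\Delta_\al-\td(\tilde{x},\tilde{z})=\Delta_\al.
\]
Your subadditive argument is the classical ``translation length'' trick: it only needs $\td(\tilde{x},A^n\tilde{x})=n\Delta_\al$ and works for \emph{any} vertex $\tilde{z}$, not just one on $\GG$, at the cost of invoking a limit. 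The paper's argument is shorter and exploits more precisely the position of $\tilde{z}$ furnished by Lemma~\ref{lem:tightn-bound-result-new}. For $\partial_C$, the paper invokes the same symmetry you do, phrased as rotating the picture by $180^\circ$ so that $\partial_C$ and $\partial_A$ exchange roles; your appeal to relabeling the punctures is equivalent.
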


\begin{proof}
  Let us first prove that the boundary $\partial_A$ is tight. If it is
  a boundary-vertex this is a tautology, otherwise it is a
  boundary-face of length $\Delta_\al>0$, and we need to show that any
  cycle $c$ freely homotopic to its contour has length at least
  $\Delta_\al$. Let us lift $c$ to a path $\tilde{c}$ as in
  Lemma~\ref{lem:tightn-bound-result-new}. Then, the length of $c$ is
  at least $\td(\tilde{z},A\tilde{z})$ which, by the triangle
  inequality, satisfies
  \begin{equation}
    \td(\tilde{z},A\tilde{z}) \geq \td(\tilde{z},A\tilde{x}) - \td(A\tilde{x},A\tilde{z}).
  \end{equation}
  As $\GG$ is a geodesic by Lemma~\ref{lem:minpathnew}, and as
  $\tilde{x}$ lies between $\tilde{z}$ and $A\tilde{x}$ on $\GG$, we
  have
  \begin{equation}
    \td(\tilde{z},A\tilde{x}) = \td(\tilde{z},\tilde{x}) +\td(\tilde{x},A\tilde{x}) = \td(\tilde{z},\tilde{x}) + \Delta_\al.
  \end{equation}
  But we have $\td(\tilde{z},\tilde{x})=\td(A\tilde{x},A\tilde{z})$
  since $A$ is an automorphism of $\tilde{M}$, hence
  \begin{equation}
    \td(\tilde{z},A\tilde{z}) \geq \Delta_\al
  \end{equation}
  as wanted.  The proof for $\partial_B$ is entirely similar, using
  again Lemma~\ref{lem:tightn-bound-result-new}, and replacing in the
  above argument $\Delta_\al$ by $\Delta_\bl$, $A$ by $B$, and
  $\tilde{x}$ by $\tilde{y}$.  Finally, for $\partial_C$, we notice
  that throughout this section it plays a symmetric role to
  $\partial_A$, viewing all the figures upside down (i.e.\ rotated by
  $180$ degrees).
\end{proof}

To conclude this section, let us note that, symmetrically to the
definition of $\GG$, we may define another geodesic $\GG'$ that
consists of two parts launched from the attachment point between the
copies $\MM_\varnothing$ and $\MM_{\overline{\cl}}$ and going towards
the ideal corners $\tilde{x}_C$ and $\tilde{x}_{B'}$,
respectively. Then, the copy $\MM_\varnothing$ is delimited by the
four geodesics $\GG$, $A\GG$, $\GG'$ and $C\GG'$, upon removing their
pairwise common parts. Furthermore, $A\GG$ (resp.\ $C\GG'$) may be
viewed as a geodesic launched from the attachment point between
$\MM_\varnothing$ and $\MM_\al$ (resp.\ $\MM_\cl$).

\section{Decomposing a map with three tight boundaries}
\label{sec:decface}

In this section we complete the proof of Theorem~\ref{thm:main}:
starting from an essentially bipartite planar map $M$ with three tight
boundaries $\partial_A,\partial_B,\partial_C$ of respective lengths
$2a,2b,2c\geq 0$ on the triply punctured sphere $S$, we want to
disassemble $M$ into two bigeodesic triangles and three bigeodesic
diangles with nonnegative exceedances, in a way that inverts the
assembling procedure.

We already sketched in Section~\ref{sec:gluinginvtp} the decomposition
in the case of triply pointed maps ($a=b=c=0$). In order to generalize
it to the case $a,b,c\geq 0$, we have to find an analog of the
equilibrium conditions~\eqref{eq:rdtp}, which involve the distances
$d_{AB},d_{BC},d_{CA}$ between the boundary-vertices. When some
boundaries are faces, we need appropriate analogs of these
distances: it turns out that such analogs may be constructed using
so-called Busemann functions defined on the universal cover
$\tilde{M}$ of the map $M$. From this, we will obtain the equilibrium
vertices, from which we will launch leftmost bigeodesics giving the
decomposition we are looking for.

Our presentation is done in several steps. First, we discuss in
Section~\ref{sec:goodfund} some graph-theoretical properties of the
infinite map $\tilde{M}$, via the notion of good embedding. Busemann
functions, associated with infinite geodesics, are then introduced in
Section~\ref{sec:hyperbolicbuse}. We then explain in
Section~\ref{sec:hyperbolicgeod} how tight boundaries give rise to
specific Busemann functions on $\tilde{M}$. In
Section~\ref{sec:univcovbigeod}, we adapt the notion of leftmost
bigeodesic to $\tilde{M}$, and use it to state the crucial diangle
lemma. We construct equilibrium vertices in
Section~\ref{sec:trianglelemma}, and use them to state the no less
crucial triangle lemma. We apply all these tools in
Sections~\ref{sec:decI} and~\ref{sec:decII} to exhibit the inverses of
the assembling procedures I and II, respectively.

\subsection{Graph-theoretical properties of the universal cover}
\label{sec:goodfund}

Let us start by discussing some properties of the underlying graph of
$\tilde{M}$, which is infinite. For this, it is useful to introduce
the notion of ``good embedding''.

Recall from Section~\ref{univcovsec} that the map $\tilde{M}$ is
obtained by lifting the map $M$ in the universal cover $\tilde{S}$ of
the triply punctured sphere $S$, and that we constructed $\tilde{S}$
as a square tiling $(S_\wl)_{\wl \in F}$ dual to the infinite
$4$-regular tree $F$. Let us denote by $\Sigma$ the projection of the
boundaries of the square $S_\varnothing$ (hence of any square $S_\wl$)
on the sphere $S'$: it is a path connecting $x_A$ to $x_C$ via $x_B$,
see again Figure~\ref{fig:Univcovcarre}.

In general, the edges of $\tilde{M}$ may connect vertices belonging to
squares arbitrarily far from each other in $F$, since the edges of $M$
may cross $\Sigma$ in an arbitrarily complicated manner. We can
simplify the situation by an appropriate deformation of the embedding
of $M$ in $S$ (hence, of the embedding of $\tilde{M}$ in
$\tilde{S}$). More precisely, we say that we have a \emph{good
  embedding} if every edge of $M$, with its endpoints excluded, is
either entirely contained in $\Sigma$, or intersects it in at most one
point. Note that, in the latter case, the endpoints may also belong to
$\Sigma$.  See again Figure~\ref{fig:Univcovcarremap} for an example.

\begin{lem}
  Every map $M$ with three boundaries admits a good embedding.
\end{lem}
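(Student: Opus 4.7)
Plan: I would use a bigon-reduction argument. Fix any embedding of $M$ in $S'$ and any simple arc $\Sigma_0$ from $x_A$ to $x_C$ through $x_B$, perturbed so that $\Sigma_0$ meets edges of $M$ transversally in finitely many points, and avoids vertices of $M$ in its interior (except possibly at its endpoints when some boundaries are boundary-vertices). Let $N(\Sigma)$ be the total number of interior intersections of $\Sigma$ with edges of $M$, and let $\Sigma$ minimize $N$ over the isotopy class of $\Sigma_0$ rel endpoints among arcs in general position.

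The main claim is that each edge of $M$ meets such a minimizer $\Sigma$ in at most one interior point, which is exactly the good embedding condition (the case of an edge entirely contained in $\Sigma$ being a degenerate subcase). Suppose for contradiction that some edge $e$ crosses $\Sigma$ at consecutive interior points $p, q$ along $e$: then the sub-arc $e_{pq}$ of $e$ between them is disjoint from $\Sigma$ in its interior, and together with the sub-arc $\Sigma_{pq}$ of $\Sigma$ between $p$ and $q$, it forms a simple closed curve $\gamma$ in $S'$ bounding two closed disks $D^\pm$. I would then argue that one of these disks, say $D^-$, can be taken \emph{innermost}, meaning its open interior contains neither any puncture nor any other intersection of $\Sigma$ with an edge of $M$. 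This is possible because $x_A$ and $x_C$ lie on the portion of $\Sigma$ complementary to $\Sigma_{pq}$ (they are the endpoints of $\Sigma$, which is strictly larger than $\Sigma_{pq}$), and $x_B$ lies either on $\Sigma_{pq}$ (hence on $\gamma$, not strictly inside either disk) or on its complement (hence in the same disk $D^+$ as $x_A, x_C$); and any remaining interior crossings inside $D^-$ yield smaller sub-bigons, so iterative refinement together with the finiteness of $N(\Sigma)$ produces the desired innermost configuration.

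Given such an innermost, puncture-free $D^-$, a routine planarity argument shows that its open interior contains no vertex or edge of $M$ either: any interior vertex would have to be joined to the rest of $M$ by an edge path in $M$, which cannot cross $e_{pq}$ (by planarity of $M$) nor $\Sigma_{pq}$ (by the innermost property). Pushing $\Sigma_{pq}$ across $D^-$ past $e_{pq}$ therefore yields a new simple arc $\Sigma'$ in the same isotopy class with $N(\Sigma') = N(\Sigma) - 2$, contradicting minimality of $N(\Sigma)$. The main obstacle in this plan is making the innermost, puncture-free bigon argument fully rigorous in the presence of the three punctures; this requires careful case analysis depending on the position of $x_B$ relative to $\Sigma_{pq}$, but the techniques are standard in the surface topology of punctured spheres and adapt directly to the present setting.
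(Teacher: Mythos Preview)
Your approach differs from the paper's: rather than a variational bigon-reduction argument, the paper constructs $\Sigma$ directly as a simple path in the first derived map $M'$ of $M$ (essentially the barycentric subdivision), going from $x_A$ to $x_C$ via $x_B$ and ``straight'' through every edge-midpoint vertex. Such a path crosses each edge of $M$ at most once by construction, and one then deforms the embedding so that this path coincides with $\Sigma$.

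Your argument has a genuine gap, precisely in the case you flag as harmless: $x_B\in\Sigma_{pq}$. Pushing $\Sigma_{pq}$ across the puncture-free disk $D^-$ to the other side of $e_{pq}$ yields an arc $\Sigma'$ that no longer passes through $x_B$, so it leaves the isotopy class rel $\{x_A,x_B,x_C\}$, and you cannot conclude $N(\Sigma')<N(\Sigma)$ within that class. In fact your main claim is false as stated. Take $M$ to be the figure-eight map: a single vertex $u$ with two loops $e_1,e_2$, whose three faces (the two loop interiors $f_1,f_2$ and the exterior $f_3$) are the boundary-faces, with $x_B\in f_1$ and $x_A,x_C$ in $f_3,f_2$. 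Any simple arc from $x_A$ through $x_B$ to $x_C$ in general position (hence avoiding $u$) must enter and leave $f_1$, so it crosses $e_1$ at least twice; the minimum of $N(\Sigma)$ over general-position arcs is $3$, realized with $e_1$ crossed twice, and no bigon move is available because $x_B$ necessarily lies on the sub-arc $\Sigma_{pq}$ between the two $e_1$-crossings. The only good embeddings here route $\Sigma$ through the vertex $u$, which your general-position hypothesis rules out from the start. The paper's derived-map construction handles this automatically, since its path may pass freely through vertices of $M$; salvaging your approach would require dropping the transversality assumption and supplementing bigon moves with moves that absorb pairs of crossings into a vertex, which is not as routine as you suggest. (A second, independent issue: when the two crossings at $p,q$ have the same sign, the complementary arcs $\Sigma_{p-},\Sigma_{q-}$ lie in \emph{different} disks bounded by $\gamma$, so your assertion that $x_A,x_C$ lie in a common disk $D^+$ also fails in general.)
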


\begin{proof}
  Consider the first derived map $M'$ of $M$, which is
  defined~\cite{Tutte63} as the triangulation obtained by
  superimposing $M$ with its dual---which creates a quadrangulation
  called the derived map~\cite{Schaeffer15} of $M$---and splitting
  each quadrangle into two triangles by connecting each vertex of $M$
  to all neighboring dual vertices. The vertex set of $M'$ can be
  partitioned into $\{W,W^*,W^\dagger\}$, where $W$, $W^*$ and
  $W^\dagger$ correspond respectively to the vertices, faces and edges
  of $M$, and the edges of $M'$ correspond to the incidence relations
  in $M$. Note that the vertices of $M'$ have even degree, and those
  in $W^\dagger$ have degree four.

  The boundaries $\partial_A,\partial_B,\partial_C$ of $M$ become
  vertices in $M'$, which we can place at the punctures $x_A,x_B,x_C$
  of $S$. Let $P$ be a simple path on $M'$ connecting $x_A$ to $x_C$
  via $x_B$, and going ``straight'' at every vertex in
  $W^\dagger$. Such a path always exists and, by deforming $M'$ in
  such a way that $P$ coincides with $\Sigma$, we get a good
  embedding of $M$.
\end{proof}

When the embedding of $M$ is good, which we will assume from now on,
the lifted edges of $\tilde{M}$ remains ``local'' with respect to the
tiling $(S_\wl)_{\wl \in F}$, i.e.\ they may only connect vertices
lying in the same square or in neighboring squares (i.e.\ squares
$S_\wl,S_{\wl'}$ with
$\wl^{-1}\wl' \in \{\al,\overline{\al},\cl,\overline{\cl}\}$).
In particular, when we remove the finitely many vertices belonging to
the square $S_\varnothing$ (including those possibly placed on its
boundaries and at its ideal corners) and their incident edges, then
$\tilde{M}$ is disconnected in four infinite pieces.  It follows that
$\tilde{M}$ has infinitely many ends in the graph-theoretical
sense. These ends are in natural bijection with those of the infinite
$4$-regular tree $F$.

Recall that $\tilde{M}$ has two possible types of vertices, namely
regular vertices with finite degree, and ideal vertices (placed at
ideal corners of $\tilde{S}'$) with infinite degree. In the absence of
ideal vertices (i.e.\ when $M$ has only boundary-faces), the
underlying graph of $\tilde{M}$ is locally finite.  In the presence of
ideal vertices, we have the following weaker property:

\begin{lem}
  \label{lem:finvert}
  Let $v$ and $v'$ be two vertices of $\tilde{M}$, and $r$ a nonnegative
  integer. Then, the number of simple paths from $v$ to $v'$ having
  length at most $r$ is finite.
\end{lem}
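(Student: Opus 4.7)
The plan is to bound the number of simple paths in $\tilde{M}$ from $v$ to $v'$ of length at most $r$ by reducing to a count of suitably enriched walks in the finite map $M$ via the projection $p:\tilde{M}\to M$ and the path-lifting property of the universal cover. The obstacle to a direct graph-theoretic branching argument is of course that $\tilde{M}$ is not locally finite: ideal vertices have infinite degree.

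First I would observe that every simple path $P$ in $\tilde{M}$ projects under $p$ to a walk $w$ of the same length in $M$, from $p(v)$ to $p(v')$. I would then enrich $w$ by recording, at each intermediate occurrence of a boundary-vertex of $M$ in $w$, the corner of $M$ that is used by $P$ at the corresponding ideal vertex (equivalently, the local side on which the puncture is circumvented). Since $M$ is a finite map in which every vertex has finite degree, the number of such enriched walks from $p(v)$ to $p(v')$ of length at most $r$ is finite.

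I would then argue that, given the starting vertex $v$, each enriched walk has at most one lift to $\tilde{M}$: viewed as a continuous path $\gamma$ in $S'$, its restriction to the complement of its puncture times lies in $S$ and lifts uniquely by the standard path-lifting property of $p:\tilde{S}\to S$, and by continuity in $\tilde{S}'$ the lift at each puncture visit converges to a uniquely determined ideal corner and resumes into the adjacent square of the tiling $(S_{\wl})_{\wl\in F}$ prescribed by the recorded corner datum. This gives an injection from simple paths of length at most $r$ from $v$ to $v'$ in $\tilde{M}$ into the finite set of enriched walks, and the lemma follows.

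The main subtlety will be to verify carefully this uniqueness of the lift at an ideal corner, since the ideal vertex itself has infinitely many incident edges in $\tilde{M}$. The good embedding established just before the lemma is what makes the picture transparent: the edges incident to an ideal corner fan out by squares $S_{\wl}$ in a linear $\Z$-order under the stabilizer of that corner, and adjacent to any fixed incoming edge there are exactly two corners, one on each side, each the unique local lift of the corresponding corner at the boundary-vertex of $M$; the side recorded in the enriched walk therefore determines the outgoing edge at the ideal vertex without ambiguity.
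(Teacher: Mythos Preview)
Your approach has a genuine gap at the key step: the corner/side datum you record at a boundary-vertex visit in $M$ does \emph{not} determine the outgoing edge at the corresponding ideal vertex in $\tilde{M}$. Suppose $P$ arrives at the ideal vertex $\tilde{x}_A$ along an edge $e$ and leaves along an edge $e'$. There are infinitely many edges at $\tilde{x}_A$ projecting to $p(e')$, namely $A^n e'$ for $n\in\Z$, and any of them could be the actual outgoing edge; your ``side'' tells you only whether $e'$ lies clockwise or counterclockwise from $e$ in the linear order at $\tilde{x}_A$, not how far. Equivalently, a walk in $M$ through a boundary-vertex, enriched by a circumvention side as in Figure~\ref{fig:boundaryvertex}, does not pin down a homotopy class of path in $S$: the winding number around the puncture is an arbitrary integer that your enrichment fails to record. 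So your claimed injection from simple paths to enriched walks is not an injection, and the finiteness of enriched walks does not conclude.

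This is precisely the obstruction that forces the paper to use both the simplicity of $P$ and the fixed endpoint $v'$. The paper tracks, via the good embedding, which squares $S_\wl$ are visited: a pass through an ideal vertex over $x_A$ allows a jump $S_\wl\to S_{\wl\al^n}$ with $n$ arbitrary, so a priori infinitely many squares are reachable in $r$ steps. The decisive observation---absent from your argument---is that a big jump (with $|n|>r$) cannot be undone in the remaining $\le r$ steps without revisiting the same ideal vertex, because $F$ is free and its Cayley graph is a tree; simplicity forbids the revisit, and the fixed endpoint forces the undoing. This bounds the jumps, hence the set of visited squares, hence the number of simple paths.

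If you want to rescue the projection-and-lift strategy, you must record at each boundary-vertex visit an integer (the jump $n$) rather than a side, and then bound those integers using simplicity and the tree structure of $F$---at which point you are reproducing the paper's proof in different clothing.
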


\begin{proof}
  Since we have a good embedding, we may keep track of the squares
  visited by a path on $\tilde{M}$:
  \begin{itemize}
  \item when following an edge between regular vertices, we may
    either remain in the same square $S_\wl$, or \emph{move} to a
    neighboring square $S_{\wl\mathtt{l}}$,
    $\mathtt{l} \in \{\al,\overline{\al},\cl,\overline{\cl}\}$,
  \item when passing through an ideal vertex projecting to $x_A$ (if
    there are any), we may \emph{jump} from the square $S_\wl$ to any
    square of the form $S_{\wl \al^n}$, with $n \in \Z$ arbitrary,
  \item similarly, when passing through an ideal vertex projecting to
    $x_C$ (if there are any), we may jump from $S_\wl$ to any square
    $S_{\wl \cl^n}$, $n \in \Z$,
  \item finally, when passing through an ideal vertex projecting to
    $x_B$ (if there are any), we may jump from $S_\wl$ to squares
    of the form $S_{\wl \bl^n}$, $S_{\wl \al \bl^n}$,
    $S_{\wl \bl^n \cl}$ or $S_{\wl \al \bl^n \cl}$, with $n \in \Z$
    and $\bl=\overline{\al}\ \overline{\cl}$ (see again
    Figure~\ref{fig:Univcovcarre}).
  \end{itemize}

  Without loss of generality, we may assume that the initial vertex
  $v$ belongs to the square $S_\varnothing$. Then, we may reach after
  $r$ steps only squares of the form
  $ S_{\wl_1 \mathtt{l}_1^{n_1} \cdots \wl_s \mathtt{l}_s^{n_s}}$,
  where $\wl_1,\ldots,\wl_s$ are elements of $F$ such that
  $|\wl_1|+\cdots+|\wl_s|+s \leq 2r$,
  $\mathtt{l}_1,\ldots,\mathtt{l}_s$ are equal to $\al$, $\bl$ or
  $\cl$, and $n_1,\ldots,n_s$ are arbitrary integers. Hence, generally
  speaking, infinitely many squares may be reached.

  However, here we fix the endpoint $v'$, hence the final square 
  $ S_{\wl_1 \mathtt{l}_1^{n_1} \cdots \wl_s \mathtt{l}_s^{n_s}}$. We
  claim that, on any simple path from $v$ to $v'$ with length at most
  $r$, we may only visit squares of the form
  $ S_{\vl_1 \mathtt{l}_1^{m_1} \cdots \vl_s \mathtt{l}_s^{m_s}}$,
  where $\vl_1,\ldots,\vl_s$ are elements of $F$ at bounded distance
  from the neutral element $\varnothing$, and $m_1,\ldots,m_s$ are
  integers such that either $\abs{m_i}\leq r$, or
  $\abs{m_i-n_i} \leq r$. The reason is that, if we perform a ``big
  jump'' (larger than $r$) at an ideal vertex, then we cannot ``undo''
  the jump in less than $r$ steps since we cannot revisit the same
  ideal vertex again---here we use the fact that $F$ is a free group,
  whose Cayley graph is a tree. Therefore, the number of squares that
  may be visited is finite, and since each square contains finitely
  many vertices, the claim follows.
\end{proof}

Note that Lemma~\ref{lem:finvert} becomes false if, instead of simple
paths, we consider general paths, or even nonbacktracking
paths. Indeed, without the simplicity assumption, it is possible to do
arbitrarily big jumps, then undo them.

Note also that we have not used the fact that the boundaries of $M$
are tight, in fact all the discussion in this subsection remains valid
without this assumption.

\subsection{Infinite geodesics and Busemann functions}
\label{sec:hyperbolicbuse}

Next, we introduce the main tool that will allow us to decompose maps with tight boundary-faces. This tool is the notion of Busemann function, a classical 
object of metric geometry, see for instance Chapter 5 in \cite{BuBuIv01}. 
In the infinite map $\tilde{M}$, we define an \emph{infinite
  geodesic} (also called a geodesic ray in metric geometry) 
as an infinite sequence $(\gamma(t))_{t \in \N}$ of vertices such that
\begin{equation}
  \tilde{d}(\gamma(t),\gamma(t'))=\abs{t-t'}
\end{equation}
for all $t,t'$ (recall that $\tilde{d}$ denotes the graph distance in
$\tilde{M}$). A \emph{biinfinite geodesic} is defined in exactly the
same way, except that the sequence is indexed by $\Z$ instead of
$\N$. To simplify our discussion, we overlook the fact that the map
may be non simple, which would in all rigor require to specify which
edges are visited by the geodesic.

With an infinite geodesic $\gamma$, we may associate its \emph{Busemann
  function} $B_\gamma$ which assigns to a vertex $v$ the quantity
\begin{equation}
  \label{eq:Busedef}
  B_\gamma(v) = \lim_{t \to +\infty} \tilde{d}(v,\gamma(t))-t.
\end{equation}
This quantity is well-defined since the function
$t \mapsto \tilde{d}(v,\gamma(t))-t$ is nonincreasing, by virtue of
the triangle inequality, and bounded from below by
$-\tilde{d}(v,\gamma(0))$.  In fact, since we are in a discrete metric
space, we have $B_\gamma(v) = \tilde{d}(v,\gamma(t))-t$ for $t$
large enough. We also have $B_\gamma(\gamma(t))=-t$ for all $t$.

It is not difficult to check that $B_\gamma$ is a $1$-Lipschitz
function, changes parity along each edge since the map $\tilde{M}$ is bipartite, and
admits no local minimum: every vertex $v$ has an adjacent vertex $v'$
such that $B_\gamma(v')=B_\gamma(v)-1$.

For a biinfinite geodesic $\gamma$, we define its Busemann function
$B_{\gamma}$ in the same way. Note that a change of parametrization
$t \mapsto t+t_0$ of $\gamma$ changes $B_\gamma$ by a constant, so a
Busemann function should really be viewed as ``defined modulo a
constant''. Note however that the change of parametrization
$t \mapsto -t$ gives rise to a different Busemann function.

\begin{lem}
  \label{lem:gdamdef}
  Let $\gamma$, $\gamma'$ be two infinite geodesics, and suppose that
  there exists a finite set $V_0$ of vertices whose removal splits
  $\tilde{M}$ in several connected components, such that $\gamma$ and
  $\gamma'$ eventually belong to different connected components
  (i.e. $\gamma(t)$ remains in one connected component and
  $\gamma'(t)$ in another for large enough $t$). Then, the function
  $B_\gamma+B_{\gamma'}$ admits a global minimum, which is reached at some
  $v_0 \in V_0$.
  Furthermore, there exists at least one biinfinite geodesic along
  which $B_\gamma$ is strictly increasing and $B_{\gamma'}$ is
  strictly decreasing, and a vertex $v$, not necessarily in $V_0$,
  belongs to such a geodesic if and only if it is a global minimum of
  $B_\gamma+B_{\gamma'}$.
\end{lem}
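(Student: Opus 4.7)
The plan is to set $\Phi := B_\gamma + B_{\gamma'}$ and proceed in three stages: (i) locate a global minimum of $\Phi$ inside $V_0$; (ii) from any global minimum construct a biinfinite geodesic with the prescribed monotonicity; (iii) show conversely that every vertex on such a biinfinite geodesic is a global minimum of $\Phi$. Throughout I would freely use that $B_\gamma$ and $B_{\gamma'}$ are $1$-Lipschitz, change parity along edges and admit no local minimum, as well as the fact that for each fixed $v$ one has $\tilde{d}(v,\gamma(t)) - t = B_\gamma(v)$ once $t$ is sufficiently large (and similarly for $\gamma'$).

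For stage (i), for every sufficiently large pair $(t,t')$ the vertices $\gamma(t)$ and $\gamma'(t')$ lie in different components of $\tilde{M}\setminus V_0$, so every geodesic between them passes through some vertex $v_0(t,t')\in V_0$. For any vertex $v$ and $t,t'$ large enough that the Busemann limits at $v$ and at all of $V_0$ have stabilized,
\[ \Phi(v) = \tilde{d}(v,\gamma(t)) + \tilde{d}(v,\gamma'(t')) - t - t' \geq \tilde{d}(\gamma(t),\gamma'(t')) - t - t' = \Phi(v_0(t,t')). \]
Since $V_0$ is finite, $\Phi$ attains its restriction-to-$V_0$ minimum at some $v_0 \in V_0$, and the displayed inequality upgrades this to a \emph{global} minimum.

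For stage (ii), start from any global minimum $v$ of $\Phi$. Since $B_{\gamma'}$ has no local minimum, pick a neighbor $v_1$ with $B_{\gamma'}(v_1) = B_{\gamma'}(v) - 1$; combining $\Phi(v_1) \geq \Phi(v)$ with the $1$-Lipschitz property of $B_\gamma$ forces $B_\gamma(v_1) = B_\gamma(v) + 1$, so $v_1$ is still a global minimum. Iterating produces a sequence $v =: v_0, v_1, v_2,\ldots$ with $B_\gamma(v_i) = B_\gamma(v) + i$ and $B_{\gamma'}(v_i) = B_{\gamma'}(v) - i$, which is an infinite geodesic since the Lipschitz bound $\tilde{d}(v_i,v_j) \geq |B_\gamma(v_i)-B_\gamma(v_j)| = |i-j|$ is matched by the path itself. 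Interchanging the roles of $B_\gamma$ and $B_{\gamma'}$ extends the construction in the opposite direction to yield a biinfinite geodesic through $v$ with the required monotonicity.

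For stage (iii), suppose $v$ lies on such a biinfinite geodesic $(v_i)_{i\in\Z}$. Along it, $B_\gamma$ increases by exactly $1$ and $B_{\gamma'}$ decreases by exactly $1$ at each step (by $1$-Lipschitzness combined with strict monotonicity), so $\Phi \equiv \Phi(v)$ on the geodesic. For large $n$ and much larger $t,t'$, the concatenation of a geodesic from $\gamma(t)$ to $v_{-n}$, the segment $v_{-n}\to v_{-n+1}\to \cdots \to v_n$, and a geodesic from $v_n$ to $\gamma'(t')$ has length $(t + B_\gamma(v) - n) + 2n + (t' + B_{\gamma'}(v) - n) = t + t' + \Phi(v)$, so $\tilde{d}(\gamma(t),\gamma'(t')) - t - t' \leq \Phi(v)$. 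Comparing with stage (i), which identifies this quantity with $\min \Phi$ for $t,t'$ large, yields $\Phi(v) = \min \Phi$. The main obstacle I foresee is making stage (i) fully rigorous in the possible presence of ideal vertices of infinite degree, where the finiteness of $V_0$ and the stabilization of each Busemann function at each fixed vertex are precisely what is needed for the inequalities and limits to be legitimate.
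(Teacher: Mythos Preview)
Your stages (i) and (ii) are correct and coincide with the paper's argument. The problem is stage (iii): the concatenated path from $\gamma(t)$ via $v_{-n},\ldots,v_n$ to $\gamma'(t')$ has length $t+t'+\Phi(v)$, so it only furnishes the bound $\td(\gamma(t),\gamma'(t'))-t-t'\le\Phi(v)$. Combined with the identification of the left side with $\min\Phi$ from stage (i), this yields $\min\Phi\le\Phi(v)$, which is the \emph{trivial} direction; nothing in your argument gives $\Phi(v)\le\min\Phi$.

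This is not a mere gap in exposition: the ``only if'' direction is actually false at the stated level of generality (and the paper's own proof merely asserts it as ``straightforward''). On the bipartite ladder $\Z\times\{0,1\}$ with edges $(n,j)\sim(n\pm1,j)$ and $(n,0)\sim(n,1)$, taking $\gamma(t)=(t,0)$, $\gamma'(t)=(-t,0)$ and $V_0=\{(0,0),(0,1)\}$, one computes $B_\gamma(n,j)=j-n$ and $B_{\gamma'}(n,j)=j+n$, so $\Phi\equiv 0$ on $\Z\times\{0\}$ and $\Phi\equiv 2$ on $\Z\times\{1\}$; yet $\Z\times\{1\}$ is a biinfinite geodesic along which $B_\gamma$ and $B_{\gamma'}$ are strictly monotone in opposite directions. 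A correct proof of the converse therefore needs extra input specific to $\tilde{M}$, for instance that every geodesic ray along which $\td_A\to-\infty$ eventually merges with the contour $\gamma_A$ of the ideal face $\tdel_A$ (and likewise for $\tdel_B$). Once that holds, for $n$ large the vertices $v_{-n}$ and $v_n$ actually lie on $\gamma$ and $\gamma'$, your concatenated path becomes a genuine geodesic between $\gamma(t)$ and $\gamma'(t')$, and the inequality upgrades to the desired equality $\Phi(v)=\min\Phi$.
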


\begin{proof}
  Let $v_0$ be a vertex at which $B_\gamma+B_{\gamma'}$ attains its
  minimum in the finite set $V_0$.  For a given $v$, take $t$ large
  enough so that $B_\gamma(v)=\tilde{d}(v,\gamma(t))-t$,
  $B_{\gamma'}(v)=\tilde{d}(v,\gamma'(t))-t$, and so that $\gamma(t)$
  and $\gamma'(t)$ are in different connected components after
  removing $V_0$. By the triangle inequality, we have
  $B_\gamma(v)+B_{\gamma'}(v)\geq \tilde{d}(\gamma(t),\gamma'(t))-2t$,
  and we have
  $\tilde{d}(\gamma(t),\gamma'(t))=\tilde{d}(\gamma(t),v_1)+\tilde{d}(v_1,\gamma'(t))$
  for some $v_1 \in V_0$ since a geodesic path from $\gamma(t)$ to
  $\gamma'(t)$ necessarily meets $V_0$ at some vertex $v_1$. We get
  \begin{equation}
    B_\gamma(v)+B_{\gamma'}(v)\geq (\tilde{d}(v_1,\gamma(t))-t)
    +(\tilde{d}(v_1,\gamma'(t))-t)\geq
    B_\gamma(v_1)+B_{\gamma'}(v_1),
  \end{equation}
  which is at least $B_\gamma(v_0)+B_{\gamma'}(v_0)$.  This proves the
  first claim.

  Consider now an arbitrary minimizer $v$ of
  $B_\gamma+B_{\gamma'}$. Since $B_\gamma$ has no local minimum, we
  can construct an infinite path starting at $v$ along which
  $B_\gamma$ is strictly decreasing, and similarly we can construct
  another infinite path starting at $v$ along which $B_{\gamma'}$ is
  strictly decreasing. It is straightforward to check that, reversing
  the direction of the second path and concatenating it with the first
  one, we get a biinfinite geodesic with the wanted property, and that
  conversely any such geodesic can only pass through minimizers of
  $B_\gamma+B_{\gamma'}$.
\end{proof}

Note that, in this subsection, we have not used the fact that
$\tilde{M}$ is planar. In fact, our discussion (including
Lemma~\ref{lem:gdamdef}) holds for an arbitrary infinite
graph.

\subsection{Busemann functions associated with tight
  boundaries}
\label{sec:hyperbolicgeod}

We will now exploit the assumption that $M$ has tight
boundaries. Consider a boundary of $M$, say $\partial_A$ of length
$2a$. As discussed in Section~\ref{univcovsec}, $\partial_A$ has a
distinguished lift $\tdel_A$ which is an ideal vertex if $a=0$, and an
ideal face if $a>0$. In this latter case, we let $\gamma_A$ be the
biinfinite path on $\tilde{M}$ obtained by following the contour of
$\tdel_A$ in the counterclockwise direction. Choosing a reference
point $\gamma_A(0)$ arbitrarily, $\gamma_A$ is parametrized by $\Z$.

\begin{lem}\label{sec:infin-geod-busem}
  If $\partial_A$ is a tight boundary-face in $M$, then $\gamma_A$ is
  a biinfinite geodesic in $\tilde{M}$.
 \end{lem}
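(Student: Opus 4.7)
The inequality $\td(\gamma_A(s), \gamma_A(t)) \leq |t-s|$ is immediate, since the segment of $\gamma_A$ joining $\gamma_A(s)$ to $\gamma_A(t)$ is itself a path of length $|t-s|$ in $\tilde M$. For the reverse inequality, the plan is to project any candidate shortcut back to $M$ and invoke tightness of $\partial_A$.

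More precisely, suppose $s<t$ and let $\tilde\alpha$ be any path in $\tilde M$ from $\gamma_A(s)$ to $\gamma_A(t)$, of length $L$. Set $v := p(\gamma_A(s))$ and $w := p(\gamma_A(t))$ in $M$. The projection $\alpha$ of $\tilde\alpha$ is a path of length $L$ from $v$ to $w$ in $M$, and the projection $\beta$ of the arc of $\gamma_A$ from $\gamma_A(s)$ to $\gamma_A(t)$ is a path of length $t-s$ along the contour of $\partial_A$ in the counterclockwise direction. Because $\tilde\alpha$ and the arc of $\gamma_A$ share endpoints in the simply connected $\tilde S$, their projections $\alpha$ and $\beta$ are homotopic rel endpoints in $S$. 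Writing $t-s = 2ak + r$ with $k\geq 0$ and $0\leq r<2a$, I would let $\beta'$ denote the counterclockwise contour arc from $w$ back to $v$, of length $2a-r$ (empty if $r=0$ and $t-s=2ak$). The loop $\alpha\cdot\beta'$ based at $v$ has length $L+2a-r$ and is freely homotopic in $S$ to $\beta\cdot\beta'$, which is the contour of $\partial_A$ traversed $k+1$ times.

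When $t-s\leq 2a$ (that is, $k=0$), tightness of $\partial_A$ applied directly to $\alpha\cdot\beta'$ gives $L + 2a - r \geq 2a$, so $L \geq r = t-s$, settling the statement for short arcs. For $t-s>2a$, the same strategy requires a ``power tightness'' statement: any loop in $M$ freely homotopic to the $n$-th iterate of the tight contour of $\partial_A$ has length at least $2an$. I expect this to be the main technical step. It can be proved by induction on $n$: the base case $n=1$ is the defining tightness of $\partial_A$, and for the inductive step one uses that a closed walk in $M$ with winding number $n\geq 2$ around $\partial_A$ cannot be a simple cycle, since a simple closed curve on a surface represents a primitive element of $\pi_1$. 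Such a walk must therefore revisit some vertex, and splitting it there produces two sub-loops whose winding numbers sum to $n$; a nontrivial split $(n_1,n_2)$ with $1\leq n_i\leq n-1$ closes the induction on $n$, while a degenerate split $(0,n)$ pinches off a strictly shorter sub-walk still winding $n$ times and is handled by a secondary induction on length. Granting power tightness, the inequality $L+2a-r \geq 2a(k+1)$ yields $L\geq 2ak+r = t-s$, which completes the proof.
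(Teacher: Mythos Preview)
Your overall strategy---project a path in $\tilde M$ down to $M$, close it up along the contour of $\partial_A$, and invoke tightness---is exactly the paper's approach. The paper obtains the ``power tightness'' step by citing \cite[Proposition~2.5]{CdVEr10} (which it notes is closely related to the wrapping lemma of \cite{irredmaps}), whereas you attempt a direct proof.

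Your inductive proof of power tightness has a real gap. When you split a closed walk $\gamma$ freely homotopic to the $n$-th iterate of the contour of $\partial_A$ at a repeated vertex, the sub-loops $\gamma_1,\gamma_2$ satisfy $[\gamma_1][\gamma_2]=[\gamma]$ in $\pi_1(S)$, but there is no reason for each $[\gamma_i]$ to be conjugate to a power of the class $\al$ of that contour: on the triply punctured sphere $\pi_1(S)$ is free of rank two, and a product conjugate to $\al^n$ can have factors such as $\al\cl$ and $\cl^{-1}\al^{n-1}$. So your inductive hypothesis, which concerns only loops homotopic to iterates of the contour, does not apply to the $\gamma_i$. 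Strengthening the hypothesis to ``any loop whose $\al$-exponent sum is $m\geq 1$ has length $\geq 2am$'' does not help either, since a loop around $\partial_B$ has $\al$-exponent sum $\pm 1$ but length $2b$, which can be smaller than $2a$.

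The standard fix is to run the splitting argument not on $S$ but on the annular cover $\hat S\to S$ associated with the cyclic subgroup $\langle\al\rangle\subset\pi_1(S)$. There $\pi_1(\hat S)\cong\Z$, so winding number and free homotopy class coincide; tightness of $\partial_A$ in $M$ implies that its lift is a shortest noncontractible cycle in the lifted map $\hat M$, and your induction (now on length rather than on $n$, to absorb splits where one piece winds negatively) goes through cleanly on $\hat M$. A loop in $M$ freely homotopic to the $n$-th iterate lifts to a closed loop in $\hat M$ winding $n$ times, yielding $|\gamma|\geq 2an$. This is essentially the content of the cited references.
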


\begin{proof}
  If we view the contour of $\partial_A$ as a biinfinite sequence of
  vertices $(\lambda_A(t),t\in \Z)$ obtained by cycling infinitely
  many times around $\partial_A$, then~\cite[Proposition
  2.5]{CdVEr10}---which is closely related to the wrapping lemma of
  \cite{irredmaps}---implies that any path of the form
  $(\lambda_A(t+l),0\leq t\leq m)$ for $l\in \Z$ and $m\in \N$ is
  shortest in its homotopy class with fixed endpoints (such a path is
  called `tight' in \cite{CdVEr10}, so that our terminology
  agrees). Then, as noted in \cite[Section 2.1]{CdVEr10}, this
  property is preserved by taking lifts in the universal cover (and in
  fact, even in arbitrary covers).  Since $\gamma_A$ is one of these
  lifts, and since $\tilde{S}$ is simply connected, any two paths
  between the same vertices in $\tilde{M}$ are homotopic, and we
  conclude that $\gamma_A$ is a geodesic between any pair of points it
  visits, which is the definition of a biinfinite geodesic.
\end{proof}

We may therefore define the function $\td_A:V(\tilde{M}) \to \Z$ as follows:
\begin{itemize}
\item if $a=0$, then we let $\td_A$ be the distance in $\tilde{M}$ to the vertex $\tilde{x}_A$,
\item if $a>0$, then we let $\td_A=B_{\gamma_A}$ be the Busemann function associated
  with $\gamma_A$.
\end{itemize}
Let us record the relation
\begin{equation}\label{eq:1}
  \td_A(Av)=\td_A(v)-2a
\end{equation}
valid for any vertex $v$ of $\tilde{M}$, where $A\in \mathrm{Aut}(p)$
is the automorphism defined in Section~\ref{univcovsec}. This relation
is immediate in the case $a=0$; for $a>0$ it follows from the
definition of the Busemann function $B_{\gamma_A}$ and the fact that
$A(\gamma_A(t))=\gamma_A(t+2a)$ for any $t$.

In a completely similar manner, we define the functions $\td_B$,
$\td_C$ and $\td_{B'}$, which obey relations similar to~\eqref{eq:1}
mutatis mutandis. For later use, we record the following:
\begin{lem}
  \label{lem:ABCbuse}
  There exists a constant $k \in \Z$ such that, for any vertex $v$ of
  $\tilde{M}$, we have
  \begin{equation}
    \label{eq:2}
    \td_{B'}(Av) = \td_B(v) + k
  \end{equation}
  and
  \begin{equation}
    \label{eq:3}
    \td_B(Cv) = \td_{B'}(v) - k + 2b.
  \end{equation}
\end{lem}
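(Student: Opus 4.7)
The plan is to leverage the conjugation relations satisfied by $A$, $B$, $B'$, $C$ in $\mathrm{Aut}(p)\cong F$. Since $B=\al^{-1}\cl^{-1}$ and $B'=\cl^{-1}\al^{-1}$ as elements of $F$, one computes immediately that $A B A^{-1}=B'$ and $C B' C^{-1}=B$. These identities are the algebraic reflection of the fact that $\tdel_B$ and $\tdel_{B'}$ are two different lifts of the same boundary component $\partial_B$. In particular, since each of $B,B'$ is a ``parabolic-like'' automorphism with a unique invariant ideal corner, the conjugation $A B A^{-1}=B'$ forces $A\tdel_B=\tdel_{B'}$; symmetrically $C\tdel_{B'}=\tdel_B$.

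I would now split according to the value of $b$. When $b=0$, $\tdel_B$ and $\tdel_{B'}$ are the ideal vertices $\tilde{x}_B$ and $\tilde{x}_{B'}$, and $\td_B,\td_{B'}$ are distances to these. Because $A$ is an automorphism of $\tilde{M}$ (hence an isometry of $\tilde{d}$) sending $\tilde{x}_B$ to $\tilde{x}_{B'}$, we directly get $\td_{B'}(Av)=\td_B(v)$, and likewise $\td_B(Cv)=\td_{B'}(v)$, so the lemma holds with $k=0$ (and $2b=0$). When $b>0$, $\gamma_B$ and $\gamma_{B'}$ are the biinfinite counterclockwise contours of $\tdel_B$ and $\tdel_{B'}$, which by Lemma~\ref{sec:infin-geod-busem} are biinfinite geodesics. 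Since $A$ preserves orientations and sends $\tdel_B$ to $\tdel_{B'}$, the map $A\gamma_B$ coincides with $\gamma_{B'}$ up to a shift in parametrization: there exists $k_1\in\Z$ with $A\gamma_B(t)=\gamma_{B'}(t+k_1)$ for all $t$. Then a direct computation from the definition of the Busemann function,
\begin{equation*}
\td_{B'}(Av)=\lim_{t\to\infty}\bigl(\td(Av,\gamma_{B'}(t))-t\bigr)
=\lim_{t\to\infty}\bigl(\td(v,\gamma_B(t-k_1))-t\bigr)=\td_B(v)-k_1,
\end{equation*}
yields the first relation with $k:=-k_1$.

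For the second relation, instead of repeating the same argument for $C$ (which would give $\td_B(Cv)=\td_{B'}(v)-k_2$ for a second shift $k_2$, and then require matching $k_2$ with $-k+2b$), I would shortcut by appealing directly to the first relation together with the $\partial_B$-analog of \eqref{eq:1}, namely $\td_B(Bv)=\td_B(v)-2b$. Writing $B=A^{-1}C^{-1}$, substitute $v=Cw$ in this identity to obtain $\td_B(A^{-1}w)=\td_B(Cw)-2b$. On the other hand, applying the first relation to the vector $A^{-1}w$ gives $\td_{B'}(w)=\td_B(A^{-1}w)+k$, i.e.\ $\td_B(A^{-1}w)=\td_{B'}(w)-k$. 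Equating the two expressions for $\td_B(A^{-1}w)$ produces $\td_B(Cw)=\td_{B'}(w)-k+2b$, which is exactly the second relation.

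The only potentially delicate points are the identifications $A\tdel_B=\tdel_{B'}$ and $C\tdel_{B'}=\tdel_B$, which rest on the uniqueness of the ideal corner fixed by each of the cyclic automorphisms $B$, $B'$, and the fact that $A\gamma_B$ is really $\gamma_{B'}$ reparametrized (not its reverse), which uses orientation-preservation of $A$ on $\tilde{S}'$. Once these geometric facts are in place, the rest is bookkeeping on Busemann functions, as sketched above, and the case $b=0$ fits the same framework trivially.
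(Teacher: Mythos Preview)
Your proof is correct and follows essentially the same approach as the paper. Both arguments hinge on the identities $A\tdel_B=\tdel_{B'}$ and $C\tdel_{B'}=\tdel_B$ to obtain the first relation, and then exploit the group relation $B=A^{-1}C^{-1}$ together with $\td_B(Bv)=\td_B(v)-2b$ to pin down the constant in the second relation; the only cosmetic difference is that the paper introduces an auxiliary constant $k'$ and solves for it, whereas you derive the second relation directly from the first without naming a second constant.
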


\begin{proof}
  The first relation is a straightforward consequence of the fact that
  $\tdel_{B'}=A\tdel_B$. The constant $k$ is equal to $0$ in the case
  $b=0$ while, for $b>0$, it satisfies $A\gamma_B(k)=\gamma_{B'}(0)$
  (we could thus set it to $0$ too by choosing the reference points on
  $\tdel_B$ and $\tdel_{B'}$ appropriately).

  For the second relation, we note that $\tdel_B=C\tdel_{B'}$ hence
  there exists a constant $k'$ such that $\td_B(Cv) = \td_{B'}(v)+k'$.
  But, by the relation $B=A^{-1} C^{-1}$, we have
  \begin{equation}
    \td_B(v)=\td_B(CABv)=\td_{B'}(ABv)+k'=
    \td_{B}(Bv)+k'+k =\td_B(v)+k'+k-2b.
  \end{equation}
  hence $k'=-k+2b$.
\end{proof}

\begin{rem}
  In this paper, we only consider the Busemann functions obtained by
  following the contours of ideal faces counterclockwise. We surmise
  that considering the clockwise orientation might be useful to study
  ``strictly tight'' boundaries, i.e.\ boundaries whose contours are
  the unique paths of minimal length in their homotopy class.
\end{rem}

\subsection{Leftmost bigeodesics and the diangle lemma}
\label{sec:univcovbigeod}

Throughout this section, we consider specifically the pair of ideal
vertices/faces $(\tdel_A,\tdel_B)$, but all the discussion can be
adapted to any other pair, e.g.\ $(\tdel_A,\tdel_C)$,
$(\tdel_B,\tdel_{B'})$, etc.  We start by adapting the concepts of
Section~\ref{sec:geoddefs} to the context of the infinite map
$\tilde{M}$ and of Busemann functions.

\paragraph{Geodesics and bigeodesics.}
A \emph{geodesic towards $\tdel_A$} is a path on $\tilde{M}$ along
which $\td_A$ is strictly decreasing, which stops at $\tdel_A$ if
$a=0$, and which continues forever if $a>0$ (so that $\td_A$ tends to
$-\infty$ along the path).  Such a path $\gamma$ is indeed a geodesic,
i.e.\ satisfies $\td(\gamma(t),\gamma(t'))=|t-t'|$ for all $t,t'$ in
its interval of definition.
A geodesic towards $\tdel_B$ is defined similarly.

A \emph{bigeodesic between $\tdel_A$ and $\tdel_B$} is a path which is
both a geodesic towards $\tdel_A$ in one direction, and a geodesic
towards $\tdel_B$ in the other direction. Such bigeodesics always
exist: this is clear when $b=0$ (start at the vertex $\tdel_B$ and
follow a path along which $\td_A$ decreases) or similarly when $a=0$;
when $ab>0$, we may invoke Lemma~\ref{lem:gdamdef}, with $V_0$ the set
of vertices lying in the square $S_\varnothing$ (given a good
embedding).  We define the \emph{distance $\td_{AB}$ between $\tdel_A$
  and $\tdel_B$} as
\begin{equation}
  \label{eq:dAB}
  \td_{AB} := \min_v \left( \td_A(v) + \td_B(v) \right).
\end{equation}
This denomination is consistent with the fact that, for $a=b=0$,
$\td_{AB}$ is precisely the graph distance in $\tilde{M}$ between
$\tilde{x}_A$ and $\tilde{x}_B$. For $ab>0$, it is nothing but the
minimal value of the function $B_{\gamma_A}+B_{\gamma_B}$ as
considered in Lemma~\ref{lem:gdamdef}.

In the Poincaré disk representation of $\tilde{S}$, a geodesic towards
$\tdel_A$ forms a simple path which ``ends'' at the ideal point
$\tilde{x}_A$, see again Figure~\ref{fig:UnivcovPoincare}. A
bigeodesic between, say, $\tdel_A$ and $\tdel_B$, forms a simple path
connecting the ideal points $\tilde{x}_A$ and $\tilde{x}_B$. By
planarity, this path splits the disk (hence $\tilde{M}$) in two
regions, which we may distinguish as \emph{left} and \emph{right},
given an orientation of the bigeodesic (say, from $\tdel_B$ to
$\tdel_A$). Note that the interiors of these regions may be
disconnected, if the bigeodesic passes through an ideal vertex on its
way.

\paragraph{Geodesic vertices.}

A \emph{geodesic vertex between $\tdel_A$ and $\tdel_B$} is a vertex
belonging to a bigeodesic between $\tdel_A$ and $\tdel_B$. It is
straightforward to check (see again Lemma~\ref{lem:gdamdef} in the
case $ab>0$) that $v$ is such a vertex if and only if
\begin{equation}
  \label{eq:geodvertcond}
  \td_A(v)+\td_B(v) = \td_{AB}.
\end{equation}
The quantity $\td_A(v)$ is called the \emph{$\td_A$-latitude} of the
geodesic vertex $v$ between $\tdel_A$ and $\tdel_B$.

\begin{lem}
  \label{lem:fingeodvert}
  For any $r \in \Z$, the set $I_{AB}(r)$ of geodesic vertices between
  $\tdel_A$ and $\tdel_B$ having $\td_A$-latitude $r$ is finite.
\end{lem}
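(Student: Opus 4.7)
The plan is to deduce finiteness of $I_{AB}(r)$ from Lemma~\ref{lem:finvert} by showing that every $v\in I_{AB}(r)$ lies on a simple path of uniformly bounded length between two fixed vertices of $\tilde{M}$.

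In the simplest degenerate case $a=b=0$, the Busemann functions $\td_A$ and $\td_B$ reduce to graph distances from $\tilde{x}_A$ and $\tilde{x}_B$, so every $v\in I_{AB}(r)$ lies on a geodesic, hence a simple path, of fixed length $\td_{AB}$ connecting these two fixed vertices. Lemma~\ref{lem:finvert} then bounds the number of such paths, and thus of their vertices, giving the result.

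In the general case $a+b>0$, I would choose fixed reference vertices $u_A$ on $\gamma_A$ and $u_B$ on $\gamma_B$ (reducing to $\tilde{x}_A$ or $\tilde{x}_B$ when $a=0$ or $b=0$) and try to build, for each $v\in I_{AB}(r)$, a simple path of uniformly bounded length from $u_A$ through $v$ to $u_B$. Such a path can be obtained by concatenating portions of $\gamma_A,\gamma_B$ with geodesic segments along a bigeodesic through $v$, whose existence follows from the analog of Lemma~\ref{lem:gdamdef} applied to the biinfinite geodesics $\gamma_A$ and $\gamma_B$.

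The main obstacle, on which I would focus most of the effort, is obtaining a uniform bound on the length of these paths: a priori, the Busemann identity $\td(v,\gamma_A(t))=r+t$ only holds for $t$ large in a way that depends on $v$. I plan to handle this by using the good embedding of $\tilde{M}$ from Section~\ref{sec:goodfund} to track the square $S_\wl$ of the tiling containing $v$. The constraints $\td_A(v)=r$ and $\td_B(v)=\td_{AB}-r$, combined with the equivariance $\td_A(Av)=\td_A(v)-2a$ and its analog for $B$, should force $\wl$ to lie simultaneously close, in the Cayley graph of $F=\langle\al,\cl\rangle$, to both cyclic subgroups $\langle\al\rangle$ and $\langle\bl\rangle$, where $\bl=\ov\al\,\ov\cl$. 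A direct word-length computation gives $|\bl^{-k}\al^j|=2|k|+|j|$ in this Cayley graph, so the intersection of bounded-distance neighborhoods of $\langle\al\rangle$ and $\langle\bl\rangle$ is finite, confining $\wl$ to finitely many squares, each containing finitely many vertices. Lemma~\ref{lem:finvert} then concludes.
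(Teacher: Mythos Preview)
Your high-level plan---reduce to Lemma~\ref{lem:finvert} by trapping each $v\in I_{AB}(r)$ on a simple path of uniformly bounded length between two fixed finite sets of vertices---is exactly the paper's strategy, and your treatment of the case $a=b=0$ is the same as the paper's. The difficulty, as you correctly identify, is the uniform bound when $a+b>0$.

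However, your proposed resolution of this obstacle does not work as stated. You claim that the constraint $\td_A(v)=r$, together with the equivariance $\td_A(Av)=\td_A(v)-2a$, forces the square $S_\wl$ containing $v$ to lie at bounded tree-distance from the cyclic subgroup $\langle\al\rangle$, and similarly for $\td_B$ and $\langle\bl\rangle$. But the level set $\{\td_A=r\}$ is not confined to a neighborhood of $\langle\al\rangle$: the equivariance only compares $\td_A$ on $S_\wl$ with $\td_A$ on $S_{\al\wl}$, and says nothing about $S_{\cl\wl}$. Worse, in the presence of ideal vertices (e.g.\ if $c=0$), a short path from $v$ towards $\gamma_A$ may jump through an ideal vertex, so that $\td_A(v)$ can remain bounded while $d_F(\wl,\langle\al\rangle)$ is arbitrarily large. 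Thus the premise of your Cayley-graph intersection argument is unjustified. (Incidentally, if that confinement did hold, it would finish the proof directly---each square contains finitely many vertices---so the appeal to Lemma~\ref{lem:finvert} in your last sentence would be superfluous.)

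The paper's argument avoids this trap by \emph{not} fixing the endpoints $u_A,u_B$ in advance. Instead, it uses equivariance in a sharper way: since $A$ maps $S_\varnothing$ to $S_\al$, the maximal $\td_A$-latitude of a geodesic vertex lying in $S_{\al^n}$ equals $r_0-2an$ for some constant $r_0$, so one can choose $m$ with $r_0-2am<r$. Every bigeodesic between $\tdel_A$ and $\tdel_B$ must visit $S_{\al^m}$ (a separation argument in the good embedding), and the visited vertex has $\td_A$-latitude below $r$, hence lies beyond $v$ towards $\tdel_A$. An analogous choice of $S_{\bl^\ell}$ traps $v$ on a geodesic, hence simple, path between a vertex of $S_{\al^m}$ and one of $S_{\bl^\ell}$, with length bounded independently of $v$. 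Lemma~\ref{lem:finvert} then concludes. The point is to let the bigeodesic choose its own endpoints within two fixed \emph{squares}, rather than forcing it through two fixed \emph{vertices}.
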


\begin{proof}
  If $a=b=0$ then this is a corollary of
  Lemma~\ref{lem:finvert}. Suppose now that $a>0$. Given a good
  embedding, any bigeodesic between $\tdel_A$ and $\tdel_B$ must visit
  a vertex in the square $S_{\al^n}$, for any $n\geq 0$. Let us denote
  by $r_n$ the maximal $\td_A$-latitude of a geodesic vertex belonging
  to $S_{\al^n}$, then $r_{n+1}=r_n-2a<r_n$ by \eqref{eq:1}. Hence
  there exists an $m$ such that $r_{m}<r$. If $b=0$, we see that any
  geodesic vertex of $\td_A$-latitude $r$ belongs to a geodesic (hence
  simple) path between $\tdel_B$ and a vertex of $S_{\al^{m}}$, so
  there are finitely many of them by Lemma~\ref{lem:finvert}. If
  $b>0$, adapting the previous reasoning shows that there exists an
  $\ell$ such that the minimal $\td_A$-latitude of a geodesic vertex
  in $S_{\bl^{\ell}}$ is larger than $r$. Therefore, any geodesic
  vertex of $\td_A$-latitude $r$ belongs to a geodesic (hence simple)
  path between a vertex of $S_{\al^{m}}$ and a vertex of
  $S_{\bl^{\ell}}$, and again there are finitely many of them by
  Lemma~\ref{lem:finvert}.
\end{proof}

\paragraph{Leftmost bigeodesic via a geodesic vertex.}

Given a geodesic vertex $v$ between $\tdel_A$ and $\tdel_B$, we may,
as in Section \ref{sec:basic}, ``launch'' from $v$
the leftmost geodesics towards $\tdel_A$ and $\tdel_B$ (the planarity
of $\tilde{M}$ is used to identify the first edges of these leftmost
geodesics, as in the finite case). The only potential difficulty is
that these geodesics may encounter ideal vertices of infinite
degree. However, by Lemma~\ref{lem:fingeodvert}, only finitely many
edges through a given ideal boundary point make $\td_A$ and $\td_B$
decrease, therefore, when there are any, there is always a leftmost
one to be picked. 
Since $v$ is a geodesic vertex, the concatenation
of these two geodesics, oriented all the way from $\tdel_B$ to $\tdel_A$, forms 
a bigeodesic between $\tdel_A$ and $\tdel_B$ which we call 
the \emph{leftmost bigeodesic via $v$} and denote $\GG_{AB}(v)$. 
Note that the leftmost geodesic towards $\tdel_A$
eventually merges with it: this is obvious for $a=0$ since the ideal
vertex $\tilde{x}_A=\tdel_A$ is reached in finitely many steps; for
$a>0$, setting $\gamma_A$ as in Section~\ref{sec:hyperbolicgeod}, then
the leftmost geodesic from $v$ merges with $\gamma_A$ at the vertex
$\gamma_A(t)$ for the smallest value of $t$ such that
$\td_A(v)=\td(v,\gamma_A(t))-t$. Of course, a similar property holds
for the leftmost geodesic towards $\tdel_B$.

\paragraph{The diangle lemma.}

\begin{figure}[t]
  \centering 
  \fig{.6}{non-crossing-bigeod2}
  \caption{Illustration of the diangle lemma: we consider two geodesic
    vertices $v$ and $v'$ between $\tdel_A$ and $\tdel_B$, such that
    $v'$ lies on the right of the leftmost bigeodesic $\GG_{AB}(v)$
    (shown in orange), and such that $\td_A(v) \geq \td_A(v')$. Then,
    $\GG_{AB}(v)$ and $\GG_{AB}(v')$ (shown in purple) delimit a
    region $\DD_{AB}(v,v')$ (shown in white) which is a bigeodesic
    diangle of nonnegative exceedance $\td_A(v) - \td_A(v')$.}
  \label{fig:non-crossing-bigeod2}
\end{figure}

Consider two geodesic vertices $v$ and $v'$ between $\tdel_A$ and
$\tdel_B$, such that $v'$ lies on the right of the leftmost bigeodesic
$\GG_{AB}(v)$ (oriented from $\tdel_B$ to $\tdel_A$) or on it, and
such that $\td_A(v) \geq \td_A(v')$. This situation is illustrated on
Figure~\ref{fig:non-crossing-bigeod2}.

We claim that the leftmost bigeodesic $\GG_{AB}(v')$ remains on the
right of $\GG_{AB}(v)$. Indeed, as we start from $v'$ and follow a
geodesic towards $\tdel_A$, it is not possible to pass to the left of
$\GG_{AB}(v)$, since the latter consists of a leftmost geodesic
towards $\tdel_A$ and we started on its right. Furthermore, when we
actually follow the leftmost geodesic from $v'$ towards $\tdel_A$,
then we will eventually meet $\GG_{AB}(v)$ at a vertex $w$ (since all
leftmost geodesics towards $\tdel_A$ eventually merge with it), and
follow it onwards. Similarly, as we start from $v'$ and follow the
leftmost geodesic towards $\tdel_B$, it is again not possible to pass
to the left of $\GG_{AB}(v)$, hence $\GG_{AB}(v')$ stays to the right
of $\GG_{AB}(v)$. In particular, $v$ lies on the left of
$\GG_{AB}(v')$, hence on the right of $\GG_{BA}(v')$ (oriented from 
$\tdel_A$ to $\tdel_B$). As we have
$\td_B(v') \geq \td_B(v)$, we see that $v$ and $v'$ play a completely
symmetric role, upon exchanging the roles of $A$ and $B$ and viewing
Figure~\ref{fig:non-crossing-bigeod2} upside-down. We denote by $w'$
the vertex at which $\GG_{AB}(v)$ and $\GG_{AB}(v')$ merge when going
towards $\tdel_B$. Note that it is possible that $\GG_{AB}(v)$ and
$\GG_{AB}(v')$ have intermediate contacts at vertices of
$\td_A$-latitude strictly included between $\td_A(v')$ and
$\td_A(v)$.

We now consider the closed region $\DD_{AB}(v,v')$ delimited by
$\GG_{AB}(v)$ and $\GG_{AB}(v')$ (precisely, the region which is on
the right of $\GG_{AB}(v)$ and on the left of $\GG_{AB}(v')$, when
orienting them from $\tdel_B$ to $\tdel_A$), which we prune at $w$ and
$w'$ to remove their (possibly infinite) common parts towards
$\tdel_A$ and $\tdel_B$. Note that $\DD_{AB}(v,v')$ is connected but
its interior may be disconnected when there are intermediate contacts
between $\GG_{AB}(v)$ and $\GG_{AB}(v')$. As a degenerate case, it is
possible to have $\GG_{AB}(v)=\GG_{AB}(v')$, and then $\DD_{AB}(v,v')$
consists of a segment joining $v=w'$ to $v'=w$.

\begin{lem}[Diangle lemma]
  \label{lem:diangle}
  $\DD_{AB}(v,v')$ is a bigeodesic diangle of nonnegative exceedance
  $\td_A(v)-\td_A(v')$.
\end{lem}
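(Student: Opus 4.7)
The plan is to equip $\DD_{AB}(v,v')$ with four distinguished boundary corners and verify each clause in the definition of bigeodesic diangle from Section~\ref{sec:bigeo-diangles}. Let $w$ and $w'$ be the vertices where $\GG_{AB}(v)$ and $\GG_{AB}(v')$ merge going respectively towards $\tdel_A$ and $\tdel_B$, as introduced in the discussion preceding the lemma. I set the corners so that $v_1 := w'$, $w_{12} := v'$, $v_2 := w$, and $w_{21} := v$, and observe that, when $\DD_{AB}(v,v')$ is viewed as a planar bipartite submap of $\tilde{M}$ whose unique boundary-face is its exterior region in $\tilde{S}$, these corners appear in the counterclockwise order $c_1,c_{12},c_2,c_{21}$ demanded by the definition. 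Planarity and bipartiteness are inherited from $\tilde{M}$ (see Section~\ref{univcovsec}), and having a single boundary-face follows from $\DD_{AB}(v,v')$ being the closure of one of the two regions into which the two bigeodesics split $\tilde{S}$.

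Several clauses are then straightforward. The intervals $[c_1,c_2]$ and $[c_2,c_1]$ are by construction the portions of $\GG_{AB}(v')$ and $\GG_{AB}(v)$ between $w'$ and $w$, hence geodesic in $\tilde{M}$ and therefore also in the submap $\DD_{AB}(v,v')$ since distances in a submap dominate those in the ambient graph. The exceedance is computed directly as
\begin{equation*}
e = \td(w_{12},v_1) - \td(w_{21},v_1) = \td(v',w') - \td(v,w') = \td_A(v) - \td_A(v'),
\end{equation*}
which is nonnegative by hypothesis. The ``single common vertex'' conditions at $v_1$ and $v_2$ follow from the $\td_A$-latitude analysis recorded just above the lemma statement: any intermediate contact between $\GG_{AB}(v)$ and $\GG_{AB}(v')$ has $\td_A$-latitude strictly between $\td_A(v')$ and $\td_A(v)$, which lies outside the latitude ranges of $[c_{21},c_1]$ and $[c_{12},c_2]$, so that these intervals only meet the opposite ``long'' intervals at the prescribed corners.

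The main obstacle, which I expect to be the hardest step, is to prove the strict geodesicity of $[c_{12},c_2] = [v',w]$ and $[c_{21},c_1] = [v,w']$. I would treat these symmetrically and focus on $[v,w']$: suppose there is a geodesic $\tau$ from $v$ to $w'$ inside $\DD_{AB}(v,v')$ distinct from the portion of $\GG_{AB}(v)$. Since that portion is the leftmost geodesic from $v$ towards $\tdel_B$ in all of $\tilde{M}$, the planar structure forces $\tau$ to lie strictly to the right of $\GG_{AB}(v)$ in $\DD_{AB}(v,v')$. One would then like to exploit the leftmost property of $\GG_{AB}(v')$ on the right boundary: any contact between $\tau$ and $\GG_{AB}(v')$ would occur at a vertex $u^\ast$ whose $\td_A$-latitude lies in $[\td_A(v),\td_A(w')]$, and then the leftmost bigeodesic through $u^\ast$ would coincide with $\GG_{AB}(v')$ on the portion from $u^\ast$ to $w'$, while $\tau$ provides a path from $v$ to $u^\ast$ strictly to the right of $\GG_{AB}(v)$, leading via planarity to a contradiction with the leftmost nature of $\GG_{AB}(v')$. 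An alternative, possibly cleaner route would be to reduce to Proposition~\ref{prop:proptightslice}: launching leftmost bigeodesics from the intermediate vertices of $[v,w']$ towards $\tdel_A$ and $\tdel_B$ should carve a tight slice out of $\DD_{AB}(v,v')$ whose right boundary is $[v,w']$, the strict geodesicity of the latter then following from the very definition of tight slices.
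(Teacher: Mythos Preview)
Your overall strategy---identify the four corners and check each axiom of Section~\ref{sec:bigeo-diangles}---is exactly what the paper does. Your corner assignment is the mirror of the paper's (you take $v_1=w'$, $w_{12}=v'$, $v_2=w$, $w_{21}=v$ while the paper takes $v_1=w$, $w_{12}=v$, $v_2=w'$, $w_{21}=v'$), but this is harmless: it is the same diangle read from the other end, and your exceedance computation comes out the same.

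Where you go astray is in the strict geodesicity step, which you flag as the ``main obstacle'' but which in the paper is a one-line observation. The point you are missing is an orientation check. The region $\DD_{AB}(v,v')$ lies to the right of $\GG_{AB}(v)$ \emph{when the latter is oriented from $\tdel_B$ to $\tdel_A$}. But the portion $[v,w']$ is part of the leftmost geodesic launched from $v$ \emph{towards} $\tdel_B$, i.e.\ in the opposite direction. Reversing orientation, the interior of $\DD_{AB}(v,v')$ sits on the \emph{left} of that leftmost geodesic as you walk from $v$ to $w'$. Since ``leftmost'' means precisely that every other geodesic from $v$ towards $\tdel_B$ stays weakly to the right, no alternative geodesic from $v$ to $w'$ can enter the diangle at all. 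That is the whole argument; the same reasoning with $v'$ and $w$ handles $[c_{12},c_2]$.

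Your proposed route---assuming a competing geodesic $\tau$ inside the diangle and chasing a contradiction via contacts with $\GG_{AB}(v')$, or alternatively carving out a tight slice---is not needed, and as written it starts from a false premise: your claim that ``the planar structure forces $\tau$ to lie strictly to the right of $\GG_{AB}(v)$ in $\DD_{AB}(v,v')$'' conflates the two orientations. Once you straighten this out, the contradiction is immediate and the elaborate detour disappears. A minor omission: you should also note, as the paper does, that $\DD_{AB}(v,v')$ is a \emph{finite} map (any closed cycle in $\tilde{M}$ bounds only finitely many faces), so that the diangle axioms even apply.
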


\begin{proof}
  We have to check that $\DD_{AB}(v,v')$ satisfies the axioms of
  Section~\ref{sec:bigeo-diangles}. First we observe that it is by
  construction a finite map with one boundary-face (the interior of a
  closed cycle on $\tilde{M}$ always contains finitely many vertices,
  edges and faces).

  Then, to make the correspondence with the notations of
  Section~\ref{sec:bigeo-diangles}, we take $w_{12}=v$, $w_{21}=v'$,
  $v_1=w$ and $v_2=w'$ (see again Figure~\ref{fig:diangle}) and the
  corners $c_1,c_{12},c_2,c_{21}$ are selected in a natural
  manner. The boundary intervals $[c_1,c_2]$ and $[c_2,c_1]$ are
  geodesic since they correspond to parts of the bigeodesics
  $\GG_{AB}(v)$ and $\GG_{AB}(v')$.  The boundary intervals
  $[c_{12},c_2]$ and $[c_{21},c_1]$ are strictly geodesic since
  $\GG_{AB}(v)$ and $\GG_{AB}(v')$ are actually \emph{leftmost}
  bigeodesics: no geodesic between $v$ and $w'$ can enter into
  $\DD_{AB}(v,v')$, and similarly between $v'$ and $w$. Finally, $w$
  (resp.\ $w'$) is by definition the only vertex common to
  $[c_{21},c_1]$ and $[c_1,c_2]$ (resp.\ $[c_{12},c_2]$ and
  $[c_2,c_1]$).
\end{proof}

So far, our construction depends on the choice of geodesic vertices
$v$ and $v'$ satisfying the aforementioned properties that $v'$ is on
the right of $\GG_{AB}(v)$ and that $\td_A(v) \geq \td_A(v')$.
However, given two latitudes $r \geq r'$ such that the sets
$I_{AB}(r)$ and $I_{AB}(r')$, as defined in
Lemma~\ref{lem:fingeodvert}, are both nonempty, there exists a
canonical choice of such vertices. Indeed, we may consider the
\emph{leftmost} element $v$ of $I_{AB}(r)$, defined as the only vertex
$v \in I_{AB}(r)$ such that the region on the left of $\GG_{AB}(v)$
(again oriented from $\tdel_B$ to $\tdel_A$) contains no other element
of $I_{AB}(r)$. Similarly, we choose $v'$ to be the \emph{rightmost}
element of $I_{AB}(r')$. Clearly $v'$ is on the right of
$\GG_{AB}(v)$, which actually passes through the leftmost
element of $I_{AB}(r')$. This choice of $v$ and $v'$ makes
$\DD_{AB}(v,v')$ the largest possible, as in the case of
triply pointed maps discussed in Section~\ref{sec:gluinginvtp}, and we
will always encounter such maximal diangles in the following.

\subsection{Equilibrium vertices and the triangle lemma}
\label{sec:trianglelemma}

In the previous subsection, we have only considered the pair
$(\tdel_A,\tdel_B)$. Let us now add $\tdel_C$ in the game: our purpose
is to construct a bigeodesic triangle $\TT_{ABC}$ which,
interestingly, is canonical in the sense that it is entirely
determined by the triplet of distinguished ideal corners
$(\tilde{x}_A,\tilde{x}_B,\tilde{x}_C)$ of $\tilde{S'}$. Indeed, as
discussed in Section~\ref{univcovsec}, this triplet distinguishes
the triplet $(\tdel_A,\tdel_B,\tdel_C)$ of ideal vertices/faces of
$\tilde{M}$.

Recall the definition~\eqref{eq:dAB} of the distance $\td_{AB}$
between $\tdel_A$ and $\tdel_B$, and define $\td_{BC}$ and $\td_{CA}$
similarly. Inspired by the equilibrium conditions~\eqref{eq:rdtp},
we define $r_A$, $r_B$ and $r_C$ by
\begin{equation}
  \label{eq:rdtpbis}
  \td_{AB} = r_A + r_B, \quad \td_{BC}= r_B+r_C, \quad \td_{CA}=r_C+r_A.
\end{equation}
Note that $r_A$, $r_B$ and $r_C$ may now be negative, since the
``renormalized'' distances $\td_{AB}$, $\td_{BC}$ and $\td_{CA}$ may
be negative in the presence of ideal faces. From the very definition
of Busemann functions and from the bipartiteness of $\tilde{M}$, we
get that the quantity $\td_A(v)+\td_B(v)$ has the same parity for all
$v$, which is also necessarily the parity of $\td_{AB}$.  We
immediately deduce that $\td_{AB}+\td_{BC}+\td_{CA}$ is even, hence
$r_A$, $r_B$ and $r_C$ are integers.

We claim that the sets $I_{AB}(r_A)$ (as defined in
Lemma~\ref{lem:fingeodvert}), $I_{BC}(r_B)$ and $I_{CA}(r_C)$ are
always nonempty: this is true when $\tdel_A,\tdel_B,\tdel_C$ are all
ideal faces, since the bigeodesics between them are infinite and
therefore pass through geodesic vertices of any latitude; this is also
true when $\tdel_A,\tdel_B,\tdel_C$ are all ideal vertices, as
$I_{AB}(r_A)$ projects to the set $S_{AB}$ considered in
Figure~\ref{fig:threepointdecomp} and similarly for the other sets;
the other cases are left to the reader.

Furthermore, even though we had to choose a reference point along
$\tdel_A$ to define the Busemann function $\td_A$ when $a>0$, and
similarly for $\tdel_B$ and $\tdel_C$, the sets $I_{AB}(r_A)$,
$I_{BC}(r_B)$ and $I_{CA}(r_C)$ \emph{do not depend} on these choices.
Indeed, as we change the reference point along $\tdel_A$, say, $\td_A$ is
changed by a constant, but $r_A$ gets changed by the same constant.
We also have the identification $I_{AB}(r_A)=I_{BA}(r_B)$.

We now define $v_{AB}$ to be the rightmost element of $I_{AB}(r_A)$
(again orienting from $\tdel_B$ to $\tdel_A$), and define $v_{BC}$ and
$v_{CA}$ similarly by permuting $A,B,C$ cyclically. We call $v_{AB}$,
$v_{BC}$ and $v_{CA}$ \emph{equilibrium vertices}, as they satisfy
\begin{equation}
  \begin{split}
    \td_A(v_{AB})&=\td_A(v_{CA})=r_A,\\ \td_B(v_{AB})&=\td_B(v_{BC})=r_B,\\
    \td_C(v_{BC})&=\td_C(v_{CA})=r_C.
  \end{split}
\label{eqvert}
\end{equation}
By the previous paragraph, the equilibrium vertices are intrinsic to
$\tilde{M}$.

Consider now the leftmost bigeodesics $\GG_{AB}(v_{AB})$,
$\GG_{BC}(v_{BC})$ and $\GG_{CA}(v_{CA})$. We will show that they
delimit a region $\TT_{ABC}$ which is a bigeodesic triangle. For this,
we first state some technical lemmas.

\begin{lem}
  \label{lem:crossing}
  Let $v$ be a vertex strictly to the left of the leftmost bigeodesic
  $\GG_{AB}(v_{AB})$. Then, we have $\td_C(v)>r_C$.
\end{lem}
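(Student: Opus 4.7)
The plan is to lower-bound $\td_C$ on the bigeodesic $\GG_{AB}(v_{AB})$ using the equilibrium conditions~\eqref{eq:rdtpbis}, and then to reach $v$ from such a vertex by intercepting a geodesic from $v$ towards $\tdel_C$ as soon as it crosses $\GG_{AB}(v_{AB})$; this crossing will contribute strictly positive distance since $v$ itself lies off the bigeodesic.

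First, I will show that every vertex $w$ on $\GG_{AB}(v_{AB})$ satisfies $\td_C(w) \geq r_C$. Indeed, since $w$ is a geodesic vertex between $\tdel_A$ and $\tdel_B$, equation~\eqref{eq:geodvertcond} gives $\td_A(w) + \td_B(w) = \td_{AB} = r_A + r_B$. The defining inequalities coming from~\eqref{eq:dAB} applied to $\td_{CA}$ and $\td_{BC}$ read $\td_A(w) + \td_C(w) \geq r_A + r_C$ and $\td_B(w) + \td_C(w) \geq r_B + r_C$; summing these two and subtracting the previous identity yields $2\td_C(w) \geq 2r_C$.

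Next, I invoke planarity. Viewed inside $\tilde{S}'$, the bigeodesic $\GG_{AB}(v_{AB})$ is a simple topological curve with endpoints at the ideal corners $\tilde{x}_B$ and $\tilde{x}_A$, and by the Jordan curve theorem it separates $\tilde{S}'$ into two open regions; by the counterclockwise cyclic arrangement of the distinguished ideal corners fixed in Section~\ref{univcovsec}, the ideal corner $\tilde{x}_C$, and hence $\tdel_C$, lies in the region opposite to $v$. Picking a geodesic $\pi$ from $v$ towards $\tdel_C$ (in the sense of Section~\ref{sec:univcovbigeod}) realizing $\td_C(v)$, it starts on the $v$-side and must eventually enter the opposite region; since edges of $\tilde{M}$ cannot cross except at common vertices in the planar embedding, $\pi$ visits some vertex $w$ of $\GG_{AB}(v_{AB})$. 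As $\pi$ is a geodesic along which $\td_C$ decreases by one at each step, $\td_C(v) = \td(v,w) + \td_C(w)$, and combining the bound $\td_C(w) \geq r_C$ with $\td(v,w) \geq 1$ (using that $v \neq w$, because $v$ is strictly to the left while $w$ lies on $\GG_{AB}(v_{AB})$) yields the desired strict inequality $\td_C(v) > r_C$.

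The main delicate point will be the topological assertion that $\tdel_C$ lies on the side of $\GG_{AB}(v_{AB})$ opposite to $v$, together with the fact that when $c>0$ the infinite geodesic $\pi$ actually reaches that side in finitely many steps. Both follow from the cyclic order of the ideal corners combined with the first step, which ensures that $\td_C$ is bounded below by $r_C$ on the $v$-side of $\GG_{AB}(v_{AB})$ and therefore cannot tend to $-\infty$ without $\pi$ first crossing the bigeodesic.
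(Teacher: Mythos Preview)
Your proof is correct and follows essentially the same approach as the paper: force a geodesic from $v$ towards $\tdel_C$ to cross $\GG_{AB}(v_{AB})$ by planarity, then bound $\td_C$ on the crossing vertex via the equilibrium relations. Your symmetric summation argument for $\td_C(w)\geq r_C$ is a pleasant variant of the paper's case split (``either $\td_A(w)\leq r_A$ or $\td_B(w)\leq r_B$''), and arguably cleaner.

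One small caveat: in your final paragraph, the phrase ``the first step \ldots ensures that $\td_C$ is bounded below by $r_C$ on the $v$-side'' is not what the first step shows (it bounds $\td_C$ \emph{on} the bigeodesic, not on the whole left region); read literally, this is the statement of the lemma and would be circular. Fortunately this add-on is unnecessary: the crossing is already secured in your second paragraph by the purely topological observation that $\tdel_C$ lies on the right of $\GG_{AB}(v_{AB})$, together with the fact (from Section~\ref{sec:univcovbigeod}) that any geodesic towards $\tdel_C$ eventually reaches the ideal vertex $\tilde{x}_C$ when $c=0$, or merges with the contour $\gamma_C$ when $c>0$, both of which sit on the right. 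The paper simply asserts this in one line.
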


\begin{proof}
  Since $\tilde{x}_C$ is on the right of $\GG_{AB}(v_{AB})$, any
  geodesic from $v$ towards $\tdel_C$ must cross $\GG_{AB}(v_{AB})$,
  at a vertex denoted $w$ such that $\td_C(v)>\td_C(w)$. As $w$ is a
  geodesic vertex between $\tdel_A$ and $\tdel_B$, we have
  $\td_A(w)+\td_B(w)=\td_{AB}=r_A+r_B$, and therefore we have either
  $\td_A(w) \leq r_A$ or $\td_B(w) \leq r_B$. In the former case, the
  definition of $\td_{CA}$ implies that
  $\td_C(w)+\td_A(w) \geq \td_{CA}=r_C+r_A$ hence $\td_C(w) \geq
  r_C$. The same conclusion holds in the latter case, using rather
  $\td_{BC}$.
\end{proof}

\begin{cor}
  \label{cor:trianglealternative}
  Unless we have $v_{AB}=v_{BC}=v_{CA}$, the vertices $v_{BC}$ and
  $v_{CA}$ are both strictly to the right of $\GG_{AB}(v_{AB})$.
\end{cor}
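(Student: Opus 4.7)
The plan is to combine Lemma~\ref{lem:crossing} with the equilibrium identities~\eqref{eqvert} in two stages: first to rule out that $v_{BC}$ or $v_{CA}$ lies strictly to the left of $\GG_{AB}(v_{AB})$, and then to show that the only degenerate situation in which one of them actually sits on $\GG_{AB}(v_{AB})$ is the collapse $v_{AB}=v_{BC}=v_{CA}$.

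For the first stage I would observe that $\td_C(v_{BC})=\td_C(v_{CA})=r_C$ by~\eqref{eqvert}, so by the contrapositive of Lemma~\ref{lem:crossing} neither $v_{BC}$ nor $v_{CA}$ can lie strictly to the left of $\GG_{AB}(v_{AB})$. Each of them thus lies either strictly to the right of, or exactly on, the bigeodesic $\GG_{AB}(v_{AB})$. The task reduces to showing that if one of them, say $v_{BC}$, lies on $\GG_{AB}(v_{AB})$, then in fact $v_{AB}=v_{BC}=v_{CA}$.

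If $v_{BC}$ lies on $\GG_{AB}(v_{AB})$, then it is a geodesic vertex between $\tdel_A$ and $\tdel_B$, so $\td_A(v_{BC})+\td_B(v_{BC})=\td_{AB}=r_A+r_B$; combined with $\td_B(v_{BC})=r_B$ from~\eqref{eqvert}, this forces $\td_A(v_{BC})=r_A$. Since $\td_A$ is strictly monotone along the bigeodesic $\GG_{AB}(v_{AB})$, the only vertex of $\GG_{AB}(v_{AB})$ at $\td_A$-latitude $r_A$ is $v_{AB}$ itself, and hence $v_{BC}=v_{AB}$. Writing $v^*$ for this common vertex, I then note that $v^*$ satisfies the three equilibrium identities $\td_A(v^*)=r_A$, $\td_B(v^*)=r_B$, $\td_C(v^*)=r_C$, and so in particular $v^*\in I_{CA}(r_C)$. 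Assuming $v^*\neq v_{CA}$ for contradiction, the rightmost property defining $v_{CA}$ then forces $v^*$ to lie strictly to the left of $\GG_{CA}(v_{CA})$, and the cyclically permuted form of Lemma~\ref{lem:crossing} (with $A,B,C$ replaced by $C,A,B$) gives $\td_B(v^*)>r_B$, contradicting $\td_B(v^*)=r_B$. The case where $v_{CA}$ lies on $\GG_{AB}(v_{AB})$ is entirely symmetric.

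The main obstacle I expect is bookkeeping the ``rightmost'' orientation conventions when invoking Lemma~\ref{lem:crossing} in its cyclically permuted forms: the definition of $v_{CA}$ orients $\GG_{CA}$ from $\tdel_A$ to $\tdel_C$, so one has to verify that the cyclic symmetry $(A,B,C)\to(C,A,B)$ used in the contradiction step is genuinely consistent with the orientation convention set by the definition of $v_{CA}$. Once this bookkeeping is in place the rest of the argument is essentially formal.
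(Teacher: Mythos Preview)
Your proof is correct and follows the same route as the paper's: first exclude ``strictly left'' via Lemma~\ref{lem:crossing} and the equilibrium latitudes, then use monotonicity of the latitude along $\GG_{AB}(v_{AB})$ to force $v_{BC}=v_{AB}$ if it lies on the bigeodesic. The paper is terser in the final step, simply asserting that $v^*=v_{AB}=v_{BC}$ is ``clearly'' the rightmost element of $I_{CA}(r_C)$, whereas you spell this out via the cyclic form of Lemma~\ref{lem:crossing}; one small point to make explicit is that the rightmost property of $v_{CA}$ only gives ``not strictly to the right'', and you need the same monotonicity trick (both $v^*$ and $v_{CA}$ have $\td_C$-latitude $r_C$) to exclude the case that $v^*$ lies on $\GG_{CA}(v_{CA})$ before concluding it is strictly to the left.
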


\begin{proof}
  Since $v_{BC}$ and $v_{CA}$ are both at $\td_C$-latitude $r_C$, they
  cannot be strictly on the left of $\GG_{AB}(v_{AB})$ by the previous
  lemma. If one of them, say $v_{BC}$, is on $\GG_{AB}(v_{AB})$, then
  it is equal to $v_{AB}$ since these two vertices are at the same
  $\td_B$-latitude $r_B$. But then, $v_{AB}=v_{BC}$ belongs to
  $I_{CA}(r_C)$, and is clearly the rightmost element $v_{CA}$ of that
  set.
\end{proof}

Reasoning as in the discussion of the diangle lemma, we deduce from
Corollary~\ref{cor:trianglealternative} that $\GG_{BC}(v_{BC})$ and
$\GG_{CA}(v_{CA})$ remain on the right of
$\GG_{AB}(v_{AB})$. Furthermore, the two bigeodesics
$\GG_{AB}(v_{AB})$ and $\GG_{CA}(v_{CA})$ merge at a vertex $v_A$ when
following them towards $\tdel_A$, but are disjoint before. Similarly,
we introduce the merging vertices $v_B$ and $v_C$ of the bigeodesics
going towards $\tdel_B$ and $\tdel_C$, respectively.

We now consider the cycle on $\tilde{M}$ obtained by following
$\GG_{AB}(v_{AB})$ from $v_A$ to $v_B$, then $\GG_{BC}(v_{BC})$ until
$v_C$, and finally $\GG_{CA}(v_{CA})$ until we return to $v_A$. It is
a simple counterclockwise cycle which delimits a region denoted
$\TT_{ABC}$. See the left of Figure~\ref{fig:ProcedureI} for an
illustration (ignoring the right of this figure for now). Note that
$\TT_{ABC}$ is reduced to a single vertex if and only if
$v_{AB}=v_{BC}=v_{CA}$. This situation happens when there exists a
vertex which is a geodesic vertex between $\tdel_A$ and $\tdel_B$,
between $\tdel_B$ and $\tdel_C$, and between $\tdel_C$ and $\tdel_A$,
all at the same time. Such a vertex, if it exists, is necessarily
unique by planarity\footnote{Two such vertices may exist on the sphere
  (consider the situation $v_{AB}=v_{BC}=v_{CA}$ and
  $v_{BA}=v_{CB}=v_{AC}$ on Figure~\ref{fig:threepointdecomp}), but
  here we are in the disk with $\tdel_A$, $\tdel_B$ and $\tdel_C$
  on the (ideal) boundary.}.

\begin{lem}[Triangle lemma]
  \label{lem:triangle}
  $\TT_{ABC}$ is a bigeodesic triangle.
\end{lem}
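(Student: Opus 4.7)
The plan is to verify the five defining properties of a bigeodesic triangle (Section~\ref{sec:bigeo-triangle}) for $\TT_{ABC}$, under the correspondence $(v_1,v_2,v_3) \leftrightarrow (v_A,v_B,v_C)$ and $(w_{12},w_{23},w_{31}) \leftrightarrow (v_{AB},v_{BC},v_{CA})$, with the six corners induced accordingly. Since $\TT_{ABC}$ is the finite region of $\tilde{M}$ enclosed by the simple counterclockwise cycle constructed just before the statement, and since $\tilde{M}$ is bipartite, $\TT_{ABC}$ inherits the structure of a planar bipartite map with one boundary-face; the degenerate case $v_{AB}=v_{BC}=v_{CA}$, which reduces $\TT_{ABC}$ to the vertex-map, can be verified immediately.

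First I would check the geodesicity, pairwise disjointness, and equal length conditions of the three sides. Each of $[v_A,v_B]$, $[v_B,v_C]$, $[v_C,v_A]$ is a sub-path of a bigeodesic, hence geodesic. For the disjointness, if $\GG_{AB}(v_{AB})$ and $\GG_{BC}(v_{BC})$ shared a vertex $v^{\ast}\neq v_B$, then by uniqueness of the leftmost outgoing edge at $v^{\ast}$ towards $\tdel_B$ the two bigeodesics would coincide from $v^{\ast}$ onwards in that direction, producing a common vertex with $\td_B$-latitude strictly greater than $\td_B(v_B)$, contradicting the definition of $v_B$ as the first merging point towards $\tdel_B$; the two other pairs of sides are handled identically. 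The equal length conditions follow from the equilibrium identities~\eqref{eqvert}: since $\td_A(v_{AB})=\td_A(v_{CA})=r_A$ and $\td_A(v_A)<r_A$ (as $v_A$ lies further towards $\tdel_A$ on $\GG_{AB}(v_{AB})$ than $v_{AB}$), both $[c_1,c_{12}]\subset \GG_{AB}(v_{AB})$ and $[c_{31},c_1]\subset \GG_{CA}(v_{CA})$ have the common length $s_1 := r_A-\td_A(v_A)$, and similarly for $s_2$ and $s_3$.

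Next I would prove that every geodesic $\gamma$ from $v_A$ to $v_B$ in $\TT_{ABC}$ passes through $v_{AB}$. I extend $\gamma$ to a biinfinite geodesic $\tilde{\gamma}$ from $\tdel_A$ to $\tdel_B$ by concatenation with the portions of $\GG_{AB}(v_{AB})$ lying outside $[v_A,v_B]$; since $r_A$ lies in the range of $\td_A$ swept along $\tilde{\gamma}$, this bigeodesic crosses $\td_A$-latitude $r_A$ at a unique vertex $w\in I_{AB}(r_A)$. Because $\gamma$ is contained in $\TT_{ABC}$, which is weakly to the right of $\GG_{AB}(v_{AB})$, the vertex $w$ is weakly to the right of $v_{AB}$; by the defining rightmost-ness of $v_{AB}$ in $I_{AB}(r_A)$ we conclude $w=v_{AB}$, which necessarily lies on $\gamma$ itself.

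Finally, the strict geodesicity of $[c_{12},c_2]$, $[c_{23},c_3]$ and $[c_{31},c_1]$ I would obtain by replicating the leftmost-bigeodesic argument from the proof of Lemma~\ref{lem:diangle}: any alternative geodesic in $\TT_{ABC}$ from $v_{AB}$ to $v_B$ would, upon extension to a full bigeodesic from $\tdel_A$ to $\tdel_B$ through $v_{AB}$, deviate strictly to the right of $\GG_{AB}(v_{AB})$, which contradicts the leftmost choice made at each step in the construction of $\GG_{AB}(v_{AB})$ from $v_{AB}$ towards $\tdel_B$. I expect this final step to be the main obstacle, since it requires carefully reconciling the global notion of leftmost bigeodesic on $\tilde{M}$ with the local strict-geodesic condition inside the bounded region $\TT_{ABC}$, and since several degenerate configurations—some of $v_A,v_B,v_C$ coinciding, or one of $\tdel_A,\tdel_B,\tdel_C$ being an ideal vertex rather than an ideal face—require a separate but analogous check.
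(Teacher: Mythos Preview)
Your proposal is correct and follows essentially the same approach as the paper: identify $(v_1,v_2,v_3,w_{12},w_{23},w_{31})$ with $(v_A,v_B,v_C,v_{AB},v_{BC},v_{CA})$, then verify the axioms using that the sides are portions of leftmost bigeodesics (giving geodesicity and strict geodesicity) and that $v_{AB}$ is the rightmost element of $I_{AB}(r_A)$ (forcing every interior geodesic from $v_A$ to $v_B$ through it). The paper's proof is much terser---three sentences---while you spell out the details; in particular your extension of an interior geodesic $\gamma$ to a full bigeodesic between $\tdel_A$ and $\tdel_B$ is a clean way to justify the one-line claim in the paper.

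Two small remarks. First, the pairwise disjointness of the three sides (except at $v_A,v_B,v_C$) is already established in the construction preceding the lemma: the paper shows that $\GG_{BC}(v_{BC})$ and $\GG_{CA}(v_{CA})$ remain on the right of $\GG_{AB}(v_{AB})$, defines $v_A,v_B,v_C$ as merging points with disjointness before merging, and notes that the resulting cycle is simple. Your re-derivation via ``uniqueness of the leftmost outgoing edge'' is therefore redundant, and also slightly delicate since the leftmost continuation at an interior vertex depends on the incoming edge. Second, the concern you flag at the end---reconciling the global leftmost property on $\tilde{M}$ with the local strict-geodesic condition inside $\TT_{ABC}$---is not a genuine obstacle: any geodesic from $v_{AB}$ to $v_B$ inside $\TT_{ABC}$ is also a geodesic in $\tilde{M}$ (same length as the boundary segment), so a competitor would contradict leftmost-ness of $\GG_{AB}(v_{AB})$ directly, exactly as in the diangle lemma.
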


\begin{proof}
  To make the correspondence with the notations of
  Section~\ref{sec:bigeo-triangle}, we take $v_1=v_A$,
  $v_{12}=v_{AB}$, etc.\ (see again Figure~\ref{fig:triangle}), and the
  corners $c_1,c_{12},\ldots$ are selected in a natural manner.  It is
  then straightforward to check that $\TT_{ABC}$ satisfies all the
  axioms defining bigeodesic triangles, since it is delimited by
  leftmost bigeodesics, and every geodesic from $v_A$ to $v_B$ inside
  $\TT_{ABC}$ must pass through $v_{AB}$ as we chose it to be the
  rightmost element of $I_{AB}(r_A)$, and similarly for $v_{BC}$ and
  $v_{CA}$.
\end{proof}

We have constructed the bigeodesic triangle $\TT_{ABC}$ associated
with the triplet of ideal corners
$(\tilde{x}_A,\tilde{x}_B,\tilde{x}_C)$ of $\tilde{S}'$, but we can
adapt the construction to any other triplet. Note however that our
construction depends at several places (e.g.\ in
Lemma~\ref{lem:crossing}) on the fact that $\tilde{x}_A$,
$\tilde{x}_B$ and $\tilde{x}_C$ appear in counterclockwise order along
the ideal boundary of $\tilde{S}'$, and we shall therefore assume the
same order for other triplets. Specifically, we will consider later on
the triangles $\TT_{ACB'}$, $\TT_{ABB'}$ and $\TT_{BCB'}$
corresponding to such triplets.

\subsection{Decomposing a map of type I}
\label{sec:decI}

\begin{figure}[t]
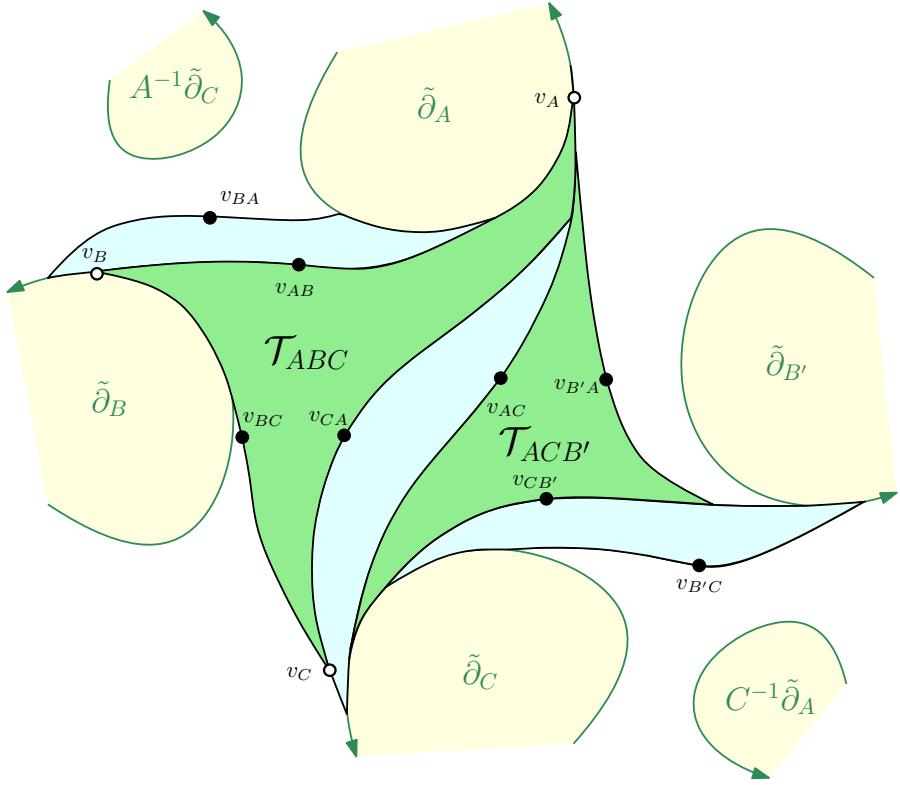

  \centering
  \fig{.8}{ProcedureI}
  \caption{Illustration of the decomposition of a map of type
    I. Cutting along the leftmost bigeodesics launched from the
    equilibrium vertices (shown in black), we delimit the two geodesic
    triangles $\TT_{ABC}$ and $\TT_{ACB'}$ (shown in green), and three
    geodesic diangles of nonnegative exceedances (shown in light
    blue). Altogether, these five regions form the domain
    $\MM_{ABCB'}$ of Lemma~\ref{lem:nonoverlapstrongI}, containing
    exactly one preimage of each inner face of $M$.}
  \label{fig:ProcedureI}
\end{figure}

Let us consider the bigeodesic triangles $\TT_{ABC}$ and $\TT_{ACB'}$,
as constructed in the previous subsection. From the fact that
$\tilde{x}_A$, $\tilde{x}_B$, $\tilde{x}_C$ and $\tilde{x}_{B'}$ are
the four ideal corners of a fundamental domain of $\tilde{S}$, it is
tempting to identify $\TT_{ABC}$ and $\TT_{ACB'}$ with the two
triangles appearing in the assembling procedure. For this, we need to
make sure that they do not overlap. Recall the notations from
Section~\ref{sec:trianglelemma}, which we complete with notations
pertaining to the triangle $\TT_{ACB'}$: we let $r'_A$, $r'_C$ and
$r'_B$ be the integers such that
\begin{equation}
  \label{eq:rdtpter}
  \td_{AC} = r'_A + r'_C, \quad \td_{CB'}= r'_C+r'_B, \quad \td_{B'A}=r'_B+r'_A,
\end{equation}
and we let $v_{AC}$, $v_{CB'}$ and $v_{B'A}$ be the corresponding
equilibrium vertices, see Figure~\ref{fig:ProcedureI}. Then, the
triangles $\TT_{ABC}$ and $\TT_{ACB'}$ do not overlap if the leftmost
bigeodesic $\GG_{AC}(v_{AC})$ remains on the right of
$\GG_{AC}(v_{CA})$ (oriented from $\tdel_C$ to $\tdel_A$). We may
ensure this by invoking the diangle lemma (Lemma~\ref{lem:diangle}),
and more precisely by identifying the region delimited by these two
bigeodesics with the diangle $\DD_{AC}(v_{CA},v_{AC})$ as defined
(mutatis mutandis) in Section~\ref{sec:univcovbigeod}. But, for this,
the assumption of the diangle lemma must be satisfied, namely we must
have $\td_A(v_{CA}) \geq \td_A(v_{AC})$, i.e.\ $r_A \geq r'_A$.

\begin{lem}
  \label{lem:latdiffexc}
  The difference $r_A-r'_A$ of $\td_A$-latitude between $v_{CA}$ and $v_{AC}$
  is equal to $a+c-b$.
\end{lem}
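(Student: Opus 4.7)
The plan is to express both $r_A$ and $r'_A$ as linear combinations of the Busemann distances via the equilibrium conditions, and then to reduce everything to the unprimed distances $\td_{AB}$ and $\td_{BC}$ using Lemma~\ref{lem:ABCbuse} and equation~\eqref{eq:1}.

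First I would solve the linear systems \eqref{eq:rdtpbis} and \eqref{eq:rdtpter}, which give
\begin{equation*}
  r_A = \tfrac{1}{2}\!\left(\td_{AB} + \td_{CA} - \td_{BC}\right),
  \qquad
  r'_A = \tfrac{1}{2}\!\left(\td_{AC} + \td_{B'A} - \td_{CB'}\right).
\end{equation*}
The definition~\eqref{eq:dAB} is symmetric in its two subscripts, so $\td_{AC}=\td_{CA}$ and these terms cancel in the difference:
\begin{equation*}
  r_A - r'_A = \tfrac{1}{2}\!\left[(\td_{AB} - \td_{B'A}) + (\td_{CB'} - \td_{BC})\right].
\end{equation*}

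Next I would compute each of the two differences using the automorphisms $A$ and $C$. For the first, a change of variable $v = Au$ in the definition of $\td_{B'A}$, combined with~\eqref{eq:2} and the relation $\td_A(Au)=\td_A(u)-2a$ from~\eqref{eq:1}, yields
\begin{equation*}
  \td_{B'A} = \min_u\!\left(\td_B(u)+k+\td_A(u)-2a\right) = \td_{AB} + k - 2a.
\end{equation*}
Symmetrically, using~\eqref{eq:3} together with the analog $\td_C(Cv)=\td_C(v)-2c$ of~\eqref{eq:1} for $C$, the substitution $u=Cv$ in the definition of $\td_{CB'}$ gives
\begin{equation*}
  \td_{CB'} = \min_u\!\left(\td_C(u)+2c+\td_B(u)+k-2b\right) = \td_{BC} + 2c + k - 2b.
\end{equation*}

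Substituting these into the formula for $r_A - r'_A$ yields $\tfrac{1}{2}[(2a-k)+(2c+k-2b)]=a+c-b$, as wanted. I do not expect any serious obstacle: the calculation is a direct bookkeeping exercise. The only mild subtlety is that the unknown constant $k$ of Lemma~\ref{lem:ABCbuse} enters both differences with opposite signs and therefore cancels, which is a welcome consistency check since the equilibrium vertices were shown in Section~\ref{sec:trianglelemma} to be independent of the choices of reference points on the contours $\tdel_B$ and $\tdel_{B'}$.
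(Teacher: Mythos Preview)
Your proof is correct and follows essentially the same route as the paper: express $r_A-r'_A$ via the equilibrium conditions as $\tfrac12[(\td_{AB}-\td_{B'A})+(\td_{CB'}-\td_{BC})]$, then use the automorphisms $A$ and $C$ together with \eqref{eq:1} and Lemma~\ref{lem:ABCbuse} to evaluate the two differences, with the constant $k$ cancelling. The only (harmless) wrinkle is a slight inconsistency in naming the dummy variable in your second substitution---the computation and the result $\td_{CB'}=\td_{BC}+2c+k-2b$ are correct regardless.
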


\begin{proof}
  By the equilibrium conditions~\eqref{eq:rdtpbis} and
  \eqref{eq:rdtpter}, we have
  \begin{equation}
    \label{eq:distdiff}
    r_A-r_A'=\frac{(\td_{AB}-\td_{B'A})-(\td_{BC}-\td_{CB'})}{2}.
  \end{equation}
  Therefore, we must compare $\td_{AB}$ to $\td_{B'A}$, and $\td_{BC}$
  to $\td_{CB'}$. For this, we will use the symmetries of $\tilde{M}$.
  Consider first the action of $A$ on the Busemann functions.
  By~\eqref{eq:1} and~\eqref{eq:2}, we have
  \begin{multline}
    \td_{B'A}=\min_v \left(\td_{B'}(v)+\td_A(v)\right) = \min_v\left(\td_{B'}(Av)+\td_A(Av)\right)\\
   = \min_v\left(\td_{B}(v)+k+\td_A(v)-2a\right)
   = \td_{AB} +k-2a.
 \end{multline}
 Similarly, considering now the action of $C$, the analog
 of~\eqref{eq:1} for $C$ and \eqref{eq:3} imply
 \begin{equation}
   \td_{BC}=\min_v \left(\td_{B}(v)+\td_{C}(v)\right) = \min_v\left(\td_B(Cv)+\td_{C}(Cv)\right)
   = \td_{CB'} - k + 2b -2c. 
 \end{equation}
  Plugging these relations into~\eqref{eq:distdiff} gives the wanted
  difference $a+c-b$.
\end{proof}

We conclude that $\TT_{ABC}$ and $\TT_{ACB'}$ do not overlap when
$a+c-b \geq 0$. However, as we are trying to find a decomposition of
$M$ (and not just $\tilde{M}$), we actually want the stronger property
that the \emph{projections} of $\TT_{ABC}$ and $\TT_{ACB'}$ on $M$ do
not overlap. This property is ensured by the following:

\begin{lem}
  \label{lem:nonoverlapstrongI}
  Let $v_{BA}:=A^{-1}v_{B'A}$ and $v_{B'C}:=C^{-1}v_{BC}$, and let
  $\MM_{ABCB'}$ be the domain delimited by the bigeodesics
  $\GG_{AB}(v_{BA})$, $\GG_{BC}(v_{BC})$, $\GG_{CB'}(v_{B'C})$ and
  $\GG_{B'A}(v_{B'A})$, pruned from their common parts (see again
  Figure~\ref{fig:ProcedureI}). Then, $\MM_{ABCB'}$ contains exactly
  one preimage of each inner face of $M$.

  Furthermore, when $M$ is of type I, then both triangles $\TT_{ABC}$
  and $\TT_{ACB'}$ are contained in $\MM_{ABCB'}$, and their
  complement consists of the three geodesic diangles
  $\DD_{AC}(v_{CA},v_{AC})$, $\DD_{AB}(v_{BA},v_{AB})$ and
  $\DD_{B'C}(v_{CB'},v_{B'C})$, with nonnegative exceedances equal to
  $a+c-b$, $b+a-c$ and $c+b-a$, respectively.
\end{lem}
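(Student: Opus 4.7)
The plan is to establish two things in sequence: first, that $\MM_{ABCB'}$ is a fundamental domain for the deck transformation group $\mathrm{Aut}(p) = \langle A, C \rangle$ acting on $\tilde{M}$; second, that in the type I case this fundamental domain decomposes cleanly into the two triangles and three diangles stated.

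For the fundamental domain property, I would first observe that the four bounding bigeodesics of $\MM_{ABCB'}$ are paired by the generators of $\mathrm{Aut}(p)$. Since $A$ fixes $\tdel_A$ and sends $\tdel_B$ to $\tdel_{B'}$, it maps any bigeodesic between $\tdel_A$ and $\tdel_B$ to a bigeodesic between $\tdel_A$ and $\tdel_{B'}$; by construction $v_{BA} = A^{-1} v_{B'A}$, and since $A$ is an automorphism of the map and preserves the notion of leftmost bigeodesic, it sends $\GG_{AB}(v_{BA})$ to $\GG_{B'A}(v_{B'A})$. Similarly, since $C$ fixes $\tdel_C$ and (by the relation $\tdel_B = C \tdel_{B'}$ used in Lemma \ref{lem:ABCbuse}) sends $\tdel_{B'}$ to $\tdel_B$, and since $v_{B'C} = C^{-1} v_{BC}$, the automorphism $C$ maps $\GG_{CB'}(v_{B'C})$ to $\GG_{BC}(v_{BC})$. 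Thus the closures of the translates $g \cdot \MM_{ABCB'}$ for $g \in \langle A, C \rangle$ fit together edge-to-edge, with adjacencies exactly mirroring those of the square tiling $(S_\wl)_{\wl \in F}$ of $\tilde{S}$. The translates have pairwise disjoint interiors by planarity and the fact that each bounding bigeodesic separates $\tilde{M}$ into two half-planes; their union is open, closed, and nonempty in $\tilde{M}$, hence equals $\tilde{M}$ by connectedness. Consequently $\MM_{ABCB'}$ contains exactly one preimage of each inner face of $M$.

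For the decomposition in the type I case, I would use Lemma \ref{lem:latdiffexc}, which gives $r_A - r'_A = a + c - b \geq 0$, so that the diangle lemma (Lemma \ref{lem:diangle}) applies with $v = v_{CA}$ and $v' = v_{AC}$ to realize the region between $\GG_{CA}(v_{CA})$ and $\GG_{AC}(v_{AC})$ as a bigeodesic diangle $\DD_{AC}(v_{CA}, v_{AC})$ of exceedance $a + c - b$. Entirely analogous computations, replacing the triple $(\tdel_A, \tdel_B, \tdel_C)$ by $(\tdel_A, \tdel_B, \tdel_{B'})$ (respectively $(\tdel_B, \tdel_C, \tdel_{B'})$) and repeating the argument of Lemma \ref{lem:latdiffexc} using the automorphism relations \eqref{eq:1}, \eqref{eq:2}, \eqref{eq:3}, produce the other two diangles $\DD_{AB}(v_{BA}, v_{AB})$ and $\DD_{B'C}(v_{CB'}, v_{B'C})$ with exceedances $b + a - c$ and $c + b - a$, both nonnegative under the type I hypothesis. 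These three diangles by construction lie inside $\MM_{ABCB'}$, bounded on one side by the leftmost bigeodesics through the equilibrium vertices and on the other side by the boundary bigeodesics of $\MM_{ABCB'}$.

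It then remains to identify the complement of the three diangles inside $\MM_{ABCB'}$ with the union $\TT_{ABC} \cup \TT_{ACB'}$. For this, I would observe that once the three diangles are excised from $\MM_{ABCB'}$, the remaining boundary consists precisely of the six leftmost bigeodesics $\GG_{AB}(v_{AB})$, $\GG_{BC}(v_{BC})$, $\GG_{CA}(v_{CA})$, $\GG_{AC}(v_{AC})$, $\GG_{CB'}(v_{CB'})$, $\GG_{B'A}(v_{B'A})$, which by the triangle lemma (Lemma \ref{lem:triangle}) bound exactly $\TT_{ABC}$ and $\TT_{ACB'}$, glued along their common strictly geodesic side $\GG_{CA}(v_{CA}) \cup \GG_{AC}(v_{AC})$ (degenerating to a single bigeodesic when $a+c = b$). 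The main obstacle I anticipate is verifying cleanly that the pairings of boundary arcs truly yield a tiling of $\tilde{M}$ without accumulation issues near the ideal boundary, and for this I would invoke the local finiteness properties from Section \ref{sec:goodfund} to ensure that each compact region of $\tilde{M}$ meets only finitely many translates.
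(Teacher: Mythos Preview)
Your overall strategy matches the paper's: identify $\MM_{ABCB'}$ as a fundamental domain via the side-pairings by $A$ and $C$, then decompose it using the diangle and triangle lemmas with exceedances computed from analogues of Lemma~\ref{lem:latdiffexc}. The fundamental-domain argument is essentially the paper's.

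However, your picture of the decomposition is geometrically garbled. The three diangles are \emph{not} all ``bounded on one side \ldots\ by the boundary bigeodesics of $\MM_{ABCB'}$'': the diangle $\DD_{AC}(v_{CA},v_{AC})$ lies in the \emph{interior} of $\MM_{ABCB'}$, sandwiched between the two triangles $\TT_{ABC}$ and $\TT_{ACB'}$. It is bounded by $\GG_{CA}(v_{CA})$ (a side of $\TT_{ABC}$) and $\GG_{AC}(v_{AC})$ (a side of $\TT_{ACB'}$), neither of which is a boundary of $\MM_{ABCB'}$. Consequently the two triangles do \emph{not} share a common side unless $a+c=b$, so your final claim that $\TT_{ABC}$ and $\TT_{ACB'}$ are ``glued along their common strictly geodesic side $\GG_{CA}(v_{CA}) \cup \GG_{AC}(v_{AC})$'' is wrong. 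The correct layout (Figure~\ref{fig:ProcedureI}) reads, from left to right: $\DD_{AB}(v_{BA},v_{AB})$ against the left boundary of $\MM_{ABCB'}$, then $\TT_{ABC}$, then $\DD_{AC}(v_{CA},v_{AC})$ in the middle, then $\TT_{ACB'}$, then $\DD_{B'C}(v_{CB'},v_{B'C})$ against the right boundary. With this corrected picture, what actually needs checking is that $\GG_{AB}(v_{AB})$ lies to the right of the boundary $\GG_{AB}(v_{BA})$, and that the boundary $\GG_{CB'}(v_{B'C})$ lies to the right of $\GG_{CB'}(v_{CB'})$.

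A secondary point: your proposed replacement triples $(\tdel_A,\tdel_B,\tdel_{B'})$ and $(\tdel_B,\tdel_C,\tdel_{B'})$ do not line up with the latitude differences you actually need. The paper instead exploits the cyclic symmetry in $A,B,C$, applying the reasoning of Lemma~\ref{lem:latdiffexc} to the quadruplets $(\tilde{x}_B,\tilde{x}_C,\tilde{x}_A,\tilde{x}_{C'})$ and $(\tilde{x}_{B'},\tilde{x}_A,\tilde{x}_C,\tilde{x}_{A'})$, where $\tilde{x}_{C'}:=A^{-1}\tilde{x}_C$ and $\tilde{x}_{A'}:=C^{-1}\tilde{x}_A$. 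Alternatively, one can compute directly: since $v_{BA}=A^{-1}v_{B'A}$, relation~\eqref{eq:1} gives $\td_A(v_{BA})=r'_A+2a$, so the exceedance of $\DD_{AB}(v_{BA},v_{AB})$ is $(r'_A+2a)-r_A=2a-(a+c-b)=a+b-c$, and similarly for the third diangle.
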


\begin{proof}
  The first claim means that $\MM_{ABCB'}$ is essentially a
  fundamental domain for the action of $\mathrm{Aut}(p)$. Indeed, the
  bigeodesics delimiting $\MM_{ABCB'}$ can be viewed as paths
  connecting the ideal points $\tilde{x}_A$, $\tilde{x}_B$,
  $\tilde{x}_C$ and $\tilde{x}_{B'}$: the bigeodesic
  $\GG_{AB}(v_{BA})$ connects $\tilde{x}_A$ and $\tilde{x}_B$ and the
  bigeodesic $\GG_{B'A}(v_{B'A})$ is (upon reversing its orientation)
  its image by $A$, connecting $\tilde{x}_A$ and
  $\tilde{x}_{B'}$. Similarly, $\GG_{CB'}(v_{B'C})$ connects
  $\tilde{x}_C$ and $\tilde{x}_{B'}$ and $\GG_{BC}(v_{BC})$ is (upon
  reversing its orientation) its image by $C$ connecting $\tilde{x}_C$
  and $\tilde{x}_B$. This pattern mimics precisely that of the four
  sides of the square $S_\varnothing$ in
  Figure~\ref{fig:Univcovcarre}, even though the topology of the
  quadrangle $\MM_{ABCB'}$ is not necessarily that of a square as it
  may have pinch points if two of its boundaries come into
  contact. The similitude with $S_\varnothing$ (whose sides do reach
  the ideal points) may be further improved by adding to
  $\MM_{ABCB'}$ the common parts of its boundary geodesics so as to
  eventually reach $\tilde{x}_A$, $\tilde{x}_B$, $\tilde{x}_C$ and
  $\tilde{x}_{B'}$ (possibly after infinitely many steps).  We are
  still left with a final (but somewhat irrelevant) slight difference
  with the situation of Section~\ref{univcovsec}: when, say,
  $\partial_A$ is a boundary-face, the projection on the sphere
  $p(\GG_{AB}(v_{BA}))=p(\GG_{B'A}(v_{B'A}))$ actually never reaches
  the puncture $x_A$, but rather wraps eventually around $\partial_A$
  forever. This issue can be fixed by stopping the path at the first
  time it hits $\partial_A$, replacing the final part with a segment
  entering inside $\partial_A$ to reach $x_A$ (and doing similar fixes
  at $\partial_B$ and $\partial_C$ if needed). All in all, the above
  differences do not concern the inner faces of $M$, which therefore
  lift to unique preimages in $\MM_{ABCB'}$.

  We now turn to the second claim. We have seen that $\TT_{ACB'}$ is
  on the right of $\TT_{ABC}$ when $a+c-b\geq 0$, and each of
  these triangles has one side in common with $\MM_{ABCB'}$ (namely  along
  $\GG_{BC}(v_{BC})$ for $\TT_{ABC}$ and along $\GG_{B'A}(v_{B'A})$ for 
  $\TT_{ACB'}$). Checking that both triangles are contained in $\MM_{ABCB'}$
  therefore boils down to checking that their other sides are well placed, namely that
  the boundary $\GG_{AB}(v_{AB})$ of $\TT_{ABC}$ is on the
  right of boundary $\GG_{AB}(v_{BA})$ of $\MM_{ABCB'}$ and that the boundary $\GG_{CB'}(v_{B'C})$ 
  of $\MM_{ABCB'}$ is on
  the right of the boundary $\GG_{CB'}(v_{CB'})$ of $\TT_{ACB'}$.
  But this can be done exactly in the same way as for proving that
  $\GG_{AC}(v_{AC})$ is on the right of $\GG_{AC}(v_{CA})$, via the
  diangle lemma. Indeed, the reasoning done at the beginning of this
  subsection---including Lemma~\ref{lem:latdiffexc}---pertains to the
  quadruplet of ideal corners
  $(\tilde{x}_A,\tilde{x}_B,\tilde{x}_C,\tilde{x}_{B'})$, and relies
  on the key relations
  $\tilde{x}_{B'}=A\tilde{x}_B=C^{-1}\tilde{x}_B$. But, at a
  fundamental level, $A$, $B$ and $C$ play a completely symmetric
  role, and redoing our reasoning with the quadruplets
  $(\tilde{x}_B,\tilde{x}_C,\tilde{x}_{A},\tilde{x}_{C'})$ and
  $(\tilde{x}_{B'},\tilde{x}_A,\tilde{x}_C,\tilde{x}_{A'})$, with
  $\tilde{x}_{C'}:=B\tilde{x}_C=A^{-1}\tilde{x}_C$ and
  $\tilde{x}_{A'}:=B'\tilde{x}_A=C^{-1}\tilde{x}_A$, we find that the
  remaining pieces of the puzzle of Figure~\ref{fig:ProcedureI} are
  the bigeodesic diangles $\DD_{AB}(v_{BA},v_{AB})$ and
  $\DD_{B'C}(v_{CB'},v_{B'C})$ of respective exceedances $b+a-c$ and
  $c+b-a$, which are indeed nonnegative since $M$ is assumed of type
  I.
\end{proof}

\begin{prop}
  \label{prop:typeIinv}
  The procedure which, to the map $M$ of type I, associates the two
  bigeodesic triangles $\TT_{ABC}$ and $\TT_{ACB'}$, and the three
  bigeodesic diangles $\DD_{AC}(v_{CA},v_{AC})$,
  $\DD_{AB}(v_{BA},v_{AB})$ and $\DD_{B'C}(v_{CB'},v_{B'C})$, is
  the inverse of the assembling procedure I.
\end{prop}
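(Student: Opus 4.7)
The plan is to establish that the disassembly map and the assembling procedure~I are mutually inverse by checking both compositions, exploiting throughout the universal cover viewpoint developed in Sections~\ref{sec:universal-cover-maps} and~\ref{sec:decface}.

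First, for the composition \emph{disassemble then assemble} applied to a map $M$ of type~I, the main input will be Lemma~\ref{lem:nonoverlapstrongI}: its conclusion is that the five output pieces tile the quadrilateral domain $\MM_{ABCB'}\subset\tilde M$, which contains exactly one preimage of each inner face of $M$ and whose four bigeodesic sides are pairwise identified by the automorphisms $A$ and $C$ (namely $\GG_{AB}(v_{BA})$ with $A^{-1}\GG_{B'A}(v_{B'A})$, and $\GG_{BC}(v_{BC})$ with $C^{-1}\GG_{CB'}(v_{B'C})$). I would then check that applying procedure~I to these five pieces reproduces exactly this pattern: the attachment of corners at $v_A,v_B,v_C$ and at the equilibrium vertices reassembles $\MM_{ABCB'}$, and the red-to-blue gluings around the three resulting special faces realize the quotient $\tilde M/\mathrm{Aut}(p)=M$. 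The exceedances $a+c-b, b+a-c, c+b-a$ read off from Lemma~\ref{lem:nonoverlapstrongI} match the boundary-length formula~\eqref{eq:boundlengths}, so the quintuple obtained is a valid input of procedure~I producing $M$ back.

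For the reverse composition \emph{assemble then disassemble}, I would start from a quintuple $Q$ and the assembled map $M=\mathrm{Assemble}_{\mathrm I}(Q)$. By Section~\ref{sec:univcov_assembling}, the universal cover $\tilde M$ is built by gluing copies $\MM_\wl$ of the partial assembly, and up to a choice of embedding the sides of $\MM_\varnothing$ may be taken to coincide with the geodesics $\GG, A\GG, \GG', C\GG'$ of Section~\ref{tightsec}. The crux is to identify the equilibrium vertices of $\tilde M$ with the attachment points carried by $\MM_\varnothing$ and its immediate neighbors, and to identify the leftmost bigeodesics launched from them with the original bigeodesic sides of the pieces of $Q$. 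Using Lemma~\ref{lem:minpathnew} together with the length relations~\eqref{eq:exccount}, one can read off the Busemann latitudes $\td_A,\td_B,\td_C$ at every attachment point and verify the equilibrium conditions~\eqref{eqvert}, so that each attachment point lies in the appropriate set $I_{AB}(r_A)$, $I_{BC}(r_B)$ or $I_{CA}(r_C)$.

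The main obstacle is then the \emph{rightmost} characterization of the equilibrium vertices: one must rule out that some other vertex of $\tilde M$ strictly to the right of a given attachment point belongs to the same $I_{\cdot\cdot}(\cdot)$ set. I expect this to be handled by a planarity argument---any such candidate would lie in a neighboring copy $\MM_{\wl}$, and launching the leftmost bigeodesic through it towards the relevant pair of ideal faces or vertices would force a crossing with one of the geodesics $A^n\GG, B^n\GG',\ldots$, contradicting either Lemma~\ref{lem:minpathnew} or the leftmost-bigeodesic definitions from Sections~\ref{sec:bigeo-diangles}--\ref{sec:bigeo-triangle}. Once this is settled, the coincidence of the leftmost bigeodesics launched in $\tilde M$ with the bigeodesic boundaries of the original pieces follows because the red intervals are unique shortest paths while the blue intervals were leftmost by construction, so the quintuple $Q$ is recovered and both compositions are identity maps.
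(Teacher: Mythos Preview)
Your plan is sound and follows the same two-direction structure as the paper's proof, including the use of Lemma~\ref{lem:nonoverlapstrongI} for the first direction and the universal-cover viewpoint of Section~\ref{sec:univcov_assembling} together with Lemma~\ref{lem:minpathnew} for the second. There is one point where you diverge from the paper and make things harder than necessary.

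For the \emph{rightmost} property of the attachment points in the assemble-then-disassemble direction, you propose a planarity/crossing argument involving neighboring copies $\MM_\wl$. The paper instead uses the defining axiom of bigeodesic triangles directly: by hypothesis, the original piece $\TT_{ABC}$ satisfies the property that every geodesic from $v_A$ to $v_B$ inside it passes through $v_{AB}$. Once you have established that the sides of $\MM_\varnothing$ coincide with the leftmost bigeodesics $\GG_{AB}(v_{BA})$, $\GG_{BC}(v_{BC})$, $\GG_{B'A}(v_{B'A})$, $\GG_{CB'}(v_{B'C})$ in $\tilde M$ (which the paper does by observing that $\GG$ is glued only to red, hence strictly geodesic, segments on its left), and that the internal bigeodesics through $v_{AB},v_{CA},v_{AC},v_{CB'}$ are likewise leftmost in $\tilde M$ because they are leftmost inside $\MM_\varnothing$ and coalesce with $\GG$ or $\GG'$ outside, the triangle axiom immediately rules out any competing element of $I_{AB}(r_A)$ to the right of $\GG_{AB}(v_{AB})$. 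Your crossing argument would likely work too, but invoking the axiom you already have in hand is both shorter and avoids the somewhat delicate case analysis of which copy a hypothetical competitor lies in.
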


\begin{proof}
  By comparing Figures~\ref{fig:Univcovcarre_assembling} and
  \ref{fig:ProcedureI}, it is plain that disassembling a map $M$ of
  type I by cutting its universal cover $\tilde{M}$ along leftmost
  bigeodesics as in Figure~\ref{fig:ProcedureI}, then reassembling the
  pieces following procedure I as in
  Figures~\ref{fig:assembling_partial} and
  \ref{fig:Univcovcarre_assembling}, restores $M$ after projecting
  $\tilde{M}$ on $S$ by $p$.

  It remains to check that, conversely, if we assemble two triangles
  and three diangles together, then disassemble the result, we recover
  the original pieces. 
We thus start with two triangles and three diangles, and
  perform a partial gluing, as described in the Figure~\ref{fig:assembling_partial} of Section~\ref{univcovsec}.
We note that the obtained ``bigeodesic quadrangle'' $\MM$ is of the same form as that, $\MM_{ABCB'}$, displayed on
  Figure~\ref{fig:ProcedureI}. Working directly on the universal cover, we then have to glue 
  copies $\MM_\wl$ of $\MM$ along the scheme of Figure~\ref{fig:Univcovcarre_assembling}.

  Let us for now forget the decomposition interpretation of Figure~\ref{fig:ProcedureI} described in
  its caption and reinterpret it instead as the result of the procedure described in Figure~\ref{fig:Univcovcarre_assembling},
  once completed by a gluing of all the red and blue intervals facing each other.
We may then view the light blue and green domains in this figure as representing the copy $\MM_\varnothing$ of $\MM$, 
  together with its diangle/triangle components. We may also view the vertices $v_{AB}, \ldots$ as the associated attachment points of $\MM_\varnothing$ and of its internal components. 
  
  With this new interpretation
  of the figure, we already know from Lemma~\ref{lem:minpathnew} that the four sides of the quadrangle $\MM_\varnothing$ 
  lie along four geodesic paths in $\tilde{M}$: the bigeodesic denoted $\GG$ in Section~\ref{tightsec}, its image by $A$, 
  and a symmetric bigeodesic $\GG'$ (launched from the attachment point between the
copies $\MM_\varnothing$ and $\MM_{\overline{\cl}}$ and going towards
the ideal corners $\tilde{x}_C$ and $\tilde{x}_{B'}$) and its image by $C$. Clearly, $\GG$ is a bigeodesic
between $\tdel_A$ and $\tdel_B$ and it is in fact the leftmost bigeodesic $\GG_{AB}(v_{BA})$ launched from $v_{BA}$ (which is de facto a geodesic vertex). This is a straightforward consequence of the fact that, from $v_{BA}$ to $\tdel_A$ (respectively to 
$\tdel_B$), $\GG$ is glued to only red segments on its left. Similarly, $\GG'$ is the leftmost bigeodesic 
 $\GG_{CB'}(v_{B'C})$. Since $v_{B'A}=A v_{AB}$ and $v_{BC}=C v_{B'C}$, the two other sides of $\MM_\varnothing$ 
 are the leftmost bigeodesics  $\GG_{B'A}(v_{B'A})$ and $\GG_{BC}(v_{BC})$.
 From the definition of bigeodesic diangles and triangles, the paths passing via $v_{AB}$, $v_{CA}$, $v_{AC}$
 and $v_{CB'}$ are clearly leftmost bigeodesics within $\MM_\varnothing$ and coalesce with either $\GG$ or
 $\GG'$ outside of $\MM_\varnothing$, hence they are leftmost bigeodesics in $\tilde{M}$ which we
 thus identify as $\GG_{AB}(v_{AB})$, $\GG_{CA}(v_{CA})$, $\GG_{AC}(v_{AC})$ and $\GG_{CB'}(v_{CB'})$.
 
 To recover the original interpretation of Figure~\ref{fig:ProcedureI}, it remains to show that $v_{AB}$, $v_{BC}$
 and $v_{CA}$ are actually the equilibrium vertices for $\tdel_A$, $\tdel_B$ and
 $\tdel_C$, while  $v_{AC}$, $v_{CB'}$
 and $v_{B'A}$ are the equilibrium vertices for $\tdel_A$, $\tdel_C$ and
 $\tdel_{B'}$.  Since ${\cal{T}}_{ABC}$ is a bigeodesic triangle, we deduce that $v_{AB}$ and $v_{CA}$ have
 the same $\td_A$-latitude, $v_{AB}$ and $v_{BC}$ the same $\td_B$-latitude and $v_{BC}$ and $v_{CA}$ the
 same $\td_C$-latitude.  Since these vertices are all geodesic vertices (so that, e.g.\
 $\td_A(v_{AB})+\td_B(v_{AB})=\td_{AB}=r_A+r_B$), we deduce that they obey the relation 
 \eqref{eqvert} imposed on equilibrium vertices. Otherwise stated, the vertices
  $v_{AB}$, $v_{BC}$ and $v_{CA}$ belong to the respective sets
  $I_{AB}(r_A)$, $I_{BC}(r_B)$ and $I_{CA}(r_C)$, as defined in
  Section~\ref{sec:trianglelemma}.  Furthermore, since these vertices
  are the three attachment points of a bigeodesic triangle, they are
  necessarily the rightmost elements of their respective sets, hence
  they are indeed the equilibrium vertices associated with
  $(\tdel_A,\tdel_B,\tdel_C)$, and $\TT_{ABC}$ indeed coincides with
  the triangle constructed in
  Section~\ref{sec:trianglelemma}. Performing the same reasoning on
  the triangle denoted $\TT_{ACB'}$, we eventually recover precisely
  the original decomposition interpretation of Figure~\ref{fig:ProcedureI}, as described in its caption.
  Proposition~\ref{prop:typeIinv} follows.
\end{proof}

We conclude this section with two remarks. First, when $M$ has no
inner face, $\tilde{M}$ has only ideal faces and is actually a tree.
We find that the triangles $\TT_{ABC}$ and $\TT_{ACB'}$ are reduced to
single vertices, while the diangles are reduced to segments, thereby
inverting the assembling of Figure~\ref{fig:assembling_noface} for
type I.  Second, in the case $a=b=c=0$, the current disassembling
procedure coincides with that of
Section~\ref{sec:gluinginvtp}. Indeed, we only have ideal vertices in
this case, so that $\td_{AB}$ is just the graph distance between
$\tdel_A$ and $\tdel_B$ in $\tilde{M}$ and coincides with the graph
distance $d_{AB}$ between $\partial_A$ and $\partial_B$ in $M$, and
similarly for the other pairs of ideal vertices. The decomposition of
$\tilde{M}$ which we perform here just projects to the decomposition
of $M$ performed in Section~\ref{sec:gluinginvtp} (note that, in that
section, the vertices denoted $v_{AB}$, $v_{BC}$ and $v_{CA}$ are the
projections of those which we consider here, while $v_{BA}$, $v_{CB}$
and $v_{AC}$ are the projections of $v_{B'A}$, $v_{CB'}$ and $v_{AC}$
respectively).

\subsection{Decomposing a map of type II}
\label{sec:decII}

\begin{figure}[h!]
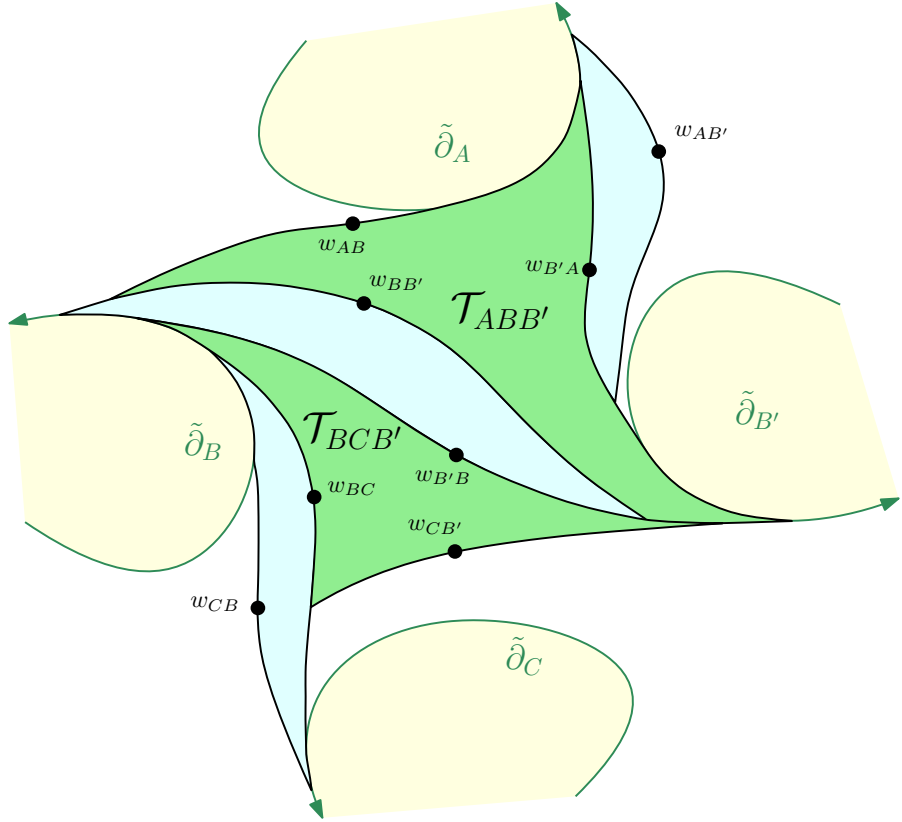

  \centering
  \fig{.8}{ProcedureII}
  \caption{Illustration of the decomposition of a map of type II. The
    two geodesic triangles $\TT_{ABB'}$ and $\TT_{BCB'}$ (shown in
    green) and the three bigeodesic diangles (shown in light blue) form
    the domain $\MM'_{ABCB'}$ of
    Lemma~\ref{lem:nonoverlapstrongII}.}
  \label{fig:ProcedureII}
\end{figure}

Suppose now that we have a map $M$ of type II. Without loss of
generality, we may assume that $\partial_B$ is the longest boundary,
i.e.\ we have $b \geq a+c$. Then, the decomposition of the previous
subsection might fail, since now
$\td_A(v_{CA})-\td_A(v_{AC})=r_A-r'_A=a+c-b \leq 0$, hence it is not
possible in general to apply the diangle lemma (it could happen that
the bigeodesics $\GG_{AC}(v_{CA})$ and $\GG_{AC}(v_{AC})$ cross each
other).

Then, the trick is to ``perform a flip'' and, instead of the triangles
$\TT_{ABC}$ and $\TT_{ACB'}$, to consider rather the triangles
$\TT_{ABB'}$ and $\TT_{BCB'}$, see Figure~\ref{fig:ProcedureII}. The
equilibrium vertices of $\TT_{ABB'}$ (resp.\ $\TT_{BCB'}$) are denoted
$w_{AB}$, $w_{BB'}$ and $w_{B'A}$ (resp.\ $w_{BC}$, $w_{CB'}$ and
$w_{B'B}$).  We now have the following counterpart of
Lemma~\ref{lem:nonoverlapstrongI}:

\begin{lem}
  \label{lem:nonoverlapstrongII}
  Let $w_{AB'}:=Aw_{AB}$ and $w_{CB}:=Cw_{CB'}$, and let
  $\MM'_{ABCB'}$ be the domain delimited by the bigeodesics
  $\GG_{AB}(w_{AB})$, $\GG_{BC}(w_{CB})$, $\GG_{CB'}(w_{CB'})$ and
  $\GG_{B'A}(w_{AB'})$, pruned from their common parts (see again
  Figure~\ref{fig:ProcedureII}). Then, $\MM'_{ABCB'}$ contains exactly
  one preimage of each inner face of $M$.

  Furthermore, when $M$ is of type II, then both triangles
  $\TT_{ABB'}$ and $\TT_{BCB'}$ are contained in $\MM'_{ABCB'}$, and
  their complement consists of the three geodesic diangles
  $\DD_{BB'}(w_{B'B},w_{BB'})$, $\DD_{AB'}(w_{B'A},w_{AB'})$ and
  $\DD_{BC}(w_{CB},w_{BC})$, with nonnegative exceedances equal to
  $b-a-c$, $2a$ and $2c$, respectively.
\end{lem}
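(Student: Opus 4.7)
The plan is to adapt the proof of Lemma~\ref{lem:nonoverlapstrongI} to the type II configuration, in which the two triangles both have $\tdel_B$ and $\tdel_{B'}$ among their ideal vertices/faces rather than $\tdel_A$ and $\tdel_C$. For the first claim, I would note that the relations $w_{AB'}=Aw_{AB}$ and $w_{CB}=Cw_{CB'}$ ensure that the four bigeodesic sides of $\MM'_{ABCB'}$ are paired by the automorphisms $A$ and $C$, in exactly the same cyclic pattern as the four sides of $S_\varnothing$ in Figure~\ref{fig:Univcovcarre}. Consequently, the fundamental-domain argument of Lemma~\ref{lem:nonoverlapstrongI} transfers essentially verbatim, without using the type II hypothesis, and shows that $\MM'_{ABCB'}$ contains exactly one preimage of each inner face of $M$.

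For the second claim, I would introduce the equilibrium latitudes $s_A,s_B,s_{B'}$ of $\TT_{ABB'}$ and $s'_B,s'_C,s'_{B'}$ of $\TT_{BCB'}$ in the manner of \eqref{eqvert}, and compute the three exceedances directly. Using \eqref{eq:1}, the exceedance of $\DD_{AB'}(w_{B'A},w_{AB'})$ equals $\td_A(w_{B'A})-\td_A(Aw_{AB})=s_A-(s_A-2a)=2a$, and the symmetric computation based on \eqref{eq:3} gives $2c$ for $\DD_{BC}(w_{CB},w_{BC})$. The middle exceedance is $\td_B(w_{B'B})-\td_B(w_{BB'})=s'_B-s_B$; combining the two systems of equilibrium equations with the symmetries of Lemma~\ref{lem:ABCbuse} yields $s'_B-s_B=\tfrac{1}{2}(\td_{AB}+\td_{BC}-\td_{B'A}-\td_{CB'})=b-a-c$. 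Nonnegativity of $2a$ and $2c$ is immediate, while nonnegativity of $b-a-c$ is precisely the type II assumption $b\geq a+c$.

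With these nonnegativities in hand, the diangle lemma (Lemma~\ref{lem:diangle}) applies to each of the three pairs of geodesic vertices and certifies that the corresponding regions are genuine bigeodesic diangles with the stated exceedances. It then remains to check that $\TT_{ABB'}$, $\TT_{BCB'}$ and the three diangles tile $\MM'_{ABCB'}$ without overlap; this follows as in Lemma~\ref{lem:nonoverlapstrongI} by verifying that the relevant leftmost bigeodesics are correctly nested, using planarity together with the fact that two leftmost geodesics towards the same ideal vertex/face cannot cross.

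The main obstacle is the middle exceedance $s'_B-s_B$: unlike the two lateral ones, it does not collapse by a single application of \eqref{eq:1} or \eqref{eq:3}, and requires combining the equilibrium relations of both triangles with the identities of Lemma~\ref{lem:ABCbuse} so that the constant $k$ cancels. This is the type II analogue of Lemma~\ref{lem:latdiffexc}, and demands essentially the same bookkeeping.
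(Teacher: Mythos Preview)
Your approach is essentially the same as the paper's, and the overall structure is correct. Two small slips to fix: for the exceedance $2c$ you should invoke the $C$-analogue of \eqref{eq:1}, namely $\td_C(Cv)=\td_C(v)-2c$, not \eqref{eq:3} (which concerns $\td_B\circ C$); and your intermediate formula for the middle exceedance has a sign error, since the equilibrium equations give $s'_B-s_B=\tfrac{1}{2}\bigl(-\td_{AB}+\td_{BC}+\td_{B'A}-\td_{CB'}\bigr)$, which is exactly the negative of \eqref{eq:distdiff} and hence equals $b-a-c$ by Lemma~\ref{lem:latdiffexc} (your stated expression would leave an uncancelled $k$).
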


\begin{proof}
  The first claim is similar to that of
  Lemma~\ref{lem:nonoverlapstrongI}, and is proved in the same way.

  For the second claim, we apply again the diangle lemma, and all
  boils down to proving that
  \begin{equation}
    \label{eq:typeIIconds}
    \begin{split}
      \td_B(w_{B'B})-\td_B(w_{BB'})&=b-a-c,\\
      \td_A(w_{B'A})-\td_A(w_{AB'})&=2a,\\
      \td_C(w_{BC})-\td_C(w_{CB})&=2c.
    \end{split}
  \end{equation}
  For the first relation, by considering the equilibrium conditions in
  $\TT_{ABB'}$ and $\TT_{BCB'}$ we find that
  \begin{equation}
    \td_B(w_{B'B})-\td_B(w_{BB'})=\frac{(\td_{BC}-\td_{AB})-(\td_{CB'}-\td_{B'A})}{2}
  \end{equation}
  and we observe that the right-hand side is nothing but the opposite
  of that of~\eqref{eq:distdiff}. Thus, by Lemma~\ref{lem:latdiffexc},
  it is equal to $b-a-c$ as wanted. For the second relation
  of~\eqref{eq:typeIIconds}, we simply note that
  $\td_A(w_{AB})=\td_A(w_{B'A})$ by the definition of equilibrium
  vertices, and that $\td_A(w_{AB'})=\td_A(w_{AB})-2a$
  by~\eqref{eq:1}. For the third relation, we proceed in the same way,
  with $C$ instead of $A$.
\end{proof}

\begin{prop}
  \label{prop:typeIIinv}
  The procedure which, to the map $M$ of type II, associates the two
  bigeodesic triangles $\TT_{ABB'}$ and $\TT_{BCB'}$, and the three
  bigeodesic diangles $\DD_{BB'}(w_{B'B},w_{BB'})$,
  $\DD_{AB'}(w_{B'A},w_{AB'})$ and $\DD_{BC}(w_{CB},w_{BC})$, is the
  inverse of the assembling procedure II.
\end{prop}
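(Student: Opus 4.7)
The plan is to mirror closely the proof of Proposition~\ref{prop:typeIinv}, adapting each ingredient to the procedure~II setting. As in the type~I case, the argument splits in two directions: one essentially pictorial and straightforward, the other requiring identification of leftmost bigeodesics and equilibrium vertices.

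For the first (easier) direction, I would verify that disassembling a map $M$ of type~II via the decomposition of Figure~\ref{fig:ProcedureII} and then reassembling the five resulting pieces via procedure~II recovers $M$. This amounts to a direct comparison of Figure~\ref{fig:ProcedureII} with the universal-cover visualization of procedure~II, i.e.\ Figure~\ref{fig:Univcovcarre_assembling} with each copy $\MM_\wl$ rotated clockwise by a quarter-turn, as per~\eqref{eq:Idefsbis}. All essential data (attachment points, red-to-blue matchings, and the structure of ideal vertices and faces) coincide, and projecting by $p\colon\tilde S\to S$ yields $M$.

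For the converse direction, I would start with two bigeodesic triangles and three bigeodesic diangles of nonnegative exceedances, perform the partial attachment common to both procedures to obtain the quadrangle $\MM$ of Figure~\ref{fig:assembling_partial}, and on the universal cover place copies $\MM_\wl$ identified according to procedure~II's scheme. By Lemma~\ref{lem:minpathnew}, the four sides of $\MM_\varnothing$ lie along geodesics in $\tilde M$, which I recognise as the paths $\GG$ and $\GG'$ of Section~\ref{tightsec} together with $A\GG$ and $C\GG'$. Since each of these geodesics is glued on one side to only red boundary intervals, no strictly leftward geodesic alternative can exist, so I identify them as the leftmost bigeodesics $\GG_{AB}(w_{AB})$, $\GG_{B'A}(w_{AB'})$, $\GG_{BC}(w_{CB})$ and $\GG_{CB'}(w_{CB'})$; the relations $w_{AB'}=Aw_{AB}$ and $w_{CB}=Cw_{CB'}$ forced by the gluing pattern of Figure~\ref{fig:assemblingII} then match this quadrangle with $\MM'_{ABCB'}$ of Lemma~\ref{lem:nonoverlapstrongII}. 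The internal attachment points of the diangles and triangles similarly launch leftmost bigeodesics within $\MM_\varnothing$ which coalesce with the boundary geodesics outside, hence are leftmost bigeodesics in the whole of $\tilde M$.

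The hard part will be the final step: checking that the attachment points $w_{AB},w_{BB'},w_{B'A}$ of $\TT_{ABB'}$ (and symmetrically $w_{BC},w_{CB'},w_{B'B}$ for $\TT_{BCB'}$) coincide with the equilibrium vertices constructed in Section~\ref{sec:trianglelemma} from the triples $(\tdel_A,\tdel_B,\tdel_{B'})$ and $(\tdel_B,\tdel_C,\tdel_{B'})$. The triangle axioms ensure that these vertices are geodesic vertices between the corresponding pairs of ideal vertices or faces and share matching Busemann latitudes, so they lie in the appropriate level sets; being attachment points of bigeodesic triangles, they must be rightmost there, and hence equal to the equilibrium vertices. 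The novelty compared to procedure~I is that procedure~II involves triangles whose two ``ideal apexes'' are distinct lifts $\tdel_B,\tdel_{B'}$ of the same boundary, so I expect bookkeeping via Lemma~\ref{lem:ABCbuse} (transferring between $\td_B$ and $\td_{B'}$ through the automorphism $A$) to be where the type~II condition $b\ge a+c$ enters, through the non-negativity of the exceedances computed in Lemma~\ref{lem:nonoverlapstrongII}. Modulo this bookkeeping, the argument is a faithful translation of that of Proposition~\ref{prop:typeIinv}.
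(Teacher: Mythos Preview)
Your proposal is correct and takes essentially the same approach as the paper, whose proof of Proposition~\ref{prop:typeIIinv} consists of the single sentence ``The proof is entirely similar to that of Proposition~\ref{prop:typeIinv}.'' You have accurately spelled out that similarity: the identification of the sides of $\MM_\varnothing$ with the boundary bigeodesics of $\MM'_{ABCB'}$ via Lemma~\ref{lem:minpathnew}, and the verification that the triangle attachment points are rightmost geodesic vertices at the equilibrium latitudes, hence the equilibrium vertices for $(\tdel_A,\tdel_B,\tdel_{B'})$ and $(\tdel_B,\tdel_C,\tdel_{B'})$.
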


The proof is entirely similar to that of
Proposition~\ref{prop:typeIinv}. Note that, when $M$ has no inner
face, $\tilde{M}$ has only ideal faces and is actually a tree.  We
find that the triangles $\TT_{ABB'}$ and $\TT_{BCB'}$ are reduced to
single vertices, while the diangles are reduced to segments, thereby
inverting the assembling of Figure~\ref{fig:assembling_noface} for
type II. The proof of Theorem~\ref{thm:main} is now complete.

\section{Equivalence with the Eynard-Collet-Fusy formula, and limiting
statistics in random maps with boundaries}
\label{sec:ecf}

In this section, we discuss the relation of our work with the
Eynard-Collet-Fusy (ECF) formula for (quasi-)bipartite maps with three
boundaries---see \cite[Proposition~3.3.1]{Eynard16} and
\cite{CoFu12}---and we show that it entails interesting properties of
the statistics of distances and areas in random maps and their scaling
limits.

Let $L_1,L_2,L_3$ be positive integers or half-integers whose sum is
an integer.  Let $G_{L_1,L_2,L_3}$ be the generating function of
essentially bipartite planar maps with three (non necessarily tight)
rooted boundary-faces of degrees $2L_1,2L_2,2L_3$, counted with a
weight $t$ per vertex and a weight $g_{2k}$ per inner face of degree
$2k$. Here, a boundary-face is said \emph{rooted} if one its incident
corners is distinguished.  We also let $R$ be the generating series
defined at \eqref{eq:Req}, and we introduce the notation
\begin{equation}
  \alpha(2L) :=
  \frac{(2L)!}{\lfloor L \rfloor!   \lfloor L-\frac{1}{2}
    \rfloor!}, \qquad L \in \frac{1}{2} \Z .
\end{equation}
The ECF formula states that 
\begin{equation}
  \label{eq:ColletFusy}
  G_{L_1,L_2,L_3} = \alpha(2L_1) \alpha(2L_2) \alpha(2L_3)
  \cdot R^{L_1+L_2+L_3}\frac{d \ln(R)}{d t}\, .
\end{equation}

We will show in Section~\ref{sec:rederivation} how one can recover
this formula from Theorem~\ref{thm:Tabc}. To this end, we first
establish in Section~\ref{sec:struct-outerm-minim} some facts about
the structure of minimal cycles homotopic to the boundaries in general
pairs of pants. For the record, and since this will be useful for the
probabilistic considerations of Section~\ref{sec:scalinglimits}, we
also recall the formula for the generating function $G_{L_1,L_2}$ of
essentially bipartite \emph{annular maps}, namely maps with two rooted
boundary-faces of lengths $2L_1$ and $2L_2$, with $L_1,L_2$ positive
integers or half-integers whose sum is an integer:
\begin{equation}
  \label{eq:ColletFusy2}
  G_{L_1,L_2} =\frac{ \alpha(2L_1) \alpha(2L_2)  }{L_1+L_2}  \cdot R^{L_1+L_2}\, .
\end{equation}
This formula appears in various places in the literature, for instance
it is the case $r=2$ of~\cite[Theorem~1.1]{CoFu12}, see also
\cite[Proposition~4]{BuddRMGFFnotes} or
\cite[Theorem~3.12]{CurienSFnotes}, and
\cite[Equation~(2.1)]{BouttierHDR} for a derivation based on the slice
decomposition.

\subsection{The structure of outermost minimal separating cycles}\label{sec:struct-outerm-minim}

Let $M$ be a planar map with three boundary-faces
$\partial_1,\partial_2,\partial_3$, that are \emph{not} supposed to be
tight.  We consider the problem of finding cycles homotopic to the
boundaries $\partial_i$ of $M$, with minimal length.  The optimization
problems of finding shortest paths with certain topological
constraints on surfaces have been investigated in the literature on
effective geometry and computer science. In particular, some of the
ideas used in this section are similar to \cite{CdVLa07,CdVEr10}. We
mention that the discussion below generalizes easily to maps with more
than three boundaries.

As in Section \ref{sec:universal-cover-maps}, we may and will assume
that $M$ is a map on the triply punctured sphere $S$, and we denote
the punctures by $x_1,x_2,x_3$.  For $i\in \{1,2,3\}$, let
$\CC^{(i)}_{\min}(M)$ be the set of cycles in $M$ that are freely
homotopic to the contour of $\partial_i$ in the punctured sphere $S$,
and that have minimal possible length. This minimal length will be
denoted by $2\ell_i(M)$, where $\ell_i(M)$ is a positive integer or
half-integer. Note that, when $M$ is essentially bipartite,
$2\ell_i(M)$ has the same parity as the degree of $\partial_i$.  Our
goal is to collect a number of facts about the structure of
$\CC^{(i)}_{\min}(M)$. In particular, we will see that it carries a
natural order relation which makes it a lattice. 

As in the preceding sections, it will be useful to work on the universal cover $\tilde{M}$ of $M$ introduced in
Section \ref{sec:universal-cover-maps}, which is a map on the universal cover $p:\tilde{S}\to S$. We will however use a slightly different index notation, replacing $\partial_A,\partial_B,\partial_C$ with $\partial_1,\partial_2,\partial_3$, $x_A,x_B,x_C$ with $x_1,x_2,x_3$. 
The automorphisms $A,B,C$ are renamed $A_1,A_2,A_3$, and the distinguished ideal corners $\tilde{x}_A,\tilde{x}_B,\tilde{x}_C$ are renamed $\tilde{x}_1,\tilde{x}_2,\tilde{x}_3$, see for instance Figure \ref{fig:Univcovcarre} for a reminder of the former notation
(we will not use $\tilde{x}_{B'}$ here). 

The boundary-faces of $M$ are lifted in $\tilde{M}$ to faces of
infinite degrees, similarly to Figure \ref{fig:Univcovcarremap}, but whose contours are now not necessarily geodesic, and
not even necessarily simple curves, since the boundaries of $M$ are
not assumed to be tight.  From the ``concrete'' construction of
$\tilde{M}$ in Section \ref{sec:universal-cover-maps}, for
$i\in \{1,2,3\}$, we can naturally distinguish a particular lift of
$\partial_i$ by choosing $\tdel_i$ to be the infinite face of
$\tilde{M}$ that is incident to the ideal boundary point
$\tilde{x}_i$.  Note that $\tdel_i$ is invariant under the
automorphism $A_i$.  If $c^i$ is a cycle that is homotopic to the
contour $\hat{\partial}_i$ of $\partial_i$, then we can reason exactly
as in the beginning of the proof of
Lemma~\ref{lem:tightn-bound-result-new}, which did not make use of the
fact that the boundaries are tight. Namely, we let $\gamma$ be a path
from some arbitrary point $x$ on $\hat{\partial}_i$ to some point $z$
on $c^i$ such that $\gamma c^i\gamma^{-1}$ and $\hat{\partial}_i$ are
homotopic as loops rooted at $x$, and then lift those two paths to
obtain a path $\tilde{c}^i_0$ from a vertex $\tilde{z}$ in $\tilde{M}$
to $A_i\tilde{z}$ that is a lift of $c^i$. The concatenation of the
paths $A_i^n\tilde{c}^i_0,n\in \Z$, seen up to increasing
reparametrization,
is then a biinfinite path
$\tilde{c}^i$ whose projection via $p$ is a path that circles
indefinitely around $c^i$, and which we call a {\em biinfinite lift}
of $c^i$. Moreover, $\tilde{c}^i$ is invariant under $A_i$. Let $U_0$
be the finite set of words $\wl$ such that $\tilde{c}^i_0$ visits the
domains $S_\wl$, in the former notation of Section
\ref{univcovsec}. Then by invariance under $A_i$, $\tilde{c}^i$ visits
$A_i^nS_\wl,n\in \Z,\wl\in U_0$. Now, from the way in which domains
are connected together, we see that all domains of the form
$A_i^nS_\varnothing,n\in \Z$ must be visited, so that $\tilde{c}^i$
remains at bounded ``distance'' from $\tdel_i$ (where distance is
measured in terms of number of domains $S_\wl$ to cross).  In this
sense, $\tilde{c}^i$ converges to the ideal boundary point
$\tilde{x}_i$, that is incident to the infinite face
$\tdel_i$. Consequently, if $j\neq i$ and $c^i,c^j$ are two cycles
that are respectively freely homotopic to the contours of $\partial_i$
and $\partial_j$, their biinfinite lifts $\tilde{c}^i,\tilde{c}^j$
defined as above converge to two different ideal boundary points.

Now, 
if $\hat{c}^i$ is another biinfinite lift of $c^i$,
there is an automorphism $W$ such that
$\hat{c}^i=W\tilde{c}^i$. If $W\in \{A_i^n,n\in \Z\}$, we have
$\hat{c}^i=\tilde{c}^i$, and otherwise, $\hat{c}^i$ is a distinct
infinite path that is invariant under the automorphisms
$WA_i^nW^{-1},n\in \Z$. In the latter case, this infinite path visits
domains of the form $WA_i^nS_{ \wl }$ where $n\in \Z$ and $\wl$
belongs to a finite set of words, which implies that $\hat{c}^i$
converges to the ideal corner $W \tilde{x}_i$, distinct from
$\tilde{x}_i$. For this reason, we can single out the biinfinite lift
$\tilde{c}^i$ constructed above, which converges to the distinguished
ideal boundary vertex $\tilde{x}_i$, and call it the \emph{canonical
  biinfinite lift} of $c^i$.

Now fix $i\in \{1,2,3\}$, let $c^i\in \CC^{(i)}_{\min}(M)$, and let
$\hat{c}^i$ be any biinfinite lift of $c^i$ in $\tilde{M}$, passing
through an arbitrary point $\hat{z}$ projecting to a point of
$c^i$. Similarly to Lemma \ref{sec:infin-geod-busem} above, the
minimality of the length of $c^i$, and Proposition 2.5 and Lemma 2.1
in \cite{CdVEr10}, imply the following result.  
 
 \begin{lem}\label{lem:biinfgeod}
 The path $\hat{c}^i$ is a biinfinite geodesic in $\tilde{M}$,
 separating $\tilde{S}$ into two connected components.  
 \end{lem}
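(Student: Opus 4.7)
The plan is to follow very closely the argument from the proof of Lemma \ref{sec:infin-geod-busem}, which handled the analogous statement for tight boundary contours. The core idea is that minimality of $c^i$ in its free homotopy class forces the doubly-infinite concatenation obtained by traversing $c^i$ indefinitely to be tight, in the sense of \cite{CdVEr10}: every finite sub-path is shortest in its fixed-endpoint homotopy class in $S$. Lifting this property to the simply connected cover $\tilde{S}$ then upgrades it to absolute length-minimality in $\tilde{M}$, which is exactly the defining property of a biinfinite geodesic.

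More concretely, let $\lambda^i:\Z\to M$ be the biinfinite sequence of vertices obtained by traversing $c^i$ indefinitely in both directions. By \cite[Proposition~2.5]{CdVEr10}, the minimality of the length of $c^i$ in its free homotopy class implies that, for every $\ell\in\Z$ and $m\in\N$, the sub-path $(\lambda^i(\ell+t))_{0\leq t\leq m}$ is shortest in its fixed-endpoint homotopy class in $S$. As noted in \cite[Section~2.1]{CdVEr10}, this ``tightness'' is preserved by taking lifts to any cover. Since $\hat{c}^i$ is precisely such a lift of $\lambda^i$, every finite sub-path of $\hat{c}^i$ is shortest in its fixed-endpoint homotopy class in $\tilde{S}$. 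But $\tilde{S}$ is simply connected, so any two paths in $\tilde{M}$ with the same endpoints are homotopic, and being shortest with fixed endpoints simply means being a geodesic. We conclude that $\td(\hat{c}^i(t),\hat{c}^i(t'))=|t-t'|$ for all $t,t'\in\Z$, which is precisely the definition of a biinfinite geodesic.

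For the separation property, notice first that the identity $\td(\hat{c}^i(t),\hat{c}^i(t'))=|t-t'|$ immediately forces injectivity of $\hat{c}^i$, so it can be viewed as a simple proper curve embedded in $\tilde{S}$. From the discussion preceding the lemma, both ends of $\hat{c}^i$ remain within a bounded number of domains $S_\wl$ from $\tdel_i$, hence accumulate on ideal boundary points of $\tilde{S}$. Using that $\tilde{S}$ is homeomorphic to the open disk, a standard Jordan-type argument for a simple proper arc whose two ends converge to points on the ideal boundary then yields that $\hat{c}^i$ separates $\tilde{S}$ into exactly two connected components.

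The main technical obstacle will be the rigorous invocation of \cite{CdVEr10}: its results concern paths on combinatorial maps with homotopy classes computed in the underlying surface, so the translation to our setting is direct, but one must verify that the lifting property indeed applies to the universal cover and to bi-infinite periodic paths. This should cause no real difficulty, since tightness is a purely local property and the argument is already essentially carried out in the proof of Lemma \ref{sec:infin-geod-busem}; the only novelty is that here $c^i$ is not assumed simple nor a boundary, but this plays no role in the length-minimality reasoning.
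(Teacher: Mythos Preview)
Your proof is correct and follows essentially the same approach as the paper: the paper does not give a standalone proof of this lemma but simply notes that, similarly to Lemma~\ref{sec:infin-geod-busem}, the result follows from the minimality of $c^i$ together with Proposition~2.5 and Lemma~2.1 of \cite{CdVEr10}. Your write-up spells out these steps in detail and adds an explicit argument for the separation claim, which the paper leaves implicit.
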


 Note that we can view the two components 
 of $\tilde{M}\setminus \hat{c}^i$ as the ``left'' and ``right'' 
 component, since $\tilde{M}$ is oriented, and we can assume that
 $c^i$ circles counterclockwise around the puncture $x_i$.   
 Given the fact that the
 latter (which we view as a point ``outside'' the surface)
 belongs to the region of the complement of $c^i$ located to 
 its left, we call the left region of $\tilde{M}\setminus
 \hat{c}^i$ the outer domain of $\hat{c}^i$, and the right region the
 inner domain.  Note that
 both domains of $\hat{c}^i$ determine $\hat{c}^i$ as their boundaries, by Jordan's
 theorem. 

 We now define a partial order relation on $\CC^{(i)}_{\min}(M)$. Let $c^i_1,c^i_2\in
 \CC^{(i)}_{\min}(M)$, and $\tilde{c}^i_1,\tilde{c}^i_2$ be their
 canonical biinfinite lifts. We write $c^i_1\preceq^{(i)} c^i_2$ if the outer domain of
 $\tilde{c}_1^i$  
 is included in the outer domain of $\tilde{c}_2^i$. The fact that
 this indeed defines a partial order is easy and left to the reader. 
 
 \begin{lem}\label{latticeprop}
 If $c_1^i,c_2^i$ are elements of $\CC^{(i)}_{\min}(M)$, and if $\tilde{c}_1^i,\tilde{c}_2^i$ are
 their canonical lifts, then
 the intersection and union of their outer domains are simply
 connected and bounded by two paths which we denote by
 $\tilde{c}_1^i\wedge \tilde{c}_2^i$ and $\tilde{c}_1^i\vee \tilde{c}_2^i$ . In turn, these two paths
 project via $p$ to two cycles $c_1^i\wedge c_2^i$ and $c_1^i\vee c_2^i$ on $M$, which are the infimum and supremum of
 $\{c_1^i,c_2^i\}$ for the order $\preceq^{(i)}$. In particular,
 $(\CC^{(i)}_{\min}(M),\preceq^{(i)})$ is a lattice.    
 \end{lem}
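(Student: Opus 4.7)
The plan is to establish the lattice property via a planar exchange argument applied to the canonical biinfinite lifts.

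First I would record the key structural facts. By Lemma~\ref{lem:biinfgeod} each canonical lift $\tilde{c}_j^i$ ($j=1,2$) is a biinfinite geodesic in $\tilde{M}$ separating $\tilde{S}$ into an inner and an outer domain, with the outer domain $O_j$ containing the ideal corner $\tilde{x}_i$. Since $\tilde{c}_j^i$ is invariant under the automorphism $A_i$, so is $O_j$. Consequently $O_1\cap O_2$ and $O_1\cup O_2$ are two non-empty $A_i$-invariant open subsets of $\tilde{S}$ containing $\tilde{x}_i$. I would first check that these two sets are simply connected: indeed, any loop in $O_1\cap O_2$ bounds a disk in each of the simply connected $O_j$, and by an elementary argument (Jordan curve theorem applied to a minimal counterexample) it bounds a disk in the intersection; for the union one uses the fact that both sets share a connected ideal "boundary at $\tilde{x}_i$".

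Next I would define $\tilde{c}_1^i\wedge \tilde{c}_2^i$ (resp.\ $\tilde{c}_1^i\vee \tilde{c}_2^i$) to be the topological boundary of $O_1\cap O_2$ (resp.\ $O_1\cup O_2$) in $\tilde{S}$. By planarity, each of these boundaries is a subset of $\tilde{c}_1^i\cup \tilde{c}_2^i$, obtained by walking along one of the two geodesics and switching to the other at a common vertex whenever that is required to stay on the boundary of the union or the intersection. This yields a biinfinite $A_i$-invariant path; the key combinatorial observation is the "exchange identity"
\begin{equation}
\mathrm{length}(\tilde{c}_1^i\wedge \tilde{c}_2^i|_F) + \mathrm{length}(\tilde{c}_1^i\vee \tilde{c}_2^i|_F) = \mathrm{length}(\tilde{c}_1^i|_F) + \mathrm{length}(\tilde{c}_2^i|_F) = 4\ell_i(M),
\end{equation}
where $|_F$ denotes the restriction to a fundamental domain for the $A_i$-action. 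This identity holds because along any maximal sub-arc where the two geodesics are disjoint, the four endpoints are two on $\tilde{c}_1^i$ and two on $\tilde{c}_2^i$, and the four arcs regroup into two arcs of $\tilde{c}_1^i\wedge \tilde{c}_2^i$ and $\tilde{c}_1^i\vee \tilde{c}_2^i$ using exactly the same edges.

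I would then project to $M$. Since $\tilde{c}_1^i\wedge \tilde{c}_2^i$ and $\tilde{c}_1^i\vee \tilde{c}_2^i$ are $A_i$-invariant biinfinite paths visiting the same ideal corner $\tilde{x}_i$ at both ends, their projections $c_1^i\wedge c_2^i$ and $c_1^i\vee c_2^i$ are cycles in $M$ freely homotopic to the contour of $\partial_i$; hence each has length at least $2\ell_i(M)$. Combined with the exchange identity above, which forces their lengths to sum to $4\ell_i(M)$, this proves that each has length exactly $2\ell_i(M)$, so both belong to $\CC^{(i)}_{\min}(M)$.

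Finally I would verify the lattice property itself: the canonical lift of $c_1^i\wedge c_2^i$ is by construction $\tilde{c}_1^i\wedge \tilde{c}_2^i$ (it is the unique lift converging to $\tilde{x}_i$ at both ends), whose outer domain is $O_1\cap O_2$, making $c_1^i\wedge c_2^i$ a lower bound for $c_1^i,c_2^i$; any common lower bound $c^i$ has an outer domain contained in both $O_1$ and $O_2$, hence in $O_1\cap O_2$, which gives $c^i\preceq^{(i)}c_1^i\wedge c_2^i$. The argument for the supremum is entirely symmetric. The main obstacle I foresee is handling the case when $\tilde{c}_1^i$ and $\tilde{c}_2^i$ share arbitrarily long common sub-paths and the "crossings" degenerate into bigons; the exchange identity above must be formulated carefully enough (for instance, by working at the level of multisets of oriented edges per fundamental domain) to accommodate these degenerate configurations.
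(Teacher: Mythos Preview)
Your approach is essentially the same as the paper's: both rely on the planar exchange argument applied to the two $A_i$-invariant biinfinite geodesic lifts. The paper's proof is in fact a one-sentence sketch, whose sole stated ingredient is that ``two consecutive intersections of the paths $\tilde{c}^i_1$ and $\tilde{c}^i_2$ must arise in increasing order for the parametrization of both paths, and be linked by arcs of same lengths, by the geodesic property.'' Your exchange identity is exactly the ``arcs of same length'' part of this observation, and is handled correctly.

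The one soft spot in your write-up is the connectedness of $O_1\cap O_2$ (and, dually, the simple connectedness of $O_1\cup O_2$). Your Jordan-curve argument shows that each connected component of $O_1\cap O_2$ is simply connected, but it does not rule out several components; and van Kampen/Mayer--Vietoris for the union needs the intersection to be connected. This is precisely where the paper's monotonicity observation does the work: since both lifts are geodesics converging to the same ideal corner $\tilde{x}_i$, consecutive intersection points are visited in the same order by both parametrizations, so the two curves never ``weave''. Once you insert this (it follows immediately from the equality of arc lengths plus planarity, which you already have), the boundary of $O_1\cap O_2$ is visibly a single $A_i$-invariant biinfinite path, and the rest of your argument goes through, including your careful remark about degenerate bigons.
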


 Again, the proof of this statement is easy and relies on the observation that
 two consecutive
 intersections of the paths $\tilde{c}^i_1$ and $\tilde{c}^i_2$
 must arise in increasing order for the parametrization of both paths,
 and be linked by arcs of same lengths, by the geodesic property. 

 As a consequence, since $(\CC^{(i)}_{\min}(M),\preceq^{(i)})$ is clearly a finite lattice, it
 admits a smallest element $c_{(i)}$, called the outermost minimal
 cycle homotopic to the contour of $\partial_i$. It also admits a
 maximal element, although we are not going to use this 
 one in the sequel. As a final observation, we state the
 following.   

 \begin{lem}\label{lem:nevercross}
   For every $i, j\in \{1,2,3\}$, the cycles $c_{(i)},c_{(j)}$
   do not cross each other, but may however have edges in common. For
   $i=j$ this means that the cycle $c_{(i)}$ cannot be self-crossing,
   but may possibly visit some edges more than once. 
 \end{lem}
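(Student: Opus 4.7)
My plan is to argue by contradiction, working throughout in the universal cover $\tilde{M}$. Suppose that $c_{(i)}$ and $c_{(j)}$ cross at some vertex $z$ of $M$, possibly with $i = j$ (in which case $z$ is a self-crossing of $c_{(i)}$). I would first lift the crossing to $\tilde{M}$: pick a preimage $\tilde{z}_i$ of $z$ lying on the canonical lift $\tilde{c}_{(i)}$ and a preimage $\tilde{z}_j$ on $\tilde{c}_{(j)}$, and use the free and transitive action of $\mathrm{Aut}(p)$ on the fibre of $z$ to find $W \in \mathrm{Aut}(p)$ with $W \tilde{z}_j = \tilde{z}_i$. Setting $\tilde{v} := \tilde{z}_i$, the two biinfinite geodesics $\tilde{c}_{(i)}$ and $W\tilde{c}_{(j)}$ both pass through $\tilde{v}$, and their four incident half-edges at $\tilde{v}$ form a crossing, this being a lift of the crossing at $z$. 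In the case $i = j$, a key observation is that $W \notin \langle A_i \rangle$: otherwise the subarc of $c_{(i)}$ between its two visits to $z$ would be freely homotopic to a nonzero power of the contour of $\partial_i$ while being strictly shorter than the corresponding multiple of $2\ell_i(M)$, contradicting the minimality defining $\ell_i(M)$.

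Next, I would apply the swap construction underlying Lemma~\ref{latticeprop} to the pair $(\tilde{c}_{(i)}, W\tilde{c}_{(j)})$. By Lemma~\ref{lem:biinfgeod}, both are biinfinite geodesics (even though $W\tilde{c}_{(j)}$ is not, in general, a canonical lift), so their consecutive common points are ordered consistently along both paths and are linked by arcs of equal lengths. Exchanging those arcs at every consecutive pair of crossings produces two new biinfinite paths which bound respectively the intersection and the union of the outer domains of $\tilde{c}_{(i)}$ and $W\tilde{c}_{(j)}$, with total length equal to that of the original pair. The intersection-bounding path $\tilde{c}^{*}$ has outer domain strictly contained in that of $\tilde{c}_{(i)}$, strictness being provided by the crossing at $\tilde{v}$ itself.

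To complete the argument, I would descend $\tilde{c}^{*}$ to a cycle in $M$, which requires $A_i$-equivariance. Since the local surgery at $\tilde{v}$ does not automatically provide this, I would perform the swaps simultaneously over the whole $\langle A_i \rangle$-orbit, exploiting the identity $A_i^n W \tilde{c}_{(j)} = (A_i^n W A_i^{-n}) \tilde{c}_{(j)}$: each translate crosses $\tilde{c}_{(i)}$ near $A_i^n \tilde{v}$ in the same combinatorial way, and wedging $\tilde{c}_{(i)}$ with all these translates simultaneously yields an $A_i$-invariant biinfinite geodesic. This projects to a cycle $c^{*} \in \CC^{(i)}_{\min}(M)$ with outer domain strictly contained in that of $c_{(i)}$, whence $c^{*} \prec^{(i)} c_{(i)}$, contradicting the definition of $c_{(i)}$ as the outermost minimal cycle; the analogous argument on the $c_{(j)}$ side yields a parallel contradiction. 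The main obstacle I foresee is precisely this equivariantization step: combining the swaps at all translates $A_i^n \tilde{v}$ into a single well-defined and strictly smaller wedge requires controlling the full pattern of crossings between $\tilde{c}_{(i)}$ and the infinite family $\{(A_i^n W A_i^{-n}) \tilde{c}_{(j)}\}_{n \in \Z}$, and verifying that the resulting global surgery still produces a valid biinfinite geodesic.
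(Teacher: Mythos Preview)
Your setup and the initial reduction to crossing biinfinite geodesics in $\tilde{M}$ are fine, and the paper proceeds the same way. The genuine gap is exactly where you flag it: the equivariantization step. Wedging $\tilde{c}_{(i)}$ with the infinite family $\{A_i^n W\tilde{c}_{(j)}\}_{n\in\Z}$ is not obviously well-defined, because the swap construction of Lemma~\ref{latticeprop} is stated for \emph{canonical} lifts of cycles in $\CC^{(i)}_{\min}(M)$, which are $A_i$-invariant, whereas $W\tilde{c}_{(j)}$ is $WA_jW^{-1}$-invariant; different translates $A_i^nW\tilde{c}_{(j)}$ may well cross one another, and you have not argued that the infinite simultaneous surgery yields a single $A_i$-invariant biinfinite geodesic. (A minor side remark: your justification that $W\notin\langle A_i\rangle$ when $i=j$ via ``strictly shorter'' is off---if $W=A_i^k$ the subarc has length exactly $|k|\cdot 2\ell_i(M)$, not less; the correct reason is simply that $W\tilde{c}_{(i)}=\tilde{c}_{(i)}$ would then not cross itself, being simple.)

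The paper avoids the equivariantization problem entirely by a direct construction that produces, in one stroke, an arc between a point and its $A_i$-image. Having fixed an excursion of $\hat{c}_{(j)}$ into the outer domain of $\tilde{c}_{(i)}$, say between points $\tilde{x}$ and $\tilde{z}$ of $\tilde{c}_{(i)}$ with $\tilde{z}$ lying between $A_i^n\tilde{x}$ and $A_i^{n+1}\tilde{x}$, the case $n=0$ is immediate (concatenate the excursion with a piece of $\tilde{c}_{(i)}$). For $n\geq 1$, the key idea you are missing is to intersect $\hat{c}_{(j)}$ with its own translate $A_i^{-1}\hat{c}_{(j)}$: the excursion must enter the Jordan domain bounded by the $A_i^{-1}$-shifted excursion and a piece of $\tilde{c}_{(i)}$, and hence must meet $A_i^{-1}\hat{c}_{(j)}$ at some point $\tilde{y}$ in the open outer domain. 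The arc of $\hat{c}_{(j)}$ from $\tilde{y}$ to $A_i\tilde{y}$ then lies entirely in the outer domain and, by a short length computation using that both paths are geodesics, has length at most $2\ell_i(M)$. Its projection is therefore an element of $\CC^{(i)}_{\min}(M)$ strictly smaller than $c_{(i)}$ for $\preceq^{(i)}$, the desired contradiction. This single-translate trick replaces your infinite family of swaps and makes the descent to $M$ automatic.
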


\begin{rem}\label{sec:struct-outerm-minim-1}
Though intuitively clear, this lemma has some subtelty to it. In
particular, the fact that $c_{(i)}$ may visit some edges twice does
happen, see for instance the contour of $\partial_B$ on the example
displayed in Figure \ref{fig:Univcovcarremap}.  
The proof of the lemma consists in showing that the outer domains of
two distinct biinfinite lifts of $c_{(i)}$ and $c_{(j)}$ cannot
overlap. For $i=j$, this entails that the self-contacts of
$c_{(i)}$, if there are any, can occur only ``from the inner
side''. More precisely, the outer region of $c_{(i)}$, which is
comprised of the edges of $\tilde{M}$ that can be attained from
$\partial_i$ without crossing $c_{(i)}$, can be seen as a map with two
boundaries, one of which is the contour of $\partial_i$, and the other
being a \emph{simple} boundary that results from cutting along
$c_{(i)}$. This remark will be useful in the next section.  
\end{rem}

 \begin{proof}
   In this proof, we let $A=A_i$ to simplify the notation.  If
   $c_{(i)}$ and $c_{(j)}$ cross each other, then we can lift them
   into biinfinite geodesics $\tilde{c}_{(i)},\hat{c}_{(j)}$ in the
   universal cover that also cross each other, where we choose
   the first lift to be the canonical lift of $c_{(i)}$, which is
   invariant under $A$. Note
   that $\hat{c}_{(j)}$ is not necessarily the canonical lift of
   $c_{(j)}$, as it is determined by the choice of a lift of an
   intersection point of $c_{(i)}$ and $c_{(j)}$. 
   Necessarily, these two geodesic
   biinfinite  lifts must converge to two distinct ideal boundary points.
   Therefore, whenever $\hat{c}_{(j)}$ enters the outer region of
   $\tilde{c}_{(i)}$, it has to eventually leave it. This
   reasoning holds also when $i=j$ and $c_{(i)}$ has a self-crossing,
   upon noting that $\tilde{c}_{(i)}$ and $\hat{c}_{(i)}$
   denote two lifts of $c_{(i)}$ which are necessarily different, since they are
   simple paths that cross each other.
   
   So there is then a subpath of $\hat{c}_{(j)}$ within the outer
   region of $\tilde{c}_{(i)}$, say between the two points $\tilde{x}$
   and $\tilde{z}$, meaning that $\hat{c}_{(j)}$ is entirely
   contained in the open outer region of $\tilde{c}_{(i)}$ between
   these points. Now assume without loss of generality that
   $\tilde{z}$ belongs to the portion of $\tilde{c}_{(i)}$ between
   $A^n\tilde{x}$ and $A^{n+1}\tilde{x}$ for some $n\geq 0$, and
   distinct from $A^n\tilde{x}$. If $n=0$ we arrive at a
   contradiction, because we can follow the arc of $\hat{c}_{(j)}$
   between points $\tilde{x}$ and $\tilde{z}$, then the arc of
   $\tilde{c}_{(i)}$ between $\tilde{z}$ and $A\tilde{x}$ to form a
   new geodesic arc $\tilde{c}$ between $\tilde{x}$ and $A\tilde{x}$
   that is contained in the (closed) outer domain of
   $\tilde{c}_{(i)}$, and therefore projects via $p$ to a cycle that
   is strictly smaller than $c_{(i)}$ in $\mathcal{C}^{(i)}_{\min}(M)$.
   
   \begin{figure}
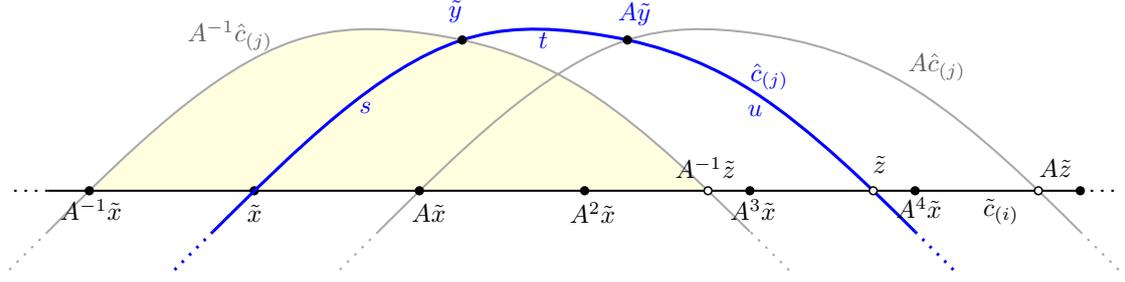

  \centering
  \fig{1}{noncrossingouter}
  \caption{Illustration of the proof of Lemma
    \ref{lem:nevercross}. The bottom black line is the biinfinite
    geodesic $\tilde{c}_{(i)}$, whose outer domain, represented as the
    upper-half of the picture, is visited by the blue path
    $\hat{c}_{(j)}$. The proof consists in showing that the portion of
    the blue arc between $\tilde{y}$ and $A\tilde{y}$ projects via $p$
    to a cycle of $\mathcal{C}^{(i)}_{\min}(M)$ which is strictly
    smaller than its minimal element, a contradiction.  }
  \label{fig:noncrossingouter}
\end{figure}
   
Next, let us assume that $n\geq 1$. Let $2\ell=2\ell_i(M)$ be the
length of the cycle $c_{(i)}$, that is in particular the length of the
arc of $\tilde{c}_{(i)}$ between $A^m\tilde{x}$ and $A^{m+1}\tilde{x}$
for every $m\in \Z$. In particular, the length of the arc of
$\tilde{c}_{(i)}$ between $\tilde{x}$ and $\tilde{z}$ equals
$2n\ell+\ell'$ for some $\ell'\in (0,2\ell]$, and this is also the
distance between its extremities since $\tilde{c}_{(i)}$ is a geodesic
path.

Then the arc of $\hat{c}_{(j)}$ between $\tilde{x}$ and $\tilde{z}$
enters the Jordan domain (see the yellow domain in Figure
\ref{fig:noncrossingouter}) formed by the arcs of $\tilde{c}_{(i)}$
and $A^{-1}\hat{c}_{(j)}$ between the points $A^{-1}\tilde{x}$ and
$A^{-1}\tilde{z}$, and has to leave it through some point $\tilde{y}$
since $\tilde{z}$ is not in this domain. Since we assumed that the arc
of $\hat{c}_{(j)}$ between $\tilde{x}$ and $\tilde{z}$ is entirely
contained in the outer domain of $\tilde{c}_{(i)}$, it must be that
$\tilde{y}$ belongs to the arc of $A^{-1}\hat{c}_{(j)}$ between
$A^{-1}\tilde{x}$ and $A^{-1}\tilde{z}$. Now by applying the
automorphism $A$, $A\tilde{y}$ has to be the intersection point of the
arc of $\hat{c}_{(j)}$ between $\tilde{x}$ and $\tilde{z}$ with the
arc of $A\hat{c}_{(j)}$ between $A\tilde{x}$ and $A\tilde{z}$. In
particular, $\hat{c}_{(j)}$ contains an arc (between $\tilde{y}$ and
$A\tilde{y}$) that is strictly contained in the outer domain of
$\tilde{c}_{(i)}$. Since one of its extremity is the image of the
other by $A$, this arc projects to a cycle $c$ of $M$ that is
homotopic to the boundary $\partial_i$. So if we can show that its
length is $2\ell$ (in fact, we will show that this length can be at
most $2\ell$, which is even better!), we will obtain that $c$ is in
$\mathcal{C}^{(i)}_{\min}(M)$ but $c \prec^{(i)} c_{(i)}$, a contradiction.
   
So let $t$ be the length of the arc of $\hat{c}_{(j)}$ between
$\tilde{y}$ and $A\tilde{y}$, and let $s,u$ be the lengths of the arcs
between $\tilde{x}$ and $\tilde{y}$, and between $A\tilde{y}$ and
$\tilde{z}$ respectively. Then $u$ is also the length of the arc of
$A^{-1}\hat{c}_{(j)}$ between $\tilde{y}$ and $A^{-1}\tilde{z}$.  

Now the length of the concatenation of the arc of $\hat{c}_{(j)}$
between $\tilde{x}$ and $\tilde{y}$, the arc of
$A^{-1}\hat{c}_{(j)}$ between $\tilde{y}$ and $A^{-1}\tilde{z}$, and
the arc of $\tilde{c}_{(i)}$ between $A^{-1}\tilde{z}$ and $\tilde{z}$
equals $s+u+2\ell$, and has to be greater than or equal to the
distance between its extremities $\tilde{x}$ and $\tilde{z}$, which is
$2n\ell+\ell'$ as mentioned above. Therefore, we obtain $s+u\geq
2(n-1)\ell+\ell'$. On the other hand, since $\hat{c}_{(j)}$ is a
geodesic path, this distance $2n\ell+\ell'$ is also equal to $s+t+u$,
which is at least $t+2(n-1)\ell+\ell'$. So we obtain that $t\leq
2\ell$, as wanted.   
 \end{proof}

\subsection{A re-derivation of the ECF formula for pairs of pants}\label{sec:rederivation}

We now use the above discussion to introduce a bijective
decomposition of planar maps with three boundary-faces which are not
necessarily tight. Let $M$ be such a map, with boundaries denoted
$\partial_1,\partial_2,\partial_3$ as before. For every $i=1,2,3$, we
let $c_{(i)}$ be the minimal element of
$(\mathcal{C}^{(i)}_{\min}(M),\preceq^{(i)})$, as defined in the
previous subsection.  By Lemma~\ref{lem:nevercross}, these cycles
cannot cross, so they split the map $M$ into four parts
$M^{(0)},M^{(1)},M^{(2)},M^{(3)}$ where, for $i=1,2,3$, $M^{(i)}$ is
the part of $M$ delimited by the (non self-crossing) cycle $c_{(i)}$
and containing the face $\partial_i$, while $M^{(0)}$ is the remainder
of the map delimited by the three cycles
$c_{(1)},c_{(2)},c_{(3)}$. By Remark \ref{sec:struct-outerm-minim-1},
for $i\in \{1,2,3\}$, we may view $M^{(i)}$ as
a map with two boundaries, one being given by the contour of
$\partial_i$, and the other being a simple boundary resulting from
cutting along $c_{(i)}$. 
  
In order to be consistent with the ECF formula, the boundary-faces of
$M$ are assumed to be rooted. This induces a canonical rooting of the
boundary-faces of $M^{(0)}$ by, say, considering the leftmost shortest
path starting at the root of $\partial_i$ and ending on $c_{(i)}$.

To characterize the resulting maps, we need an extra definition: a
boundary-face is said \emph{strictly tight} if its contour is the
unique cycle of minimal length in its homotopy class. A strictly tight
boundary-face is to a tight boundary-face what a (red) strictly geodesic
boundary interval is to a (blue) geodesic boundary interval, as we defined in
Section~\ref{sec:geoddefs}.

\begin{prop}\label{prop:minimalbij}
  Let $L_1,L_2,L_3,l_1,l_2,l_3$ be positive integers or half-integers.
  Then, the mapping $M\mapsto(M^{(0)},M^{(1)},M^{(2)},M^{(3)})$ is a
  bijection between:
  \begin{itemize}
  \item the set of planar maps $M$ which have three rooted
    boundary-faces of lengths $2L_1$, $2L_2$, $2L_3$ and whose minimal
    separating cycles have lengths $2\ell_i(M)=2l_i$,
  \item and the set of quadruplets $(M^{(0)},M^{(1)},M^{(2)},M^{(3)})$
    made of a planar map $M^{(0)}$ with three rooted tight
    boundary-faces of lengths $2l_1,2l_2,2l_3$, and of three annular
    maps $M^{(1)},M^{(2)},M^{(3)}$ where, for $i=1,2,3$, the map
    $M^{(i)}$ has a rooted boundary of length $2L_i$ and a strictly
    tight unrooted boundary of length $2l_i$.
  \end{itemize}
  The map $M$ is essentially bipartite if and only if
  $M^{(0)},M^{(1)},M^{(2)},M^{(3)}$ are essentially bipartite. (In
  this case, $M$ exists if and only if $L_1+L_2+L_3$, $L_1-l_1$,
  $L_2-l_2$ and $L_3-l_3$ are all nonnegative integers.)
\end{prop}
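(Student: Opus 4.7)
The plan is to build the bijection explicitly and verify it is well-defined in both directions, relying on the structural results of Section~\ref{sec:struct-outerm-minim}, especially Lemmas~\ref{lem:biinfgeod}, \ref{latticeprop}, and \ref{lem:nevercross}, and on a surgery argument to ensure that the reverse gluing recovers the outermost minimal cycles.

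For the forward direction, starting from a map $M$ whose minimal separating cycles have lengths $2l_i$, I define $c_{(i)}$ as the outermost element of $\mathcal{C}^{(i)}_{\min}(M)$. By Lemma~\ref{lem:nevercross} the three cycles do not cross and are individually non self-crossing, so, as indicated in Remark~\ref{sec:struct-outerm-minim-1}, cutting along them splits $M$ into four pieces $M^{(0)}, M^{(1)}, M^{(2)}, M^{(3)}$ of the announced topological types. The canonical rooting of the $i$-th boundary of $M^{(0)}$ will be the endpoint of the leftmost shortest path from the root corner of $\partial_i$ to $c_{(i)}$, which necessarily remains in $M^{(i)}$ until that endpoint. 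Two key properties then need verification. First, the \emph{strict tightness} of $c_{(i)}$ as a boundary of $M^{(i)}$: if another minimal representative $c' \neq c_{(i)}$ existed in $M^{(i)}$, then $c' \in \mathcal{C}^{(i)}_{\min}(M)$ with $c' \preceq^{(i)} c_{(i)}$, and the lattice property of Lemma~\ref{latticeprop} would produce the strictly smaller element $c_{(i)} \wedge c'$, contradicting the outermost choice. Second, the \emph{tightness} of $c_{(i)}$ as a boundary of $M^{(0)}$: any strictly shorter cycle in $M^{(0)}$ homotopic to the boundary arising from $c_{(i)}$ would also be a cycle in $M$ homotopic to $\partial_i$, contradicting the length minimality of $c_{(i)}$.

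For the backward direction, given a quadruple $(M^{(0)}, M^{(1)}, M^{(2)}, M^{(3)})$ satisfying the stated conditions, I glue $M^{(i)}$ to the $i$-th boundary of $M^{(0)}$ by identifying the rooted corner of that boundary with a canonical corner on the unrooted strictly tight boundary of $M^{(i)}$, defined intrinsically in $M^{(i)}$ as the endpoint of the leftmost shortest path from the rooted corner of $\partial_i$. This produces a well-defined rooted three-boundary map $M$, and to close the loop I need to show that applying the forward procedure to this $M$ returns the original quadruple.

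The main obstacle is checking that the gluing seams $c_{(i)}$ in the reconstructed $M$ are precisely the outermost minimal cycles homotopic to $\partial_i$. My proposed argument is a surgery: given any cycle $c'$ in $M$ homotopic to $\partial_i$, decompose it along its intersections with the seams $c_{(1)}, c_{(2)}, c_{(3)}$ into arcs each contained in one of the pieces $M^{(0)}, M^{(1)}, M^{(2)}, M^{(3)}$. Each arc lying in some $M^{(j)}$ with $j \neq i$ or in $M^{(0)}$ can be replaced by an arc along the appropriate seam without increasing length, using strict tightness in the $M^{(j)}$ case and tightness in the $M^{(0)}$ case, together with the fact that the seams are (strictly) geodesic boundaries of the pieces. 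Iterating the surgery yields a homotopic cycle contained entirely in $M^{(i)}$ of length at most $|c'|$, and strict tightness in $M^{(i)}$ then gives $|c'| \geq 2l_i$. Moreover, if equality holds and the original cycle were strictly more outer than the seam, the surgery would output a minimal cycle in $M^{(i)}$ distinct from $c_{(i)}$, again contradicting strict tightness. This establishes both the minimal-length property and the outermost property, thereby closing the bijection. The final parity claim follows from the fact that gluing preserves essential bipartiteness, combined with the observation that each annular piece forces $L_i - l_i$ to be a nonnegative integer and that $l_1 + l_2 + l_3$ being an integer forces $L_1 + L_2 + L_3$ to be an integer as well.
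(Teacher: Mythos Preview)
Your approach matches the paper's---cut along outermost minimal cycles, transfer rootings via leftmost shortest paths, and invert by gluing---and in fact you supply more detail than the paper, which essentially asserts the bijection once the rooting convention is fixed. However, your surgery argument for the inverse direction has a gap. You propose to push $c'$ into $M^{(i)}$ by replacing arcs lying in $M^{(0)}$ with arcs along the seam $c_{(i)}$, invoking tightness of $c_{(i)}$ as a boundary of $M^{(0)}$. But tightness is a statement about the \emph{entire} cycle, not about arcs: an arc $\alpha\subset M^{(0)}$ with endpoints on $c_{(i)}$, closed up with an arc $\beta$ of $c_{(i)}$, forms a cycle that in the three-boundary map $M^{(0)}$ need not be contractible nor homotopic to the $i$-th boundary---it could be homotopic to another boundary or to a nontrivial word in $\pi_1$. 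So the replacement can change the free homotopy class of $c'$ in $M$, and no length bound follows from tightness alone.

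The fix is to push in the opposite direction: replace each arc of $c'$ lying in an annular piece $M^{(j)}$ (for \emph{all} $j$, including $j=i$) by the path along $c_{(j)}$ that is homotopic to it rel endpoints in $M^{(j)}$. Since $c_{(j)}$ is strictly tight in $M^{(j)}$, its lift to the universal cover of the annulus is a biinfinite geodesic (Lemma~\ref{lem:biinfgeod}), so this replacement never increases length and, being a homotopy rel endpoints inside $M^{(j)}$, preserves the free homotopy class of $c'$ in $M$. After all such replacements, $c'$ lies in $M^{(0)}$ and is homotopic there to the $i$-th boundary, whence $|c'|\geq 2l_i$ by tightness of that boundary; the outermost property then follows from strict tightness in $M^{(i)}$ exactly as you say.
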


\begin{proof}
  The tightness properties of the boundaries of
  $M^{(0)},M^{(1)},M^{(2)},M^{(3)}$ result from the fact that
  $c_{(i)}$ is the minimal element of $\mathcal{C}^{(i)}_{\min}(M)$
  for $i=1,2,3$.

  The mapping is a bijection since we may conversely (re)assemble a
  map $M$ from a quadruplet.  Note that there are a priori
  $(2l_1)\cdot(2l_2)\cdot(2l_3)$ ways to perform the (re)assembling,
  but only one of them is consistent with the rootings. Indeed, in
  $M^{(i)}$ we consider the leftmost shortest path starting with the
  root and ending on the unrooted boundary, and this singles out the
  position at which we must align the root on the $i$-th boundary of
  $M^{(0)}$.
\end{proof}

We refer to the areas (number of faces) of the annular maps
$M^{(1)},M^{(2)},M^{(3)}$ as the \emph{exterior areas} of $M$, denoted
by $A_1(M),A_2(M),A_3(M)$ respectively, and to the area of the map
$M^{(0)}$ as the \emph{interior area} of $M$, denoted by $A_0(M)$. In
this way, the exterior area $A_i(M)$ ($i=1,2,3$) is the minimal area
bounded by the contour of $\partial_i$ and by a cycle homotopic to it
of minimal possible length\footnote{As was pointed to one of the
  authors by Marco Mazzucchelli, this area is related to the notion of
  \emph{flat topology} (here ``flat'' stands for the musical symbol
  $\flat$) on homology classes introduced by Whitney and Federer in
  geometric measure theory, see for instance the Introduction in
  \cite{MaNe16}.}.

For the purposes of the next section, we also consider the case where
$M$ is an annular map whose two boundary-faces $\partial_1,\partial_2$
are rooted and not necessarily tight. The contours of the two
boundaries are now homotopic, hence there is now a single set
$\mathcal{C}_{\min}(M)$ of separating cycles of minimal length (this
length is denoted $2\ell(M)$). In this set we may find two ``extremal
cycles'', closest to $\partial_1$ and to $\partial_2$ respectively. By
splitting $M$ along these two cycles, we obtain three pieces
$M^{(0)},M^{(1)},M^{(2)}$, where $M^{(0)}$ is a map with two rooted
tight boundary-faces both of length $2\ell(M)$, while $M^{(1)}$ and
$M^{(2)}$ have one rooted boundary-face and another strictly tight
unrooted boundary-face of length $2\ell(M)$. This decomposition yields
a bijection analogous to that of Proposition~\ref{prop:minimalbij}.
We define the interior and exterior areas of $M$ accordingly: for
$i=0,1,2$, we let $A_i(M)$ be the area of $M^{(i)}$.

We now recall known enumerative results about the annular maps with
(strictly) tight boundaries considered above. We mention that these
results may be obtained by specializing \cite[Theorem~34]{BeFu12b} or
\cite[Equations~(9.18) and (9.19)]{irredmaps}, which deal with the
more general setting of maps with girth/irreducibility constraints,
but we refer to~\cite[Section~2.2]{BouttierHDR} for a more elementary
presentation in the current setting. It uses a slice decomposition
expressed in the universal covers of the annular maps, similarly to
the present paper.

\begin{prop}[{see e.g. \cite[Theorem~2.1]{BouttierHDR}}]
  \label{prop:enumannular}
  Let $L,l$ be positive integers or half-integers such that $L-l$ is a
  nonnegative integer. Then, the generating function of essentially
  bipartite annular maps with a rooted boundary-face of degree $2L$
  and a strictly tight unrooted boundary-face of degree $2l$, counted
  with a weight $g_{2k}$ per inner face of degree $2k$ and a weight
  $t$ per vertex not incident to the unrooted boundary, is given by
  $\binom{2L}{L-l}R^{L-l}$, where $R$ is as usual defined by
  \eqref{eq:Req}.

  The generating function of essentially bipartite annular maps with
  two tight rooted boundary-faces of degree $2l$, counted with a
  weight $g_{2k}$ per inner face of degree $2k$ and a weight $t$ per
  vertex is equal to $2l R^{2l}$.
\end{prop}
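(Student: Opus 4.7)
The plan is to adapt the slice decomposition underlying Proposition~\ref{prop:proptightslice} to the annular setting, closely following the treatment of~\cite[Section~2.2]{BouttierHDR}. The natural workspace is the universal cover of the annular map, which is an infinite map on a strip: the strictly tight (resp.\ tight) unrooted or rooted boundary-face lifts to an infinite face whose contour is a biinfinite geodesic, by the same argument as Lemma~\ref{sec:infin-geod-busem}; the other boundary lifts to an infinite periodic path along the opposite side of the strip.

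For the first formula, given such an annular map with rooted boundary-face of degree $2L$ and strictly tight unrooted boundary-face of degree $2l$, I would launch from each of the $2L$ corners on one period of the (lifted) rooted boundary the leftmost geodesic towards the biinfinite geodesic on the other side. These $2L$ geodesics partition the strip into $2L$ consecutive pieces, each being a tight slice in the sense of Section~\ref{sec:tightslices}. The combinatorial content of the decomposition is encoded by a lattice path of length $2L$ with $\pm 1$ steps, where up-steps correspond to edges of the rooted boundary going away from the tight boundary and down-steps to edges going towards it. The key point is that the periodicity of the lift of the rooted boundary (combined with the fact that the tight boundary has period $2l$) forces the path to have exactly $L-l$ down-steps and $L+l$ up-steps, yielding the combinatorial factor $\binom{2L}{L-l}$. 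Each down-step in turn corresponds to an elementary slice, each contributing a factor $R$ by Proposition~\ref{prop:proptightslice}, giving the $R^{L-l}$ factor.

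For the second formula, I would apply the same procedure to an annular map with two rooted tight boundary-faces of common degree $2l$: from each of the $2l$ corners on the first boundary, launch the leftmost geodesic towards the second boundary. Because both boundaries have the same length $2l$, these $2l$ geodesics reach the $2l$ corners of the second boundary in cyclic order, with no winding mismatch, and the resulting decomposition consists of exactly $2l$ elementary slices, contributing $R^{2l}$. The extra factor $2l$ encodes the freedom in the relative rooting of the two boundaries: the decomposition itself determines the cyclic order of the corners of the second boundary relative to the root of the first, but there are $2l$ possible positions for the root corner of the second boundary within this cyclic order.

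The main obstacle, common to both parts, is the careful verification that the pieces produced by the leftmost cuts are indeed elementary slices satisfying the axioms of Section~\ref{sec:tightslices}, and that the map $M \mapsto (\text{sequence of elementary slices} + \text{lattice path data})$ is a bijection; this in turn hinges on the bipartite parity constraint together with a consistent choice of the leftmost convention in the universal cover. The annular case is where this approach was originally developed, and the details may be imported from~\cite[Section~2.2]{BouttierHDR}.
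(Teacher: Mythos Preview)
The paper does not supply its own proof here: the proposition is quoted from \cite[Theorem~2.1]{BouttierHDR}, and the surrounding text refers to \cite[Section~2.2]{BouttierHDR} for the slice decomposition on the universal cover of the annulus. Your plan is exactly this cited approach, and your sketch of the first formula (encoding the rooted boundary as a $\pm 1$ lattice path in the Busemann function of the tight side, with the $L-l$ down-steps carrying elementary slices) is correct.

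There is, however, a real slip in your second paragraph. You say you ``apply the same procedure'' for the two-tight-boundary case, but a literal application with $L=l$ fails: the first boundary, being tight, lifts to a geodesic along which the Busemann function of the second boundary is \emph{strictly monotone}, so with the sign conventions of the first part all $2l$ steps are up-steps and you obtain zero slices, not $2l$. The second formula in fact requires the opposite Busemann direction (equivalently, reversing the roles of the two boundaries in the slicing); all $2l$ edges of the first boundary then become down-steps, and the second boundary is recovered not because the geodesics ``reach its $2l$ corners''---elementary slices have a single apex, not an opposing base edge---but as the $2l$ excess blue edges left over after cyclic red-to-blue gluing of the slices, exactly as in Section~\ref{sec:gluingdesc}. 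This asymmetry is also what makes the vertex weights come out right: in the first formula the unweighted right-boundary vertices of the slices land on the strictly tight boundary, whereas in the second every vertex must receive a weight~$t$. Your final deferral to \cite{BouttierHDR} is appropriate, but ``the same procedure'' papers over a genuine difference between the two parts.
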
 

We now re-derive the ECF formula \eqref{eq:ColletFusy}.  Indeed, the
last two propositions imply that the generating function
$G_{L_1,L_2,L_3}$ of essentially bipartite planar maps with three
rooted boundary-faces of lengths $2L_1,2L_2,2L_3$ is equal to
\begin{equation*}
  \sum_{\substack{2l_1,2l_2,2l_3>0\\L_i-l_i \in \Z}}\binom{2L_1}{L_1-l_1}R^{L_1-l_1}
  \binom{2L_2}{L_2-l_2}R^{L_2-l_2}\binom{2L_3}{L_3-l_3}R^{L_3-l_3}
  (2l_1)(2l_2)(2l_3)T_{l_1,l_2,l_3}
\end{equation*}
where the factors $2l_i$ account for the extra rooting of the faces in
the tight maps counted by $T_{l_1,l_2,l_3}$. Using
Theorem~\ref{thm:Tabc} and the hypergeometric identity
$\sum_{2l>0}(2l)\binom{2L}{L-l} =\alpha(2L)$,
we recover precisely~\eqref{eq:ColletFusy}.

Similarly, the generating function of essentially
bipartite annular maps with two rooted boundary-faces of lengths
$2L_1,2L_2$ is equal to
\begin{equation}
  \label{eq:annulG}
  G_{L_1,L_2} = 
  \sum_{\substack{2l>0\\L_1-l \in \Z}} \binom{2L_1}{L_1-l}R^{L_1-l}
  \binom{2L_2}{L_2-l}R^{L_2-l} (2l) R^{2l} 
\end{equation}
and we recover~\eqref{eq:ColletFusy2} by another hypergeometric
identity.

\subsection{Scaling limits of separating loop statistics}\label{sec:scalinglimits}

In this section we show how our results can be used to deduce
statistical properties of random annular maps and pairs of pants with a large area. This will be done by deriving a scaling limit result for the minimum cycle lengths, as well as for the exterior and interior areas of the associated decomposition. For simplicity, we focus on the simplest case of bipartite quadrangulations, for which $g_{2k}=g\delta_{k,2}$, although our results should have extensions to much more general models of random maps. 

Let us recall some classical probability densities: 
\begin{equation}
  p_a(x)=\frac{1}{\sqrt{2\pi a}}e^{-x^2/2a}\, ,\qquad x\in \R
\end{equation}
the Gaussian density (of variance $a>0$),
\begin{equation}
  q_x(a)=\frac{x}{a}p_a(x)=\frac{x}{\sqrt{2\pi a^3}}e^{-x^{2}/2a}\, ,\qquad a> 0
\end{equation}
the stable-$1/2$ density (with parameter $x>0$), and 
\begin{equation}
  r_a(x)=x\sqrt{\frac{2\pi}{a}}p_a(x)=\frac{x}{a}e^{-x^2/2a}\, ,\qquad x\geq 0
\end{equation}
the size-biased Gaussian absolute value density, also known as Rayleigh density (with parameter $a>0$). All the random variables considered below will be defined on some common probability space $(\Omega,\FF,\P)$. 

\begin{thm}\label{thcylinders}
  Let $M^n$ be a uniformly random quadrangulation with two rooted
  boundaries of lengths $2L_1^n$ and $2L_2^n$ and $n$ inner faces,
  where the integers $L_i^n,i\in \{1,2\}$ satisfy
  $L_i^n\sim \LL_i\sqrt{2n}$ as $n\to\infty$ for some
  $\LL_1,\LL_2\in (0,\infty)$.  Then we have the following convergence in
  distribution for the rescaled minimal half-length of a separating
  cycle and for the rescaled exterior/interior areas of $M^n$:
\begin{equation}\label{convergencecylinder}
\left(\Big(\frac{2}{n}\Big)^{1/4}\ell(M^n),\frac{A_1(M^n)}{n},\frac{A_0(M^n)}{\sqrt{2n}}\right)
\longrightarrow (\RR,\AA,\BB).
\end{equation}
Here $\RR$ and $\AA$ are independent random variables, $\RR$ follows a Rayleigh law of parameter $\LL_{\mathrm{eff}}=(\LL_1^{-1}+\LL_2^{-1})^{-1}$ and $\AA$ has 
density $q_{\LL_1}(a)q_{\LL_2}(1-a)/q_{\LL_1+\LL_2}(1)$ for $a\in (0,1)$. Finally, the random variable $\BB$ has conditional density $q_{\RR}$ given $(\RR,\AA)$, and in particular it is independent of $\AA$. 
\end{thm}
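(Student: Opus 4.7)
The approach will be based on the bijective decomposition of annular maps with two rooted boundary-faces that is the analog, for annuli, of Proposition~\ref{prop:minimalbij} (and described just after it in the main text). Given $M^n$, cutting along the two outermost cycles of $\mathcal{C}_{\min}(M^n)$ produces a triple $(M^{(0)}, M^{(1)}, M^{(2)})$ where each $M^{(i)}$ ($i=1,2$) is an annular map with a rooted boundary of length $2L_i^n$ and a strictly tight unrooted boundary of length $2\ell(M^n)$, while $M^{(0)}$ is an annular map with two rooted tight boundaries of length $2\ell(M^n)$. This gives a product formula for the joint count of $(\ell, A_0, A_1)$, and hence for the joint probability.

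Using Proposition~\ref{prop:enumannular}, the generating functions of the three pieces are $\binom{2L_i^n}{L_i^n - l}R^{L_i^n - l}$ and $2l\, R^{2l}$, counted by inner faces with weight $g$ (we may set $t=1$ since the vertex count is determined for quadrangulations). For quadrangulations, $R$ solves $R = 1 + 3gR^2$ and has a square-root singularity $R_c - R(g) \sim 2\sqrt{1-g/g_c}$ at $g_c = 1/12$ with $R_c = 2$. I would then establish, by a Hankel-contour deformation using $R(g) \approx R_c(1-\sqrt{1-g/g_c})$ near $g_c$, the local estimate
\begin{equation*}
[g^k]\, R^m \;\sim\; \frac{12^{k}\, 2^{m}}{k}\, q_{m/\sqrt{2k}}(1),
\end{equation*}
valid uniformly when $m$ is of order $\sqrt{k}$. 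This rests on the inverse Laplace transform identity $\int_0^{\infty} e^{-sa}\,q_x(a)\,da = e^{-x\sqrt{2s}}$.

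Next, I would apply this local estimate to each of the three factors with the scalings $l = \lambda (n/2)^{1/4}$, $a_i = \alpha_i n$ (with $\alpha_2 = 1 - \alpha_1 + O(n^{-1/2})$) and $a_0 = \beta \sqrt{2n}$. The binomial factors contribute $\binom{2L_i^n}{L_i^n - l}/\binom{2L_i^n}{L_i^n} \to \exp(-\lambda^2/(2\mathcal{L}_i))$ by a standard Stirling expansion, and the scaling identity $q_{\mu x}(\mu^2 a) = \mu^{-2}\,q_x(a)$ rewrites $q_{(L_i^n-l)/\sqrt{2a_i}}(1) = \alpha_i\, q_{\mathcal{L}_i}(\alpha_i)$ and $q_{2l/\sqrt{2a_0}}(1) = \beta\, q_\lambda(\beta)$. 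Dividing by the normalization $N_n = [g^n]G_{L_1^n,L_2^n}$ computed from~\eqref{eq:ColletFusy2} (with $L_1^n L_2^n/(L_1^n+L_2^n) \sim \mathcal{L}_{\mathrm{eff}}\sqrt{2n}$), all the prefactors $12^n$, $2^{L_1^n+L_2^n}$, binomials $\binom{2L_i^n}{L_i^n}$ and powers of $n$ cancel, the quadratic exponent collapses via $\tfrac{1}{2\mathcal{L}_1}+\tfrac{1}{2\mathcal{L}_2} = \tfrac{1}{2\mathcal{L}_{\mathrm{eff}}}$ to give a Rayleigh factor, and the joint limit density becomes
\begin{equation*}
f(\lambda,\alpha,\beta)\;=\;r_{\mathcal{L}_{\mathrm{eff}}}(\lambda)\,\cdot\,\frac{q_{\mathcal{L}_1}(\alpha)\,q_{\mathcal{L}_2}(1-\alpha)}{q_{\mathcal{L}_1+\mathcal{L}_2}(1)}\,\cdot\,q_\lambda(\beta).
\end{equation*}
The independence structure stated in the theorem is then immediate: since $\int q_\lambda(\beta)\,d\beta = 1$ for each $\lambda$, the marginal of $(\mathcal{R},\mathcal{A})$ factorizes, and the density of $\mathcal{B}$ given $(\mathcal{R},\mathcal{A})$ depends only on $\mathcal{R}$.

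The main obstacle is the uniformity of the local estimate for $[g^k]R^m$, which must be quantitative enough to upgrade the pointwise convergence of discrete densities to weak convergence of the rescaled vector. This will require controlling the contour integral on the remainder of a Hankel contour and verifying tightness of the rescaled variables, e.g.\ through Gaussian tail bounds on the binomial factors and stretched-exponential tail bounds on $q_{m/\sqrt{2k}}(1)$ that are already built into the saddle-point estimate. A secondary complication, handled essentially automatically by the integer-or-half-integer framework of Section~\ref{sec:rederivation}, is uniformity over the parity of $L_i^n$ so that the bipartite and quasi-bipartite contributions are treated on the same footing.
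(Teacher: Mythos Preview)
Your overall strategy is correct and matches the paper's: decompose $M^n$ into the triple $(M^{(0)},M^{(1)},M^{(2)})$, use Proposition~\ref{prop:enumannular} to write the joint count as a product, extract coefficients, and identify the limiting density as $r_{\LL_{\mathrm{eff}}}(\lambda)\cdot q_{\LL_1}(\alpha)q_{\LL_2}(1-\alpha)/q_{\LL_1+\LL_2}(1)\cdot q_\lambda(\beta)$.

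The technical route differs, and the paper's is noticeably cleaner. Rather than a Hankel-contour asymptotic for $[g^n]R^m$, the paper uses an \emph{exact} identity coming from Lagrange inversion,
\[
[g^n]R^k=12^n\,2^k\,Q_k(2n+k),\qquad Q_k(m)=\tfrac{k}{m}P_m(k),\quad P_m(k)=2^{-m}\binom{m}{(m+k)/2},
\]
and similarly absorbs the binomial factor as $\binom{2L}{L-l}=4^L P_{2L}(-2l)$. The asymptotics then follow from the standard local limit theorems for $P$ and $Q$ (your formula $[g^k]R^m\sim 12^k 2^m k^{-1} q_{m/\sqrt{2k}}(1)$ is exactly what these give). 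Crucially, the upgrade from pointwise convergence of discrete densities to convergence in distribution is obtained in one line by Scheff\'e's lemma, since the limit is itself a probability density. This completely sidesteps the ``main obstacle'' you identify (quantitative uniformity of the contour estimate and separate tightness bounds): none of that is needed.

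One minor point: your ``secondary complication'' about parity of $L_i^n$ does not arise here, since the theorem hypothesizes that the $L_i^n$ are integers.
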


\begin{rem}
  Note that this theorem implies in particular that $A_0(M^n)/ n\to 0$ in probability, and so  $(A_1(M^n)+A_2(M^n))/n\to 1$ in probability. One can also be more explicit by computing the Laplace transform of the Rayleigh random variable, which yields
  \begin{equation}
  \E[e^{-u \BB}]=\sqrt{2\pi}(2u \LL_{\mathrm{eff}})^{1/4}e^{u \LL_{\mathrm{eff}}}\Pi(\sqrt{2u \LL_{\mathrm{eff}}})\, ,
\end{equation}
where $\Pi(x)=\int_x^\infty p_1(y)d y$ is the Gaussian tail distribution function. 
\end{rem}

\begin{thm}\label{thpants}
Let $M^n$ be a uniformly random quadrangulation with three rooted boundaries of lengths $2L_1^n,2L_2^n$ and $2L_3^n$ and $n$ inner faces, where the integers $L_i^n,i\in \{1,2,3\}$ satisfy $L_i^n\sim \LL_i\sqrt{2n}$ as $n\to\infty$ for some $\LL_1,\LL_2,\LL_3\in (0,\infty)$. 
Then we have the following convergence in distribution for the rescaled minimal half-lengths of separating cycles in each homotopy class and for the rescaled exterior/interior areas of $M^n$:
\begin{equation}
\left(\Big(\frac{2}{n}\Big)^{1/4}\ell_i(M^n),\frac{A_i(M^n)}{n}\right)_{i\in \{1,2,3\}}
\longrightarrow(\RR_i,\AA_i)_{i\in \{1,2,3\}}\, .
\end{equation}
Here $\RR_1,\RR_2,\RR_3$ are independent random variables, respectively with Rayleigh distribution of parameter $\LL_1$, $\LL_2$ and $\LL_3$, and $(\AA_1,\AA_2,\AA_3)$ is a random vector, independent of $(\RR_1,\RR_2,\RR_3)$, 
with density 
\begin{equation}\label{densitypants}
\left(\int_0^1\frac{q_{\LL_1+\LL_2+\LL_3}(x)}{2\sqrt{1-x}}dx\right)^{-1}\frac{q_{\LL_1}(a_1)q_{\LL_2}(a_2)q_{\LL_3}(a_3)}{2\sqrt{1-a_1-a_2-a_3}}
\end{equation} on the simplex $\{(a_1,a_2,a_3)\in (0,\infty)^3:a_1+a_2+a_3< 1\}$. 
\end{thm}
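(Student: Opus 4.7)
The plan is to combine the canonical decomposition of Proposition~\ref{prop:minimalbij} with the enumerative formulas of Theorem~\ref{thm:Tabc} and Proposition~\ref{prop:enumannular}, then extract the scaling limit via coefficient-asymptotic analysis. Specializing to quadrangulations ($g_{2k} = g\delta_{k,2}$, $t = 1$), the bijection writes the joint law of $(\ell_i(M^n), A_i(M^n))_{i=0,1,2,3}$ (subject to $\sum_i A_i = n$) explicitly as
\begin{equation*}
  \P(\ell_i = l_i,\ A_i = a_i) = \frac{(2l_1)(2l_2)(2l_3)\,[g^{a_0}]T_{l_1,l_2,l_3}\,\prod_{i=1}^3 \binom{2L_i^n}{L_i^n-l_i}[g^{a_i}]R^{L_i^n-l_i}}{[g^n]G_{L_1^n,L_2^n,L_3^n}}.
\end{equation*}
A direct computation from $R = 1 + 3gR^2$ yields $1 - 6gR = \sqrt{1-12g}$, hence $d\ln R/dt|_{t=1} = (1+\sqrt{1-12g})/(2\sqrt{1-12g})$, and Theorem~\ref{thm:Tabc} simplifies to the key identity $T_{l_1,l_2,l_3}(1,g)\cdot \prod_{i=1}^3 R^{L_i^n-l_i} = R^{L^n-1}/\sqrt{1-12g}$ with $L^n := L_1^n+L_2^n+L_3^n$. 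Summing the joint law over $a_0,\ldots,a_3$ therefore collapses the entire $g$-dependence to a single factor $[g^n]R^{L^n-1}/\sqrt{1-12g}$, which cancels against the same factor appearing in $[g^n]G_{L_1^n,L_2^n,L_3^n}$ via the ECF formula~\eqref{eq:ColletFusy}. Using $\alpha(2L) = L\binom{2L}{L}$ for integer $L$, the marginal of $(\ell_1,\ell_2,\ell_3)$ factorizes into the $n$-independent product $\prod_{i=1}^3 2l_i\binom{2L_i^n}{L_i^n-l_i}/(L_i^n\binom{2L_i^n}{L_i^n})$, which by the Gaussian approximation of binomials (valid since $l_i = O(n^{1/4})$ and $L_i^n = \Theta(n^{1/2})$) gives the joint convergence of $(2/n)^{1/4}\ell_i(M^n)$ to independent Rayleighs $r_{\LL_i}$.

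For the conditional law of $(A_1,A_2,A_3)$ given $(\ell_1,\ell_2,\ell_3)$, I need two coefficient asymptotics. Lagrange inversion applied to $R = 1 + 3gR^2$ gives $[g^a]R^m = (m/a) 3^a \binom{2a+m-1}{a-1}$, and Stirling---carefully retaining the factor $\binom{2a+m-1}{a-1}/\binom{2a+m}{a} \to 1/2$---yields
\begin{equation*}
  [g^a]R^m \sim \frac{m\cdot 12^a\cdot 2^m\cdot e^{-m^2/(4a)}}{2\sqrt{\pi}\,a^{3/2}}\qquad (m = O(\sqrt{a})),
\end{equation*}
which applied to $[g^{a_i}]R^{L_i^n-l_i}$ with $a_i = \AA_i n$ and $L_i^n = \LL_i\sqrt{2n}$ gives rise to the stable-$1/2$ density $q_{\LL_i}(\AA_i)$ after incorporating the binomial prefactors. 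For the pair-of-pants factor $[g^{a_0}]T_{l_1,l_2,l_3}$, writing $R^{s-1}/\sqrt{1-12g} = 2^{s-1}/(u(1+u)^{s-1})$ in terms of $u = \sqrt{1-12g}$ and applying singularity analysis term by term in $u$ (the corrections from $(1+u)^{-(s-1)}$ being negligible since $s = O(n^{1/4})$ and $a_0 = \Theta(n)$) gives $[g^{a_0}]T_{l_1,l_2,l_3} \sim 2^{s-1}\cdot 12^{a_0}/\sqrt{\pi a_0}$, whence the factor $1/\sqrt{\AA_0}$ in the limit. Substituting everything into the ratio, all exponentials $12^n,2^{L^n},4^{L^n}$ cancel, the Gaussian exponents recombine to $-\sum_i \LL_i^2/(2\AA_i) + (\sum_i \LL_i)^2/2$, and the conditional density of $(\AA_1,\AA_2,\AA_3)$ converges to
\begin{equation*}
  \frac{e^{(\LL_1+\LL_2+\LL_3)^2/2}}{\sqrt{1-\AA_1-\AA_2-\AA_3}}\prod_{i=1}^3 q_{\LL_i}(\AA_i),
\end{equation*}
\emph{independent} of $(\RR_1,\RR_2,\RR_3)$, as claimed. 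This matches~\eqref{densitypants} once we verify the normalizing identity
\begin{equation*}
  \int_0^1 \frac{q_\LL(x)}{2\sqrt{1-x}}\,dx = \frac{e^{-\LL^2/2}}{2},
\end{equation*}
which follows by the change of variables $u = 1/x$ then $v = u - 1$ reducing the left-hand side to the Gamma integral $\int_0^\infty v^{-1/2}e^{-\LL^2 v/2}\,dv = \sqrt{2\pi}/\LL$.

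The main difficulty is the scrupulous tracking of multiplicative constants in the Stirling and singularity asymptotics, and in particular the easily-missed factor $1/2$ coming from $\binom{2a+m-1}{a-1}/\binom{2a+m}{a} \to 1/2$, without which the limiting density fails to normalize correctly. Upgrading the pointwise convergence of the rescaled joint probability mass function to a continuous density into convergence in distribution is then a standard local-limit-type argument of Scheffé type, using the Gaussian tails in the Stirling error estimates to establish uniform domination on compact subsets of $(0,\infty)^3 \times \{(\AA_1,\AA_2,\AA_3)\in (0,1)^3 : \AA_1+\AA_2+\AA_3 < 1\}$.
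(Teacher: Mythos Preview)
Your approach is sound and largely parallel to the paper's, with one genuinely nice simplification: by noting the exact identity
\[
T_{l_1,l_2,l_3}\prod_{i}R^{L_i^n-l_i}=\frac{R^{L^n-1}}{\sqrt{1-12g}}
\]
you obtain the marginal law of $(\ell_1,\ell_2,\ell_3)$ as an \emph{exact} product $\prod_i 2l_i\binom{2L_i^n}{L_i^n-l_i}/\alpha(2L_i^n)$, independent of $n$ beyond the $L_i^n$. The paper does not isolate this fact and instead proceeds asymptotically throughout, using the random-walk representations $P_n(k),Q_k(n)$ and their local limit theorems (your Lagrange--Stirling computations are of course equivalent to these).

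There is, however, a genuine gap in your treatment of the conditional law. To obtain the pointwise limit of the conditional density you need the asymptotics of the normalizing denominator
\[
[g^n]\,\frac{R^{L^n-1}}{\sqrt{1-12g}}\,,
\]
and your singularity-analysis argument for $[g^{a_0}]R^{s-1}/\sqrt{1-12g}$ explicitly relies on $s=O(n^{1/4})$ so that the correction from $(1+u)^{-(s-1)}$ is negligible. For the denominator, the power is $L^n-1=\Theta(\sqrt{n})$, hence $(1+u)^{-(L^n-1)}\approx e^{-L^n u}$ contributes nontrivially at the relevant scale $u=\Theta(n^{-1/2})$. This is precisely where the factor $e^{-(\LL_1+\LL_2+\LL_3)^2/2}$ originates, and it cannot be extracted by the term-by-term expansion you describe. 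The paper handles this ``hard'' case by writing $[g^n]R^k\,d\ln R/dt$ as a convolution $\sum_m [g^m]R^k\cdot[g^{n-m}]d\ln R/dt$, applying the local limit theorem for $Q_k$ at scale $m\sim xn$, and recognizing the Riemann sum $\int_0^1 q_{\underline{\LL}}(x)/(2\sqrt{1-x})\,dx$. Your normalizing identity is then the \emph{evaluation} of this integral, not a substitute for computing it.

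You could repair the gap in two ways: either reproduce the convolution argument of the paper for the denominator, or else argue that the conditional density is $c_n\cdot g(\AA_1,\AA_2,\AA_3)$ with an unknown $n$-dependent constant and a known shape $g$, and determine $c_n\to e^{\underline{\LL}^2/2}$ from the exact normalization $\sum_{a_i}\P(\cdot\mid \ell_i)=1$ together with a uniform integrability estimate allowing the Riemann-sum approximation. The latter is legitimate but requires spelling out the domination, which you only gesture at in your final paragraph. As written, your sentence ``the Gaussian exponents recombine to $-\sum_i\LL_i^2/(2\AA_i)+(\sum_i\LL_i)^2/2$'' asserts the answer without deriving the second term.
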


\begin{rem}
  That \eqref{densitypants} is indeed a probability density is an easy
  exercise using the semigroup property of the stable densities. The
  reason why we put a constant $2$ in the denominator is that it
  allows to view it as the conditional probability density function of
  four independent random variables $(\xi_0,\xi_1,\xi_2,\xi_3)$ where
  $\xi_i$ follows a stable(1/2) law with parameter $\LL_i$ for
  $i\in \{1,2,3\}$, and $\xi_0$ follows a Beta(1,1/2) random variable,
  given the singular event that $\sum_{i=0}^3\xi_i=1$. Note also that
  the integral of the normalizing constant can be computed from the
  explicit form of $q_x(a)$, which after a change of variables
  $y=(1-x)/x$ yields
  $\int_0^1q_{\mathcal{L}}(x)dx/\sqrt{1-x}=\exp(-\mathcal{L}^2/2)$.
\end{rem}

\begin{rem}
Note that in both statements, we have the remarkable property that the areas of the regions cut by the minimal length curves homotopic to the boundaries are independent of these respective lengths. 
We also note that in the setting of Theorem \ref{thpants}, the quantity $\min(\ell_1(M),\ell_2(M),\ell_3(M))$ is also the length $\ell_{\min}(M)$ of the shortest non contractible cycle in $M$. An immediate consequence of this observation is that $(2/n)^{1/4}\ell_{\min}(M^n)$ converges in distribution to a random variable with Rayleigh distribution of parameter $\LL_{\mathrm{eff}}=(\LL_1^{-1}+\LL_2^{-1}+\LL_3^{-1})^{-1}$. Remarkably, this extends word by word  the conclusion of Theorem \ref{thcylinders}, and asks the question whether this further generalizes to maps with four boundaries or more. 
\end{rem}

\begin{rem}
It is tempting to believe that these two theorems have consequences for Brownian surfaces, that are the scaling limits in the Gromov-Hausdorff sense of (say) random quadrangulations with a fixed topology \cite{bettinelli16,BeMi20+}. In particular, we expect that for the Brownian annulus ($k=2$), which is known \cite{bettinelli16} to be homeomorphic to a two-punctured sphere, there is a unique cycle of minimal length separating the two boundaries, and that this cycle has a Rayleigh distribution with parameter $(\LL_1^{-1}+\LL_2^{-1})^{-1}$. By letting the size of the second boundary $\LL_2$ go to infinity, we naturally expect to find the \emph{infinite Brownian disk} with boundary length $\LL_1$ introduced in \cite{BaMiRa19}, a random metric space homeomorphic to the complement of the open unit disk in the plane. We conjecture that the shortest non-contractible loop in this space has length distributed as a Rayleigh law of parameter $\LL_1$. This would be relevant in work by Riera~\cite{Riera2022} on the isoperimetric profile of the Brownian plane, but will be investigated elsewhere. 
\end{rem}

Let us prove these results. Since we are focusing on the case of quadrangulations, from now on we will restrict the generating function
$R=R(t,(g_{2k},k\geq 1))$ to the special case $g_{2k}=g\delta_{k,2}$, 
yielding explicitly
\begin{equation}\label{Rquad}
R=\frac{1-\sqrt{1-12\, gt}}{6g}\, .
\end{equation} 
This implies the following well-known asymptotic
enumeration formulas: 
\begin{equation}\label{asympenum}
[g^n]R|_{t=1}\sim \frac{12^n}{\sqrt{\pi n^3}}\, ,\qquad [g^n]\frac{d
  \ln(R/t)}{d t}\Big|_{t=1}\sim\frac{12^n}{2\sqrt{\pi n}}\, .
  \end{equation}
From now on we will always implicitly assume that the vertex parameter $t$ is set to $1$. 
The coefficients of $R$ and its powers admit some convenient probabilistic representations that we recall quickly here. Let $P_n(k)=2^{-n}\binom{n}{(n+k)/2}$ be the probability that a simple random walk starting from $0$ equals $k$ at time $n$. It satisfies a local limit theorem (the summation index being due to parity reasons)
\begin{equation}\label{LLthP}\sum_{k\in 2\Z+n}|\sqrt{n}P_n(k)-2p_1(k/\sqrt{n})|\underset{n\to\infty}{\longrightarrow} 0\, .
\end{equation}
Let also $Q_k(n)=(k/n)P_n(k)$ be the probability that the simple random walk first hits $-k$ at time $n$. Then 
\begin{equation}\label{LLthQ}\sum_{n\in 2\N+k}|k^2Q_k(n)-2q_1(n/k^2)|\underset{k\to\infty}{\longrightarrow} 0\, .
\end{equation}
Then \eqref{Rquad} and Lagrange's inversion formula imply that for every $n,k\geq 0$,  
\begin{equation}\label{eqFHT}
[g^n]R^k=12^n 2^k Q_k(2n+k)\, .
\end{equation}

\begin{proof}[Proof of Theorem \ref{thcylinders}]
The number of annular quadrangulations $\QQ(n;L_1^n,L_2^n)$ with boundaries of lengths $2L_1^n,2L_2^n$ and with $n$ inner quadrangles is given by~\eqref{eq:ColletFusy2}: letting $\un{L}^n=L_1^n+L_2^n$ and $\un{\LL}=\LL_1+\LL_2$, this is 
\begin{align}\label{CFcylinders}
\#\QQ(n;L_1^n,L_2^n)=\frac{\alpha(2L_1^n)\alpha(2L_2^n)}{\un{L}^n}[g^n]R^{\un{L}^n}\underset{n\to\infty}{\sim} \frac{12^n\, 8^{\un{L}^n}}{n}\cdot\frac{\sqrt{\LL_1\LL_2}}{\pi\, \un{\LL}}q_{\LL}(1)\, ,
\end{align}
where the asymptotic formula is obtained from \eqref{eqFHT} and by applying \eqref{LLthQ} as well as the easy asymptotics $\alpha(2l)\sim 4^l\sqrt{l/\pi}$, also a consequence of \eqref{LLthP}.
Now, by the same discussion as that leading to~\eqref{eq:annulG},
the number of quadrangulations $M\in \QQ(n;L_1^n,L_2^n)$ that have $\ell(M)=l$, $A_0(M)=n_0$, $A_1(M)=n_1$, hence $A_2(M)=n_2:=n-n_0-n_1$, is given by
\begin{equation}
[g^{n_1}]\binom{2L_1^n}{L_1^n-l}R^{L_1^n-l}[g^{n_2}]\binom{2L_2^n}{L_2^n-l}R^{L_2^n-l}[g^{n_0}]2lR^{2l}.
\end{equation}
We rewrite this in ``probabilistic'' form as 
\begin{equation}
12^n\, 8^{\un{L}^n}2l P_{2L_1^n}(-2l)P_{2L_2^n}(-2l)Q_{L_1^n-l}(2n_1+L_1^n-l)Q_{L_2^n-l}(2n_2+L_2^n-l)Q_{2l}(2n_0+2l).
\end{equation}
Letting now $n_1=\lfloor n a\rfloor$ for some $a\in (0,1)$, $l=\lfloor (n/2)^{1/4}\lambda\rfloor$ and $n_0=\lfloor b\sqrt{2n}\rfloor$, for some $\lambda,b>0$, we can use the local limit theorems to get the following asymptotics: 
\begin{align}
2l P_{2L_1^n}(-2l)P_{2L_2^n}(-2l) &\underset{n\to\infty}{\sim} \frac{\sqrt{\LL_1\LL_2}}{\pi\un{\LL}} \Big(\frac{2}{n}\Big)^{1/4}r_{\LL_{\mathrm{eff}}}(\lambda)\, ,\\
Q_{L_1^n-l}(2n_1+L_1^n-l)Q_{L_2^n-l}(2n_2+L_2^n-l)& \underset{n\to\infty}{\sim}\frac{1}{n^2}q_{\LL_1}(a)q_{\LL_2}(1-a)\, ,\\
Q_{2l}(2n_0+2l) &  \underset{n\to\infty}{\sim}\frac{1}{\sqrt{2n}}q_\lambda(b)\, .
\end{align}
Taking a quotient with \eqref{CFcylinders}, this implies that for $l,n_1,n_0$ as above, 
\begin{multline}
  \P(\ell(M^n)=l,A_1(M^n)=n_1,A_0(M^n)=n_0)  
  \underset{n\to\infty}{\sim} \\
 \Big(\frac{2}{n}\Big)^{1/4}r_{\LL_{\mathrm{eff}}}(\lambda) \frac{1}{n}\frac{q_{\LL_1}(a)q_{\LL_2}(1-a)}{q_{\un{\LL}}(1)} \frac{1}{\sqrt{2n}}q_\lambda(b) \, . 
 \end{multline}
 Since the function of $\lambda,a,b$ appearing in the right hand side (after removing the factors involving $n$) is a probability density function on $\R_+\times (0,1)\times (0,\infty)$, we easily conclude by Scheff\'e's lemma. 
 \end{proof}

 \begin{proof}[Proof of Theorem \ref{thpants}]
The number of quadrangulations $\QQ(n;L_1^n,L_2^n,L_3^n)$ with three boundaries of perimeters $2L_1^n,2L_2^n,2L_3^n$ and with $n$ inner quadrangles is given by the ECF formula: letting 
$\un{L}^n=L_1^n+L_2^n+L_3^n$ and $\un{\LL}=\LL_1+\LL_2+\LL_3$, this is 
\begin{equation}
\label{CFpants}
\begin{split}
\#\QQ(n;L_1^n,L_2^n,L_3^n) &=\alpha(2L_1^n)\alpha(2L_2^n)\alpha(2L_3^n)[g^n]R^{\un{L}^n}\frac{d \ln R}{dt}\\
&\underset{n\to\infty}{\sim} 4^{\un{L}^n}\frac{\sqrt{L_1^nL_2^nL_3^n}}{\pi^{3/2}}[g^n]R^{\un{L}^n}\frac{d \ln R}{dt}
\end{split}
\end{equation}
The coefficient to extract in the right-hand side is given by a convolution of the form 
\begin{equation}
  \begin{split}
[g^n]R^{k}\frac{d \ln R}{dt} & = \sum_{m=0}^n[g^m]R^{k}[g^{n-m}]\frac{d \ln R}{d t}\\
& = 12^n\, 2^{k} \sum_{m=0}^nQ_{\un{L}^n}(2m+k)[g^{n-m}]\frac{1}{12^{n-m}}\frac{d \ln R}{dt}\, , \label{sumRk}
\end{split}
\end{equation}
where we have used again the probabilistic representation for the coefficients of $R^k$. For our present purposes we should take $k=\un{L}^n\sim \un{\LL}\sqrt{2n}$, but later we will also need the asymptotic behaviour of the same quantity, where $k$ is of smaller order $n^{1/4}$. So let us start with this simpler case, assuming that $k=k(n)$ is bounded by $Kn^{1/4}$ for some $K>0$.

Note that, by the asymptotics \eqref{asympenum}, the coefficients $c_m=[g^{m}]12^{-m}\frac{d \ln R}{dt}$ involving the logarithmic derivative of $R$ are uniformly bounded by some constant $C$, and equivalent to $1/2\sqrt{\pi m}$ as $m\to\infty$. So if we fix $\beta\in (1/2,1)$, we can rewrite using \eqref{asympenum} the sum arising in \eqref{sumRk} as
\begin{equation}
  \sum_{m=0}^{n^{\beta}}Q_{k}(2m+k)\frac{(1+\epsilon_n)}{2\sqrt{\pi n}}+r_n
\end{equation}
where $\epsilon_n$ is a sequence depending only on $n$ and converging to $0$, and $r_n\geq 0$ is a remainder term which is bounded by $C\sum_{m>n^{\beta}}Q_{k}(2m+k)$. Our choice of $\beta$ and the fact that $k\leq Kn^{1/4}$ then implies that $r_n\to 0$. So in this case, 
\begin{equation}\label{cassimple}
[g^n]R^{k}\frac{d \ln R}{dt} \sim \frac{12^n\, 2^{k} }{2\sqrt{\pi n}}\, .
\end{equation}
Now, for the slightly more delicate case where $k=\un{L}^n$, we rewrite the sum in \eqref{sumRk} as
\begin{equation*}
\frac{1}{(\un{L}^n)^2}\left[\sum_{m=0}^n c_{n-m}\bigg((\un{L}^n)^2Q_{\un{L}^n}(2m+\un{L}^n)-2q_1\Big(\frac{2m+\un{L}^n}{(\un{L}^n)^2}\Big)\bigg)+ 2\sum_{m=0}^n c_{n-m}q_1\Big(\frac{2m+\un{L}^n}{(\un{L}^n)^2}\Big)\right]
\end{equation*}
and note that by boundedness of $(c_m)$ and the local limit theorem \eqref{LLthQ}, the first sum converges to $0$ in absolute value. It remains to deal with the second sum. We introduce some $\beta\in (1/2,1)$ and split the sum according to whether $m\leq n-n^\beta$ or $n-n^\beta<m\leq n$. In the first case we can use the asymptotics \eqref{asympenum} and a comparison with an integral to obtain
\begin{equation}
\sum_{m=0}^{n-n^{\beta}} c_{n-m}q_1\Big(\frac{2m+\un{L}^n}{(\un{L}^n)^2}\Big) \sim \sqrt{n}\int_0^1\frac{q_1(a/\un{\LL}^2)}{2\sqrt{\pi(1-a)}}da\, ,
\end{equation}
while the sum over $n-n^\beta<m\leq n$ is clearly $O(n^{-\beta})$ by bounding the coefficients $c_{n-m}$ by $C$ and using the fact that $q_1$ is bounded near $1/\un{\LL}^2$. Putting things together, we obtain 
\begin{align}
\label{CFpants2}
\#\QQ(n;L_1^n,L_2^n,L_3^n) 
&\underset{n\to\infty}{\sim} (8n)^{1/4}\frac{12^n\, 8^{\un{L}^n}}{\pi^2}\sqrt{\LL_1\LL_2\LL_3}\int_0^1\frac{q_{\un{\LL}}(t)}{2\sqrt{1-t}}dt\, .
\end{align}

By Propositions \ref{prop:minimalbij} and \ref{prop:enumannular} and Theorem \ref{thm:Tabc}, the number of quadrangulations $M\in \QQ(n;L_1^n,L_2^n,L_3^n)$ such that $\ell_i(M)=l_i$ and  $A_i(M)=n_i$ for $i\in \{1,2,3\}$ is equal to (letting $n_0=n-n_1-n_2-n_3$)
\begin{multline}
\prod_{i=1}^3[g^{n_i}](2l_i)\binom{2L_i^n}{L_i^n-l_i}R^{L_i^n-l_i}\times [g^{n_0}]R^{\un{l}}\frac{d \ln R}{dt} = \\
12^n\, 8^{\un{L}^n}\prod_{i=1}^3(2l_i)P_{2L_i^n}(-2l_i)Q_{L_i^n-l_i}(2n_i+L_i^n-l_i)\times \frac{1}{12^{n_0}2^{\un{l}}}[g^{n_0}]R^{\un{l}}\frac{d \ln R}{dt}
\end{multline}
where $\un{l}=l_1+l_2+l_3$ and where we have used once again the probabilistic representation of the coefficients. 
Here the first two extracted coefficients count the number of annular quadrangulations with a rooted boundary of perimeter $2L_i^n$, and a strictly tight boundary of length $2l_i$, and the last one counts the number of (quadrangulated) pairs of pants with tight boundaries of perimeters $2l_i,i\in \{1,2,3\}$. Proposition \ref{prop:minimalbij} states that the boundaries of these pairs of pants should be marked, and we have absorbed the corresponding factors $2l_i$ in the product before. 
We let $l_i=\lfloor(n/2)^{1/4}\lambda_i\rfloor$ and $n_i=\lfloor na_i\rfloor$ for some $\lambda_i>0$ and $a_i>0$, $i\in \{1,2,3\}$,  such that $a_0=1-(a_1+a_2+a_3)>0$. Then, the local limit theorems give the asymptotics
\begin{align}
2l_i P_{2L_i^n}(-2l_i) &\underset{n\to\infty}{\longrightarrow} \sqrt{\frac{2\LL_i}{\pi}} r_{\LL_i}(\lambda_i)\, ,\\
Q_{L_i^n-l_i}(2n_i+L_i^n-l_i)& \underset{n\to\infty}{\sim}\frac{1}{n}q_{\LL_i}(a_i)\, .
\end{align}
Together with \eqref{cassimple}, this implies by taking a quotient with \eqref{CFcylinders} that for $l_i,n_i$ as above, 
\begin{multline}
\P(\ell_i(M^n)=l_i,A_i(M^n)=n_i,i\in \{1,2,3\}) \underset{n\to\infty}{\sim}  \\
\prod_{i=1}^3 \Big(\frac{2}{n}\Big)^{1/4}r_{\LL_i}(\lambda_i) \times \frac{1}{n^3}\Big(\int_0^1\frac{q_{\un{\LL}}(a)}{2\sqrt{1-a}}d a\Big)^{-1}\frac{q_{\LL_1}(a_1)q_{\LL_2}(a_2)q_{\LL_3}(a_3)}{2\sqrt{1-a_1-a_2-a_3}}  \, . 
 \end{multline}
 Since the function of $\lambda_i,a_i$ appearing in the right hand side (after removing the factors involving $n$) is a probability density function on $(\R_+)^3 \times \{(a_1,a_2,a_3)\in (0,\infty)^3:a_1+a_2+a_3< 1\}$, we conclude by Scheff\'e's lemma. 
 \end{proof}

\section{Conclusion}\label{sec:conc}

In this work, we have enumerated bijectively essentially bipartite
planar maps with three tight boundaries, relying on a geometric
decomposition of these objects in terms of elementary pieces with
certain geodesic boundaries.

Let us mention a number of natural extensions of the present work that
we plan on studying in the future. The most natural extension consists
in considering maps with more than three boundaries and/or higher
genus. We first remark that the discussion of
Section~\ref{sec:struct-outerm-minim}, hence
Proposition~\ref{prop:minimalbij}, extend easily to arbitrary
topologies: a map $M$ of genus $g$ having $n$ boundary-faces which are
not necessarily tight can be decomposed bijectively into a tuple
$(M^{(0)},M^{(1)},\ldots,M^{(n)})$, where $M^{(0)}$ is a map of genus
$g$ with $n$ tight boundary-faces, and where $M^{(1)},\ldots,M^{(n)}$
are ``funnels'', namely annular maps with one strictly tight
boundary-face, as defined in Section \ref{sec:rederivation}. This
decomposition is closely related to the Joukowsky transform considered
in \cite[Section~3.1.3.1]{Eynard16}. Using enumerative results coming
from topological recursion, we can show that the generating function
of essentially bipartite maps of genus $g$ with $n$ tight
boundary-faces of prescribed lengths $2\ell_1,\ldots,2\ell_n$ is a
quasi-polynomial generalizing the lattice count polynomial of
\cite{Norbury2010} (which we recover when setting the weights for
inner faces to zero). This will be discussed in a forthcoming paper.

Furthermore, it would be interesting to address the problem of the
enumeration of maps of genus $g$ with $n$ tight boundaries by a
bijective approach.  One might think at first that our decomposition
into bigeodesic triangles and diangles could easily be extended
without fundamental changes beyond the case $(g,n)=(0,3)$ considered
in the present paper. A closer look however shows that a number of new
technical questions arise in the general case: for instance,
controlling the exceedances of the diangles is not as simple as for
pairs of pants where these exceedances are entirely fixed by the
boundary lengths. More important, making sure that the bigeodesics
used in the decomposition do not cross, and therefore lead to
independent building blocks, is more challenging for more boundaries
or higher genus.

Still, we hope that the tight pairs of pants introduced in
this work, 
or small variations thereof, will serve as new elementary pieces intervening in the
decomposition of such maps with higher topological complexity. Indeed, pants
decompositions are the canonical way to describe all Riemann surfaces,
by cutting them along separating cycles. In the context of maps, in
order to get a canonical decomposition, 
one needs to specify along which cycles we cut. In this
respect, minimal separating cycles are the natural candidates,
but we must specify which of these minimal cycles we choose among certain ordered sets $\mathcal{C}$ of such cycles. In Section \ref{sec:struct-outerm-minim}, we explained why the choice 
of ``outermost'' 
elements in the ordered set $\mathcal{C}$ was crucial to avoid possible crossings of the 
various cutting cycles. If we now wish to split a map into two components, the ordering of the set $\mathcal{C}$ of separating cycles is reversed 
when viewed from both components and choosing its outermost
element from both sides therefore produces some overlap between the
components, hence a decomposition into non independent elements. To
avoid such overlap, we must instead choose innermost elements from
both sides (so that the overlapping region now becomes an independent
building block), or at least from one side. We then face again the
problem that, if we choose only innermost elements, different cycles
around a pair of pants may cross each other. A probable solution is to
consider a mixed prescription, with both outermost and innermost
elements, which would involve as building blocks pairs of pants with,
say, one strictly tight boundary and two tight ones. The question of
their enumeration is therefore an issue that we hope to better
understand in the future.

As we noticed above, Theorems \ref{thcylinders} and \ref{thpants}
imply that minimal separating cycles in random maps with the topology of the annulus or of the pair of pants admit Rayleigh statistics in the scaling
limit, with a parameter that depends in a similar and simple way on
the boundary lengths. One can naturally wonder whether these
statistics also arise for more boundaries or in higher
genera. However, for four boundaries or more, the minimal separating
cycles are not necessarily separating only one boundary from all the
others, and it is likely that more complicated statistics would
arise.

Another direction of study would be to control distances between the
boundaries. This program was achieved in the case of three  
boundary-vertices in \cite{threepoint} for planar quadrangulations and
in \cite{FuGu14} for general  
planar maps. As discussed in Appendix~\ref{sec:wlm}, the results
of~\cite{threepoint} provide an explicit expression for the
generating function $X_{s,t}$ (respectively $Y_{s,t,u}$) of balanced
bigeodesic diangles  
(respectively bigeodesic triangles) with all inner faces of degree $4$ and
with, say red intervals of lengths
$s'\leq s$ and $t'\leq t$ (respectively of lengths $s'\leq s$, $t'\leq
t$ and $u'\leq u$).  
Together with the generating function  
$R_s$ for elementary slices with (red) right boundary of length $s'<s-1$
($s\geq 1$), known since the very introduction of slices in
\cite{hankel}, it seems 
that we have all the ingredients (at least for quadrangulations) for a
proper refined enumeration of pairs of pants with 
a control on the (properly defined) geodesic distances between their
boundary-faces or boundary-vertices. Indeed,  
the lengths $s,t,u$ above, characterizing the various building blocks,
eventually fix the desired distances. 

A final framework where our method is likely to apply is that of
planar irreducible maps or maps with girth constraints, for which an
interesting connection with Weil-Petersson volumes was recently
pointed out by Budd~\cite{Budd2022b,Budd2022a}. Recall that the
\emph{girth} is the length of the shortest cycle in the map and that a
map is \emph{$d$-irreducible} if its girth is at least $d$ and any
cycle of length $d$ is the contour of an inner face. In
\cite{irredmaps}, a slice decomposition was devised to enumerate such
families of maps with one or two boundaries. We expect that this
decomposition may be extended to three boundaries along lines similar
to those of the present paper.

\appendix

\section{A slice-theoretic enumeration of triply pointed maps}
\label{sec:slicetp}

\paragraph{Recursion relation for $\boldsymbol{R}$.}
Call $R$ the generating function of elementary slices (i.e.\ tight slices of width $1$). Let us show that $R$ 
satisfies the recursion relation \eqref{eq:Req}, which determines it uniquely as a formal power series in $t$ 
and in the $g_{2k}$'s. We use the notations of Figure~\ref{fig:tightslice} 
for tight slices, specialized to the case where the interval $[c',c'']$ reduces to a single oriented ``base edge'' $e$. 
Assuming that the slice is not reduced to $e$, we may consider its base face $f$ 
incident to $e$ on its left, and look at the (clockwise) contour path of $f$ from $v'$ 
(incident to $c'$) to $v''$ (incident to $c''$): this path has length $2k-1$ if $f$ has degree $2k$.
Calling $v$ the apex of the slice (vertex incident to $c$), we may record the relative distances 
$\ell_i=d(v',v)-d(v_i,v)$ for the successively visited vertices $v_i$, $i=0,\ldots,2k-1$, along the contour path from 
$v_0=v'$ to $v_{2k-1}=v''$. The sequence $\left(\left(i,\ell_i\right)\right)_{0\leq i\leq 2k-1}$ defines a directed path 
$\mathcal{P}$ of length $2k-1$ in 
$\Z^2$ from $(0,\ell_0)=(0,0)$ to $(2k-1,\ell_{2k-1})=(2k-1,1)$, with $k$ ascending steps 
with $\ell_{i}-\ell_{i-1}=+1$ and $k-1$ descending steps with $\ell_{i}-\ell_{i-1}=-1$. Let us now, for each visited vertex $v_i$, 
cut the slice along the leftmost geodesic $\mathcal{G}_i$ from $v_i$ to $v$. This results into a decomposition of the slice into $k$ 
components which are elementary slices, in correspondence with the $k$ ascending steps of $\mathcal{P}$. 
More precisely, to each step 
with $\ell_{i}-\ell_{i-1}=+1$ is associated an elementary slice delimited by $\mathcal{G}_{i-1}$ and $\mathcal{G}_i$, 
whose base edge connects $v_{i-1}$  (at distance $d(v,v')-\ell_{i-1}$ from $v$) to $v_i$ (at distance $d(v,v')-\ell_{i-1}-1$ from $v$) 
and whose apex is the first meeting point of $\mathcal{G}_{i-1}$ and $\mathcal{G}_i$ towards $v$.
  As for a step with $\ell_{i}-\ell_{i-1}=-1$, it does not give rise to any component in the decomposition since
$\mathcal{G}_i$ starts by following (counterclockwise) the contour 
of $f$ from $v_i$  (at distance $d(v,v')-\ell_{i}$ from $v$) to $v_{i-1}$ (at distance $d(v,v')-\ell_{i}-1$ from $v$)
and then merges with $\mathcal{G}_{i-1}$, so that no faces lie in-between $\mathcal{G}_{i-1}$ and $\mathcal{G}_i$.

Starting conversely from the directed path $\mathcal{P}$ above, viewed as a sequence of edges all colored
in red, and from the $k$ elementary slice components $S_1,\ldots, S_k$, we may recover the original slice by: (i) 
gluing the (blue) base edge of $S_j$ to (and above) the (red) edge
associated with the
$j$-th ascending step 
of $\mathcal{P}$, then (ii) gluing each blue boundary edge 
of a slice $S_j$ to the the first available red edge, if any, facing it on its left (this edge may belong to the red boundary of a
preceding slice component or be associated with a descending step of $\mathcal{P}$) and finally (iii) closing $\mathcal{P}$ by adding 
an extra base edge $e$ so as to form the base face $f$ of degree $2k$. 

Once translated in the language of generating functions, the above bijective decomposition
yields the relation \eqref{eq:Req}, where the first term $t$ accounts for the elementary slice reduced to a single edge
and the $k$-th term in the sum accounts for elementary slices with a base face of degree $2k$, with
$g_{2k}$ the weight of this face, the factor ${2k-1\choose k}$ the number of
possible oriented paths $\mathcal{P}$ of length $2k-1$ with $k$ ascending steps, 
and the factor $R^k$ the generating function for the $k$ elementary slice components.

\paragraph{Proof of the relation $\boldsymbol{T_{0,0,0}=d \ln(R/t)/dt}$.}
Take an elementary slice not reduced to a single edge. Upon gluing its two intervals $[c,c']$ and $[c',c]$ (see again Figure~\ref{fig:tightslice} for the notations), we get a bipartite planar map with both a marked oriented edge $e$ 
(corresponding to the interval $[c',c'']$ of length $1$ oriented from $c'$ to $c''$) and a marked vertex $v$ 
(corresponding to the vertex incident to $c$) which is closer to the endpoint than to the origin of $e$. 
Conversely, starting from a bipartite planar map with a marked oriented edge $e$ and a marked vertex $v$ closer to the endpoint than to the origin of $e$, the elementary slice leading to this marked map by the above gluing is easily recovered by cutting the map along the leftmost geodesic towards $v$ starting with $e$. In the generating function $R-t$ of elementary slices not reduced to a single edge, 
the vertex incident to $c'$ receives a weight $t$ while that incident to $c$ receives no weight. 
We immediately deduce that $(R-t)/t$ is the generating function of planar bipartite maps with a marked oriented edge $e$ and 
a marked vertex $v$ closer to the endpoint than to the origin of $e$, where neither $v$ nor the origin $v'$ of $e$ (necessarily distinct from $v$) receive the weight $t$.
Alternatively, by first choosing $v'$ then $e$, $(R-t)/t$ is the generating function of planar bipartite maps with two distinct marked vertices $v$ and $v'$ (which receive no weight $t$)
and a marked edge $e$ incident to $v'$ and whose other extremity is at distance $d(v',v)-1$ from $v$. If we now wish to compute instead the generating
function $B$ of doubly pointed maps, without the marked edge $e$, we may proceed as follows: consider, for a map with two marked distinct vertices $v$ and $v'$
the (non-empty) counterclockwise sequence of edges from $v'$ to a vertex at distance $d(v',v)-1$ from $v$, and cut the map along the leftmost geodesics
towards $v$ starting with these successive edges. This results in a non-empty cyclic sequence of a particular type of elementary slices,
all not reduced to a single edge, which 
are such that all the non-boundary edges incident to the vertex $v'$ incident to the corner $c'$ lead to vertices at a distance larger than 
$d(v',v)$ from $v$, the vertex incident to $c$. Call $N$ the generating function of these particular elementary slices (with the same weighting convention
as for regular tight slices). We deduce 
the relation $B=-\ln(1-N/t)$ (note that in $B$, maps are counted with symmetry factors: a planar map with
two marked distinct vertices may have a $k$-fold symmetry by ``rotating'' around the axis of the two marked vertices. It then receives the weight $1/k$).
As for the maps counted by $(R-t)/t$, the additional marked edge $e$ provides an origin for the cyclic sequence so that the above
cutting now results in a non-empty linear sequence of the same particular tight slices. We now deduce the relation
$(R-t)/t=N/t/(1-N/t)$, from which we eventually get $B=\ln(R/t)$.
Since in $B$ the two marked vertices have no weight, taking a derivative with respect to $t$ in $B$ amounts to the marking of a third vertex on the map, 
distinct from the already marked ones. We deduce that $T_{0,0,0}=dB/dt=d \ln(R/t)/dt=d\ln R/dt-t^{-1}$.

\section{Connection with well-labeled maps}\label{sec:wlm}
We discuss here the connection between our decomposition into bigeodesic diangles and triangles 
and another decomposition introduced in \cite{threepoint} to characterize the three-point function of planar maps.
We restrict our discussion to the case of quadrangulations, i.e.\ maps whose all inner faces have degree $4$,
and to the case where the three boundaries are boundary-vertices.
As first noted in \cite{Miermont09}, such triply pointed planar quadrangulations may be bijectively encoded by so-called
planar \emph{well-labeled maps}, which are maps whose vertices carry integer labels with the constraint that
\begin{itemize}
\item{the difference of labels between any two neighboring vertices is $0$ or $\pm 1$.}
\end{itemize}
For convenience, the corners of a well-labeled map receive the label of their incident vertex.

\medskip
More precisely, as shown in \cite{Miermont09}, and in \cite{threepoint} in the specific case that we consider here, one may establish a one-to-one correspondence between
planar quadrangulations with three distinct vertices $v_A$, $v_B$
and $v_C$ and planar well-labeled maps with (generically) three faces $f_A$, $f_B$ and $f_C$
satisfying the additional constraint that 
\begin{itemize}
\item{C1. the frontier between any two faces of the map (i.e. the set of vertices and edges incident to both faces) is non-empty and the minimum label
on this frontier is $0$.}
\end{itemize}
In the non-generic case where one of the three boundary-vertices is a geodesic vertex between the other two, one of the faces in the 
well-labeled map degenerates into a single vertex, and some of the arguments presented below must be adapted.

\begin{figure}
  \centering
  \includegraphics[width=1.\textwidth]{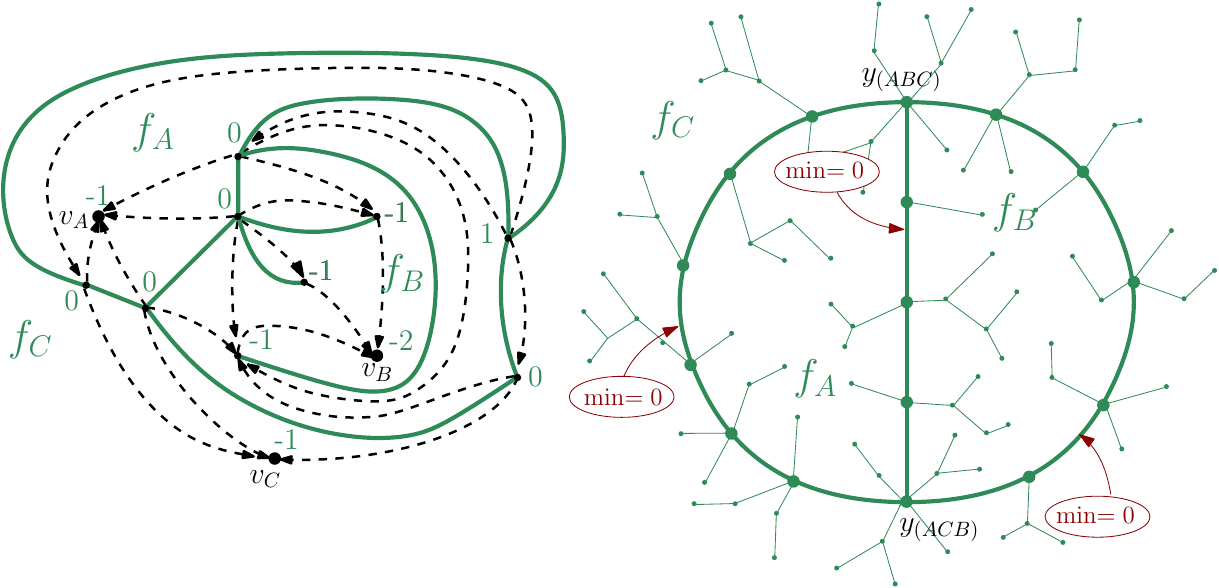}
  \caption{Left: The construction of a quadrangulation with three boundary-vertices $v_A$, $v_B$ and $v_C$ from a well-labeled
  map with three faces $f_A$, $f_B$ and $f_C$, as obtained by connecting each corner to its successor. Here $r_B=2$ and $r_A=r_C=1$.
  Right: Schematic picture of a $3$-face well-labeled map satisfying C1. Its skeleton is indicated by thick edges. Each frontier between two given faces
  carries minimal label $0$.}
  \label{fig:WL}
\end{figure}

\medskip
Given a planar well-labeled map with three faces satisfying C1, the associated triply pointed quadrangulation is easily recovered as follows: 
calling $1-r_i$ the minimum label among vertices incident to the face $f_i$ ($i\in\{A,B,C\}$), 
we add a new vertex $v_i$ with label $-r_i$ in this face. Within each face, we then connect each corner with label $\ell$ to its \emph{successor}, 
which is the first encountered corner with label $\ell-1$ when going counterclockwise around
the face (i.e. with the face on the left). See the left of Figure~\ref{fig:WL} for an example. We finally
remove the labels as well as the original edges of the well-labeled map. In particular, the vertices of the
quadrangulation are identified with those of the well-labeled map, plus the three added vertices $v_A$, $v_B$ and $v_C$. 

An important property relating the well-labeled map to its associated quadrangulation is the following: 
\begin{itemize}
\item{Any vertex $v$ incident to $f_i$ ($i\in\{A,B,C\}$ ) with label $\ell(v)$ is at a distance $r_i+ \ell(v)$ from $v_i$ in the
quadrangulation (the property extends trivially to $v_i$ itself since $\ell(v_i)=-r_i$).}
\end{itemize}
From this property, it is then easily shown that those vertices of the frontier between $f_i$ and $f_j$ which carry the (minimal) label $0$ 
are precisely the geodesic vertices between $v_i$ and $v_j$ (for $i\neq j\in\{A,B,C\}$) at distance $r_i$ from $v_i$ and $r_j$ from $v_j$.

\medskip
The generic topology of a planar well-labeled map with three faces satisfying C1 is shown on the right of Figure~\ref{fig:WL}. 
Its \emph{skeleton}, obtained by iteratively removing all the leaves of the map so 
that all remaining vertices have degree at most $2$, has exactly two $3$-valent vertices $y_{(ABC)}$ and $y_{(BAC)}$ and three linear branches
between them made of $2$-valent vertices, each branch corresponding to a frontier between two faces. 
From C1, each of the three branches carries a minimal label $0$. The full well-labeled map is made of this skeleton and a number
of attached well-labeled subtrees.

\medskip
We may now perform a canonical decomposition of the map as in~\cite[Section~4.3]{threepoint}. Namely, consider the branch at the frontier between, say $f_A$ and $f_B$ and call 
$v_{AB}$ (respectively $v_{BA}$) the vertex with label $0$ closest to $y_{(ABC)}$ (respectively closest to $y_{(ACB)}$).
We define $v_{BC}$, $v_{CB}$ and $v_{CA}$, $v_{AC}$ by cyclic permutation. 
Clearly, from the above discussion, $v_{AB}$ and $v_{BA}$ are the extremal elements of $S_{AB}$ in the sense of Figure \ref{fig:threepointdecomp}.
We may then cut the map at all the $v_{ij}$ vertices, resulting in five well-labeled tree components: the first two components, containing one of the vertices $y_{(ABC)}$ or $y_{(BAC)}$, will be referred to as \emph{Y-diagrams} and called $Y_{(ABC)}$ and $Y_{(BAC)}$ accordingly.
\begin{figure}[t!]
  \centering
  \includegraphics[width=.9\textwidth]{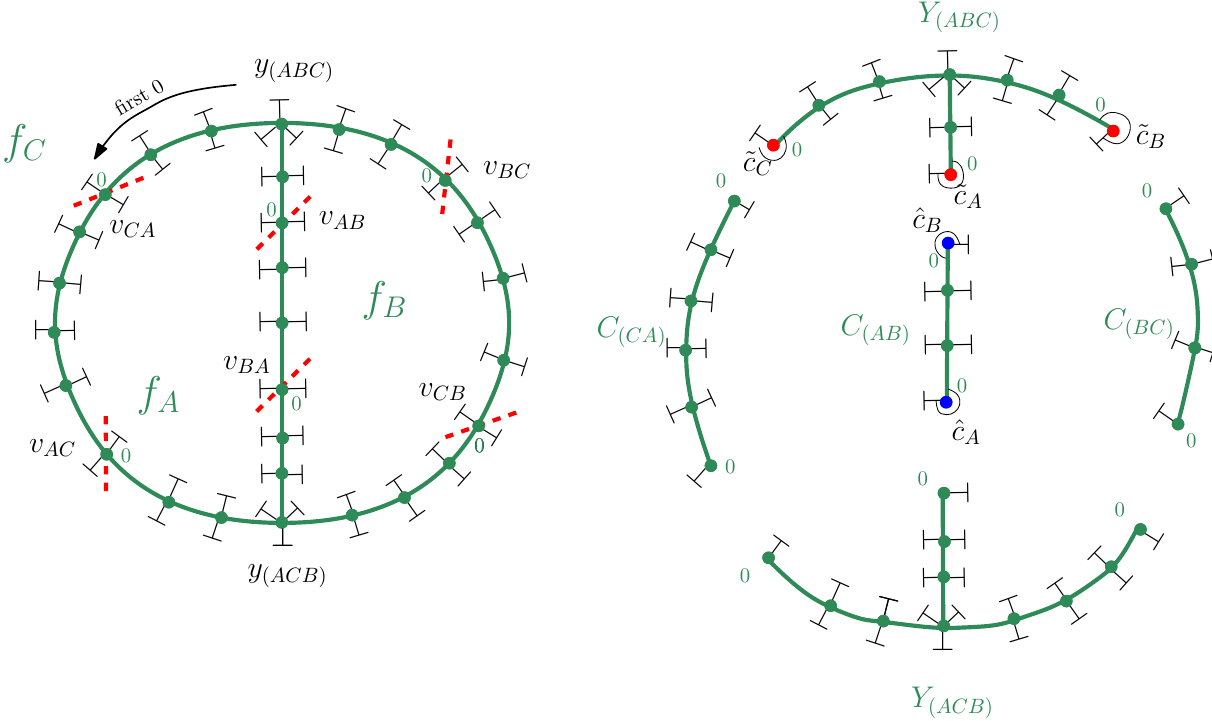}
  \caption{Schematic picture of the decomposition of a well-labeled map with three faces satisfying C1 into five well-labeled components, two Y-diagrams and three chains, 
  by cutting each branch of the skeleton at their extremal vertices labeled $0$ as shown by dashed red lines. The $\perp$ signs indicate
  attached well-labeled subtrees. The cutting singles out two corners $\hat{c}_A$ and $\hat{c}_B$ on the chain $C_{(AB)}$. 
  Similarly, three corners $\tilde{c}_A,\tilde{c}_B,\tilde{c}_C$ are singled out on the Y-diagram $Y_{(ABC)}$.}
  \label{fig:WLdecomp}
 \end{figure}
The last three components, lying in-between $v_{ij}$ and $v_{ji}$ 
for some $i\neq j\in \{A,B,C\}$, will be called \emph{chains} and denoted $C_{(ij)}$. In the cutting process, we must specify to which component
we attach the subtrees incident to the cutting vertices $v_{ij}$. We use the convention shown on the left of Figure~\ref{fig:WLdecomp}. 
This is dictated by the fact that we wish to retain in, say the chain $C_{(AB)}$ the subtree incident to $v_{BA}$ which follows clockwise 
the leftmost corner incident to $v_{BA}$ in the face $f_A$ of the original well-labeled map, as this subtree may carry the successor of this corner.
This choice of cutting singles out de facto two corners at the extremities of the chain $C_{(AB)}$:  
a corner $\hat{c}_A$ at $v_{BA}$ and a corner $\hat{c}_B$ at $v_{AB}$, see the right of Figure~\ref{fig:WLdecomp}. Note that we may identify
those vertices of the chain originally in $f_A$ (respectively $f_B$) as the vertices lying 
in-between $\hat{c}_A$ and $\hat{c}_B$ (respectively in-between $\hat{c}_B$ and $\hat{c}_A$) when going clockwise around the chain. Similarly, the
cutting process marks three corners $\tilde{c}_A,\tilde{c}_B,\tilde{c}_C$ preceding the retained subtrees at the end of the branches of Y-diagram 
$Y_{(ABC)}$, see the right of Figure~\ref{fig:WLdecomp}. By
construction, the well-labeled chains have nonnegative labels on the
unique path linking their extremal corners, the latter having label zero,
while for Y-diagrams, all labels along the paths linking the three extremal corners are
strictly positive, except for the corners themselves, which have label
zero. 
 \begin{figure}
  \centering
  \includegraphics[width=.5\textwidth]{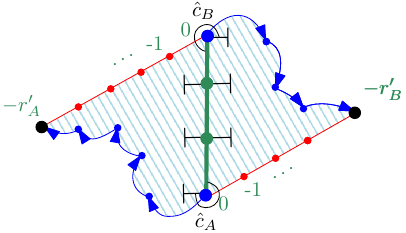}
  \caption{Schematic picture of the balanced bigeodesic diangle encoded by the well-labeled chain $C_{(AB)}$ (see text).
  Its attachment points are the big blue dots.}
  \label{fig:Chaintodiang}
 \end{figure}
\medskip
 
We now claim that the two Y-diagrams and the three chains resulting from the decomposition of the well-labeled map
precisely encode the two bigeodesic triangles and the three bigeodesic diangles resulting from our decomposition
of the associated triply pointed quadrangulation. To see this, 
consider for instance the well-labeled chain $C_{(AB)}$. We may associate to this chain a balanced bigeodesic diangle with attachment points $v_{AB}$
and $v_{BA}$ as follows: calling $1-r'_A$ (respectively $1-r'_B$) the minimal label of the chain between $\hat{c}_A$ and $\hat{c}_B$ (respectively
between $\hat{c}_B$ and $\hat{c}_A$) when going clockwise around the chain (with $r'_A\leq r_A$ and $r'_B\leq r_B$), we attach to 
$\hat{c}_B$ a new branch, called the $A$-branch, made of $r'_A$ vertices with labels decreasing from $-1$ to $-r'_A$ 
and to $\hat{c}_A$ a new branch, the $B$-branch, made of $r'_B$ vertices with labels decreasing from $-1$ to $-r'_B$. These branches are represented as
red lines in Figure~\ref{fig:Chaintodiang}.  
Each corner of the chain, except those incident to the new vertices of the two added red branches, is then connected to its successor, possibly lying on the newly added red branch
(note that going counterclockwise around the external face corresponds to going clockwise around the chain). 
The resulting object is a map with one boundary-face and four boundary intervals, alternating 
between blue (geodesic) intervals, corresponding to the sequence of successors of $\hat{c}_A$ and that of $\hat{c}_B$, and red (strictly geodesic) intervals, 
corresponding to the $A$- and the $B$-branch. This is nothing but a bigeodesic diangle with attachment points $v_{AB}$ and $v_{BA}$,
which is moreover balanced, with a blue and a red interval of the same length $r'_A$ and the other two of the same length $r'_B$. 
We can repeat the process
to build balanced bigeodesic diangles from the chains $C_{(BC)}$ and $C_{(CA)}$. As for the well-labeled Y-diagrams, say for instance $Y_{(ABC)}$: 
calling $1-r''_A$
the minimal label between $\tilde{c}_A$ and $\tilde{c}_C$, 
we attach a new red $A$-branch
of length $r''_A$
to the corner $\tilde{c}_C$. We do the same by cyclic permutations of
the letters $A,B,C$. This gives rise to the 
red lines in Figure~\ref{fig:Ydiagtotriang}. Finally, we  
connect as before each corner not along the added red branches to its successor
(possibly lying on the newly added red branches). 
This clearly creates a bigeodesic triangle: the fact that the attachment points of this triangles are 
``red'' points in our terminology is due to the fact that we chose the extremal $0$ labels on each frontier so that any geodesic between $v_i$ and $v_j$ ($i\neq j\in\{A,B,C\}$) within the triangle, which must cross a vertex with label $0$, has to pass via the appropriate attachment vertex. 
The other triangle is obtained similarly from $Y_{(ACB)}$. 
 \begin{figure}
  \centering
  \includegraphics[width=.5\textwidth]{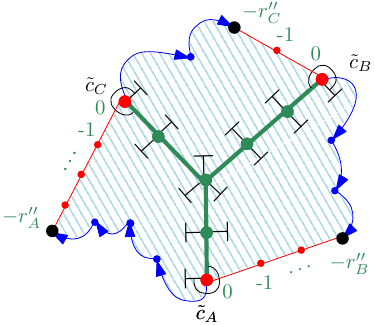}
  \caption{Schematic picture of the bigeodesic triangle encoded by the well-labeled Y-diagram $Y_{(ABC)}$ (see text).
  Its attachment points are the big red dots.}
  \label{fig:Ydiagtotriang}
 \end{figure}

\medskip
We now claim that gluing the three bigeodesic diangles and two bigeodesic triangles associated to the three well-labeled chains and the two
well-labeled Y-diagrams according to our procedure I clearly reproduces the quadrangulation
associated with the well-labeled map at hand before its decomposition. This simply results from the fact that the sequences of successors within the full well-labeled map
after gluing match precisely with the sequences of successors within each of its five well-labeled components after identification of the glued blue and red intervals, see Figure~\ref{fig:WLdecompbis}. The paths along which the bigeodesic diangles and triangles are glued induce paths in the resulting quadrangulation, that correspond precisely to leftmost geodesics launched from 
the cutting points $v_{ij}$ ($i\neq j \in \{A,B,C\}$) towards the vertices $v_i$ at the extremity of the added red $i$-branches of the various well-labeled components which have the smallest label, necessarily equal to $-r_i$.   

\medskip
To conclude, we have a bijective correspondence between (i) balanced bigeodesic diangles and well-labeled chains, and (ii) bigeodesic triangles
and well-labeled Y-diagrams. With this correspondence, our decomposition of planar quadrangulations with three boundary-vertices
matches precisely that of \cite{threepoint} for the associated well-labeled maps with three faces.

\begin{figure}
  \centering
  \includegraphics[width=.9\textwidth]{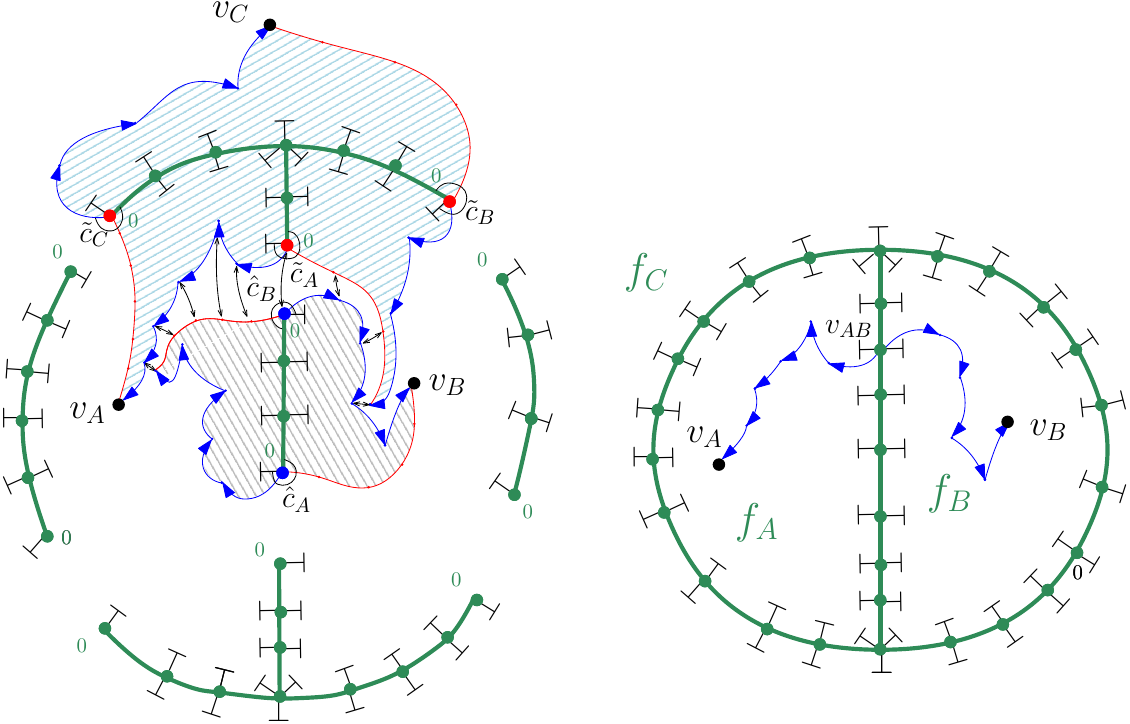}
  \caption{The gluing of the bigeodesic triangle encoded by the Y-diagram $Y_{ABC}$ and the bigeodesic diangle encoded by 
  the chain $C_{(AB)}$ occurs along leftmost geodesics launched from $v_{AB}$ in the original quadrangulation and correspond to sequences of successors
  both in the well-labeled components (left) and in the well-labeled map (right).}
  \label{fig:WLdecompbis}
 \end{figure}
 \begin{figure}
  \centering
  \includegraphics[width=.3\textwidth]{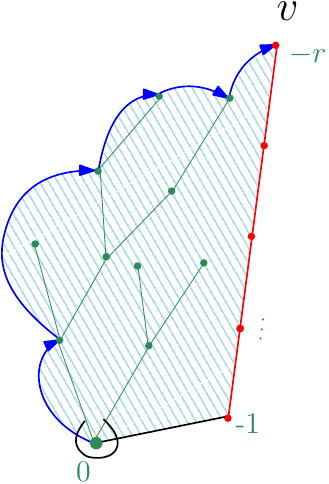}
  \caption{The elementary slice encoded by a well-labeled planted tree with root label $0$ (see text).}
  \label{fig:Rslice}
 \end{figure}
As a direct enumerative consequence, we identify $X$ and $Y$ as the generating functions of properly weighted 
well-labeled chains and Y-diagrams. More precisely, if we let $t$ and $g=g_4$ be the inner vertex and face weights in the 
triply pointed quadrangulation, each vertex of a well-labeled chain or Y-diagram receives the weight $t$ and each edge
the weight $g$. To include the possible degenerate cases (for instance the case of well-labeled maps with
two faces obtained whenever one of the boundary-vertices is geodesic between the other two),  the vertex map, with label $0$, must be considered as a well-labeled Y-diagram as well as 
a well-labeled chain.  
Viewed as well-labeled object generating functions, $X$ and $Y$ are easily obtained as the power series in $t$ solutions 
(see \cite{threepoint} for a detailed derivation) of
\begin{equation}
R=t+3 g\, R^2\ , \quad X=t+\frac{1}{t}\, g\, R^2\, X\left(1+\frac{1}{t^2}\, g\, R^2\, X\right)\, \quad Y=t+\frac{1}{t^6}\, g^3\, R^6\, X^3\, Y\ ,
\end{equation}
where $R$ is the generating function of well-labeled planted trees with root label $0$. Note that $R$ matches our definition \eqref{eq:Req} for $g_{2k}=g\, \delta_{k,2}$,
i.e.\ is also the generating function of elementary slices with $4$-valent inner faces only. 
That well-labeled planted trees encode elementary slices is obtained along the same lines as before: calling $1-r$ the smallest label in the tree,
we attach to the root-corner a branch of length $r$ with vertices having decreasing labels $-1,\cdots,-r$ as in Figure~\ref{fig:Rslice}. Connecting each corner not incident
to one of the new added vertices to its successor
creates a map with a single boundary face, with $4$-valent inner faces, having a blue (geodesic) interval from the extremity $v$ of the added branch to the root vertex counterclockwise around the map, and a geodesic interval from the root vertex to $v$, whose portion which 
goes from the new added vertex with label $-1$ to $v$ is strictly geodesic (hence represented in red). This is precisely an elementary slice. 

A simple parametrization of $X$ and $Y$ is obtained by introducing the power series $x$ solution of 
\begin{equation}
x=\frac{g\, R^2}{t}\, (1+x+x^2) 
\end{equation}
as it allows to write
\begin{equation}
  \label{eq:XYform}
X=t\, \frac{1-x^3}{1-x}\ , \quad Y=\frac{t}{1-x^3}\ .
\end{equation}

\newpage

\bibliographystyle{myhalpha}
\bibliography{triskell}

\end{document}